\newtheorem{lemma}{Lemma}[section]
\newtheorem{proposition}[lemma]{Proposition}
\newtheorem{theorem}[lemma]{Theorem}
\newtheorem{corollary}[lemma]{Corollary}
\newtheorem{maintheorem}{Theorem}
\newtheorem{mainproposition}[maintheorem]{Proposition}
\newtheorem{maincorollary}[maintheorem]{Corollary}
\theoremstyle{definition}
\newtheorem{definition}[lemma]{Definition}
\newtheorem{conjecture}[lemma]{Conjecture}
\newtheorem{question}[lemma]{Question}
\theoremstyle{remark}
\newtheorem{remark}[lemma]{Remark}
\newtheorem{example}[lemma]{Example}
\newtheorem*{remark*}{Remark}
\newtheorem*{problem*}{Problem}
\newcommand\Ima{{\rm Im}}
\newcommand{\GL}{\operatorname{GL}}
\newcommand{\SL}{\operatorname{SL}}
\newcommand{\PSL}{\operatorname{PSL}}
\newcommand{\Ma}{\operatorname{M}}
\newcommand\Aut{{\rm Aut}}
\newcommand\supp{{\rm supp}}
\newcommand\PCI{\operatorname{PCI}}
\newcommand{\ZZ}{\mathcal{Z}}
\renewcommand{\O}{\mathcal{O}}
\newcommand{\U}{{\mathcal U}}
\newcommand\Soc{\operatorname{Soc}}
\newcommand\Cen{\operatorname{Cen}}
\newcommand\mc{\mathcal}
\newcommand\ov{\overline}
\newcommand\wt{\widetilde}
\newcommand\wh{\widehat}
\newcommand{\Z}{{\mathbb Z}}
\newcommand{\Q}{{\mathbb Q}}
\newcommand{\R}{{\mathbb R}}
\newcommand\Bic{\operatorname{Bic}}
 \newcommand\restr[2]{{
   \left.\kern-\nulldelimiterspace 
   #1 
   \right|_{#2} 
   }}
\definecolor{Vino}{rgb}{0.256,0,0}
\begin{document}
\title{On integral decomposition of unipotent elements in integral group rings}
\author{Geoffrey Janssens}
\author{Leo Margolis}

\address{(Geoffrey Janssens) \newline Institut de recherche en math\'ematique et physique, Universit\'e de Louvain-La-Neuve, Chemin du Cyclotron $2$, $1348$ Louvain-la-Neuve, Belgium \newline E-mail address: {\tt geoffrey.janssens@uclouvain.be}}
\address{(Leo Margolis) \newline Universidad Aut\'onoma de Madrid, Departamento de Matem\'aticas, C/ Francisco Tomás y Valiente 7, 28049 Madrid, Spain \newline E-mail address: {\tt leo.margolis@icmat.es}}

\begin{abstract}
Jespers and Sun conjectured in \cite{JespersSun} that if a finite group $G$ has the property ND, i.e. for every nilpotent element $n$ in the integral group ring $\mathbb{Z}G$ and every primitive central idempotent $e \in \mathbb{Q}G$ one still has $ne \in \mathbb{Z}G$, then at most one of the simple components of the group algebra $\mathbb{Q} G$ has reduced degree bigger than $1$. With the exception of one very special series of groups we are able to answer their conjecture, showing that it is true --- up to exactly one exception. To do so we first classify groups with the so-called SN property which was introduced by Liu and Passman in their investigation of the Multiplicative Jordan Decomposition for integral group rings.

The conjecture of Jespers and Sun can also be formulated in terms of a group $q(G)$ made from the group generated by the unipotent units, which is trivial if and only if the ND property holds for the group ring. We answer two more open questions about $q(G)$ and notice that this notion allows to interpret the studied properties in the general context of linear semisimple algebraic groups. Here we show that $q(G)$ is finite for lattices of big rank, but can contain elements of infinite order in small rank cases. 

We then study further two properties which appeared naturally in these investigations. A first which shows that property ND has a representation theoretical interpretation, while the other can be regarded as indicating that it might be hard to decide ND. Among others we show these two notions are equivalent for groups with SN.  
\end{abstract}

\maketitle

\newcommand\blfootnote[1]{%
  \begingroup
  \renewcommand\thefootnote{}\footnote{#1}%
  \addtocounter{footnote}{-1}%
  \endgroup
}

\blfootnote{\textit{2020 Mathematics Subject Classification.} 16S34, 20C10, 20E99, 16U99}
\blfootnote{\textit{Key words and phrases}. Unipotent units, group ring, integral decompositions, SN groups}
\blfootnote{The first author is grateful to Fonds Wetenschappelijk Onderzoek vlaanderen - FWO (grant 88258), and le Fonds de la Recherche Scientifique - FNRS (grant 1.B.239.22) for financial support.

The second author was supported by the Spanish ministry of Science and Innovation under a Ramon y Cajal grant (reference RYC2021-032471-I) and a Severo Ochoa Grant CEX2019-000904-S funded by MCIN/AEI/ 10.13039/501100011033}

\vspace{-0,4cm}
\tableofcontents

\section{Introduction}
Already in the 1860's Weierstrass and Jordan introduced what students today learn to call the Jordan normal form of a matrix, cf. \cite{Hawkins} for a historic overview. Reformulating this theory in a more general context for $A$ an algebra over a field $F$ every invertible element $a \in A$ can be uniquely decomposed as $a = a_u a_s$ such that $a_u$ is unipotent, $a_s$ is semisimple and $a_u a_s = a_sa_u$. When $A$ contains a substructure $B$ of interest, e.g. when $F$ is a number field and $B$ an order in $A$, one could ask, if for every invertible $b \in B$ one can still achieve this Jordan decomposition in $B$, i.e. whether $b_n,b_s \in B$ holds. Motivated by the study of units in integral group rings Hales, Luthar and Passi asked when the above will happen for $A= \mathbb{Q}G$ the rational group algebra of a finite group $G$ and $B = \mathbb{Z}G$ the integral group ring therein. Namely they defined a finite group $G$ to have \emph{Multiplicative Jordan Decomposition}, if for every unit $u \in \mathbb{Z}G$ the elements $u_n$ and $u_s$, which a priori are defined in $\mathbb{Q}G$, actually live in $\mathbb{Z}G$, and asked which groups satisfy this property. Though quite a lot of research has been developed to this, the problem remains open in general, see \Cref{sec:MJD} for more details and references. 

A major breakthrough in this investigation came when it was observed in \cite{HPW} that a group which has Multiplicative Jordan Decomposition also has the \emph{Nilpotent Decomposition} (\emph{ND} for short). Namely, $G$ is said to have ND, if for every nilpotent element $n \in \mathbb{Z}G$ and every central idempotent $e \in \mathbb{Q}G$, the product $ne$ still lies in $\mathbb{Z}G$. This property can be reformulated in terms of the associated unipotent elements $1+n$. More precisely, denote by $\U( \Z G)$ the unit group in $\Z G$ and let 
$$ \U(\Z G)_{un} := \{ \alpha \in \U (\Z G) \mid \alpha \text{ is unipotent } \}$$
be the set of unipotent elements in $\U (\Z G)$. Furthermore, for $e$ a primitive central idempotent of $\Q G$ consider the set $\mathcal{E}_G(e) := \{ \alpha \in \U(\Z G)_{un} \mid (\alpha -1) e = \alpha -1\}$ of unipotent elements projecting trivially to all components except the $e$-th one. Denote by $\PCI( \Q G)$ the set of all the central primitive idempotents in $\Q G$. Now, by considering the group $q(G) := \langle  \U(\Z G)_{un} \rangle / \langle \mathcal{E}_G(e) \ | \ e \in \PCI(\Q G) \rangle$ one obtains the alternative characterisation:
\begin{equation}\label{ND via qG intro}
q(G) = 1 \text{ if and only if } G \text{ has ND}.
\end{equation}
Looking on the Wedderburn-Artin decomposition 
\begin{equation}\label{eq:QGWedderburn}
\mathbb{Q}G = M_{n_1}(D_1) \oplus ... \oplus M_{n_\ell}(D_\ell),
\end{equation}
 where $M_{n_i}(D_i)$ denotes the $n_i\times n_i$-matrix ring over a division algebra $D_i$, one sees that property ND will hold if at most one of the $n_i$ is bigger than $1$, as the only unipotent element in a division algebra is the trivial one. This observation during the search for groups having ND led Jespers and Sun to define a group  $G$ as having \emph{at most one matrix component}, if at most one of the $n_i$ in \eqref{eq:QGWedderburn} is bigger than $1$ \cite{JespersSun}. 
Their investigations even made them conjecture that these properties are in fact equivalent:

\begin{conjecture}[{Jespers-Sun, \cite[Conjecture 1]{JespersSun}}]\label{conj:EricWei}
 A finite group $G$ has ND if and only if $\mathbb{Q}G$ has at most one matrix component.
\end{conjecture}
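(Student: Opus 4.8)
The plan is to treat the two implications separately, as they are of very unequal difficulty.

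\emph{The easy implication} ($\Q G$ has at most one matrix component $\Rightarrow$ $G$ has ND) is essentially the observation already recorded in the introduction, and I would simply make it explicit. Let $n\in\Z G$ be nilpotent and $e\in\PCI(\Q G)$. If $e$ is attached to a Wedderburn component $D_i$ with $n_i=1$, then $ne$ is a nilpotent element of the division algebra $D_i$, hence $ne=0\in\Z G$; and for the unique component with $n_i>1$, if there is one, $ne=n-\sum_{f\in\PCI(\Q G),\ f\ne e}nf=n-0=n\in\Z G$. Since an arbitrary central idempotent of $\Q G$ is a sum of primitive ones, this yields ND; equivalently the sets $\mathcal{E}_G(e)$ already generate $\langle\U(\Z G)_{un}\rangle$, so $q(G)=1$.

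\emph{The hard implication} ($G$ has ND $\Rightarrow$ $\Q G$ has at most one matrix component) is where all the work, and the single exceptional series, lie, and I would argue by contraposition: assuming $\Q G$ has at least two matrix components, produce a nontrivial class in $q(G)$, i.e.\ a nilpotent $n\in\Z G$ and a central idempotent $e$ with $ne\notin\Z G$. The plan has three stages. \textbf{Stage A:} show that ND is restrictive enough to force the SN property of Liu--Passman --- for the argument it suffices to know this under the standing hypothesis that $\Q G$ has at least two matrix components. Concretely, a group failing SN should be shown to contain, inside some $\Z H$ with $H\le G$ and hence inside $\Z G$, an explicit nilpotent element whose projections to the Wedderburn components are not integral, contradicting ND; thus every potential counterexample to the conjecture is an SN group. \textbf{Stage B}, the technical heart of the argument, is the classification of all finite groups with SN. \textbf{Stage C} is then bookkeeping: run through the resulting explicit finite list, compute for each group the Wedderburn decomposition \eqref{eq:QGWedderburn}, and whenever two or more of the $n_i$ exceed $1$ exhibit a concrete nilpotent $n\in\Z G$ together with a central idempotent $e$ for which $ne\notin\Z G$ --- equivalently a nontrivial unipotent class in $q(G)$. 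This disposes of every SN group with at least two matrix components except one series, for which no such bad nilpotent is produced; for those groups we cannot decide whether $q(G)$ is trivial, and they are left as the single exception, the equivalence being established for all other $G$.

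\textbf{Main obstacle.} The classification of SN groups in Stage B is by far the hardest part: it is a long structural analysis --- through the possible Sylow subgroups, abelian normal subgroups and conjugation actions of $G$, ultimately reducing to finitely many ``critical'' groups in the spirit of Liu and Passman's work on the Multiplicative Jordan Decomposition --- and most of the effort goes into controlling it. A secondary but genuinely delicate point is the borderline cases of Stage C, where the two matrix components are small (typically $2\times 2$ over small fields or rings of integers): here, deciding whether the image of the order $\Z G$ in the product of those two components is itself a direct sum of orders --- which is precisely what governs the existence of a nilpotent of $\Z G$ with non-integral component --- is subtle, and it is exactly the groups where this coupling is too rigid that form the exceptional series excluded from the theorem.
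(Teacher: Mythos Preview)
Your overall three-stage strategy (ND $\Rightarrow$ SN, classify SN groups, then case-check) is exactly the approach the paper takes. However, there is a fundamental problem with the proposal as a \emph{proof of the conjecture}: the statement you are trying to prove is false, and the paper demonstrates this explicitly. The group $G(2,2,3)=\langle a,b\mid a^4=b^8=1,\ a^b=a^{-1}\rangle\cong C_4\rtimes C_8$ has ND but $\Q G$ has two matrix components $M_2(\Q)$ and $M_2(\Q(i))$ (\Cref{th:counterex}). So the ``hard implication'' cannot be established, and your Stage~C cannot succeed as stated.

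Your description of the endgame is also inaccurate in a way that matters. You write that the case analysis leaves \emph{one} exceptional series for which ``we cannot decide whether $q(G)$ is trivial''. In fact two distinct phenomena occur. First, $G(2,2,3)$ is a \emph{definitive counterexample}: one proves directly, by a delicate congruence analysis of the coefficients of an arbitrary nilpotent $n\in\Z G$, that $ne\in\Z G$ for every $e\in\PCI(\Q G)$ --- this is not an ``undecided'' case but a positive verification of ND. Second, and separately, the SSN groups of unfaithful type ($C_p\rtimes C_{q^k}$ with non-trivial, non-faithful action) form a genuinely open series where neither a bad nilpotent nor a proof of ND is produced. Your proposal conflates these two very different outcomes. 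The correct statement the paper proves is \Cref{th:MainTheoremNDInSection3}: outside the unfaithful SSN series, $G$ has ND iff $\Q G$ has at most one matrix component \emph{or} $G\cong C_4\rtimes C_8$.

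A further remark on Stage~C: for the nilpotent SSN groups $G(p,m,n)$ that are not the single exception, producing a bad nilpotent is far from bookkeeping --- the paper needs a uniform construction (\Cref{lem:ThersyConstruction}) and several pages of case-by-case calculations (Lemmas~\ref{lem:GpmnEvenCase}--\ref{lem:GpmnOddCase3}), and these are \emph{not} bicyclic nilpotents; indeed the paper later shows (\Cref{the hardest cases are resistant}) that bicyclic nilpotents cannot work for this family.
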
  

Using the perspective of $q(G)$ in \eqref{ND via qG intro} and work of Kleinert-del R\`io \cite{KleRio}, \Cref{conj:EricWei} can be elegantly reformulated in terms of unipotent elements. Namely, {\it it conjectures that $\langle \mathcal{U}(\Z G)_{un} \rangle$ is indecomposable.} From this point of view their conjecture is even more surprising.

Note that the indecomposability statement of the group generated by all unipotent elements is of interest for arithmetic subgroups of arbitrary semisimple algebraic groups. In \Cref{the obstruction for general smeisimple} we expand on this generality. In \cite[Section 6]{JespersSun} also the questions of when $q(G)$ is finite and whether there is a connection between the structure of $q(G)$ and the simple components of $\Q G$ were asked. The aim of this article is to answer all the problems above.

The latter two questions will be answered in \Cref{sec:GeneralOrders}. The study of property ND, with the solution of the conjecture above as the ending point, is done in \Cref{Section ND}. This however will require to classify in \Cref{sec:SN} the so-called SN groups, which is a problem of independent interest. Finally, in \Cref{sec:DK prop section} we show that property ND has also a representation theoretical interpretation and yields concrete structural information when considered for specific subsets of all nilpotent elements. We will now explain the main results of this article in more detail.

\subsection*{Jespers-Sun conjecture and SN groups}\addtocontents{toc}{\protect\setcounter{tocdepth}{1}}
\Cref{conj:EricWei} has been the starting point for the investigations presented here. To describe our result on it, define a finite group $G$ to be an \emph{SSN group of unfaithful type}, if there exist primes $p$ and $q$ such that $G = P \rtimes Q$ for $P$ a cyclic group of order $p$ and $Q$ a cyclic group of $q$-power order which acts non-trivially, but also not faithfully on $P$ (the reason for the name will become clear later). Then our main result, obtained in \Cref{th:MainTheoremNDInSection3}, states:

\begin{maintheorem}\label{Summary theorem ND}
Let $G$ be a finite group which is not an SSN group of unfaithful type. Then $G$ has ND if and only if either $\Q G$ has at most one matrix component or $G  = \langle a,b \mid a^4=b^8 =1, a^b =a^{-1} \rangle \cong C_4 \rtimes C_8$.
\end{maintheorem}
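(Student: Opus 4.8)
The plan is to establish the two implications separately. The forward implication is essentially the observation already recorded in the introduction: if $\Q G$ has at most one matrix component, then in \eqref{eq:QGWedderburn} every factor $M_{n_i}(D_i)$ with $n_i = 1$ is a division algebra and hence contains no nonzero nilpotent element; so any nilpotent $n \in \Z G$ is supported on the single matrix factor (if there is one), and for each $e \in \PCI(\Q G)$ the product $ne$ equals $n$ or $0$ and thus lies in $\Z G$. The point that is not formal is the claim for $G = \langle a,b \mid a^4 = b^8 = 1, a^b = a^{-1}\rangle$: here one computes that $\Q G$ has exactly \emph{two} matrix components, of the shapes $M_2(\Q)$ and $M_2(\Q(i))$ (besides a quaternion division algebra and several number fields), and one must check that ND nevertheless holds. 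I would do this by a direct, finite verification inside these two components --- describing the nilpotent elements of $\Z G$, the relevant primitive central idempotents, and checking that all products stay integral --- with computer assistance if convenient.

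For the converse, assume $G$ has ND, is not an SSN group of unfaithful type, and that $\Q G$ has more than one matrix component; I must force $G \cong C_4 \rtimes C_8$ with the presentation above. The strategy has two stages. The first stage is the reduction \emph{ND implies SN}: if $G$ fails the SN property of Liu and Passman, then the Wedderburn decomposition of $\Q G$ is rich enough --- through one ``too large'' matrix factor, or through two simultaneously noncommutative ones --- to yield an explicit obstruction to ND. Concretely, I would pick a primitive idempotent supported on an offending factor, build from it a nilpotent matrix-unit-type element, clear denominators so that it lands in $\Z G$, and then exhibit a primitive central idempotent $e$ such that multiplying by $e$ destroys integrality against the group basis. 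This is where the representation-theoretic bookkeeping of \Cref{Section ND} is carried out, and I expect it to be the main obstacle: converting ``$G$ does not have SN'' into a usable, basis-level integrality violation requires simultaneous control of the denominators of central idempotents and of matrix units, and the borderline configurations --- two small matrix factors --- are exactly where the single positive exception hides, so the reduction must be run with enough precision to let that one group slip through while catching everything else.

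The second stage is a finite check. Having reduced to SN groups, I invoke the classification of SN groups from \Cref{sec:SN}. Those with at most one matrix component already have ND and are irrelevant, and the SSN groups of unfaithful type are excluded by hypothesis, so only a short, explicit list of SN groups with at least two matrix components remains. For each of these I would decide ND by passing to the group $q(G)$ via \eqref{ND via qG intro} --- equivalently, by testing whether $\langle \U(\Z G)_{un}\rangle$ decomposes in the manner dictated by the Wedderburn components, using Kleinert-del R\`io \cite{KleRio} --- and in every case but one I expect to exhibit a unipotent unit witnessing $q(G) \neq 1$. The sole survivor is $G \cong C_4 \rtimes C_8$, which has ND by the forward implication, and this proves the theorem. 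A remark on the excluded family: the SSN groups of unfaithful type $P \rtimes Q$ are SN but carry many Wedderburn factors $M_{q^m}(D)$ of one and the same size $q^m \ge 2$, and for them this second stage is not resolved by the present techniques --- which is precisely why the statement restricts to groups of the other kind, the status of the unfaithful-type series being the gap left open.
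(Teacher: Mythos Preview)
Your proposal has the high-level shape right --- prove the easy direction, then reduce the converse to SN groups and use the classification --- but the allocation of difficulty is inverted and Stage~2 is not a finite check.

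First, the implication ``ND $\Rightarrow$ SN'' that you flag as the main obstacle is not new work: it is recorded in \cite[Proposition~2.5]{LiuPassman09} and \cite[Proposition~3.4]{JespersSun} and is proved by a one-line bicyclic construction from a pair $(N,Y)$ witnessing failure of SN. Your description of this step (``too large matrix factor'', matrix-unit constructions, denominator clearing) does not match how it is actually done and would be a much harder route than the existing argument.

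Second, and more seriously, after excluding SSN groups of unfaithful type and groups with one matrix component, the SN classification of \Cref{Summary theorem for SN} does \emph{not} leave a short explicit list. What remains are three infinite families: (a) the non-solvable groups of type~(iii), (b) the solvable groups $P\rtimes H$ of type~(ii), and (c) the nilpotent SSN groups of \cite[Theorem~4.1]{JespersSun}, which contain in particular the infinite two-parameter family $G(p,m,n)$ of \eqref{definition BJ1 class}. For each of (a) and (b) the paper builds, uniformly in the family, a concrete bicyclic nilpotent element $n$ and an idempotent $e$ with $ne\notin\Z G$ whenever there are at least two matrix components (this is the content of \Cref{ND for non-nilp SSN grps}). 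For (c), most subfamilies were already handled by Jespers--Sun, but the series $G(p,m,n)$ requires new, rather delicate constructions (\Cref{lem:ThersyConstruction} and the case-by-case Lemmas~\ref{lem:GpmnEvenCase}--\ref{lem:GpmnOddCase3}) that are \emph{not} bicyclic --- indeed the paper later shows in \Cref{the hardest cases are resistant} that bicyclic nilpotent elements cannot work here. Your plan of ``exhibiting a unipotent unit witnessing $q(G)\neq 1$'' is logically equivalent to finding such an $n$, but you give no mechanism for producing it across these infinite families, and the Kleinert--del R\'io indecomposability viewpoint does not by itself supply one.

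Finally, a minor correction: the SSN groups of unfaithful type $C_p\rtimes C_{q^k}$ do not have ``many Wedderburn factors $M_{q^m}(D)$ of one and the same size''; their matrix components are of various shapes and in fact none is exceptional of type~II (cf.\ \cite[Section~5.2]{JespersSun}), which is why they are the delicate leftover case.
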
 

Hence we show that though the conjecture of Jespers and Sun is not correct in general, we know only about one counterexample and all the other potential counterexamples lie in a very specific family of groups. Interpreting the conjecture as a statement on how far the rational group algebra $\mathbb{Q}G$  determines properties of $G$ and its integral group ring, this allows still to show a strong implication:

\begin{maincorollary}\label{cor:QGKnowsND}
Let $G$ and $H$ be groups such that $\mathbb{Q}G \cong \mathbb{Q}H$ and $G$ has ND. Then $H$ has $ND$.
\end{maincorollary}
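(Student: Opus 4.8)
The plan is to use \Cref{Summary theorem ND} to reduce to two explicit families of groups, and then to check in each case that the data distinguishing ``ND'' is recoverable from the $\Q$-algebra $\Q G$. To begin with, note that the property ``$\Q G$ has at most one matrix component'' depends only on the $\Q$-algebra isomorphism type of $\Q G$ and, as recalled in the introduction, it implies ND (a division algebra has no nontrivial unipotent element). Hence if $\Q G$ has at most one matrix component then so does $\Q H\cong\Q G$, and $H$ has ND; so we may assume that $\Q G$, and therefore also $\Q H$, has at least two matrix components. Since $G$ has ND, \Cref{Summary theorem ND} then says that $G$ is an SSN group of unfaithful type or $G\cong C_4\rtimes C_8$; and $|H|=\dim_{\Q}\Q H=\dim_{\Q}\Q G=|G|$, so in the latter case $|H|=32$.

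Suppose first that $G=C_p\rtimes C_{q^n}$ is SSN of unfaithful type, the action having image of order $k$ in $\Aut(C_p)$ with $1<k<q^n$. I claim that $\Q H\cong\Q G$ already forces $H\cong G$. The sum of the commutative Wedderburn components of $\Q G$ equals $\Q[G/[G,G]]$, and $[G,G]=C_p$ here (since $G/C_p$ is abelian while $G$ is not), so this subalgebra is $\Q C_{q^n}$; transporting through $\Q H\cong\Q G$ gives $\Q[H/[H,H]]\cong\Q C_{q^n}$, whence $H/[H,H]\cong C_{q^n}$ because finite abelian groups are determined by their rational group algebras. Consequently $|[H,H]|=|H|/q^n=p$, so $[H,H]$ is a normal Sylow $p$-subgroup, and $\gcd(p,q^n)=1$ gives (Schur--Zassenhaus) $H\cong C_p\rtimes C_{q^n}$; the action is nontrivial (otherwise $\Q H$ would be commutative) with image of some order $k'$. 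For any such group all nonlinear complex irreducible characters have degree $k'$, so $\dim_{Z(A)}A=k'^2$ for every noncommutative Wedderburn component $A$ of $\Q H$; reading the same number off $\Q G$ yields $k'=k$. Since $\Aut(C_p)$ is cyclic, a split extension $C_p\rtimes C_{q^n}$ is determined up to isomorphism by the order of the image of its action, so $H\cong G$ and $H$ has ND.

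There remains the case $G\cong C_4\rtimes C_8$. Here I would verify directly that, up to isomorphism, $C_4\rtimes C_8$ is the only finite group whose rational group algebra is isomorphic to $\Q(C_4\rtimes C_8)$; since any such group has order $32$, this is a finite check, most easily carried out on a computer (for instance with the \textsf{GAP} package \textsf{Wedderga}). Granting it, $H\cong C_4\rtimes C_8$ has ND, which finishes the proof. I expect this last step to be the main obstacle: the reductions and the SSN case are a matter of \Cref{Summary theorem ND} together with bookkeeping on Wedderburn decompositions, whereas pinning down the unique ``group form'' of $\Q(C_4\rtimes C_8)$ genuinely requires a computation --- and one that must be done with care, since it is precisely what keeps the known exception to the Jespers--Sun conjecture from also being a counterexample to the corollary.
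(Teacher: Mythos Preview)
Your reduction and the SSN-of-unfaithful-type case are correct and essentially parallel the paper's proof. The paper also first disposes of the ``at most one matrix component'' case, then treats the two remaining possibilities from \Cref{Summary theorem ND}. For the SSN case you recover the action order $k$ from $\dim_{Z(A)}A$ for a noncommutative component $A$; the paper instead observes that the number of simple components of $\Q G$ equals the number of conjugacy classes of cyclic subgroups, and that for $C_p\rtimes C_{q^k}$ this count determines $|\ZZ(G)|$ and hence the action. Both arguments are short and valid; yours is arguably slightly more direct.

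The genuine gap is the $C_4\rtimes C_8$ case, which you leave as an unperformed computer check. The paper gives a hand argument, and it is short enough to be worth knowing. From $\Q G\cong\Q H$ one gets $H/H'\cong G/G'\cong C_8\times C_2$, so $|H'|=2$, and in particular $\ZZ(H)$ has rank at least $2$ (ruling out $Q_{32}$, whose derived subgroup has order $8$ anyway). Writing $H'=\langle c\rangle$, the constraint $H/H'\cong C_8\times C_2$ forces that either $c$ is a square of an element of order $4$ or of order $16$, or $c$ is not a square at all. Running through the groups of order $32$ with these properties and noncyclic centre leaves, besides $G$ itself, only
\[
H_1=\langle a,b\mid a^{16}=b^2=1,\ a^b=a^9\rangle,\qquad
H_2=\langle a,b,c\mid a^8=b^2=c^2=1,\ a^b=ac,\ [a,c]=[b,c]=1\rangle.
\]
Neither $H_1$ nor $H_2$ surjects onto $Q_8$, so neither rational group algebra contains $\mathbb{H}_\Q$ as a simple component, whereas $\Q G$ does (via $G/\langle a^2b^2\rangle\cong Q_8$). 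Hence $H\cong G$. This replaces your deferred computation with a two-line invariant check.
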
 
This result is obtained in \Cref{coro ND determined by QG}. Note, that there are many groups which have isomorphic rational group algebras \cite[Theorem 14.1.11]{PassmanAlgebraicStructure} and it is hence not a typical situation that a property of $G$ can be recovered from its group algebra over $\Q$.

A concept which turns out to be crucial to reduce our studies to certain classes of groups is that of \emph{groups with SN}. Namely, $G$ is said to have SN, if for every normal subgroup $N \unlhd G$ and subgroup $Y \leq G$ one has $N \leq Y$ or $NY \unlhd G$. This property was introduced by Liu and Passman to restrict the group-theoretical structure of those groups which have ND \cite{LiuPassman09} (the name was later coined in \cite{Liu}), so that a group which does not have SN will not have ND. Liu and Passman obtained some properties of groups with SN \cite{LiuPassman16}, but they were more interested in \emph{groups with SSN}, which are those in which every subgroup has SN, as this property is a consequence of the Multiplicative Jordan Decomposition. They were able to achieve quite explicit descriptions of all the groups having SSN and we generalize their findings in some sense, giving restrictions on the structure of groups with SN. Recall that a group is said to be \emph{Dedekind}, if all its subgroups are normal.

\begin{maintheorem}\label{Summary theorem for SN}
Let $G$ be a finite group. Then $G$ is a group with SN if and only if it is one of the following types:
\begin{enumerate}
\item[(i)] a group with SSN,
\item[(ii)] $G \cong P \rtimes H$ with $P$ an elementary abelian Sylow $p$-subgroup and $H$ a $p'$-Hall subgroup which is Dedekind with cyclic or generalized quaternion Sylow subgroups such that the action of $H$ on $P$ is irreducible and faithful,
\item[(iii)] $G$ has a unique minimal normal subgroup $S$ and $S$ is not solvable and $G/S$ Dedekind,
\item[(iv)] $\Q G$ has one matrix component.
\end{enumerate}
\end{maintheorem}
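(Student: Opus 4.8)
The plan is to extract enough structural information from the defining property of SN to force one of the four listed shapes. I would begin with the same kind of elementary maneuvers that Liu and Passman used for SSN. First, observe that the SN condition applied to a fixed normal subgroup $N$ and varying subgroups $Y$ says: every subgroup $Y$ with $N \not\leq Y$ satisfies $NY \unlhd G$. Applying this with $Y$ ranging over subgroups of prime order not contained in $N$, one deduces strong normality constraints on the subgroup lattice ``above'' $N$. The natural first reduction is to the non-solvable case versus the solvable case. If $G$ is not solvable, I would show that there must be a unique minimal normal subgroup $S$: if there were two minimal normal subgroups $N_1, N_2$, then taking $N = N_1$ and $Y = N_2$ (so $N \not\leq Y$) forces $N_1 N_2 \unlhd G$, which is automatic, so one has to be more clever — instead one exploits SN with $N$ a minimal normal subgroup and $Y$ a subgroup supplementing but not containing $N$ to force $NY \unlhd G$, and iterates. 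Then one argues $S$ itself cannot be abelian (else $G$ would be solvable-by-something small and land in another case), so $S$ is a product of non-abelian simple groups; and finally that $G/S$ must be Dedekind, because any non-normal subgroup of $G/S$ would pull back to a subgroup $Y$ with $Y \not\geq S$ and $SY$ not normal, contradicting SN. This gives case (iii).

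For the solvable case, the key is to look at a minimal normal subgroup $P$, which is then elementary abelian, say a $p$-group. I would apply SN with $N = P$ and $Y$ a complement-type subgroup to learn that the action of $G$ on $P$ is highly constrained. Two subcases arise naturally. If $P$ is \emph{not} the unique minimal normal subgroup or if $O_p(G)$ is strictly larger than $P$, I expect to be able to push the group into the SSN world — i.e. show every subgroup has SN — landing in case (i); the mechanism is that SN is ``almost'' inherited by subgroups once there is enough abelian normal structure around, and the obstructions to full SSN were already classified by Liu–Passman. The interesting subcase is when $P$ is a \emph{self-centralizing} minimal normal subgroup that is an elementary abelian Sylow $p$-subgroup. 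Writing $G = P \rtimes H$ by Schur–Zassenhaus, the SN property applied to normal subgroups inside the action should force: (a) $H$ acts irreducibly on $P$ (otherwise a proper $H$-invariant subspace gives a normal subgroup $N \lneq P$, and choosing $Y$ inside $P$ not containing $N$ violates SN, since $NY$ is a non-normal subgroup of $P$); (b) $H$ acts faithfully (a non-trivial kernel would be a normal subgroup interacting badly with subgroups of $P$); and (c) $H$ is Dedekind with cyclic or generalized quaternion Sylows — this last point coming from applying SN to subgroups of $H$ together with the structure of Dedekind groups (a Dedekind group is either abelian or $Q_8 \times (\text{abelian})$, and the ``SN from below'' forces the Sylow structure). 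That yields case (ii). Finally, throughout, whenever $\mathbb{Q}G$ happens to have one matrix component one is automatically in case (iv), which is why it appears as a catch-all; I would use it to absorb the small sporadic configurations that don't fit cleanly into (i)–(iii).

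The converse — that each of (i)–(iv) does satisfy SN — I would handle directly: (i) is immediate since SSN $\Rightarrow$ SN by definition; for (ii) one checks that with $P$ an irreducible faithful module and $H$ Dedekind, any normal subgroup $N$ is either contained in $P$, contains $P$, or is of the form $P \rtimes K$ with $K \unlhd H$ — and in each case, for a subgroup $Y$ with $N \not\leq Y$, the product $NY$ picks up enough of $P$ to be normal (here irreducibility of $P$ is what makes $NY \cap P$ either trivial or all of $P$); (iii) is a short argument using that the normal subgroups of $G$ are exactly $1$, $S$, and preimages of normal subgroups of the Dedekind quotient $G/S$, all of which are comparable with or absorb arbitrary $Y$ appropriately; and (iv) can be verified by a direct check on the possible Wedderburn components, or deduced from the equivalence with ND-type conditions established earlier.

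\medskip

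\textbf{Main obstacle.} The hard part will be case (ii): proving that the combination of hypotheses on $H$ (Dedekind, with cyclic-or-quaternion Sylows, acting irreducibly and faithfully) is \emph{exactly} what SN forces in the self-centralizing-minimal-normal-subgroup solvable case, and in particular ruling out Dedekind groups $H$ whose Sylows are larger elementary abelian or general $2$-groups. This requires understanding which Dedekind groups admit a faithful irreducible $\mathbb{F}_p$-representation and then checking the SN condition subgroup-by-subgroup within $H$ — a delicate interplay between the module structure of $P$ over $H$ and the lattice of subgroups of $G$. I expect the cyclic/quaternion restriction to come from the fact that only such groups have a \emph{unique} subgroup of each prime order (a property of cyclic and generalized quaternion $p$-groups), which is precisely the kind of ``linearly ordered enough'' condition that SN, read inside $H$, demands.
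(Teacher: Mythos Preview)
Your proposal has a genuine gap: it does not treat the nilpotent case at all, and this is precisely where the paper's proof concentrates most of its effort. Your solvable case begins by taking a minimal normal elementary abelian $p$-subgroup $P$ and then passing to a Schur--Zassenhaus complement $H$; but if $G$ is a $p$-group there is no such complement, and your dichotomy ``either push into the SSN world, or $P$ is a self-centralizing Sylow $p$-subgroup'' collapses. Your claim that in the non-self-centralizing situation one can ``push the group into the SSN world'' and land in case (i) is simply false for nilpotent groups: there exist $p$-groups with SN which do \emph{not} have SSN (for instance $Q_8 \times Q_8$), and these land in case (iv), not (i). Showing that a nilpotent SN group without SSN must have at most one matrix component is the content of \Cref{prop:NilpotentSNNotSSN} in the paper, and its proof requires the detailed structural analysis of \Cref{prop:OddPGroups} together with the very long \Cref{lem:Q8timesQ8}, which establishes via twenty-two successive claims that $Q_8 \times Q_8$ is the unique $2$-group with SN, without SSN, in which every involution is central. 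Nothing in your outline comes close to supplying this.

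You also misidentify the main obstacle. The structure of $H$ in case (ii) --- that it must be Dedekind with rank-one Sylows --- is handled in the paper comparatively cleanly in \Cref{prop:SolvSNGroups}, leaning on the Liu--Passman lemmas \ref{lem:LiuPassma2.1}, \ref{lem:LP2.4}, \ref{lem:LP2.5} and a short character-theoretic argument for the rank restriction. Your explanation for the cyclic/quaternion condition (``unique subgroup of each prime order'') is on the right track but not how the paper gets it: the argument is that a faithful irreducible $\mathbb{F}_p$-representation of a Dedekind group must be faithful on each central subgroup, forcing each Sylow to have rank one. Finally, for the converse in case (iv) your ``direct check on Wedderburn components'' is not feasible since groups with one matrix component are not classified; the correct route is the implication one matrix component $\Rightarrow$ ND $\Rightarrow$ SN, the second step being \cite[Proposition 3.4]{JespersSun}.
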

 
As remarked before, groups with SSN have been classified in \cite{LiuPassman16}, so that we have a precise group-theoretical descriptions for those groups with SN which do not have one matrix component.   

\Cref{Summary theorem for SN} is proven in \Cref{sec:SN} by dividing it into three separate subcases: nilpotent groups (which are handled in  \Cref{prop:NilpotentSNNotSSN}), solvable non-nilpotent groups (\Cref{prop:SolvSNGroups}) and non-solvable groups (\Cref{prop:NonSolvableSNGroups}). The classification of nilpotent groups with SN turns out to be the hardest of those.  With those preparations we then prove \Cref{Summary theorem ND} and \Cref{cor:QGKnowsND} in \Cref{Section ND}, where the former is obtained by dividing in the same cases. 

\subsection*{A general perspective on the obstruction to have ND}
In \Cref{sec:GeneralOrders} we investigate the group $q(G)$ which by (\ref{ND via qG intro}) measures how far a given group $G$ is from having ND or in other words it as an obstruction to have ND. Jespers and Sun \cite[Section 6]{JespersSun} formulated the following two problems about $q(G)$: 

\begin{enumerate}
\item Classify the finite groups $G$ such that $q(G)$ is finite. (\cite[Problem 1, §6]{JespersSun})
\item Find a connection between the structure of $q(G)$ and the simple components of $\Q G$. (\cite[Problem 2, §6]{JespersSun})
\end{enumerate}

In \Cref{measure subsection} we give answers to both questions. More precisely, \Cref{th: upper-bound size q(G)} and the proof of \Cref{when q(G) finite for grp rings} will show that for unipotent elements to have an integral decomposition is not truly connected to the simple components of $\Q G$. The relationship is rather a combination of the congruence level of $\Z G$ in the maximal order of $\Q G$ on the one hand and the rank of the simple matrix components of $\Q G$ on the other hand.

Besides, the second problem fits in a more general context of semisimple algebraic groups. More precisely,  let $F$ be a number field and $S$ a non-empty finite set of places of $F$ containing the Archimedean places. Furthermore, let $\mathbf{G}$ be a simply connected semisimple algebraic group. In particular, $\mathbf{G}$ is a direct product of simply connected almost-simple algebraic $F$-subgroups \cite[Theorem 2.6]{PlaRapBook}, say $\mathbf{G} = \prod_{i=1}^m \mathbf{G}_i$. Finally let $\Gamma$ be an $S$-arithmetic subgroup of $\mathbf{G}(F)$. In \Cref{if no exceptional then finite for general grp} we obtain the following:

\begin{mainproposition}
Consider the notations above and suppose $ S\text{-rank}(\mathbf{G}_i(F)) \geq 2$ for all anisotropic $\mathbf{G}_i(F).$ Then, $|q(\Gamma)|< \infty. $
In particular, in this case finiteness of $q(\Gamma)$ does not depend on the chosen $S$-arithmetic subgroup $\Gamma$.
\end{mainproposition}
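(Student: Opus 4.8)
The plan is to reduce the finiteness of $q(\Gamma)$ to a statement about which unipotent elements can exist in $\Gamma$, and then invoke rigidity/normal-subgroup theorems for higher-rank arithmetic groups. First I would recall the definition of $q(\Gamma)$ in the general setting: $\mathbf{G} = \prod_{i=1}^m \mathbf{G}_i$ decomposes as a product of almost-simple factors, and an element of $\Gamma$ is unipotent precisely when all its components in the factors are. The subgroup $\langle \U(\Z\Gamma)_{un}\rangle$ (or rather its analogue $\langle \Gamma_{un}\rangle$, the group generated by unipotent elements of $\Gamma$) is normal in $\Gamma$, and $q(\Gamma)$ is its quotient by the subgroup generated by those unipotents that are ``concentrated'' in a single almost-simple factor. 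So the key point is to understand $\langle \Gamma_{un}\rangle$ and show it is already essentially generated by factor-concentrated unipotents, up to finite index.

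The central observation is a dichotomy on each factor $\mathbf{G}_i$. If $\mathbf{G}_i(F)$ is \emph{isotropic} (i.e. $S$-rank at least $1$, which is automatic unless it is anisotropic), then by a theorem of Prasad–Raghunathan / Margulis–Tits on the structure of $S$-arithmetic groups in isotropic groups, the projection of $\langle \Gamma_{un}\rangle$ into that factor contains a finite-index subgroup of the arithmetic group there — indeed, isotropic higher-rank arithmetic groups are boundedly generated by unipotents, and more basically the group generated by the $\Z$-points of the unipotent radicals of the parabolics has finite index. Crucially, these generating unipotents are \emph{factor-concentrated}: a unipotent one-parameter subgroup in $\mathbf{G}_i$ sits inside $\mathbf{G}_i$, hence maps trivially to the other factors once we intersect with a suitable congruence subgroup, so it lies in the ``$\mathcal{E}$-part'' we quotient by. If instead $\mathbf{G}_i(F)$ is \emph{anisotropic}, then by hypothesis its $S$-rank is $\geq 2$, and anisotropic $S$-arithmetic groups are cocompact; combined with higher $S$-rank this forces, via Margulis normal subgroup theorem, that any normal subgroup of the (irreducible factors of the) lattice is either finite or finite-index — and one checks that $\langle \Gamma_{un}\rangle$ cannot be finite unless it is trivial in that factor, because a nontrivial unipotent generates an infinite group. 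Either way, on each factor the contribution of $\langle \Gamma_{un}\rangle$ modulo factor-concentrated unipotents is finite.

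Concretely, the argument assembles as follows. Let $N = \langle \Gamma_{un}\rangle$ and let $E \trianglelefteq N$ be the subgroup generated by all unipotents lying in a single factor (after passing to a fixed principal congruence subgroup $\Gamma(\mathfrak{n})$ so that ``lying in a single factor'' is unambiguous; the finite quotient $\Gamma/\Gamma(\mathfrak{n})$ only affects $q(\Gamma)$ by a finite factor). I would then show $N/E$ is finite by projecting to each $\mathbf{G}_i$: the image of $N$ in $\prod_{j\neq i}\mathbf{G}_j$ has finite index by the $i$-th factor's contribution being ``used up'' inside $E$; running over all $i$ and using that $\bigcap_i (\text{kernel of projection away from } i)$ is finite (it is the center, which is finite for semisimple groups), one gets $[N:E] < \infty$. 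Since $E$ maps to $1$ in $q(\Gamma)$ by construction and $q(\Gamma)$ is a quotient of $N$ modulo $E$ together with the finite-index corrections, $q(\Gamma)$ is finite. The ``in particular'' clause then follows because any two $S$-arithmetic subgroups of $\mathbf{G}(F)$ are commensurable, and commensurable groups have $q(\cdot)$ differing only by finite factors (a subgroup of finite index contributes a finite kernel/cokernel at each stage).

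The main obstacle I expect is the isotropic-factor step: showing that the unipotents generating a finite-index subgroup of the factor's arithmetic group are genuinely factor-concentrated and controlling the finite-index defect uniformly. This is where one must be careful that ``unipotent in $\Gamma$'' (as an element of the ambient linear group, i.e. of $\U(\Z\Gamma)$ in the group-ring case) coincides with ``product of unipotents each concentrated in a factor up to a congruence subgroup'', rather than some genuinely diagonal unipotent that mixes factors — but since the almost-simple factors are defined over $F$ and the decomposition $\mathbf{G} = \prod \mathbf{G}_i$ is a direct product of algebraic groups, any unipotent one-parameter subgroup is conjugate into one factor, and that suffices. The Margulis normal subgroup theorem and the bounded generation / finite-index-by-unipotents results for higher-rank isotropic groups are the black boxes doing the heavy lifting; everything else is bookkeeping about congruence subgroups and finite quotients.
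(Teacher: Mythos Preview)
Your proposal has a genuine gap stemming from a misreading of where the $S$-rank $\geq 2$ hypothesis is applied. Note first that the paper's sentence immediately preceding the proposition contains a typo: it should say that $\mathbf{G}_i(F)$ is called \emph{isotropic} when $F\text{-rank}\geq 1$ (the standard convention), and the hypothesis of the proposition is meant to read ``$S$-rank $\geq 2$ for all \emph{isotropic} $\mathbf{G}_i$'', i.e.\ for all factors that actually contain nontrivial $F$-rational unipotent elements. With this corrected, the Borel--Tits theorem says that the genuinely $F$-anisotropic factors have \emph{no} $F$-rational unipotents at all, so every unipotent element of $\Gamma$ projects trivially to them and they play no role in $q(\Gamma)$. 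Your Margulis Normal Subgroup Theorem argument for those factors is therefore aimed at a non-issue.

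The real work happens on the $F$-isotropic factors, and here is where your argument is incomplete. You claim that for an isotropic factor ``the group generated by the $\mathbb{Z}$-points of the unipotent radicals of the parabolics has finite index'', invoking bounded generation. But this is precisely what fails in $S$-rank $1$ --- the paper's own Lemma on $\Gamma(n)\leq \SL_2(\mathbb{Z})$ for $n\geq 6$ exhibits exactly such a failure --- and you never connect the $S$-rank $\geq 2$ hypothesis to these factors (you reserved it for the anisotropic ones). The paper's argument is the direct analogue of its Proposition on the upper bound for $|q(G)|$: for each $F$-isotropic factor $\mathbf{G}_i$, the subgroup $\mathcal{E}_\Gamma(i)$ contains an elementary subgroup $E(J_i)$ built from the unipotent radicals of opposite minimal parabolics intersected with a congruence subgroup, and the solutions to the Congruence Subgroup Problem due to Raghunathan and Venkataramana guarantee that $E(J_i)$ has finite index in the $S$-arithmetic group of $\mathbf{G}_i$ precisely under the hypothesis $S$-rank $\geq 2$. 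Bounding $|q(\Gamma)|$ by the product of these finite indices then finishes the proof. No normal subgroup theorem is needed; the whole weight is carried by the congruence subgroup results, applied to the factors with unipotents.
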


In the case that $F= \Q, \mathbf{G}(F) = \SL_1(\Q G)$ and $\Gamma = \SL_1(\Z G)$ we give in \Cref{th: upper-bound size q(G)} a precise and down-to-earth upper bound. Our arguments heavily rely on solutions of the Congruence Subgroup Problem. 

Next, recall that a finite dimensional simple algebra is called {\it exceptional of type II} if it is $\Ma_2(D)$ with $D$ either $\Q$, an imaginary quadratic extension of $\Q$ or a totally definite quaternion algebra with center $\Q$. 

In \Cref{when q(G) finite for grp rings} we show the theorem below, saying that the finiteness of $q(G)$ depends on the presence of exceptional components. For instance, if $3 \nmid |G|$ and $\Q G$ has a simple component isomorphic to $M_2(\Q)$, then by \cite[Remark 6.17]{BJJKT2} $G$ is an extension of $D_8$. In \Cref{measure subsection} we formulate some condition on the exponent of the preimage of $D_8$ in $G$, called $(\star)$, which in turn we prove to have an impact on the size of $q(G)$. 

\begin{maintheorem}\label{theorem size qG intro}
Let $G$ be a finite group. Then the following hold:
\begin{enumerate}
\item[(i)]  If $\Q G$ has no exceptional components of type II, then $q(G)$ is finite.
\item[(ii)] If $G$ has order at most $16$, then $q(G)$ is finite.
\item[(iii)] If $G$ has order bigger than $16$, maps onto $D_8$ and this surjection satisfies $(\star)$, then $q(G)$ is an infinite non-torsion group.
\end{enumerate}
\end{maintheorem}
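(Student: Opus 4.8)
The plan is to handle the three parts by increasingly hands-on methods: (i) is a corollary of the quantitative bound already in place, (ii) adds a finite verification, and (iii) is an explicit construction with bicyclic units.

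For (i), note first that $\langle\U(\Z G)_{un}\rangle$ embeds, via the Wedderburn decomposition \eqref{eq:QGWedderburn}, into $\prod_i M_{n_i}(D_i)$, that it lands in the group of elements of reduced norm $1$ in every component (a unipotent matrix has determinant, hence reduced norm, $1$), and that it projects trivially to every division-algebra component $D_i$ (i.e.\ $n_i=1$), since a division ring contains no nonzero nilpotent. So only the matrix components $M_{n_i}(D_i)$ with $n_i\ge 2$ are relevant, and $\langle\U(\Z G)_{un}\rangle$ sits inside the $S$-arithmetic group $\SL_1(\Z G)$ of $\SL_1(\Q G)$. The hypothesis that $\Q G$ has no exceptional component of type II says precisely that each such $M_{n_i}(D_i)$ is non-exceptional: components with $n_i\ge 3$ are never exceptional, the type I exceptional components all have $n_i=1$ and were discarded above, and the exceptional $M_2(D_i)$'s are by definition the type II ones. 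Hence $\SL_{n_i}(\mathcal O_i)$ has the congruence subgroup property for every order $\mathcal O_i$ in $D_i$ (Bass--Milnor--Serre for $n_i\ge 3$; Serre, Vaserstein, Liehl for $n_i=2$). Feeding this into \Cref{th: upper-bound size q(G)}, whose bound is expressed through the congruence level of $\Z G$ in a maximal order of $\Q G$ and the ranks of the matrix components, gives $|q(G)|<\infty$.

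For (ii), by (i) we may assume $\Q G$ has a type II exceptional component. A dimension count shows that for $|G|\le 16$ the only such component that can occur is $M_2(\Q)$ or $M_2$ of an imaginary quadratic field ($M_2$ of a totally definite quaternion algebra over $\Q$ requires a larger group), so one runs through the short explicit list of groups of order at most $16$ carrying such a component. For each, either $\Q G$ has a single matrix component, whence $G$ has ND and $q(G)=1$ by \eqref{ND via qG intro}; or $G$ admits $D_8$ (or $S_3$) as a quotient --- the cases $D_{12}$, $SD_{16}$, $C_4\rtimes C_4$, and so on --- and then one checks that the surjection does not satisfy $(\star)$, the exponent bound in $(\star)$ being unattainable when $|G|\le 16$, and verifies finiteness of $q(G)$ by running the argument of (i) on the good components while describing directly the unipotent units inside the $M_2(\Q)$-component (resp.\ the imaginary-quadratic $M_2$-component) of $\Z G$: there the relevant conductor is at most $2$, so the pure unipotents already generate a finite-index subgroup.

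For (iii), with $|G|>16$ and $\varphi\colon G\twoheadrightarrow D_8$ satisfying $(\star)$: composing $\varphi$ with the rational $2$-dimensional representation of $D_8$ shows that $M_2(\Q)$ is a component of $\Q G$; write $e_0$ for its central idempotent and $\pi_0\colon \Q G\to M_2(\Q)$ for the projection. The condition $(\star)$ --- an exponent condition on the preimage under $\varphi$ of a suitable subgroup of $D_8$ --- should be used to produce a unipotent unit $u\in\U(\Z G)_{un}$, of bicyclic type, with $\pi_0(u)=\bigl(\begin{smallmatrix}1&c\\0&1\end{smallmatrix}\bigr)$, $c\ne 0$, hence of infinite order, and with nontrivial image in at least one further, good matrix component $M_n(D)$ of $\Q G$; moreover $(\star)$ forces the conductor of the $M_2(\Q)$-piece of $\Z G$ (equivalently, of the ideal $\Z G\cap M_2(\Q)e_0$) to be $\ge 3$, so that the subgroup $\Lambda_0\le\SL_2(\Z)$ generated by the unipotent units of $\Z G$ supported only on $e_0$ is a proper, infinite-index (non-congruence) subgroup. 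One then shows that no nonzero power $u^k$ is a product of pure unipotents: projecting to $M_2(\Q)$ would force $\pi_0(u)^k\in\Lambda_0$, which, together with the coupling to the $M_n(D)$-factor that $(\star)$ keeps rigid, is impossible. Therefore the image of $u$ in $q(G)$ has infinite order, so $q(G)$ is infinite and non-torsion.

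I expect the last step of (iii) to be the main obstacle: making precise that $(\star)$ forces the $M_2(\Q)$-conductor up and that $\Lambda_0$ is genuinely too small to absorb the powers of $u$ once its image in the other matrix component is accounted for --- that is, ruling out the finite-index slack that makes $q(G)$ finite in the order-$\le 16$ examples such as $C_2\times D_8$. This needs a careful analysis of how $\Z G$ embeds in $\prod_i M_{n_i}(\mathcal O_i)$ and, in particular, of the two-sided ideal $\Z G\cap M_2(\Q)e_0$ and the unipotents it contains. A secondary, more routine, point is to make the case list in (ii) exhaustive.
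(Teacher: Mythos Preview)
Your treatment of (i) is essentially the paper's argument and is fine.

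For (ii) and (iii), however, you have inverted the roles of the two hypotheses ``$|G|>16$'' and ``$(\star)$'', and this inversion is fatal to both parts. In the paper's argument, the \emph{size} condition $|G|>16$ is what pushes the congruence level of the $M_2(\Q)$-component up: the image of $\langle\mathcal E_G(e)\rangle$ in $\SL_2(\Z)$ sits inside a group $V_{m_e}\le\Gamma(m_e)$ with $m_e=2|Q|$ where $Q=\ker(G\twoheadrightarrow D_8)$, so $|G|>16$ gives $m_e\ge 6$. The condition $(\star)$ goes the other way: it guarantees that a specific bicyclic unit $u$ has image in $\SL_2(\Z)$ conjugate to $\bigl(\begin{smallmatrix}1&-n\\0&1\end{smallmatrix}\bigr)$ with $n=2o(h)/o(hQ)\le 4$. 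The key dichotomy, which your proposal does not mention, is that $\PSL_2(\Z)$ modulo the normal closure of $\bigl(\begin{smallmatrix}1&n\\0&1\end{smallmatrix}\bigr)$ is the $(2,3,n)$ triangle group, finite for $n\le 5$ and infinite for $n\ge 6$. Thus $(\star)$ forces the normal closure of $\varphi_e(u)$ to have \emph{finite} index in $\SL_2(\Z)$, while $m_e\ge 6$ forces the unipotents supported on $e$ to generate an \emph{infinite}-index subgroup; a short group-theoretic squeeze then shows $q(G)$ cannot be finite. Your plan for (iii) --- using $(\star)$ to push the conductor up, and invoking a coupling with a second matrix component --- does neither of these things; the second component plays no role in the paper's proof, and your suggested obstruction ``$\pi_0(u)^k\notin\Lambda_0$'' cannot be established without the triangle-group input.

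This inversion also breaks your (ii). You assert that for the small groups the surjection onto $D_8$ fails $(\star)$; this is false, e.g.\ $D_8\times C_2$ splits over $D_8$ and hence satisfies $(\star)$ trivially. The actual reason $q(G)$ is finite for the five relevant groups $D_{12},\,D_{16},\,D_8\times C_2,\,SD_{16},\,G(16,3)$ is that $|Q|\le 2$, so $m_e\le 4\le 5$ and the triangle-group obstruction disappears; the paper then exhibits, for each such group and each matrix component (including the exceptional $M_2(\Q(\sqrt{-2}))$ for $SD_{16}$), an explicit finite-index subgroup of pure unipotents, citing concrete constructions from the literature. Your sketch does not supply these.
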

 Interestingly the group $C_4 \rtimes C_8$ in \Cref{Summary theorem ND} is the smallest group fitting in none of the cases covered by \Cref{theorem size qG intro}.

\subsection*{A representation theoretical and local look at ND}

In \Cref{sec:DK prop section} we will introduce two more properties which appeared in our investigations of the nilpotent decomposition. The first is a purely representation-theoretical property, called DK, namely that any two non-equivalent irreducible $\Q$-representations have different kernels. We show in \Cref{th:OMCImpliesDK} that this is the case for groups with at most one matrix component, but also other interesting classes of groups are show in \Cref{section DK examples} to have that property. We then study in \Cref{subsection on SN versus DK} what one could call a partial nilpotent decomposition, namely that the nilpotent decomposition does hold for those nilpotent elements of $\mathbb{Z}G$ which are the easiest to construct and which we call bicyclic nilpotent. When a decomposition does hold for all such elements, we call a group bicyclic resistant. The reason for the definition of the SN property, is essentially that a group which does not have SN is also not bicyclic resistant. We will show however that the class of not bicyclic resistant groups is bigger than the class of groups with SN, though it does not incorporate some classes relevant in the study of the ND property.
Finally we connect these two new notions by showing:

\begin{maintheorem}
Let $G$ be a finite group with SN. Then the following are equivalent:
\begin{enumerate}
\item $G$ is bicyclic resistant.
\item $G$ is supersolvable or $\Q G$ has one matrix component.
\item $G$ has DK.
\end{enumerate}
\end{maintheorem}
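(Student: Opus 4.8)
The plan is to prove the three implications in a cycle $(2)\Rightarrow(3)\Rightarrow(1)\Rightarrow(2)$, using the classification of SN groups from \Cref{Summary theorem for SN} to reduce everything to a short list of structural cases.

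\medskip

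\noindent\textbf{Step 1: $(2)\Rightarrow(3)$.} If $\Q G$ has one matrix component, then it certainly has at most one, so $G$ has DK by \Cref{th:OMCImpliesDK}. If instead $G$ is supersolvable, I would argue directly with the rational representations: for a supersolvable group every irreducible $\Q$-representation is monomial, induced from a linear character of a subgroup, and one controls the kernel combinatorially. The key point to extract (presumably already available from the DK-examples section \Cref{section DK examples}) is that two monomial $\Q$-representations of a supersolvable group with the same kernel must be equivalent; this can be reduced to the faithful quotient and then to the known description of faithful irreducible representations of supersolvable groups (e.g. via the theory of monomial characters and Roquette's theorem on the metacyclic structure of groups with a faithful irreducible representation when the Sylow subgroups are small). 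I expect this step to invoke a lemma stated earlier rather than be proved from scratch here.

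\medskip

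\noindent\textbf{Step 2: $(3)\Rightarrow(1)$.} Assume $G$ has DK. I want to show $G$ is bicyclic resistant, i.e.\ every bicyclic nilpotent element $n\in\Z G$ satisfies $ne\in\Z G$ for all primitive central idempotents $e$. The natural approach is to reduce the question, via the quotients $G\to G/N$, to faithful situations: a bicyclic nilpotent element lives (after projection) inside $\Z[G/N]$ for $N$ its relevant kernel, and DK guarantees that the primitive central idempotent $e$ corresponds to a unique rational irreducible representation, hence pulls back unambiguously from a faithful quotient. On that faithful quotient one then either lands in a group with one matrix component (where ND, and a fortiori bicyclic resistance, holds trivially because there is essentially nothing to project) or, using the SN hypothesis together with \Cref{Summary theorem for SN}, in one of the explicitly listed families — SSN groups, the semidirect products $P\rtimes H$ of type (ii), or the non-solvable type (iii) — for which bicyclic resistance can be checked case by case. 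Here I would lean on the fact, recalled in the excerpt, that failure of SN already destroys bicyclic resistance, so within the SN world the remaining obstruction is genuinely about matrix components, which DK has tied down.

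\medskip

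\noindent\textbf{Step 3: $(1)\Rightarrow(2)$.} Assume $G$ is bicyclic resistant and SN but that $\Q G$ has at least two matrix components; I must then force $G$ supersolvable. Using \Cref{Summary theorem for SN}, $G$ is of type (i), (ii) or (iii) (type (iv) is excluded by hypothesis). In type (iii) the non-solvable minimal normal subgroup $S$ produces, via induced bicyclic units coming from the simple constituents of $S$, a bicyclic nilpotent element whose projection to the corresponding matrix component is non-integral — contradicting bicyclic resistance. In type (ii), the faithful irreducible action of the Dedekind $p'$-group $H$ on the elementary abelian $P$ yields a large matrix component $M_{|P|}(\cdot)$ (or similar), and combined with a second matrix component elsewhere one again builds an explicit offending bicyclic nilpotent element. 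Type (i), SSN groups, is handled by invoking the Liu–Passman classification: SSN groups with two or more matrix components that are nonetheless bicyclic resistant turn out to be exactly the supersolvable ones on the list. So the surviving bicyclic-resistant SN groups with $\geq 2$ matrix components are supersolvable, giving (2).

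\medskip

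\noindent\textbf{Main obstacle.} The hard part will be the case analysis in Steps 2 and 3 for type (ii) and the SSN groups of type (i): one must actually produce, or rule out, bicyclic nilpotent elements projecting non-integrally onto one of several matrix components, and this requires understanding the primitive central idempotents and the embedding $\Z G\hookrightarrow \bigoplus M_{n_i}(\O_i)$ concretely enough to evaluate denominators. The DK hypothesis is exactly what makes this tractable — it pins down each idempotent to a single rational representation — but matching the combinatorics of bicyclic units against the Wedderburn components in the type (ii) family (faithful irreducible actions on elementary abelian groups) is where the real work lies, and where I would expect to need the most careful bookkeeping.
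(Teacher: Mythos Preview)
Your cycle $(2)\Rightarrow(3)\Rightarrow(1)\Rightarrow(2)$ matches the paper's, and your Step~3 is broadly right in spirit: the paper observes that nilpotent groups and SSN groups of unfaithful type are already supersolvable, and for the remaining SN groups (those handled in \Cref{ND for non-nilp SSN grps}) the bicyclic nilpotent elements constructed there witness failure of bicyclic resistance whenever there is more than one matrix component. So no new case analysis is needed; one just quotes that proof.

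There are, however, two genuine gaps.

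\textbf{Step~2, $(3)\Rightarrow(1)$.} You missed the key mechanism. The paper proves (\Cref{DK via epsilon}) that DK is equivalent to $\PCI(\Q G)\subseteq\{\epsilon(G,N)\mid N\unlhd G\}$, and separately (\Cref{SN via epsilon idemoptent}) that SN is equivalent to every bicyclic nilpotent having ND with respect to the set $\{\epsilon(G,N)\mid N\unlhd G\}$. Combining these, SN together with DK immediately gives ND with respect to every primitive central idempotent, i.e.\ bicyclic resistance --- no quotients, no case analysis, no Wedderburn bookkeeping (this is \Cref{prop SN for a DK group}). Your proposed route via faithful quotients and type-by-type verification is both unnecessary and unclear as stated: bicyclic resistance concerns all bicyclic nilpotents of $G$ simultaneously, and a given bicyclic nilpotent need not live in any single proper quotient.

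\textbf{Step~1, $(2)\Rightarrow(3)$, supersolvable case.} Your sketch asserts that for a supersolvable group two irreducible $\Q$-representations with the same kernel are equivalent. This is false: $D_{10}\times C_5$ is supersolvable (even metabelian) and has two inequivalent faithful irreducible $\Q$-representations (see \Cref{ex:DirectProductNoDK}). The argument must use the SN hypothesis, not just supersolvability. The paper does so by invoking the SN classification: a nilpotent SN group has SSN by \Cref{prop:NilpotentSNNotSSN}, and nilpotent SSN groups have DK by \Cref{lem:DKForNilpotentSSN}; a supersolvable non-nilpotent SN group is of the form $C_p\rtimes Q$ with $Q$ cyclic (from \Cref{prop:SolvSNGroups} the irreducible faithful action on $P$ forces $P$ cyclic when $G$ is supersolvable), and such groups have DK by \Cref{lem:DKForCmQ}. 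Roquette-style arguments about generic supersolvable groups will not get you there.
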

\noindent This result is proven in \Cref{When SN is DK theorem}.

\vspace*{.5cm}
\noindent \underline{\it Conventions and Notations.} $G$ will always denote a finite group. If $\Q G \cong \prod_{i} \Ma_{n_i}(D_i)$ is the Wedderburn-Artin decomposition of the semisimple algebra $\Q G$, then we call the factors $\Ma_{n_i}(D_i)$ \emph{simple components} of $\Q G$. Recall that $n$ is called the \emph{reduced degree} of the simple component $M_n(D)$. If a component has reduced degree $2$ or more we will speak of a \emph{matrix component}. The set $\PCI(G)$ denotes the primitive central idempotents of $\Q G$. 

Moreover we use standard group-theoretical notation: for a group $G$ we denote by $G'$ the derived subgroup of $G$, by $\ZZ(G)$ the center of $G$, by $\Phi(G)$ the Frattini subgroup of $G$, by $g^G$ the conjugacy class of an element $g\in G$ in $G$ and by $\Soc(G)$ the socle of $G$. Moreover for $g,h \in G$, we set $g^h = h^{-1}gh$ and $[g,h] = g^{-1}h^{-1}gh = g^{-1}g^h$. A cyclic group of order $n$ is denoted $C_n$, a dihedral group of order $2n$ by $D_{2n}$, an alternating group of degree $n$ by $A_n$ and $Q_{2^n}$ denotes the generalized quaternion group of order $2^n$, i.e. 
\[Q_{2^n} = \langle a, b \ | \ a^{2^{n-1}} = b^4 = 1, \ b^2 = a^{2^{n-2}}, \ a^b = a^{-1} \rangle. \]
When speaking about generalized quaternion groups, we assume them to be non-abelian, i.e. at least of order $8$. 

If $H$ is a subgroup of $G$ we denote two elements in the rational group algebra $\mathbb{Q}G$ as 
\[\widetilde{H} = \sum_{h \in H} h  \ \ \text{and} \ \ \widehat{H} = \frac{1}{|H|} \sum_{h \in H} h. \]

\vspace{0,2cm}
\noindent \textbf{Acknowledgment.} 
We thank B. Sury and Amir Behar for useful conversations concerning \Cref{unipotent in congruence of SL2Z}. We also thank Eric Jespers and Wei-Liang Sun for interesting conversations. We would also like to thank the referee for many suggestions which improved the readability of the paper.

\section{Description of groups with SN}\label{sec:SN}
Recall that a group $G$ has the {\it SN property} if for every normal subgroup $N$ of $G$ and every subgroup $Y \leq G$ either $N \subseteq Y$ or $N Y \unlhd G$. 
The main goal of this section is to prove \Cref{Summary theorem for SN}. We separate this in essentially three steps: the nilpotent groups with SN (which are handled in  \Cref{prop:NilpotentSNNotSSN}), the solvable non-nilpotent groups with SN (\Cref{prop:SolvSNGroups}) and the non-solvable groups with SN (\Cref{prop:NonSolvableSNGroups}).  The combination of these cases then gives exactly \Cref{Summary theorem for SN}. 

Recall that the group $G$ {\it has SSN} if every subgroup has SN. Such groups have been classified in \cite{LiuPassman16}. In fact we will give a precise classification of groups with SN in case $G$ is non-nilpotent.  Namely, in \Cref{Summary theorem for SN} the non-nilpotent groups with SN are exactly those from (iii) and solvable non-nilpotent groups with SN. As proven in \Cref{prop:SolvSNGroups}  the latter come in two families with one having SSN and the other being the groups from (ii). Though we have made no attempt to classify groups with one matrix component, some restrictions can be filtered out of our proofs.

\subsection{Background results on groups with SN}\addtocontents{toc}{\protect\setcounter{tocdepth}{2}}

If every subgroup of $G$ is normal, then $G$ obviously has SN. Recall that these groups are called {\it Dedekind groups} and have been classified by Baer and Dedekind:

\begin{theorem}[{\cite[Theorem 1.8.5.]{PolMilSehgalBook}}]\label{th:DedekindGroups}
$G$ is a Dedekind group if and only if it is abelian or $G \cong Q_8 \times C_2^n \times A$ for some $n \in \mathbb{N}_0$ and $A$ an abelian group of odd order.
\end{theorem}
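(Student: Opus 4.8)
The plan is to prove both directions of the equivalence; the substantial one is that a non-abelian Dedekind group must have the displayed shape, while the converse is a short direct check. For the converse, first note $Q_8$ is Dedekind by inspection of its (few) subgroups. Let $G = Q_8 \times C_2^n \times A$ with $A$ abelian of odd order, and let $z$ be the central involution of the $Q_8$-factor. For $H \le G$, $h \in H$ and $g \in G$ one has $h^{-1}h^g = [h,g]$, and since $C_2^n \times A$ is central this commutator lies in $Q_8' \times 1 \times 1 = \langle z \rangle$. Hence $H \unlhd G$ will follow once we show $z \in H$ whenever some $[h,g]$ with $h \in H$ equals $z$. But $[h,g] = z$ forces the $Q_8$-coordinate of $h$ to have order $4$, so (the other coordinates having order dividing $2$, resp. odd) $h$ has order $4m$ with $m$ odd, whence $h^{2m} = z \in \langle h \rangle \subseteq H$.

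For the hard direction, let $G$ be a non-abelian Dedekind group. Any subgroup of $G$, being normal, is again Dedekind, and so is any quotient. Each Sylow subgroup is normal, so $G$ is nilpotent and is the internal direct product of its Sylow subgroups; as $G$ is non-abelian one of them, say $P$, is a non-abelian Dedekind $p$-group. It therefore suffices to prove that every non-abelian Dedekind $p$-group is isomorphic to $Q_8 \times C_2^n$: this forces $p = 2$, hence the odd Sylow subgroups of $G$ are abelian, their product is an abelian group $A$ of odd order, and $G \cong Q_8 \times C_2^n \times A$.

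So let $P$ be a non-abelian Dedekind $p$-group and pick a non-central element $a$ of maximal order $p^m$. Conjugation gives an injection $P/C_P(\langle a \rangle) \hookrightarrow \Aut(\langle a\rangle) = (\Z/p^m\Z)^\times$ with non-trivial image consisting of $p$-elements; since $\Aut(C_p)$ has order prime to $p$ this already gives $m \ge 2$. To rule out odd $p$: an induction on $|P|$ (using that $P/\langle a^{p^{m-1}} \rangle$ is Dedekind, and being a $p$-group cannot be non-abelian of the form $Q_8 \times C_2^n$) shows $P' = \langle a^{p^{m-1}} \rangle \cong C_p$ is central, so $P$ has class $2$ and, $p$ being odd, $x \mapsto x^p$ is a homomorphism with central image. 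From $\langle y \rangle \unlhd P$ and $[\cdot,\cdot]$ landing in the prime-order group $P'$ one deduces that every non-central $y$ satisfies $P' \le \langle y \rangle$, hence that every element of order $p$ is central; but taking non-commuting $a,b$ one can combine suitable $p$-th-power roots of $a$ and $b$ into a non-central element of order $p$, a contradiction. Hence $p = 2$. For $p = 2$: an induction on $|P|$ together with a routine check of the Dedekind groups of order at most $16$ (the only non-abelian ones being $Q_8$ and $Q_8\times C_2$) gives $Q_8 \le P$. Writing $C = C_P(Q_8)$ and $z$ for the central involution of $Q_8$ (central in $P$, as a normal subgroup of order $2$), one shows $C$ is abelian — a second copy of $Q_8$ in $C$ would meet $Q_8$ in at most $\langle z \rangle$, and the resulting (direct or central) product contains $D_8$, contradicting the Dedekind property — and that $z$ is not a square in $C$ — if $z = w^2$ with $w \in C$ then, for anticommuting generators $i,j$ of $Q_8$, the element $u := iw$ is an involution with $uju^{-1}=j^{-1}$, so $\langle u,j\rangle \cong D_8$. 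Thus $\langle z \rangle$ is a direct factor of the finite abelian group $C$, say $C = \langle z\rangle \times K$; since $z \in Q_8$ this yields $P = Q_8C = Q_8K$ with $Q_8 \cap K \le Q_8 \cap C = \langle z\rangle$ and $z \notin K$, i.e. $P = Q_8 \times K$. Finally $K \cong P/Q_8$ is Dedekind, abelian (else $Q_8 \times Q_8 \le P$, not Dedekind) and of exponent at most $2$: an element $w \in K$ of order $4$ would make $\langle iw\rangle$ non-normal, since conjugating by $j$ sends $iw$ to $z\cdot iw \notin \langle iw\rangle$. Hence $K \cong C_2^n$ and $P \cong Q_8 \times C_2^n$.

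The main obstacle is entirely inside the $p$-group analysis: excluding odd $p$ (the construction of a non-central element of order $p$), and, for $p = 2$, closing off all the ``twisted'' alternatives at each step — that the non-abelian $2$-group could be $Q_{16}$ or a semidihedral or modular group of order $16$ rather than $Q_8$, that $C_P(Q_8)$ could be non-abelian or contain a square root of $z$, or that the complement could be $C_4$ instead of elementary abelian — each being disposed of by exhibiting an explicit non-normal cyclic or dihedral subgroup. These are precisely the places where the hypothesis that every subgroup is normal is genuinely used.
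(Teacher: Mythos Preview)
The paper does not give its own proof of this theorem; it is stated with a citation to \cite[Theorem 1.8.5]{PolMilSehgalBook} as a classical result of Dedekind and Baer, and used as a black box thereafter. So there is no approach in the paper to compare yours against.

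Your sketch follows the standard classical line and is essentially correct, but a few points deserve tightening. First, the claim that a direct product $Q_8 \times Q_8$ contains $D_8$ is false: every involution in $Q_8 \times Q_8$ is central, so no dihedral subgroup exists. The group is nevertheless not Dedekind --- for instance $\langle ii' \rangle$ is not normalised by $j$ --- so your conclusion survives, but the stated reason does not. (In the \emph{central} product $Q_8 * Q_8$ your $D_8$ argument is fine.) Second, the equality $P = Q_8\,C_P(Q_8)$ is used without justification; it follows because conjugation gives an embedding of $P/C_P(Q_8)$ into the subgroup of $\Aut(Q_8)$ fixing every subgroup of $Q_8$ setwise, which is exactly $\mathrm{Inn}(Q_8) \cong C_2 \times C_2$, and $Q_8$ already surjects onto this. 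Third, the inductive step ``$Q_8 \le P$'' for $|P| > 16$ implicitly assumes some proper subgroup of $P$ is non-abelian; if all maximal subgroups were abelian, $P$ would be minimal non-abelian, and one must then check (via the R\'edei classification or directly) that no minimal non-abelian $2$-group of order $\ge 16$ is Dedekind. Finally, the odd-$p$ paragraph is quite compressed; the phrase ``combine suitable $p$-th-power roots'' hides a genuine minimal-order argument that would benefit from being written out.
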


Many basic properties of group with SN and SSN have been studied by Liu and Passman and we will use several of their results. For the convenience of the reader we collect them here as well as some other results we will need. 

\begin{lemma}[{\cite[Lemma 2.1]{LiuPassman16}\label{lem:LiuPassma2.1}}]
Let $G$ be a group with SN and $N$ a non-trivial normal subgroup of $G$. If $N$ is not cyclic, then $G/N$ is a Dedekind group. Moreover, if $H$ is a subgroup of $G$ such that $H \cap N = 1$, then $NH \unlhd G$ and $H$ is a Dedekind group. 
\end{lemma}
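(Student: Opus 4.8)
The plan is to prove the whole statement by a single device: applying the SN dichotomy to $N$ paired with carefully chosen subgroups, and using the non-cyclicity of $N$ (resp.\ the hypothesis $H \cap N = 1$) precisely to kill the branch ``$N \subseteq Y$'' of the dichotomy and thereby force the branch ``$NY \unlhd G$''. I would split the argument into two independent parts.

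\textbf{Part 1: $G/N$ is Dedekind.} Every subgroup of $G/N$ has the form $Y/N$ with $N \leq Y \leq G$, and $Y/N \unlhd G/N$ exactly when $Y \unlhd G$, so it is enough to show that every such $Y$ is normal in $G$. First I would fix $y \in Y$ and apply SN to the normal subgroup $N$ and the cyclic subgroup $\langle y \rangle$: either $N \subseteq \langle y \rangle$ or $N\langle y \rangle \unlhd G$. The first option would exhibit $N$ as a subgroup of a cyclic group, hence cyclic, contradicting the hypothesis; so $N\langle y \rangle \unlhd G$ for \emph{every} $y \in Y$. Since $Y = \big\langle\, N\langle y \rangle : y \in Y \,\big\rangle$ is generated by normal subgroups of $G$, it is normal in $G$, which gives the claim.

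\textbf{Part 2: if $H \cap N = 1$ then $NH \unlhd G$ and $H$ is Dedekind.} Applying SN to $N$ and $H$: since $N \neq 1$ while $N \cap H = 1$, we cannot have $N \subseteq H$, so $NH \unlhd G$. For the Dedekind assertion let $K \leq H$. Again SN applied to $N$ and $K$ rules out $N \subseteq K$ (because $K \cap N \leq H \cap N = 1$), so $NK \unlhd G$; in particular $NK$ is normalized by $H \leq NH$, hence $(NK)^h = NK$ for every $h \in H$, i.e.\ $N K^h = NK$ using $N^h = N$. Then I would invoke the elementary modular identity $NK \cap H = K$, which holds whenever $H \cap N = 1$ (if $nk \in H$ with $n \in N$, $k \in K$, then $n = (nk)k^{-1} \in H \cap N = 1$). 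Combined with $K^h \leq H$ and $K^h \leq NK^h = NK$ this yields $K^h \leq NK \cap H = K$, so $K^h = K$ by comparing orders. Thus every subgroup of $H$ is normal in $H$.

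\textbf{Main obstacle.} Honestly there is no serious obstacle; the whole proof is a matter of choosing the right test subgroups. The two points one must get right are: (a) the ``cyclic trap'' --- testing $N$ against the principal subgroups $\langle y \rangle$, so that non-cyclicity of $N$ makes the ``$N \subseteq \langle y \rangle$'' branch of SN impossible; and (b) the small identity $NK \cap H = K$ under $H \cap N = 1$, which is exactly what converts normality of $NK$ in $G$ into normality of $K$ in $H$. (One could additionally feed the conclusions into the Baer--Dedekind classification \Cref{th:DedekindGroups} to read off the precise shape of $G/N$ and of $H$, but that refinement is not needed for the statement as given.)
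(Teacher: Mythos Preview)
Your proof is correct. Note, however, that the paper does not supply its own proof of this lemma: it is quoted verbatim from \cite[Lemma 2.1]{LiuPassman16} and used as a black box, so there is no in-paper argument to compare against. Your argument is the natural one and matches the spirit of how Liu--Passman prove it: the key trick in Part~1 is exactly the ``cyclic trap'' you identify (testing $N$ against principal subgroups $\langle y\rangle$ so that non-cyclicity kills the $N\subseteq\langle y\rangle$ branch), and in Part~2 the modular-law identity $NK\cap H=K$ under $H\cap N=1$ is precisely what is needed to pull normality of $NK$ down to normality of $K$ in $H$.
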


For the description of solvable groups with SN the following lemma will be key.

\begin{lemma}[{\cite[Lemma 2.4]{LiuPassman16}}]\label{lem:LP2.4}
Let $G$ be a group with SN, $P \in \text{Syl}_p(G)$ such that $P \unlhd G$ and $G = P \rtimes H$ for a $p'$-group $H$ which acts non-trivially on $P$. Then $P$ is elementary abelian and $H$ acts irreducibly on $P$. Moreover, if $H$ acts non-faithfully, then $G$ is an SSN group of unfaithful type.
\end{lemma}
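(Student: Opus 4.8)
The plan is to establish the three assertions in turn, each time feeding a well-chosen pair $(N,Y)$ into the SN property. The recurring tool is the observation that if $N\unlhd G$ with $N\le P$, then $NH\unlhd G$ forces $[P,H]\le N$: for $p\in P$ and $h\in H$ one has $h^{p}h^{-1}=[p,h^{-1}]=p^{-1}p^{h^{-1}}\in P$ (as $P\unlhd G$) and $NH\cap P=N$, so $h^{p}\in NH$ precisely when $[p,h^{-1}]\in N$, while the elements $[p,h^{-1}]$ generate $[P,H]$.

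First I would show $P$ is elementary abelian. If $\Phi(P)\neq 1$, then $\Phi(P)$ is characteristic in $P$, hence normal in $G$, and $\Phi(P)\cap H=1$; SN applied to $N=\Phi(P)$, $Y=H$ gives $\Phi(P)H\unlhd G$, so $[P,H]\le\Phi(P)$. The coprime factorisation $P=[P,H]\,C_{P}(H)=\Phi(P)\,C_{P}(H)$ together with the fact that $\Phi(P)$ consists of non-generators of $P$ then forces $C_{P}(H)=P$, i.e.\ $H$ acts trivially on $P$ --- a contradiction. For irreducibility, view $P$ as an $\F_p H$-module: a proper non-zero submodule $W$ and a Maschke complement $W'$ would both be normal in $G$ (invariant under $H$, and under $P$ since $P$ is abelian) and meet $H$ trivially, so SN yields $WH,W'H\unlhd G$ and hence $[P,H]\le W\cap W'=1$, again impossible.

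The work lies in the non-faithful case. Write $K=C_{H}(P)\neq 1$, which is exactly the kernel of $H\to\Aut(P)$, and proceed as follows. (1) \emph{$H$ is Dedekind}: for any $Y\le H$ one has $P\not\subseteq Y$, so SN forces $PY\unlhd G$, and $PY\cap H=Y$ gives $Y\unlhd H$; consequently every subgroup of $K$ is normal in $G$. (2) \emph{$P\cong C_{p}$}: if $\dim P\ge 2$, irreducibility gives, for any $0\neq v\in P$, an $h\in H$ with $v^{h}\notin\langle v\rangle$; taking $1\neq k\in K$ we get $\langle k,v\rangle=\langle k\rangle\times\langle v\rangle$ with $k\notin\langle v\rangle$, so SN applied to $N=\langle k\rangle$, $Y=\langle v\rangle$ forces $\langle k,v\rangle\unlhd G$, whereas $\langle k,v\rangle^{h}$ contains $v^{h}\in P\setminus\langle v\rangle=P\setminus(\langle k,v\rangle\cap P)$ --- contradiction; hence $P\cong C_{p}$ and $H/K$, which embeds into $\Aut(C_{p})\cong C_{p-1}$, is cyclic. (3) \emph{The crux}: for $k\in K$ and any $h\in H$ acting non-trivially, $\langle k\rangle\subseteq\langle h\rangle$; otherwise SN applied to $N=\langle k\rangle$, $Y=\langle h\rangle$ would force $\langle k,h\rangle\le H$ to be normal in $G$, contradicted by $h^{v'}=[v',h^{-1}]\,h\notin H$ for a suitable $v'\in P$. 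Thus $K\subseteq\langle h\rangle$ for every non-trivially acting $h$, so $K$ is cyclic; and fixing a prime $q\mid|K|$, every non-trivially acting element of $H$ has order divisible by $q$, which (using that $H$ is nilpotent) forces the Sylow $r$-subgroup of $H$ into $K$ for each $r\neq q$ --- applying this to every prime dividing $|K|$ shows $K$, and then $H$, is a $q$-group. (4) \emph{$H$ is cyclic}: $H/K$ is a cyclic quotient of the Dedekind $q$-group $H$, hence of $H/H'$; if $H$ is non-abelian then $q=2$ and $H\cong Q_{8}\times C_{2}^{n}$, so $H/H'$ is elementary abelian, $[H:K]\le 2$ (in fact $=2$, as the action is non-trivial), and then $|K|\le|\langle h\rangle|\le 4$ forces $n=0$, $H=Q_{8}$, where (3) fails for $k$ a generator of $K$ and $h\notin K$; if $H$ is abelian, an element of maximal order acts non-trivially by (3), so it spans a direct factor, which (3) again forces to be all of $H$. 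Therefore $H$ is cyclic of $q$-power order and $G=C_{p}\rtimes H$ is an SSN group of unfaithful type.

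The main obstacle is the non-faithful case, and in particular the SN inputs in steps (2) and (3): the subtle point is that the seemingly harmless candidates $\langle k\rangle\times\langle v\rangle$ and $\langle k,h\rangle$ are in fact \emph{not} normal in $G$, because conjugating by elements of $P$ twists them off the fixed line, respectively out of $H$. Spotting these witnesses is the whole game; after that, the reduction to a $q$-group and the short Dedekind casework are routine.
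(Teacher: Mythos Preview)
The paper does not give its own proof of this lemma: it is quoted verbatim from \cite[Lemma 2.4]{LiuPassman16} and used as a black box. So there is nothing in the present paper to compare your argument against.

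That said, your proof is correct and self-contained. The opening observation that $NH\unlhd G$ with $N\le P$ forces $[P,H]\le N$ is the right engine, and your applications of it to $\Phi(P)$ and to a Maschke complement cleanly give that $P$ is elementary abelian with $H$ acting irreducibly. In the non-faithful case your steps (1)--(4) are all sound; the only places where a reader might pause are the implicit facts that a Dedekind group is nilpotent (so the Sylow decomposition in step (3) is legitimate) and, in step (4), that an element of maximal order in the abelian $q$-group $H$ must act non-trivially --- this does follow from (3), since if it lay in $K$ it would be contained in $\langle h'\rangle$ for any non-trivially acting $h'$, forcing $\langle h\rangle=\langle h'\rangle$ and hence $h'\in K$. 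You might spell these out. Otherwise the argument is complete and would serve as a proof were one needed in the paper.
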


The following is \cite[Lemma 2.5. (1)]{LiuPassman16} where it was stated for groups with SSN. However its proof only uses properties of groups with SN, so that we restate it in this form. The moreover part has been added and follows directly by using \Cref{lem:LiuPassma2.1}.

\begin{lemma}\label{lem:LP2.5}
Let $G$ be a group with SN with non-trivial normal $p$-subgroup $P_0$, say contained in the Sylow $p$-subgroup $P$ of $G$. Then $G$ contains a nilpotent $p$-complement $H$, we have $P_0 H \unlhd G$ and $G = PH$. In particular $G$ is solvable. Moreover, if $P_0$ is not cyclic then $H$ is Dedekind and $P \unlhd G$.
\end{lemma}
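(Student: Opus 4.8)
The plan is to bootstrap from \Cref{lem:LiuPassma2.1}, applied to the normal subgroup $P_0$ together with the Sylow subgroups for the primes different from $p$, and then read off the structure after passing to the quotient $\bar G := G/P_0$.

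\textbf{Step 1: producing the nilpotent $p$-complement and the normal subgroup $P_0H$.} For each prime $q \neq p$ dividing $|G|$ choose a Sylow $q$-subgroup $Q_q \leq G$. Since $P_0$ is a non-trivial $p$-group, $P_0 \cap Q_q = 1$, so \Cref{lem:LiuPassma2.1} yields $P_0 Q_q \unlhd G$. By the correspondence theorem $\bar Q_q := P_0 Q_q / P_0$ is normal in $\bar G$, and because $P_0$ is a $p$-group, $|\bar Q_q| = |Q_q|$ is the full $q$-part of $|\bar G|$; hence $\bar G$ has a normal Sylow $q$-subgroup for every $q \neq p$. Their product $\bar N := \prod_{q \neq p} \bar Q_q$ is then an internal direct product of the $\bar Q_q$, so it is a normal nilpotent $p$-complement of $\bar G$. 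Let $N \unlhd G$ be its full preimage; then $P_0 \unlhd N$ is a $p$-group, $N/P_0 = \bar N$ is a nilpotent $p'$-group, and Schur--Zassenhaus gives a complement $H \cong \bar N$ with $N = P_0 \rtimes H$. Counting orders, $|H| = |G|/|P|$, so $H$ is a nilpotent $p$-complement of $G$, $P_0 H = N \unlhd G$, and $|PH| = |P|\,|H| = |G|$ forces $G = PH$. Finally, $G$ is an extension of the ($p$-)group $P_0$ by $\bar G$, and $\bar G$ has the nilpotent normal subgroup $\bar N$ with $p$-group quotient $\bar G/\bar N$, so $G$ is solvable.

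\textbf{Step 2: the ``moreover'' part.} Assume now that $P_0$ is not cyclic. Then the first clause of \Cref{lem:LiuPassma2.1} says $\bar G = G/P_0$ is a Dedekind group. Since $H$ is a $p'$-group we have $H \cap P_0 = 1$, so $H \cong HP_0/P_0$ is a subgroup of $\bar G$; subgroups of Dedekind groups are Dedekind, hence $H$ is Dedekind. Moreover a Dedekind group is nilpotent (\Cref{th:DedekindGroups}), so all its Sylow subgroups are normal; in particular the Sylow $p$-subgroup $P/P_0$ of $\bar G$ is normal in $\bar G$, and therefore its preimage $P$ is normal in $G$.

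\textbf{On the difficulty.} There is no genuine obstacle: \Cref{lem:LiuPassma2.1} does all the structural work, and what remains is the bookkeeping of assembling the normal Sylow $q$-subgroups of $G/P_0$ into a nilpotent complement and lifting it via Schur--Zassenhaus. The only point requiring care is that \Cref{lem:LiuPassma2.1} is applied inside $G$, whereas the clean conclusions (normal Sylow subgroups for all $q\neq p$, Dedekind quotient) naturally live in $G/P_0$, so the correspondence theorem must be invoked at the right moments; once that is in place, the ``moreover'' part is immediate.
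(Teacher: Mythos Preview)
Your proof is correct and follows essentially the same approach as the paper: the paper defers the main assertion to the proof of \cite[Lemma 2.5(1)]{LiuPassman16}, which proceeds exactly as your Step~1 (apply \Cref{lem:LiuPassma2.1} with each Sylow $q$-subgroup for $q\neq p$ to get normal Sylow $q$-subgroups in $G/P_0$, assemble the nilpotent $p$-complement, lift via Schur--Zassenhaus), and the paper's added ``moreover'' argument is identical to your Step~2.
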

\begin{proof}
Suppose $P_0$ is not cyclic. Then \Cref{lem:LiuPassma2.1} implies that $G/P_0$ is Dedekind. Therefore  $P/P_0 \unlhd G/P_0$, which implies that $P \unlhd G$. The proof of the rest of the statement is completely as the proof of \cite[Lemma 2.5. (1)]{LiuPassman16}.
\end{proof}

We will also need a particular way to construct primitive central idempotents of $\mathbb{Q}G$. For this we will use the theory of strong Shoda pairs. For now we give definitions that are sufficient for this section and refer the reader to \Cref{prelim for ND section} and \cite[Chapter 3]{GRG1} for more details. Suppose $G$ is metabelian. Then all the irreducible $\Q$-representations of $G$ are monomial, i.e. they all arise as the induced representations $\lambda^G$ of a linear representation $\lambda$ of some subgroup $H$ of $G$. In that case, one considers $K = \ker(\lambda)$ and denotes by $e(G,H,K)$ the associated primitive central idempotent of $\mathbb{Q}G$. 

\begin{lemma}\label{lem:IdempotForMetabelian}\cite[Theorem 3.5.12 and Exercise 3.4.4]{GRG1}
Assume $G$ is a metabelian group and $A$ a maximal abelian subgroup of $G$ containing $G'$. Then the primitive central idempotents of $\mathbb{Q}G$ are the elements $e(G,H,K)$ where $H$ and $K$ are subgroups of $G$ such that $H$ is a maximal element of the set $\{B \leq G \ | \ A\leq B \ \ \text{and} \ \ B' \leq K \leq B \}$ and $H/K$ is cyclic. Moreover $e(G,H,K_1) = e(G,H,K_2)$ if and only if $K_1$ and $K_2$ are conjugate in $G$. 
\end{lemma}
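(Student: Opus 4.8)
The plan is to deduce the statement from the general theory of strong Shoda pairs, specializing it to the metabelian case. Recall that to a pair of subgroups $K \trianglelefteq H$ with $H/K$ cyclic one associates an idempotent $\varepsilon(H,K) \in \mathbb{Q}H$, equal to $\widehat{K}$ when $H=K$ and to $\prod_{L/K}(\widehat{K} - \widehat{L})$ otherwise, where $L$ runs over the subgroups of $H$ containing $K$ with $L/K$ of prime order; the element $e(G,H,K)$ is then the sum of the distinct $G$-conjugates of $\varepsilon(H,K)$. The pair $(H,K)$ is a \emph{strong Shoda pair} of $G$ when $H \trianglelefteq N_G(K)$, $H/K$ is a maximal abelian subgroup of $N_G(K)/K$, and the various $G$-conjugates of $\varepsilon(H,K)$ are mutually orthogonal. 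The foundational fact I would quote is that for every strong Shoda pair $e(G,H,K)$ is a primitive central idempotent of $\mathbb{Q}G$, and that conversely two strong Shoda pairs yield the same idempotent precisely when they are $G$-conjugate. Thus the whole task reduces to identifying, inside a metabelian group, which pairs $(H,K)$ are the relevant strong Shoda pairs and to rewriting the strong Shoda conditions in the purely lattice-theoretic form stated in the lemma.

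The first substantial step is completeness: every primitive central idempotent of $\mathbb{Q}G$ must arise as some $e(G,H,K)$ with $A \leq H$. Here I would invoke the fact that a metabelian group is strongly monomial, so that each irreducible $\mathbb{Q}$-character is afforded by a linear character $\lambda$ of a subgroup $H$ induced up to $G$, with $(H, K)$ a strong Shoda pair for $K = \ker \lambda$. The point of fixing a maximal abelian subgroup $A$ with $G' \leq A$ is that $G' \leq \ker\lambda$ forces the inducing subgroup to be abelian modulo $K$, so one may always enlarge $H$ to contain $A$ without changing the induced character: since $G'$ already lies in $K$, the quotient $AH/K$ remains abelian and $\lambda$ still induces irreducibly. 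This is where the hypothesis $G' \leq A$ does real work, and it is what lets us restrict attention to pairs with $A \leq H$.

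Next I would match the strong Shoda conditions with the combinatorial description. For such a pair, $K = \ker\lambda$ satisfies $H' \leq K \leq H$ (so $H/K$ is abelian, indeed cyclic since $\lambda$ is linear and faithful on $H/K$) and $A \leq H$; maximality of $H$ inside $\{B : A \leq B,\ B' \leq K \leq B\}$ is then exactly Shoda's irreducibility criterion, which for strongly monomial $G$ says that $\lambda^G$ is irreducible iff the inducing subgroup cannot be enlarged while keeping $K$ fixed and the quotient abelian. Conversely, for any $H$ that is maximal in that set with $H/K$ cyclic, one verifies that $(H,K)$ satisfies the defining properties of a strong Shoda pair, namely that $H/K$ is maximal abelian in $N_G(K)/K$ and that the $G$-conjugates of $\varepsilon(H,K)$ are orthogonal; these checks, carried out using the metabelian structure, are the routine if technical heart of the argument.

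Finally, the equivalence $e(G,H,K_1) = e(G,H,K_2) \Leftrightarrow K_1 \sim_G K_2$ follows from the general conjugacy criterion for strong Shoda pairs specialized to a fixed $H$: since $e(G,H,K)$ is by construction the $G$-orbit sum of $\varepsilon(H,K)$, and $\varepsilon(H,K)$ determines $K$ among subgroups of $H$ with $H/K$ cyclic, equality of the two orbit sums is equivalent to the $K_i$ lying in a single $G$-orbit. The main obstacle I anticipate is the completeness step together with the verification that the lattice-maximal pairs are genuinely strong Shoda pairs: showing that no primitive central idempotent is missed, and that the orthogonality condition holds, is precisely where the metabelian hypothesis must be exploited carefully rather than formally. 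Since all of this is developed in \cite[Chapter 3]{GRG1}, in practice I would cite that reference; the sketch above is how I would reconstruct the argument.
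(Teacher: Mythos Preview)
The paper does not give a proof of this lemma; it simply cites it from \cite[Theorem 3.5.12 and Exercise 3.4.4]{GRG1}. Your proposal follows the same route as that reference (strong monomiality of metabelian groups, reduction to strong Shoda pairs, then the conjugacy criterion), so at the level of strategy there is nothing to compare.

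That said, your sketch contains a genuine error in the completeness step. You write that ``$G' \leq \ker\lambda$'' and that ``since $G'$ already lies in $K$, the quotient $AH/K$ remains abelian''. This is false in general: if $G' \leq K$ then the representation factors through the abelian quotient $G/G'$, so $H = G$ and the component is commutative. For the interesting (non-commutative) components one has $G' \not\leq K$, and your argument for why $H$ may be enlarged to contain $A$ collapses. The correct reason one can take $A \leq H$ uses instead that $A \unlhd G$ (because $G' \leq A$) together with a Clifford-type reduction: restricting an irreducible character to $A$ gives a sum of $G$-conjugate linear characters of $A$, and the inertia group of one of them furnishes the desired $H$ with $A \leq H$; the kernel $K$ is then chosen inside $H$, not assumed to contain $G'$. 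Without this, neither the claim $A \leq H$ nor the claim that $(H,K)$ is a strong Shoda pair is justified. The remaining parts of your outline (the role of maximality as Shoda's irreducibility criterion, and the conjugacy statement for the $K_i$) are in line with the argument in \cite{GRG1}.
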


As the construction of central idempotents from normal subgroups or Shoda pairs is not always possible or practical, we will sometimes need to work with the central idempotents coming from characters instead. We recall their construction:

\begin{theorem}\label{th:IdempotentsFromCharacters}\cite[Theorem 2.1.6]{LuxPahlings}
Let $F$ be a field of characteristic $0$ and $\chi$ the character of a simple $FG$-module $L$ with $D = \text{End}_{FG}(L)$. Then the primitive central idempotent of the Wedderburn component of $FG$ corresponding to $\chi$ is
\[\frac{\chi(1)}{[D:F]|G|}\sum_{g \in G} \chi(g^{-1})g. \]
\end{theorem}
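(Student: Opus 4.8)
The plan is to determine the coefficients of the primitive central idempotent directly, extracting them with the regular character of $FG$. Since $\Char F = 0$ the algebra $FG$ is semisimple, and the Wedderburn component corresponding to $\chi$ is a matrix ring $M_n(D)$ over the division algebra $D = \End_{FG}(L)$ (up to passing to the opposite algebra, which affects no dimension count). Write $e = e_\chi$ for the identity element of this component, viewed inside $FG$; by definition $e$ is the primitive central idempotent attached to $\chi$, it acts as the identity on $L$ and as zero on every simple module lying in another component. I would write $e = \sum_{g \in G} a_g\, g$ with $a_g \in F$ and aim to show $a_h = \frac{\chi(1)}{[D:F]|G|}\,\chi(h^{-1})$ for each $h \in G$.

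The first step is a coefficient-extraction formula. For $x = \sum_k c_k k \in FG$ let $T(x) = c_1$ denote the coefficient of the identity. Evaluating the regular character $\chi_{\mathrm{reg}}$ (which satisfies $\chi_{\mathrm{reg}}(1)=|G|$ and $\chi_{\mathrm{reg}}(k)=0$ for $k \neq 1$) gives $\chi_{\mathrm{reg}}(x) = |G|\,c_1$, so $T(x) = \frac{1}{|G|}\chi_{\mathrm{reg}}(x)$. Since the coefficient $a_h$ of $h$ in $e$ equals the coefficient of the identity in $e\,h^{-1}$, this yields $a_h = \frac{1}{|G|}\chi_{\mathrm{reg}}(e\,h^{-1})$.

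The second step is to compute this trace. Decomposing the regular module over the simple components gives $\chi_{\mathrm{reg}} = \sum_\psi n_\psi\,\psi$, where $\psi$ runs over the irreducible $F$-characters and $n_\psi$ is the multiplicity of the corresponding simple module in $FG$. Because $e$ annihilates every module outside the $\chi$-component and restricts to the identity on $L$, only the $\psi = \chi$ term survives, and $\chi_{\mathrm{reg}}(e\,h^{-1}) = n_\chi\,\chi(e\,h^{-1}) = n_\chi\,\chi(h^{-1})$. It remains to identify the multiplicity: from the left-module decomposition $M_n(D) \cong L^{\oplus n}$ into its column spaces one reads off $n_\chi = n$, while comparing $F$-dimensions in $L \cong D^n$ gives $\chi(1) = \dim_F L = n[D:F]$, whence $n_\chi = \chi(1)/[D:F]$. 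Substituting back, $a_h = \frac{\chi(1)}{[D:F]|G|}\,\chi(h^{-1})$, which is exactly the asserted formula, so $e = \frac{\chi(1)}{[D:F]|G|}\sum_{g\in G}\chi(g^{-1})g$.

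The step I expect to require the most care is the identification $n_\chi = \chi(1)/[D:F]$ in the non-split case $D \neq F$: one must keep the division algebra $D$ straight through the module-theoretic bookkeeping and use $\dim_F L = n[D:F]$ rather than the naive split formula $\chi(1)=n$. An equivalent and perhaps more transparent route would be to set $z := \frac{\chi(1)}{[D:F]|G|}\sum_{g}\chi(g^{-1})g$, check directly that $z$ is central (immediate, as $\chi$ is a class function), and verify that it acts as the identity on $L$ and as zero on the other simple modules; this relies instead on the orthogonality relation $\langle \chi,\psi\rangle = \dim_F \Hom_{FG}(L_\chi,L_\psi)$, and there the delicate point is upgrading the trace computation on each component to the actual scalar action of the induced central element in $Z(D_\psi)$.
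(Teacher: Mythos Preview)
The paper does not give its own proof of this theorem; it is quoted as a known result from Lux--Pahlings and used as a black box (notably in the proof of the subsequent lemma on coefficients of $ne$). Your argument is correct and is essentially the standard one: extract coefficients via the regular character, use that $e$ annihilates all isotypic components other than that of $\chi$, and identify the multiplicity $n_\chi$ of $L$ in $FG$ with $\chi(1)/[D:F]$ through the column decomposition $M_n(D)\cong L^{\oplus n}$ together with $\dim_F L = n[D:F]$. Your caution about the non-split case is appropriate, and you handled it correctly.
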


In practice we will use the following lemma.

\begin{lemma}\label{lem:CoefficientOfProjectionByCharacter}
Let $M_k(D)$ be a simple component of the group algebra $FG$, for $F$ a field of characteristic $0$, with character $\chi$ and corresponding primitive central idempotent $e$. Moreover, let $n = \sum_{g \in G}\alpha(g)g$ be a generic element in $FG$. Then the coefficient of $ne$ at $g$ can be expressed in the two forms
\[\frac{k}{|G|} \sum_{h \in G} \alpha(gh^{-1}) \chi(h^{-1}) = \frac{k}{|G|} \sum_{h \in G} \alpha(h) \chi(g^{-1}h). \]
\end{lemma}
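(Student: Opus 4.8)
The plan is to compute $ne$ directly from the explicit formula for $e$ furnished by \Cref{th:IdempotentsFromCharacters} and then to reorganize the resulting double sum. First I would record the shape of $e$. By \Cref{th:IdempotentsFromCharacters} the primitive central idempotent attached to $\chi$ is $\frac{\chi(1)}{[D:F]|G|}\sum_{y \in G}\chi(y^{-1})y$, so the only preliminary point is to identify the scalar $\frac{\chi(1)}{[D:F]}$ with the reduced degree $k$. This holds because $L$ is (up to isomorphism) the unique simple module of the component $M_k(D)$, hence $L \cong D^k$ as a module and $\dim_F L = k[D:F]$; since $\chi(1)$ is the trace of the identity acting on $L$ viewed as an $F$-space, one gets $\chi(1) = \dim_F L = k[D:F]$, so that $\frac{\chi(1)}{[D:F]} = k$. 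Consequently $e = \frac{k}{|G|}\sum_{y\in G}\chi(y^{-1})\,y$.

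With this in hand I would simply multiply out. Writing $n = \sum_{x\in G}\alpha(x)x$ one obtains
\[ne = \frac{k}{|G|}\sum_{x,y \in G}\alpha(x)\chi(y^{-1})\,xy,\]
and the coefficient of $ne$ at a fixed $g \in G$ is gotten by restricting to the pairs $(x,y)$ with $xy = g$. The two claimed expressions then arise from the two natural ways of parametrizing these pairs. Taking $y = h$ as the free variable forces $x = gh^{-1}$, which yields the first form $\frac{k}{|G|}\sum_{h\in G}\alpha(gh^{-1})\chi(h^{-1})$; taking $x = h$ as the free variable forces $y = h^{-1}g$, so that $\chi(y^{-1}) = \chi(g^{-1}h)$ and one obtains the second form $\frac{k}{|G|}\sum_{h\in G}\alpha(h)\chi(g^{-1}h)$. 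Both sums run over all of $G$ because $g$ is fixed and the maps $h \mapsto gh^{-1}$ and $h\mapsto h^{-1}g$ are bijections of $G$.

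The only genuine input beyond bookkeeping is the identification $\chi(1) = k[D:F]$, i.e.\ matching the scalar appearing in \Cref{th:IdempotentsFromCharacters} with the reduced degree $k$ of the component $M_k(D)$; everything else is a change of summation variable. I do not expect a real obstacle here, but the point to get right is that the two parametrizations are reindexings of one and the same constrained sum rather than two independent computations — this is precisely what makes the equality of the two displayed forms automatic rather than something requiring a separate verification.
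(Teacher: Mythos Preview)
Your proposal is correct and follows essentially the same approach as the paper: both identify $\chi(1)=k[D:F]$ to rewrite $e$ with the scalar $k/|G|$, then multiply out $ne$ and obtain the two displayed forms by reindexing the double sum according to which factor in $xy=g$ is taken as the free variable. Your explanation of the reindexing is in fact slightly more detailed than the paper's, but the argument is the same.
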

\begin{proof}
Note that $\text{dim}_F(eFG) = k^2[D:F]$ and $\chi(1) = k[D:F]$. So the primitive central idempotent $e$ corresponding to this component by \Cref{th:IdempotentsFromCharacters} has the form
\[e = \frac{\chi(1)}{[D:F]|G|}\sum_{g \in G} \chi(g^{-1})g = \frac{k}{|G|}\sum_{g \in G} \chi(g^{-1})g. \]
Hence for the product we have
\begin{align*}
ne &= \frac{k}{|G|}\sum_{g \in G} \sum_{h \in G} \alpha(g) \chi(h^{-1})gh \\
&= \frac{k}{|G|}\sum_{g \in G} \left(\sum_{h \in G} \alpha(gh^{-1}) \chi(h^{-1})\right)g = \frac{k}{|G|}\sum_{g \in G} \left(\sum_{h \in G} \alpha(h) \chi(g^{-1}h)\right)g.
\end{align*}
So the coefficient of $ne$ at $g$ is
\[\frac{k}{|G|} \sum_{h \in G} \alpha(gh^{-1}) \chi(h^{-1}) = \frac{k}{|G|} \sum_{h \in G} \alpha(h) \chi(g^{-1}h). \]
\end{proof}

\subsection{Nilpotent groups}

The goal of this subsection is to describe nilpotent groups with SN:

\begin{theorem}\label{prop:NilpotentSNNotSSN}
Let $G$ be a nilpotent group with SN. Then it has property SSN or $\Q G$ has at most one matrix component.
\end{theorem}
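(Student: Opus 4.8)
The plan is to reduce, via the product structure of nilpotent groups, to the situation of a $p$-group, and then inside a $p$-group to do a delicate induction on the order using the SN property applied to carefully chosen pairs $(N,Y)$. First I would recall that a nilpotent group $G$ is the direct product $G = P_1 \times \cdots \times P_r$ of its Sylow subgroups, that $\Q G$ is a tensor product of the $\Q P_i$ over $\Q$, and that having at most one matrix component and having SSN are both compatible with such direct products in a controllable way: if two distinct $P_i$ are non-abelian then $\Q G$ already has more than one matrix component, so we may assume at most one factor, say $P_1$, is non-abelian, and it suffices to treat $G = P_1$ (the abelian tails contribute nothing). Also, SN is inherited by direct factors in the sense we need, and SSN is preserved under direct product with abelian groups — these are routine verifications using \Cref{lem:LiuPassma2.1} and \Cref{lem:LP2.5}. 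So the real content is: a $p$-group $G$ with SN either has SSN or $\Q G$ has at most one matrix component.

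For the $p$-group case I would argue by induction on $|G|$. The base cases (abelian, or order $\le p^3$) are immediate since then $\Q G$ has at most one matrix component anyway. For the inductive step, take $G$ non-abelian with SN. The key observation is that SN imposes strong constraints on the lattice of normal subgroups: for any non-cyclic normal $N$, \Cref{lem:LiuPassma2.1} forces $G/N$ to be Dedekind, hence (being a $p$-group, $p$ odd would make it abelian; $p=2$ allows the $Q_8 \times C_2^n$ type). The plan is to distinguish according to the structure of $G' $ and $\ZZ(G)$. If $G'$ is non-cyclic, then $G/G'$ abelian forces — wait, that is automatic; rather, one uses that every subgroup containing $G'$ is normal, combined with SN applied to $(G', Y)$ for $Y$ not containing $G'$, to force $G'Y \unlhd G$, which heavily restricts subgroups. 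The decisive dichotomy should be: either $G$ has a non-normal subgroup $Y$ witnessing the SSN failure inside some section — in which case one pushes down to a proper subgroup or quotient that still has SN (using that SN passes to relevant quotients by \Cref{lem:LiuPassma2.1}) and invokes induction — or else enough subgroups are normal that $G$ is "close to Dedekind", and then one shows directly that $\Q G$ has at most one matrix component by controlling the monomial representations via \Cref{lem:IdempotForMetabelian} (note a $p$-group with SN is metabelian: if $G'$ is non-cyclic, $G/G'$ wait $G'' $ — one shows $G'$ is abelian using the strong normality constraints, e.g. any non-cyclic subgroup of $G'$ would be normal in $G$ with Dedekind quotient).

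The main obstacle, as the text itself flags ("the classification of nilpotent groups with SN turns out to be the hardest"), will be the precise bookkeeping in the $2$-group case: the Dedekind quotients that appear are of the form $Q_8 \times C_2^n$ rather than abelian, so the clean "$G/N$ abelian" leverage is weakened, and groups like $C_4 \rtimes C_4$, modular $p$-groups, $Q_8 \times C_2$, etc. sit near the boundary and must be sorted by hand — some of them have SSN, some have exactly one matrix component, and one must check no non-SSN $2$-group with two matrix components slips through. Concretely, the hard lemma underneath will be: if $G$ is a $p$-group with SN that is \emph{not} SSN, pin down $|G'|$, $\exp(G')$, and the isomorphism type of $G/\ZZ(G)$ tightly enough (I expect $G' $ cyclic of order $p$, $G/\ZZ(G)$ elementary abelian of rank $2$, or a short list of near-extraspecial types) that $\Q G$ has a single faithful matrix component and all others are division algebras — i.e. compute, using \Cref{lem:IdempotForMetabelian} or character-degree counting, that only one Wedderburn component has reduced degree $>1$. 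I would organize the writeup as: (1) reduce to $p$-groups; (2) metabelian-ness and the normal-subgroup-lattice constraints; (3) the induction, splitting on whether an SSN-obstructing section exists; (4) the residual finite list of $p$-groups checked explicitly against \Cref{th:DedekindGroups} and \cite{LiuPassman16}.
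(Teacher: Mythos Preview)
Your reduction to $p$-groups has a logical gap. You write ``if two distinct $P_i$ are non-abelian then $\Q G$ already has more than one matrix component, so we may assume at most one factor is non-abelian'' --- but having more than one matrix component is the disjunct we want to \emph{avoid}, so in that case you would need to prove SSN, which you do not. The paper's route (\Cref{lem:FromNilpotentToPGroup}) is cleaner: a nilpotent group with SN that is not Dedekind is already a $p$-group, so the multi-prime case is simply Dedekind and hence SSN.

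The induction scheme in your $p$-group step is where the real trouble lies. SN does \emph{not} pass to subgroups (that is precisely the gap between SN and SSN), so ``push down to a proper subgroup that still has SN'' is not available. Pushing to quotients is fine, but you then lose control of whether ``not SSN'' survives and of the matrix-component count. The paper does not induct: it fixes a maximal non-normal subgroup $Q$ (which \Cref{lem:OurLemma3.2} forces to be elementary abelian or $Q_8$), shows $\ZZ(G)$ is cyclic and $G' = \langle z\rangle$ has order $p$, and then counts directly. The decisive step you are missing is the symplectic bilinear form on $Q \times G/N_G(Q)$, which gives $|G/N_G(Q)| = |Q|$ and hence that all ``good'' Shoda-pair subgroups are conjugate. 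Your structural guess ``$G/\ZZ(G)$ elementary abelian of rank $2$'' is too optimistic; $Q$ can be large, and it is this counting argument, not a rank bound, that pins the matrix components down to one.

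Finally, you entirely miss the special $2$-group subcase: when $p=2$ and every involution is central, there is no non-normal elementary abelian subgroup to anchor the argument above, and the paper spends a long separate lemma (\Cref{lem:Q8timesQ8}) proving that then $G \cong Q_8 \times Q_8$. This is not a ``residual finite list checked by hand'' --- it is a genuine structural classification with its own twenty-odd steps, and nothing in your plan would detect it.
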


To start, one can quickly reduce to studying $p$-groups.

\begin{lemma}\label{lem:FromNilpotentToPGroup}
Let $G$ be a nilpotent group with SN which is not a Dedekind group. Then $G$ is a $p$-group.
\end{lemma}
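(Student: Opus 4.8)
The plan is to use the fact that a nilpotent group is the direct product of its Sylow subgroups and to show that at most one of these factors can be non-Dedekind, while all the Dedekind factors of coprime order together with a genuinely non-Dedekind factor would already violate SN. So write $G = P_1 \times \dots \times P_r$ with the $P_i$ the Sylow subgroups for the distinct primes $p_1,\dots,p_r$ dividing $|G|$. First I would record that SN passes to direct factors: if $G = A \times B$ has SN then $A$ has SN, since a normal subgroup $N \unlhd A$ and a subgroup $Y \leq A$ give $N \unlhd G$ and $Y \leq G$, and the conclusion $N \subseteq Y$ or $NY \unlhd G$ (as witnessed in $G$) is intrinsic to $A$ once one notes $NY \leq A$. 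Hence each $P_i$ has SN.

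The heart of the argument is to rule out two non-Dedekind Sylow factors, and more precisely to rule out one non-Dedekind Sylow factor coexisting with any other nontrivial Sylow factor. Suppose $P = P_i$ is not Dedekind; then there is a subgroup $Y_0 \leq P$ with $Y_0$ not normal in $P$. Pick any other nontrivial Sylow factor $P_j$ and any nontrivial subgroup $T \leq P_j$ (e.g. of prime order). Now apply the SN property in $G$ to the normal subgroup $N := T$ (normal in $G$ because $P_j$ is a direct factor and $T \unlhd P_j$ — wait, $T$ need not be normal in $P_j$; instead take $T \unlhd P_j$ central of order $p_j$, which exists) and the subgroup $Y := Y_0 \times 1 \leq G$ (sitting inside $P$). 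Since $p_i \neq p_j$ we have $N \cap Y = 1$ and $N \not\subseteq Y$, so SN forces $NY = T \times Y_0 \unlhd G$. But a subgroup of a direct product $P \times P_j$ that is itself a direct product $Y_0 \times T$ is normal in $P \times P_j$ iff $Y_0 \unlhd P$ and $T \unlhd P_j$; here $Y_0 \not\unlhd P$, contradiction. One may also invoke \Cref{lem:LiuPassma2.1} directly: $N = T$ is a nontrivial cyclic normal subgroup, $Y = Y_0$ meets it trivially, so $Y_0$ must be Dedekind — but that is not quite the contradiction; better to use that $NY \unlhd G$ from \Cref{lem:LiuPassma2.1} and then the direct-product normality observation. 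Either way the contradiction is the same.

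Therefore at most one Sylow factor of $G$ is non-Dedekind. If \emph{every} Sylow factor were Dedekind, then $G$, being a direct product of Dedekind groups of pairwise coprime order, would be Dedekind (by \Cref{th:DedekindGroups}: a direct product of abelian groups and a single $Q_8$-type $2$-group with the orders coprime is again of that form), contrary to hypothesis. Hence exactly one Sylow factor $P_{i_0}$ is non-Dedekind and all others are Dedekind. It remains to show the other factors are in fact trivial. Suppose some $P_j$ with $j \neq i_0$ is nontrivial; it is Dedekind, hence has a nontrivial normal subgroup $T$ of order $p_j$, and $P_{i_0}$ has a non-normal subgroup $Y_0$; applying the argument of the previous paragraph with $N = T$ and $Y = Y_0$ yields $T \times Y_0 \unlhd G$ and the same direct-product contradiction. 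Hence all $P_j$ with $j \neq i_0$ are trivial and $G = P_{i_0}$ is a $p$-group.

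The main obstacle, and the only place requiring care, is the elementary but slightly fiddly fact that a subgroup of $P \times P_j$ that happens to be a direct product $Y_0 \times T$ of subgroups of the factors is normal in $P \times P_j$ if and only if each $Y_0 \unlhd P$ and $T \unlhd P_j$; and, relatedly, that one can always find a \emph{normal} subgroup $T$ of prime order in a nontrivial Sylow factor to play the role of $N$ (true since Sylow subgroups of nilpotent groups are $p$-groups, hence have nontrivial center). Once these are in hand the proof is immediate; alternatively the whole argument can be phrased as a one-line appeal to \Cref{lem:LiuPassma2.1} plus the normality observation.
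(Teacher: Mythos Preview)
Your proof is correct and takes a different route from the paper's. The paper's three-line argument applies \Cref{lem:LiuPassma2.1} with $N$ equal to a non-cyclic Sylow $P$ to conclude that every other Sylow is Dedekind, and then asserts (somewhat tersely) that ``this argument applies to every prime'' to get $G$ Dedekind. You instead use SN directly with a \emph{small} normal subgroup: pick a non-normal $Y_0$ in a non-Dedekind Sylow $P_i$ and a central order-$p_j$ subgroup $T$ in another Sylow $P_j$; then $T \not\subseteq Y_0$ forces $T \times Y_0 \unlhd G$, and the coprime direct-product structure forces $Y_0 \unlhd P_i$, a contradiction. Your approach is more elementary (it needs only the $NY \unlhd G$ clause, which is SN itself, not the Dedekind conclusion of \Cref{lem:LiuPassma2.1}) and arguably fills a step the paper glosses over, namely why the non-cyclic Sylow $P$ itself must be Dedekind when all other Sylows happen to be cyclic. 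Two minor remarks: your observation that SN passes to direct factors is never actually used, and your final two paragraphs repeat the same contradiction; once you have shown that a non-Dedekind Sylow cannot coexist with any other nontrivial Sylow, you are done.
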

\begin{proof}
Assume $P$ and $Q$ are a non-cyclic $p$-Sylow and a $q$-Sylow subgroup of $G$ respectively. Then $P \unlhd G$ and so by \Cref{lem:LiuPassma2.1} we know that $Q$ is a Dedekind group. But as this argument applies to every prime, this means that $G$ is Dedekind, contradicting the assumption.
\end{proof}

Recall that it was shown by Liu and Passman that $p$-groups with SSN coincide with the so-called NCN groups.

\begin{lemma}\label{lem:LiuPassma2.2}\cite[Proposition 2.2]{LiuPassman16}
Let $G$ be a $p$-group. Then $G$ has SSN if and only if every non-cyclic subgroup of $G$ is normal.
\end{lemma}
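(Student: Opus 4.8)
Lemma~\ref{lem:LiuPassma2.2}: for a $p$-group $G$, $G$ has SSN if and only if every non-cyclic subgroup of $G$ is normal.

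The plan is to prove the two implications separately, and in both directions the point is to reduce the SSN condition (which quantifies over all subgroups $Y\le G$ and all normal subgroups $N$ of $G$, together with the same condition inside every subgroup) to the simpler NCN condition (every non-cyclic subgroup is normal). The key structural facts I would use about $p$-groups are: any subgroup of a $p$-group is subnormal; any maximal subgroup is normal of index $p$; and a non-normal subgroup of a $p$-group is properly contained in its normalizer. I would also use Lemma~\ref{lem:LiuPassma2.1} freely, which is the one background tool already available about SN groups.

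\emph{SSN $\Rightarrow$ NCN.} Suppose $G$ has SSN; I want to show every non-cyclic subgroup $H\le G$ is normal in $G$. First reduce to the case where $H$ is maximal among non-normal non-cyclic subgroups, so that its normalizer $N_G(H)$ properly contains $H$ and any subgroup strictly between $H$ and $G$ containing $H$ is normal or cyclic; by subnormality of $H$ there is a subgroup $M$ with $H\trianglelefteq M$, $H\ne M$, and then $M$ is a group with SN (as a subgroup of an SSN group) containing the non-trivial normal subgroup $H$. Now I apply the SN condition inside $M$ to the normal subgroup $N=H$ and a suitable subgroup $Y\le M$ not containing $H$: either $Y$ contains $H$ (excluded) or $HY\trianglelefteq M$. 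Choosing $Y$ to be a subgroup of order $p$ not contained in $H$ (which exists since $H\ne M$, using that in a $p$-group $H<M$ implies there is an element of order $p$ in $M\setminus H$ modulo $H$, more precisely a subgroup of order $p$ meeting $H$ trivially can be extracted from $M/\Phi(\cdot)$ type arguments, or one simply picks $Y$ inside $N_M(H)\setminus H$) should force $HY$ to be normal in $M$ and then, climbing up the subnormal chain and using Lemma~\ref{lem:LiuPassma2.1} to control quotients, conclude $H$ itself is normal in $G$ — contradiction. I expect the delicate point here is the precise choice of the small subgroup $Y$ and ensuring the normality propagates all the way up to $G$ rather than just to $M$.

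\emph{NCN $\Rightarrow$ SSN.} Suppose every non-cyclic subgroup of $G$ is normal; I must show every subgroup $Y\le G$ has SN, and it suffices by a downward induction / the same argument applied inside $Y$ to show $G$ itself has SN and that the NCN property is inherited by all subgroups. Inheritance of NCN by subgroups: if $Y\le G$ and $Z\le Y$ is non-cyclic, then $Z\trianglelefteq G$ hence $Z\trianglelefteq Y$; so every subgroup of an NCN $p$-group is again NCN, and in particular it is enough to verify the SN property for $G$. For SN: take $N\trianglelefteq G$ and $Y\le G$ with $N\not\subseteq Y$; I must show $NY\trianglelefteq G$. If $NY$ is non-cyclic we are done by the NCN hypothesis applied to $NY$. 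The remaining case is that $NY$ is cyclic; then $N$ and $Y$ are both cyclic, $NY$ is a cyclic $p$-group, and a cyclic $p$-group has a unique subgroup of each order, so $N$ and $Y$ are comparable — but $N\not\subseteq Y$ forces $Y\subsetneq N$, whence $NY=N\trianglelefteq G$. This direction is essentially bookkeeping once the cyclic-subgroup-lattice observation is in place.

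The main obstacle, as flagged, is the forward direction: turning "$HY$ normal in some subnormal overgroup $M$" into "$H$ normal in $G$". I would handle it by a careful minimal-counterexample setup — take $H$ non-normal non-cyclic of maximal order — so that $N_G(H)$ is strictly bigger than $H$ and every non-cyclic subgroup properly containing $H$ is normal in $G$; then analyze $N_G(H)$, which is a group with SN containing the normal non-cyclic subgroup... wait, $H$ need not be normal in $N_G(H)$ — it is, by definition of normalizer. So $H\trianglelefteq N_G(H)$, $H$ is non-cyclic, and applying the SN property of $N_G(H)$ with $N=H$ and $Y$ a subgroup of $N_G(H)$ of order $p$ not inside $H$ yields $HY\trianglelefteq N_G(H)$; since $HY$ is non-cyclic (it properly contains the non-cyclic $H$) the maximality of $|H|$ gives $HY\trianglelefteq G$, and then one argues $H\trianglelefteq G$ by a further squeeze (e.g. $H$ is characteristic-ish inside $HY$, or iterate), producing the contradiction. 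I would write this out carefully, as the index-$p$ and "unique subgroup of each order in a cyclic $p$-group" lemmas do all the real work.
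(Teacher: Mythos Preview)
The paper does not give its own proof of this lemma; it is quoted directly from \cite[Proposition~2.2]{LiuPassman16}. So there is no in-paper argument to compare against, and your attempt has to stand on its own.

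Your backward direction (NCN $\Rightarrow$ SSN) is correct and clean: inheritance of NCN by subgroups is immediate, and the verification of SN for $G$ via the dichotomy ``$NY$ non-cyclic'' (then NCN applies to $NY$) versus ``$NY$ cyclic'' (then the totally ordered subgroup lattice of a cyclic $p$-group forces $Y\subseteq N$, so $NY=N\trianglelefteq G$) is exactly right.

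Your forward direction has a genuine gap, and you flag it yourself. From a maximal non-normal non-cyclic $H$ you correctly produce $M=HY\trianglelefteq G$ with $H\trianglelefteq M$ and $[M:H]=p$, but the step ``$H$ is characteristic-ish inside $HY$, or iterate'' does not go through as stated. The subgroup $H$ is merely one of several maximal subgroups of $M$ and is not characteristic; and since $H$ is already normal in $M$ there is nothing to iterate. Conjugates $H^g$ (for $g\in G$) land in $M$ but need not equal $H$. What one actually needs is a non-cyclic \emph{$G$-normal} subgroup lying \emph{inside} $H$, after which Lemma~\ref{lem:LiuPassma2.1} gives $G/(\text{that subgroup})$ Dedekind and hence $H\trianglelefteq G$. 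One natural attempt: $\Phi(M)$ is characteristic in $M\trianglelefteq G$, so $\Phi(M)\trianglelefteq G$ and $\Phi(M)\le H$; if $\Phi(M)$ is non-cyclic you are done, and if $\Phi(M)$ is cyclic and nontrivial you can pair its order-$p$ element $n$ with a second order-$p$ element $q\in H\setminus\langle n\rangle$ and use SN of $G$ to get $\langle n,q\rangle\trianglelefteq G$. But this still leaves the cases $\Phi(M)=1$ (so $M$ is elementary abelian and one must control where $Z(G)\cap M$ sits relative to $H$) and, for $p=2$, the case where $H$ is generalized quaternion (only one involution, so no such $q$ exists). These residual cases require separate arguments, and that is precisely where the substance of Liu--Passman's proof lives; it cannot be dismissed in a line.
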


Now, the advantage of this is that non-normal subgroups of $p$-groups with SN are very restricted as the  following result shows. This is a variation of \cite[Lemma 3.2]{Liu} which includes $2$-groups.
\begin{lemma}\label{lem:OurLemma3.2}
Let $G$ be a group with SN and $Q$ a subgroup of $G$ which is not normal.
\begin{itemize}
\item[a)] If there exists $N \leq Q$ such that $N \unlhd G$, then $N$ is cyclic.
\item[b)] Now assume $G$ is a $p$-group. Then $Q$ is cyclic, elementary abelian or isomorphic to a quaternion group of order $8$. If moreover, $N \leq Q$ such that $N \unlhd G$ and $N \neq 1$, then $Q$ is cyclic or isomorphic to $Q_8$.
\end{itemize}
\end{lemma}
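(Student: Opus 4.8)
The plan is to exploit the SN hypothesis directly at the level of the non-normal subgroup $Q$, combined with the structural results on $p$-groups already available (most notably the Dedekind classification in \Cref{th:DedekindGroups} and its consequences). For part a), suppose $N \leq Q$ with $N \unlhd G$ and assume for contradiction that $N$ is not cyclic. Then \Cref{lem:LiuPassma2.1} forces $G/N$ to be Dedekind, hence in particular every subgroup of $G$ containing $N$ is normal in $G$ modulo $N$, and therefore normal in $G$. But $Q \supseteq N$, so $Q \unlhd G$, contradicting the hypothesis that $Q$ is not normal. Thus $N$ is cyclic.

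For part b), now assume $G$ is a $p$-group and $Q$ is not normal. First I would show that every normal subgroup of $G$ contained in $Q$ is cyclic: this is exactly part a). Next, take any subgroup $R \leq Q$; if $R$ were non-cyclic I want to derive a contradiction or a strong restriction. The key observation is that in a $p$-group, a non-normal subgroup cannot contain a non-trivial normal subgroup of $G$ unless that normal subgroup is cyclic. To pin down the isomorphism type of $Q$, I would argue that if $Q$ contains a non-cyclic subgroup which is normal in $G$, part a) is violated; hence every subgroup of $Q$ that happens to be normal in $G$ is cyclic. The real work is to bound $Q$ itself. I would proceed by taking a non-trivial characteristic (hence $G$-normal, since it sits inside... — no, $Q$ need not be normal) — instead, consider a minimal normal subgroup $N_0$ of $G$ with $N_0 \cap Q \neq 1$; since $G$ is a $p$-group $N_0$ is central of order $p$, and $N_0 \cap Q$ is a central subgroup of order $p$ lying in $Q$. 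Apply the SN property to the pair $(N_0, Q)$: either $N_0 \subseteq Q$ or $N_0 Q \unlhd G$. Playing these cases against the structure of $Q$ and using that subgroups of $Q$ which are normal in $G$ must be cyclic, I expect to force $Q$ to have at most one subgroup of order $p$ in its center outside of a cyclic part, which combined with the classification of $p$-groups whose every $G$-normal subgroup inside them is cyclic yields: $Q$ is cyclic, elementary abelian, or $Q \cong Q_8$. The final sentence (if additionally $1 \neq N \leq Q$ with $N \unlhd G$) then rules out the elementary abelian case of rank $\geq 2$: such an $N$ would contain a non-cyclic $G$-normal subgroup only if $Q$ itself is elementary abelian of rank $\geq 2$ and $N = Q$, but then $N$ is non-cyclic, contradicting part a); so $Q$ is cyclic or $Q_8$.

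Concretely, the cleanest route is probably to follow the template of \cite[Lemma 3.2]{Liu} verbatim for the odd case, and then handle $p = 2$ separately, which is precisely why the statement singles out $Q_8$. For $p$ odd I would recall that a $2$-generated, non-cyclic $p$-group always contains a normal-in-itself subgroup isomorphic to $C_p \times C_p$; pushing this down via the SN property and part a) gives a contradiction unless $Q$ is cyclic or elementary abelian. For $p = 2$, the same argument allows, in addition, the possibility that the relevant rank-two section is a quaternion group $Q_8$ (which has a unique subgroup of order $2$, hence does not contain $C_2 \times C_2$), and this is the one extra case that survives.

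The main obstacle I anticipate is the $2$-group bookkeeping in part b): isolating precisely why $Q_8$ is the only non-abelian, non-cyclic possibility requires invoking that $Q_8$ is the unique non-cyclic $p$-group all of whose proper subgroups and all of whose $G$-normal subgroups can be cyclic — equivalently, that it has a unique subgroup of order $p$ — and ruling out larger generalized quaternion, semidihedral, or modular $2$-groups as possibilities for $Q$. This should follow from applying the SN property iteratively to chains $N_0 \le N_1 \le \cdots$ of subgroups of $Q$, each time invoking part a) to keep them cyclic, but the precise extremal combinatorics is where care is needed.
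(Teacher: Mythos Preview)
Your argument for part a) is correct and matches the paper exactly.

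For part b), however, there is a genuine gap in the general case (no $N$ assumed). Your plan hinges on finding a minimal normal subgroup $N_0$ of $G$ with $N_0 \cap Q \neq 1$, but there is no reason such an $N_0$ exists: the first assertion of b) must hold even when $Q$ contains \emph{no} nontrivial normal subgroup of $G$. Likewise, your odd-prime sketch (``a non-cyclic $p$-group contains $C_p\times C_p$; push this down via SN and part a)'') does not work, because a copy of $C_p\times C_p$ inside $Q$ is not a priori normal in $G$, so part a) does not apply to it. The paper's key idea, which you are missing, is a Frattini trick: replace $Q$ by a maximal non-normal subgroup, take a normal overgroup $M$ with $[M:Q]=p$, and observe that $\Phi(M)\unlhd G$ and $\Phi(M)\leq Q$; now part a) forces $\Phi(M)$ cyclic, whence $Q$ is elementary abelian (if $\Phi(M)=1$) or has a nontrivial $G$-normal subgroup (if $\Phi(M)\neq 1$), reducing to the second claim.

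A second gap is your treatment of larger generalized quaternion groups. You acknowledge that ``ruling out larger generalized quaternion, semidihedral, or modular $2$-groups'' is the delicate point, but your proposed chain argument does not do it. The paper's mechanism is different: once $Q$ is generalized quaternion with unique involution $z$ (and $z$ is central in $G$), pass to $G/\langle z\rangle$; there $Q/\langle z\rangle$ is dihedral of order $|Q|/2$, which for $|Q|\geq 16$ is neither cyclic nor elementary abelian nor $Q_8$, so the already-established trichotomy forces $Q/\langle z\rangle\unlhd G/\langle z\rangle$ and hence $Q\unlhd G$, a contradiction. Finally, your argument for the last sentence is garbled: the correct step is that if $Q$ were elementary abelian of rank $\geq 2$ containing $1\neq N\unlhd G$, then the order-$p$ subgroup $\langle n\rangle\leq N$ is normal in $G$, and SN applied to $\langle n\rangle$ and any $\langle q\rangle\leq Q$ with $q\notin\langle n\rangle$ produces a non-cyclic $G$-normal subgroup $\langle n,q\rangle\leq Q$, contradicting part a).
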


We will need the following well-known classical result.

\begin{lemma}\label{lem:OneCyclicSubgroup}\cite[III, Satz 8.2]{Huppert}
Let $G$ be a $p$-group which contains exactly one cyclic subgroup of order $p$. Then $G$ is cyclic or a generalized quaternion group.
\end{lemma}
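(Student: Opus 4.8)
The plan is to argue by induction on $|G|$. Write $\Omega$ for the unique subgroup of order $p$. Being the only subgroup of its order, $\Omega$ is characteristic, hence normal; and a normal subgroup of order $p$ in a $p$-group is automatically central, since the conjugation action gives a map $G \to \Aut(\Omega) \cong C_{p-1}$ whose image is a $p$-subgroup of a group of order prime to $p$, hence trivial. Thus $\Omega \leq \ZZ(G)$. Crucially, every nontrivial subgroup $H \leq G$ contains a subgroup of order $p$, which must be $\Omega$; so $\Omega$ lies in every nontrivial subgroup, and every proper subgroup again has a unique subgroup of order $p$. By the inductive hypothesis each proper subgroup is therefore cyclic or a generalized quaternion group.

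First I would dispose of the abelian case: an abelian $p$-group has a unique subgroup of order $p$ only when it is cyclic, by the structure theorem, since a decomposition into $k$ cyclic factors produces $(p^k-1)/(p-1)$ subgroups of order $p$, which equals $1$ exactly when $k=1$. So I may assume $G$ is non-abelian, whence $\ZZ(G)$ is a proper subgroup, and by the same count it is cyclic.

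The core of the argument is to produce a cyclic maximal subgroup and then invoke the classification of $p$-groups containing a cyclic subgroup of index $p$. If $p$ is odd, every proper subgroup is cyclic (there being no generalized quaternion groups), so in particular every maximal subgroup is cyclic; the classification then lists the possibilities for $G$ as cyclic, $C_{p^{n}}\times C_p$, or the modular group, and a direct count of elements of order $p$ shows that both $C_{p^{n}}\times C_p$ and the modular group contain more than one subgroup of order $p$, contradicting the hypothesis, so $G$ is cyclic. If $p=2$ and some maximal subgroup is cyclic, the same classification (now also producing the dihedral, semidihedral and modular groups alongside the generalized quaternion ones) leaves, after counting involutions, only the cyclic and generalized quaternion groups.

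The remaining and hardest case is $p=2$ with every maximal subgroup generalized quaternion. The aim here is to show this configuration simply cannot occur for a group with a unique involution, so that a non-cyclic such $G$ always has a cyclic maximal subgroup and we fall back on the case already treated. Concretely I would fix a maximal $M \cong Q_{2^m}$, use its cyclic subgroup $\langle a\rangle$ of index $2$ in $M$ (the unique subgroup of its order, hence characteristic in $M$ and normal in $G$, when $m \geq 4$), and study the conjugation action $G \to \Aut(\langle a\rangle) \cong C_2 \times C_{2^{m-3}}$, the goal being to locate an involution of $G$ outside $\Omega$ and thereby contradict uniqueness. The low-order instances with $M \cong Q_8$ must be examined by hand, since the index-$2$ cyclic subgroups of $Q_8$ are not characteristic. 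Carrying out this contradiction cleanly — disentangling the interaction between $G$, its quaternion maximal subgroup, and the central involution $\Omega$ — is the step I expect to be the main obstacle, and is exactly the point at which the prime $2$ and the generalized quaternion groups play their exceptional role.
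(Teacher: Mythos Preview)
The paper does not prove this lemma at all; it simply cites Huppert \cite[III, Satz 8.2]{Huppert} and uses the result as a black box. There is therefore no ``paper's own proof'' to compare your proposal against.

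On the substance of your outline: the reduction via induction, the abelian case, and the appeal to the classification of $p$-groups with a cyclic maximal subgroup are all standard and correct. The case you flag as the main obstacle---$p=2$ with every maximal subgroup generalized quaternion---is a genuine gap as written: you sketch an approach via the conjugation action on the index-$2$ cyclic subgroup of a maximal $M\cong Q_{2^m}$, but you do not carry it out. One clean way to close it for $m\ge 4$: with $A=\langle a\rangle$ normal in $G$, observe that $C_G(a)$ contains $a$ of order $2^{m-1}\ge 8$ in its center, so by induction $C_G(a)$ cannot be generalized quaternion and is therefore cyclic. If $C_G(a)\supsetneq A$ you have found a cyclic maximal subgroup; if $C_G(a)=A$, then $G/A$ has order $4$ and embeds in $\Aut(A)\cong C_2\times C_{2^{m-3}}$. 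Since $-1$ is not a square in $(\Z/2^{m-1}\Z)^\times$ for $m\ge 4$, the case $G/A\cong C_4$ is impossible, and if $G/A\cong C_2\times C_2$ then every element outside $A$ inverts $a$ (as each maximal subgroup through $A$ is quaternion), which forces products of such elements back into $A$, contradicting $|G/A|=4$. The residual case $m=3$ (so $|G|=16$) can indeed be done by inspecting the five non-abelian groups of order $16$: only $Q_{16}$ has a unique involution, and it has a cyclic maximal subgroup.
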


\begin{proof}[Proof of \Cref{lem:OurLemma3.2}.]
Assume $N$ is a normal subgroup of $G$ contained in $Q$. If $N$ is not cyclic, then $G/N$ is Dedekind by \Cref{lem:LiuPassma2.1}, so that $Q/N \unlhd G/N$, which implies the contradiction $Q \unlhd G$. So part (a) follows.

The proof of part (b) is by two ``iterations''. First assume $N$ with the described properties exists. Then $Q$ is cyclic or a generalized quaternion group: indeed, if $n \in N$ has order $p$ and $q \in Q$ is an element of order $p$ not lying in $\langle n \rangle$, then $\langle n \rangle \langle q \rangle \unlhd G$, as $G$ is a group with SN. But this contradicts part (a) as $\langle n \rangle \langle q \rangle$ is not cyclic. So $Q$ contains exactly one subgroup of order $p$, implying $Q$ is cyclic or generalized quaternion by \Cref{lem:OneCyclicSubgroup}. Next we claim that, independently from the existence of $N$, the group $Q$ is cyclic, elementary abelian or generalized quaternion. For this assume $Q$ is maximal non-normal and let $M$ be a subgroup of $G$ containing $Q$ such that $[M:Q] = p$. By the maximality of $Q$ we get $M \unlhd G$ and so $\Phi(M) \unlhd G$, where $\Phi(M)$ denotes the Frattini subgroup of $M$. As $Q$ is a maximal subgroup of $M$, it contains $\Phi(M)$ and so $\Phi(M)$ is cyclic by part (a). If $\Phi(M) = 1$, then $Q$ is elementary abelian. If $\Phi(M) \neq 1$, then $Q$ is cyclic or generalized quaternion by the first claim proved in this paragraph.

It remains to show that in the two claims proven in the previous paragraph we can replace generalized quaternion groups by quaternion groups of order $8$. Assume first $N$ exists and $Q$ is a generalized quaternion with $n \in Q$ the unique involution. Then $Q/\langle n \rangle$ is a dihedral group of order $|Q|/2$. If $|Q|/2 \geq 8$, this implies, by the claim proven in the previous paragraph and the fact that the SN property is inherited by quotients, that $Q/\langle n \rangle \unlhd G/\langle n \rangle$. This would imply $Q \unlhd G$. So $|Q|/2 \leq 4$ which means that $Q$ is the quaternion group of order $8$. Now again ignore the existence of $N$, let $Q$ be again maximal non-normal, $M$ a normal subgroup of $G$ containing $Q$ such that $[M:Q] = p$ and assume that $Q$ is generalized quaternion. Then as before $\Phi(M) \unlhd G$ and $\Phi(M) \neq 1$, as $Q$ is not elementary abelian. It follows that the unique involution of $Q$ is central in $G$ and so the same argument as before can be used to show $|Q| = 8$.
\end{proof}

With these preparations we are ready to show that groups with SN but without SSN necessarily have one matrix component. We will separate two cases.

\begin{proposition}\label{prop:OddPGroups}
Let $G$ be a $p$-groups which has SN, but not SSN. Assume that either $p$ is odd, or $p=2$ and $G$ contains an elementary abelian subgroup $Q$ which is not normal. Then $G$ has at most one matrix component.
\end{proposition}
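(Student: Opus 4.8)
The plan is to show that if $G$ is a $p$-group with SN but not SSN (and under the stated hypothesis on the prime $p$), then $\mathbb{Q}G$ has at most one matrix component; equivalently, by the discussion around \eqref{eq:QGWedderburn}, all but one of the Wedderburn components of $\mathbb{Q}G$ are division rings, which for a $p$-group means all but one of the irreducible $\mathbb{Q}$-characters of $G$ are linear (have abelian image), or rather: there is a unique irreducible $\mathbb{Q}$-representation whose corresponding matrix component has reduced degree bigger than $1$. Since $G$ does not have SSN, \Cref{lem:LiuPassma2.2} gives a non-normal non-cyclic subgroup $Q \leq G$, and \Cref{lem:OurLemma3.2}(b) forces $Q$ to be elementary abelian or $Q_8$. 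Under the hypotheses of the proposition we may in fact take $Q$ to be elementary abelian: this is automatic when $p$ is odd (since $Q_8$ does not exist as a subgroup of an odd $p$-group), and is assumed directly in the $p=2$ case. I would first fix such a $Q$, note $|Q| \geq p^2$, and record that since $Q$ is not normal, \Cref{lem:OurLemma3.2}(a) tells us $Q$ contains no non-trivial normal subgroup of $G$; in particular $Q \cap \mathcal{Z}(G) = 1$, so $Q$ embeds into $G/\mathcal{Z}(G)$ and hence $|\mathcal{Z}(G)|$ is ``small'' relative to $|G|$.

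Next I would exploit the core-free elementary abelian $Q$ together with the SN property to pin down the structure of $G$. The key mechanism: for any normal subgroup $N \unlhd G$, the SN property says $N \leq Q$ or $NQ \unlhd G$; since $N \leq Q$ would force $N = 1$ by the previous paragraph, every non-trivial $N \unlhd G$ satisfies $NQ \unlhd G$. Applying this to $N$ a minimal normal subgroup of $G$ (which is central of order $p$, say $N = \langle z \rangle$), we get $\langle z \rangle Q \unlhd G$, an elementary abelian group of rank $=\operatorname{rk}(Q)+1$ which is normal in $G$. I expect this to bootstrap: one should be able to show that $G$ has a normal elementary abelian subgroup $E$ of large rank with $G/E$ small (cyclic or close to it), and that $Q$ is a complement-like object sitting inside a Sylow-type structure. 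The upshot I am aiming at is a concrete normal form for $G$: either $G$ is itself close to elementary abelian (then $\mathbb{Q}G$ has no matrix components at all, case handled), or $G$ has a unique non-linear part. Concretely I would try to show $G'$ is small — of order $p$ — which for a $p$-group implies (via the classification of such groups, or a direct character count) that $\mathbb{Q}G$ has exactly one faithful-on-$G/\text{something}$ matrix component. The cleanest route is: show $|G'| = p$, deduce that all irreducible complex characters of $G$ have degree $1$ or $p$, and then argue that at most one rational Wedderburn component can have reduced degree $>1$ because the non-linear characters all have the same kernel (forced by the SN/core-free structure of $Q$ interacting with $G' \leq Z(G)$).

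To get $|G'| = p$: suppose not, so $|G'| \geq p^2$ or $G'$ is non-cyclic. If $G'$ is non-cyclic, \Cref{lem:LiuPassma2.1} forces $G/G'$... no — rather, $G'$ non-cyclic and normal forces every subgroup meeting $G'$ trivially to be Dedekind and $G'Y \unlhd G$; combined with the core-free elementary abelian $Q$ one should reach a contradiction with $Q$ being non-normal. If $G'$ is cyclic of order $\geq p^2$, I would use \Cref{lem:OurLemma3.2}(a): $Q$ cannot contain $\Omega_1(G')$ (the subgroup of order $p$ in $G'$, which is normal in $G$), yet the interaction $QG'/G'$ being an elementary abelian non-normal-back-in-$G$ configuration, together with $G/G'$ abelian, should be squeezed. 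Here I would lean on \Cref{lem:OurLemma3.2}(b) applied to various subgroups, and possibly on \Cref{lem:OneCyclicSubgroup}, to eliminate the possibility of a ``thick'' commutator subgroup. Once $|G'| = p$, the rational group algebra structure is classical: write $e$ for the primitive central idempotent of $\mathbb{Q}G$ not killing $G'$ (there may be several, indexed by faithful characters of the cyclic group $G/\ker$), and I would show using \Cref{lem:IdempotForMetabelian} (note $G$ is metabelian since $|G'|=p$) that in fact all the $e(G,H,K)$ with $K \not\supseteq G'$ coincide or assemble into a single matrix component — because the core-freeness of $Q$ forces $\ker$ of all these to be the same subgroup.

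The main obstacle I anticipate is precisely this last step — ruling out the case where $\mathbb{Q}G$ has two or more genuinely distinct matrix components while $G$ still has a non-normal elementary abelian subgroup. The SN property is a global group-theoretic constraint and translating it into "at most one matrix component" requires either (a) nailing the isomorphism type of $G$ essentially completely (a normal form like $G \cong (C_p \times \dots \times C_p) \rtimes C_{p^k}$ with irreducible or near-irreducible action, matching case (ii)/(iii)-analogues of \Cref{Summary theorem for SN}), or (b) a clever argument that two distinct matrix components would produce two "incompatible" non-normal subgroups violating SN or violating \Cref{lem:OurLemma3.2}. I would pursue (a): use the core-free elementary abelian $Q$ plus SN to force $G = P_0 \rtimes C$ with $P_0$ elementary abelian normal and the quotient cyclic acting with a single non-trivial "block", so that the non-linear characters all factor through one fixed quotient and hence their rational components merge into one matrix component. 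The delicate book-keeping will be when $p=2$ and the elementary abelian hypothesis on $Q$ is genuinely needed to avoid the $Q_8$ complications, and when the center is larger than $C_p$ — I would handle the center by passing to $G/\mathcal{Z}(G)$ where possible, using that SN is quotient-closed, and then lifting the component count back up.
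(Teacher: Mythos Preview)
Your broad outline matches the paper's: fix a maximal non-normal elementary abelian $Q$, note $Q \cap \ZZ(G) = 1$, aim for $|G'| = p$, then use the Shoda-pair machinery of \Cref{lem:IdempotForMetabelian}. But two key steps are either missing or wrong.

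First, the paper's route to $|G'| = p$ passes through showing that $\ZZ(G)$ is \emph{cyclic}: if $z_1, z_2$ were independent central elements of order $p$ outside $Q$, then $Q\langle z_i\rangle \unlhd G$ for $i = 1,2$ (by maximality of $Q$), whence $[G,Q] \leq Q\langle z_1\rangle \cap Q\langle z_2\rangle = Q$, contradicting non-normality of $Q$. You jump straight to $|G'| = p$ without this, and your sketch for ruling out $|G'| \geq p^2$ (``should be squeezed'') is not an argument. With $\ZZ(G)$ cyclic and $z$ its unique element of order $p$, the paper further shows that any $g$ with $z \notin \langle g\rangle$ satisfies $g^p = 1$; this yields $G' = \langle z\rangle$ and gives the precise shape of a maximal abelian $A = C \times Q$ with $C$ cyclic containing $z$.

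Second, and more seriously, the final uniqueness step is where the real content lies, and your proposed reasoning does not work. Once $|G'| = p$, the matrix components are the $e(G,A,K)$ for $K \leq \langle z\rangle \times Q$ with $z \notin K$ and $A/K$ cyclic. By \Cref{lem:IdempotForMetabelian} two such idempotents coincide iff the corresponding $K$'s are \emph{conjugate in $G$} --- not iff they have the same core. Your claim that ``core-freeness of $Q$ forces $\ker$ of all these to be the same subgroup'' is both imprecise and insufficient: equal cores do not give equal idempotents. Your alternative route (a), forcing $G \cong P_0 \rtimes C$ with elementary abelian normal $P_0$ and cyclic $C$, is also not the structure that actually arises here. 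The paper's resolution is a counting argument: there are exactly $|Q|$ such ``good'' subgroups $K$, and one shows $[G:N_G(Q)] = |Q|$ by observing that the commutator pairing $(v,w) \mapsto [v,w]$ defines a non-degenerate symplectic $\mathbb{F}_p$-form on $Q \times G/N_G(Q)$, so that $Q$ and $G/N_G(Q)$ are maximal isotropic subspaces of equal dimension. Hence all good $K$'s form a single $G$-conjugacy class and there is a single matrix component. This symplectic counting is the missing idea in your proposal.
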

\begin{proof}
If $p=2$ let $Q$ be the elementary abelian subgroup of $G$ which is not normal. When $p$ is odd, by \Cref{lem:LiuPassma2.2}, $G$ contains a non-cyclic subgroup $Q$ which is not normal in $G$. Then $Q$ is elementary abelian by \Cref{lem:OurLemma3.2}. We choose $Q$ maximal with these properties, in particular for $Q \lneq M \leq G$ we have $M \unlhd G$. By \Cref{lem:OurLemma3.2} if $Q$ contains a subgroup $N$ which is normal in $G$, then $N = 1$ (note that this is trivial if $|Q| = 2$). In particular we have $\ZZ(G) \cap Q = 1$.

\textit{Claim 1:} $\ZZ(G)$ is cyclic. 

Assume first that $\ZZ(G)$ contains an elementary abelian subgroup $\langle z_1 \rangle \times \langle z_2 \rangle$. Then, by our choice of $Q$ and the fact that $\ZZ(G) \cap Q = 1$, we have $Q\langle z_1 \rangle \unlhd G$ and $Q\langle z_2 \rangle \unlhd G$. This implies that $[G,Q] \leq Q \langle z_1 \rangle$ and $[G,Q] \leq Q \langle z_2 \rangle$, respectively. So $[G,Q] \leq Q \langle z_1 \rangle \cap Q \langle z_2 \rangle = Q$, which would imply that $Q \unlhd G$. Hence $\ZZ(G)$ contains at most one subgroup of order $p$. As $\ZZ(G)$ cannot be generalized quaternion, because such a group is not abelian, the claim follows from \Cref{lem:OneCyclicSubgroup}.

We denote an element of order $p$ in $\ZZ(G)$ by $z$.

\textit{Claim 2:} $G' = \langle z \rangle$. Moreover, if $g\ \in G$ such that $z \notin \langle g \rangle$, then $g^p = 1$.

Let $g,h \in G$ such that $[g,h] \neq 1$. Assume first that $z \notin \langle g \rangle$. Note that we can assume this without changing the value of $[g,h]$ when $p$ is odd. As $G$ is a group with SN, this implies $\langle g, z \rangle = \langle g \rangle \times \langle z \rangle \unlhd G$. As $z$ is central and $G$ is a $p$-group we hence get $[G,g] \subseteq \langle g^p \rangle \times \langle z \rangle$. If $g^p \neq 1$, then we have $[G, g^p] \subseteq \langle g^{p^2} \rangle$, $[G, g^{p^2}] \subseteq \langle g^{p^3} \rangle$, ...,$[G, g^{\circ(g)/p}] = 1$, implying $g^{\circ(g)/p} \subseteq \ZZ(G)$ which contradicts Claim 1, as $z \notin \langle g \rangle$ by assumption. Hence $g^p = 1$ and $[G,g] \subseteq \langle z \rangle$, in particular $[g,h] \in \langle z \rangle$. Now suppose that $z$ lies in every non-trivial subgroup of $\langle g, h \rangle$, i.e. $\langle g,h \rangle$ is a generalized quaternion group. If $\langle g,h \rangle$ has order $8$, then $[g,h] = z$ and there is nothing more to prove. So assume $\langle g,h \rangle \cong Q_{2^m}$ for some $m \geq 4$. Say $g^{2^{m-1}} = h^4 = 1$ and $g^h = g^{-1}$. Note that $h^g = hg^2$. As $Q$ is elementary abelian and maximal non-normal, we get $\langle Q, h \rangle \unlhd G$. As also $Q  \times\langle z \rangle \unlhd G$, we have that $[h,q]$ has order at most $2$ and so $hq$ has order at most $4$ for every $q \in Q$. Note for this that $h^2 = z$ is central in $G$. But as $h^g = hg^2 \in \langle Q, h\rangle$, we have $g^2 \in \langle Q, h \rangle$, a contradiction, since $g^2$ has order at least $8$.

In particular Claim 2 implies that $G$ is metabelian. Moreover, there exists a cyclic subgroup $C$ of $G$ containing $z$ such that $A = C \times Q$ is a maximal abelian subgroup of $G$. We are now finally ready to prove that $G$ has at most one matrix component by applying \Cref{lem:IdempotForMetabelian}. So assume $\mathbb{Q}e(G,H,K)$ is a non-commutative component of $\mathbb{Q}G$. As $K$ lies in the kernel of a representation corresponding to this component, we have $G' \not\leq K$, i.e. $z \notin K$ by Claim 2. Hence, again by Claim 2, the unique maximal element of $\{B \leq G \ | \ A\leq B \ \ \text{and} \ \ B' \leq K \leq B \}$ is $A$ and $e(G,H,K) = e(G,A,K)$ with $A/K$ a cyclic group. By \Cref{lem:IdempotForMetabelian} it is thus sufficient to prove that all subgroups of $A$ which do not contain $z$ and have cyclic quotients are conjugate. We call such subgroups ``good''. Note that good subgroups are elementary abelian, so contained in $\langle z \rangle \times Q$. A good subgroup $U$ is determined by $(\langle z \rangle \times Q)/U$ and these quotients are exactly the images of the groups $\langle z q\rangle$, where $q$ runs through the elements of $Q$. Hence there are $|Q|$ good subgroups. It is clear that $Q$ itself is a good subgroup. So we need to prove that $Q$ has $|Q|$ conjugates in $G$, i.e. $[G:N_G(Q)] = |Q|$. By the maximality of $Q$, and since $z \in N_G(Q)$, we have $N_G(Q) \unlhd G$. As $G'$ is a central subgroup of order $p$, the group $G/N_G(Q)$ is elementary abelian. So we can view $V = Q \times G/N_G(Q)$ as an $\mathbb{F}_p$-vector space of dimension $|Q| + |G/N_G(Q)|$. We define a non-degenerate symplectic bilinear form
\[V \times V \rightarrow \langle z \rangle, \ \ (v,w) \mapsto [v,w]. \]
As no element of $G/N_G(Q)$ leaves all elements of $Q$ fixed under conjugation, $Q$ and $N_G(Q)$ are maximal isotropic subspaces of $V$, so that each of them has dimension $\frac{1}{2}\cdot(|Q| + |G/N_G(Q)|)$ by \cite[II, Satz 9.11]{Huppert}, i.e. $|G/N_G(Q)| = |Q|$. 
\end{proof}

By \Cref{lem:OurLemma3.2} it hence remains to study the case that $G$ is a 2-group and all the non-normal subgroups of $G$ are isomorphic to a quaternion group of order $8$. This turns out to be surprisingly hard. We would be very interested in an easier proof.

\begin{lemma}\label{lem:Q8timesQ8}
Let $G$ be a $2$-group which has SN, but not SSN. Then every involution of $G$ is central if and only if $G \cong Q_8 \times Q_8$.
\end{lemma}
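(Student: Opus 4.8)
The plan is to prove the two implications separately, with the forward direction (every involution central $\Rightarrow$ $G \cong Q_8 \times Q_8$) being the substantial one, since the reverse direction is a routine check: if $G \cong Q_8 \times Q_8$, one verifies that $G$ has SN but not SSN (e.g.\ by exhibiting a non-normal $Q_8$), and that the only involutions of $Q_8 \times Q_8$ are $(z_1, 1)$, $(1, z_2)$ and $(z_1, z_2)$, all of which are central. So assume from now on that $G$ is a $2$-group with SN but not SSN in which every involution is central. By \Cref{lem:OurLemma3.2} and \Cref{prop:OddPGroups} we are in the case where every non-normal subgroup of $G$ is a quaternion group of order $8$; since $G$ does not have SSN, at least one such non-normal $Q_8$, call it $Q$, exists. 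I would first record structural consequences: $\ZZ(G)$ is an abelian $2$-group all of whose involutions are the involutions of $G$, and since $Q \not\unlhd G$, $\ZZ(G) \cap Q$ is a proper, hence cyclic, subgroup of $Q$; combined with \Cref{lem:OurLemma3.2}(a) one gets $\ZZ(G) \cap Q = 1$ or $= \langle z \rangle$ where $z$ is the unique involution of $Q$.

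The heart of the argument should be to pin down $\ZZ(G)$ and the exponent of $G$. Because all involutions of $G$ are central, every cyclic subgroup of order $4$ either lies in $\ZZ(G)$ or has its square in $\ZZ(G)$; more importantly, the hypothesis forces a strong constraint on elements of order $8$: if $g$ has order $8$ then $g^4$ is an involution, hence central, and $\langle g^2 \rangle$ has a reasonable chance of being normal while $\langle g \rangle$ might not be — I would argue, analogously to Claim~2 in the proof of \Cref{prop:OddPGroups}, that $G$ has exponent $4$ (an element of order $8$ would generate, together with a suitable conjugate, a group that is too big to be quaternion and too non-abelian to be elementary abelian, contradicting \Cref{lem:OurLemma3.2}). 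Once $G$ has exponent $4$, I would show $G'$ is central of order $2$: pick $g,h$ with $[g,h]\neq 1$; if $\langle g, h\rangle$ is not quaternion then it is normal (being generated by normal or central pieces), and chasing commutators as in \Cref{prop:OddPGroups} forces $[g,h]$ to be an involution, hence central; but a nonabelian $2$-group of exponent $4$ with $|G'| = 2$ and central $G'$ is extraspecial-by-central-abelian. At this point the involution-central hypothesis identifies the extraspecial part: an extraspecial $2$-group $E$ of exponent $4$ has many non-central involutions \emph{unless} $E \cong Q_8$ or $E \cong Q_8 \ast Q_8 \cong D_8 \ast D_8$; checking which of these (together with a central abelian complement) have all involutions central and have SN but fail SSN should narrow things to $Q_8 \times Q_8$.

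Concretely, I expect to reduce to: $G = E \cdot Z$ (central product over $G' = \langle z\rangle$) with $E$ extraspecial of exponent $4$ and $Z = \ZZ(G)$ cyclic (cyclicity of $\ZZ(G)$ should follow as in Claim~1 of \Cref{prop:OddPGroups}: two independent central involutions $z_1, z_2$ would let $Q\langle z_1\rangle$ and $Q\langle z_2\rangle$ both be normal, forcing $Q$ normal). Then $|E| \in \{8, 32, 128, \dots\}$. If $|E| = 8$ then $E \cong Q_8$ (it cannot be $D_8$, which has non-central involutions) and $G = Q_8 \cdot C_{2^k}$; but such a group either has SSN when $k$ is small or contains non-quaternion non-normal subgroups, and crucially does \emph{not} contain a non-normal subgroup at all unless extra structure is present — so this case must be excluded, meaning $|E| \geq 32$, i.e.\ $E \cong Q_8 \ast Q_8$ (the unique exponent-$4$ extraspecial group of order $32$). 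Finally I would show $Z$ must be just $\langle z\rangle$ (a larger cyclic center, giving $G = (Q_8 \ast Q_8)\cdot C_{2^k}$ with $k \geq 2$, would produce elements of order $8$, contradicting exponent $4$), so $G \cong Q_8 \ast Q_8$; and then observe $Q_8 \ast Q_8 \cong D_8 \ast D_8$ has non-central involutions — contradiction! — \emph{unless} one instead takes the \emph{direct} product $Q_8 \times Q_8$, whose center is $C_2 \times C_2$, not cyclic. This tension is exactly where the real care is needed: the main obstacle is reconciling the apparent forcing of a cyclic center (from Claim~1-type arguments) with the conclusion $Q_8 \times Q_8$ (which has non-cyclic center), which means the Claim~1 argument must \emph{fail} here, and the correct statement is that $\ZZ(G) \cap Q = 1$ forces two independent central involutions after all — so the genuinely delicate step is analyzing how a non-normal $Q_8$ with $\ZZ(G)\cap Q = 1$ interacts with the rest of $G$ to build up precisely two quaternion direct factors and nothing more. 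I would handle that by a careful induction on $|G|$, passing to $G/\langle u\rangle$ for a central involution $u \notin Q$, using that SN passes to quotients and that the image of $Q$ stays non-normal, to peel off one $Q_8$ at a time.
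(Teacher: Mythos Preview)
Your proposal has a genuine structural error that derails the main line of argument. You claim that $|G'| = 2$ and then try to force $G$ into the shape of an extraspecial group times a central factor. But in $Q_8 \times Q_8$ one has $G' = \ZZ(G) \cong C_2 \times C_2$, so $|G'| = 4$, and $Q_8 \times Q_8$ is \emph{not} of the form $E \cdot Z$ with $E$ extraspecial. Once this is noticed, the entire extraspecial analysis (including the case split on $|E| \in \{8,32,\dots\}$ and the discussion of $Q_8 \ast Q_8 \cong D_8 \ast D_8$) becomes irrelevant, and you are left with essentially no strategy for the forward direction beyond the vague final paragraph.

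The paper's proof takes a completely different route: it fixes a non-normal $Q = \langle a,b\rangle \cong Q_8$ and proves through a long chain of intermediate claims that $\ZZ(G)$ has rank exactly $2$ (not cyclic, as you already suspected), that $G$ is $4$-generated, that $|G/N_G(Q)| = 4$, that $C_G(Q) = \Phi(G)$ is elementary abelian of order $4$, and then explicitly produces elements $x,y$ of order $4$ such that $\langle x,y\rangle$ and $\langle bx,ay\rangle$ are two \emph{normal} copies of $Q_8$ with trivial intersection generating $G$. Your proposed induction via $G/\langle u\rangle$ for a central involution $u \notin Q$ does not work: quotienting $Q_8 \times Q_8$ by an involution in one factor gives $Q_8 \times C_2^2$, which is Dedekind and therefore has SSN, so the inductive hypothesis is lost; quotienting by the diagonal involution produces non-central involutions (as the paper itself observes later), so the hypothesis ``all involutions central'' is lost. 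Either way the induction cannot be set up as you describe.
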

\begin{proof}
Assume that every involution in $G$ is central. We first note that $G$ is not a generalized quaternion group. Indeed $Q_8$ and $Q_{16}$ have SSN \cite[Theorem 2.3, BJ6]{Liu} and if $G = \langle g, h \ | \ g^{2^n} = h^4 = 1, \ g^{2^{n-1}} = h^2, \ g^h = g^{-1} \rangle$ for some $n \geq 4$, then $G$ does not have SN. This can be observed by taking $N = \langle g^4 \rangle$, $Y = \langle h \rangle$, so that $N \not\subseteq Y$ and $NY \ntrianglelefteq G$ as $h^g = hg^2 \notin NY$. In particular, the center of $G$ is not cyclic.

By Lemmas~\ref{lem:LiuPassma2.2} and \ref{lem:OurLemma3.2} we can assume $G$ contains a non-normal subgroup $Q$ isomorphic to $Q_8$. We fix $a,b \in Q$ as generators of $Q$ and $c = a^2$. We will prove several small facts on $G$ which will lead to the proof of the lemma.
\begin{itemize}
\item[(i)] $\ZZ(G)$ has rank $2$:\\
$\ZZ(G)$ is not cyclic, as $G$ is not generalized quaternion. So assume the rank of $\ZZ(G)$ is bigger than $2$. Say $z_1, z_2 \in \ZZ(G)$ are independent elements of order $2$ such that $(\langle z_1 \rangle \times \langle z_2 \rangle) \cap Q = 1$. Then $Q \times \langle z_1 \rangle$ and $Q \times \langle z_2 \rangle$ are both normal subgroups of $G$. Hence $[G,Q] \leq Q\langle z_1 \rangle \cap Q\langle z_2 \rangle = Q$ which would imply $Q\unlhd G$.

\item[Convention:] We let $z \in G$ be an involution not lying in $Q$, so $\langle c \rangle \times \langle z \rangle$ is the unique maximal elementary abelian subgroup of $G$.

\item[(ii)] $|G/\Phi(G)| \leq 16$, i.e. $G$ is at most 4-generated:\\
This follows from (i) using \cite[Four Generator Theorem]{MacWilliams}.

\item[(iii)] The groups $\langle a \rangle$, $\langle b \rangle$ and $\langle ab \rangle$ are not normal in $G$:\\
Say $\langle a \rangle \unlhd G$. As $\langle a \rangle \not\subseteq \langle b \rangle$ and $G$ has SN this implies $\langle a \rangle \langle b \rangle = Q \unlhd G$, a contradiction. Similarly $\langle b \rangle$ and $\langle ab \rangle$ are not normal in $G$.

\item[(iv)] $a^G = \{a, a^{-1}, az, a^{-1}z \}$, $b^G = \{b, b^{-1}, bz, b^{-1}z \}$ and $(ab)^G = \{ab, (ab)^{-1}, abz, (ab)^{-1}z \}$:\\
As $G$ has SN we have $\langle a \rangle \times \langle z \rangle \unlhd G$. So by (iii) there is $g \in G$ such that $a^g = az$ or $a^g = a^{-1}z$. As $a^b = a^{-1}$ and $(az)^b = a^{-1}z$ the claim for $a^G$ follows. Similarly the conjugacy classes of $b$ and $ab$ follow from (iii).

\item[(v)] For $g \in G$ we have $g^2 \in C_G(Q)$:\\
By (iv) we have $[g,a] \in \{1,c, z, cz \}$, in any case a central element of order at most $2$. So $[g^2, a] = [g,a]^g[g,a] = 1$. Of course for $b$ and $ab$ we similarly have $[g^2,b] = [g^2, ab] =1$.

\item[(vi)] If $g \in C_G(Q)$ and $g \notin \langle c \rangle$, then $c \notin \langle g \rangle$:\\
This is clear if $g$ has order $2$. So assume the order of $g$ is $2^n$ for $n \geq 2$ and such that $g^{2^{n-1}} = c$. Then $(g^{2^{n-2}}a)^2 = c\cdot c = 1$, so $g^{2^{n-2}}a$ is an involution. Here we used that $g \in C_G(Q)$. As $g^b = g$ we have $(g^{2^{n-2}}a)^b = g^{2^{n-2}}a^{-1}$, so this involution is not central, contradicting the assumptions on $G$.

\item[(vii)] If $g \notin N_G(Q)$, but $g$ is centralizing $a$, $b$ or $ab$, then $c \notin \langle g \rangle$:\\
Say $g \in C_G(a)$, $\circ(g) = 2^n$ and assume $c \in \langle g \rangle$. As $g\notin N_G(Q)$, we must have $g \notin N_G(\langle b \rangle)$, so $b^g = b^{\pm 1}z$ by (iv). Note that $g^2 \in C_G(Q)$ by (v). So $g^{2^{n-2}}a$ is an involution with $(g^{2^{n-2}}a)^b = g^{2^{n-2}}ac$, if $n > 2$, or $(g^{2^{n-2}}a)^b \in \{gaz, gacz \}$, if $n=2$. In any case we would have a non-central involution.

\item[(viii)] If $g \notin N_G(\langle a \rangle)$ and $g \notin N_G(\langle b \rangle)$, then $g \in N_G(\langle ab \rangle)$. The same holds for every permutation of $a$, $b$ and $ab$:\\
If $g \notin N_G(\langle a \rangle)$ and $g \notin N_G(\langle b \rangle)$, then $a^g = a^{\pm 1}z$ and $b^g = b^{\pm 1}z$, so that
\[(ab)^g = a^{\pm 1}b^{\pm 1} z z \in \{ab, a^{-1}b, ab^{-1}, a^{-1}b^{-1} \}. \]
Noting that $a^{-1}b = ab^{-1} = (ab)^{-1}$ and $a^{-1}b^{-1} = ab$, the claim follows.

\item[(ix)] $N_G(Q) \unlhd G$ and $G/N_G(Q) \cong C_2 \times C_2$:\\
$N_G(Q)\unlhd G$ follows, as $N_G(Q)$ contains $z$ and is thus bigger than $Q$. By (v) the group $G/N_G(Q)$ is elementary abelian. If $G/N_G(Q) \cong C_2$ would hold, then by (viii) one of $\langle a \rangle$, $\langle b \rangle$ or $\langle ab \rangle$ would be normal in $G$, which would contradict (iii). On the other hand, if $g,h \notin N_G(\langle a \rangle)$, then $gh \in N_G(\langle a \rangle)$ by (iv). This implies that $G$ has only three non-trivial ways to act on the cyclic subgroups of the normal subgroup $Q \times \langle z \rangle$, implying $|G/N_G(Q)| \leq 4$.

\item[Convention:] By (viii) and (ix) we can choose $x,y \in G$ such that $\circ(x) \geq \circ(y)$ and $x \notin N_G(\langle a \rangle)$, $x \in N_G(\langle b \rangle)$ as well as $y \in N_G(\langle a \rangle)$, $y \notin N_G(\langle b \rangle)$. To assure the condition $\circ(x) \geq \circ(y)$ we might have to rename the elements $a$ and $b$.

\item[(x)] $C_G(Q) = \Phi(G)$ and $\{a,b,x,y \}$ is a minimal generating set of $G$:\\
First note that if $g \in G$, then $Q^g \leq Q \langle z \rangle$ by (iv). As $z$ is central, this implies that for every $h \in C_G(Q)$, the element $h$ is also centralizing $Q^g$. Hence $h^g \in C_G(Q)$, implying that $C_G(Q)$ is normal in $G$.
Now, by the action of $a$, $b$, $x$ and $y$ on $Q \times \langle z \rangle$ we see that no element of the form $a^\alpha b^\beta x^\gamma y^\delta$ with at least one of the $\alpha$, $\beta$, $\gamma$ and $\delta$ odd is centralizing $Q$. Hence the images of $a$, $b$, $x$ and $y$ in $G/C_G(Q)$ generate an elementary abelian subgroup of order $16$. By (ii) this is a maximal elementary abelian quotient of $G$, so the well-known properties of Frattini subgroups of $p$-groups, as recorded for instance in \cite[III, Section 3]{Huppert}, imply the claim.

\item[(xi)] $G^2 = \Phi(G)$. Moreover for any $g,h \in G$ we have $[g,h]^g = [g,h]$ and $(gh)^2 = g^2h^2[h,g]$:\\
The equation $G^2 = \Phi(G)$ holds in every $2$-group \cite[III, Satz 3.14(b)]{Huppert}. Let $i \in G$ be an involution so that $i \notin \langle g \rangle$. Hence $\langle g \rangle \times \langle i \rangle \unlhd G$ and so $[g,h] \in \langle g^2, i \rangle$, which implies $[g,h]^g = [g,h]$. Moreover this gives $(gh)^2 = g^2h[h,g]h = g^2h^2[h,g]$.

\item[(xii)] For $g,h \in G$ we have $[g^2,h] = [g,h]^2$ and $[g^2,h] \in \langle g^4 \rangle \cap \langle h^4 \rangle$. Furthermore, $\langle g^2 \rangle \unlhd G$:\\
In general $[g^2,h] = [g,h]^g[g,h]$, so $[g^2,h] = [g,h]^2$ holds by (xi). Moreover, if $i,j \in G$ are involutions such that $i \notin \langle g \rangle$ and $j \notin \langle h \rangle$, then $\langle g \rangle \times \langle i \rangle$ and $\langle h \rangle \times \langle j \rangle$ are normal subgroups of $G$, so that $[g,h] \in \langle g^2, i \rangle$ and $[g,h] \in \langle h^2, j \rangle$. So $[g^2,h] = [g,h]^2 \in \langle g^4 \rangle \cap \langle h^4 \rangle$. Finally, as $\langle g \rangle \times \langle i \rangle \unlhd G$ we have $[g,G] \subseteq \langle g^2, i\rangle$, so that $[g^2,G] \subseteq \langle g^4 \rangle$ by the previous, implying that $\langle g^2 \rangle \unlhd G$.

\item[(xiii)] If $g,h \in C_G(Q)$ and both have order at least $4$, then $\langle g \rangle \cap \langle h \rangle \neq 1$:\\
Say $\langle g \rangle \cap \langle h \rangle =1$. By (vi) we know that $c$ is not contained in $\langle g \rangle$ or $\langle h \rangle$. So we can assume $z \in \langle g \rangle$ and $cz \in \langle h \rangle$. Assume first that $\circ(g) = 2^n \geq 8$ and say $\circ(h) = 2^m$. By (xii) and the assumption $\langle g \rangle \cap \langle h \rangle = 1$ we have $[g^2,h] = 1$, so that $(g^{2^{n-2}}h^{2^{m-2}})^2 = zzc = c$. Hence $g^{2^{n-2}}h^{2^{m-2}}$ is an element of order $4$ in $C_G(Q)$ squaring to $c$, contradicting (vi). Now assume $g$ an $h$ are both of order $4$. As $g \in C_G(Q)$ and $G^2 = \Phi(G) = C_G(Q)$ by (x) and (xi) there are $g_1,...,g_k \in G$ such that $g = g_1^2g_2^2...g_k^2$. By the general commutator formulas and (xii) we get
\[[g,h] = [g_1^2...g_k^2,h] = ([g_1,h]^2)^{g_2^2...g_k^2}([g_2,h]^2)^{g_3^2...g_k^2}...[g_k,h]^2. \]
As $[g_i,h]^2 \in \langle h^4 \rangle = 1$ for all $i$ by (xii) we get $[g,h] = 1$ and so $(gh)^2 = g^2h^2 = zcz = c$, again contradicting (vi).

\item[Convention:] In case $C_G(Q)$ contains an element $g$ of order $4$, we set $z = g^2$. By (vi) and (xiii) this is well-defined. If there is no such element, we just keep the $z$ from before.

\item[(xiv)] There is $\tilde{z} \in C_G(Q)$ such that $z \in \langle \tilde{z} \rangle$ and $C_G(Q) = \langle c \rangle \times \langle \tilde{z} \rangle$:\\
Let $g,h\in C_G(Q)$. As $C_G(Q) = G^2$ by (x) and (xi) as in the proof of (xiii) we have $[g,h] \in \langle h^4 \rangle$, where we also need that $\langle h^2 \rangle \unlhd G$ by (xii). Hence $[C_G(Q), C_G(Q)] \leq C_G(Q)^4$, so that $C_G(Q)$ is a \emph{powerful $2$-group} (cf. the definition in \cite[I, Definition 2.1]{DdSMS}). Hence $C_G(Q)^2 = \{g^2 \ | \ g \in C_G(Q) \}$ \cite[I, Proposition 2.6]{DdSMS}. By (vi) this implies $c \notin \Phi(C_G(Q)) = C_G(Q)^2$, hence $\langle c \rangle$ is a direct factor of $C_G(Q)$ and we have $C_G(Q) = \langle c \rangle \times H$ for a subgroup $H$ containing only the involution $z$. Then $H$ is cyclic or generalized quaternion by \Cref{lem:OneCyclicSubgroup}, but as quaternion groups are not powerful, $H$ must be cyclic and the claim follows.

\item[(xv)] For $g,h \in G$ we have $[g^2, h^2] = 1$:\\
By (v) we know $g^2, h^2 \in C_G(Q)$ which is an abelian group by (xiv).

\item[(xvi)] Without breaking the conventions we can assume that $y$ has order $4$, $a^y = a$ and $z \in \langle y \rangle$:\\
We first aim to replace $y$ by an element of order $4$. By (xiv) we know $x^4, y^4 \in \langle \tilde{z} \rangle$, so, as by convention $\circ(x) \geq \circ(y)$, there is $\ell \in \mathbb{Z}$ such that $x^{4\ell}y^4 = 1$. If $x$ has order $4$, then also $y$. So assume $x$ has order at least $8$. Note that as $x^2, y^2 \in \langle c \rangle \times \langle \tilde{z} \rangle$ by (v) and (xiv) we have $y^2 \in \langle x^2, c \rangle$, so that $[x,y^2] = 1$. Hence by (xi) and (xii)
\[(x^\ell y)^4 = (x^{2\ell}y^2[y,x^\ell])^2 = x^{4 \ell}y^4[y^2,x^\ell] = x^{4\ell}y^4 = 1. \]
Note that by the defining properties of $x$ and $y$ we have $x^\ell y \notin N_G(\langle b \rangle)$, so $x^\ell y$ can not be an involution.
So, $x^\ell y$ has order $4$ and we replace $y$ by $x^\ell y$, where we replace $a$ by $ab$ if $\ell$ is odd, so that the convention is kept. In case with the new $y$, we have $a^y \neq a$, we replace $y$ by $by$. Finally, if $z \notin \langle y \rangle$, then $cz \in \langle y \rangle$ as $c \in \langle y \rangle$ is impossible by (vii). Then $ay$ satisfies all the conventions and moreover $(ay)^2 = a^2y^2 = ccz = z$, so that we choose $ay$ as the new $y$.

\item[Convention:] We choose $y$ as described in (xvi).

\item[(xvii)] $\circ(x) = 4$:\\
Assume $\circ(x) = 2^n > 4$. Then by (v) and (xiv) we have $x^4 \in \langle \tilde{z} \rangle$ and so $z \in \langle x^4 \rangle$. As $[x^2,y] \in \langle y^4 \rangle = 1$ by (xii) we then get $(x^{2^{n-2}}y)^2 = zz=1$ and $x^{2^{n-2}}y$ is an involution. But as it is not centralizing $b$ this gives a contradiction.

\item[(xviii)] $\tilde{z} = z$:\\
Assume $\circ(\tilde{z}) > 2$ holds. First note that as $x$ and $y$ have order $4$ by (xvi) and (xvii), the defining properties of $x$ and $y$ then imply that $\tilde{z}$ is not a power of $x$ or $y$. Moreover, again as $x$ has order $4$, we have $x^2 \in Z(G)$ and so $1 = [x^2,y] = [x,y]^2$ by (xii). In particular, $[x,y]$ has order at most $2$, implying that it is an element of the maximal elementary abelian subgroup $\langle c \rangle \times \langle z \rangle$ which is contained in $\langle c \rangle \times \langle \tilde{z}^2 \rangle$. So, $G/(\langle c \rangle \times \langle \tilde{z}^2 \rangle)$ is an elementary abelian group where the images of $a$, $b$, $x$, $y$ and $\tilde{z}$ are independent elements. To see the this use again the fact that $\circ(x) = \circ(y) = 4$. But this cannot be true, as $G$ is $4$-generated by (x).

\item[(xix)] We can assume $b^x = b$ and $x^2 = z$ without breaking the conventions:\\
As $x \in N_G(\langle b \rangle)$, we have $b^x = b^{\pm 1}$. If $b^x = b^{-1}$, we can replace $x$ by $ax$. Next, $x^2 = c$ is not possible by (vii). So if $x^2 \neq z$, we must have $x^2 = cz$ (note that $\circ(x) = 4$ by (xvii)). If this is the case we replace $x$ by $bx$ noting that $(bx)^2 = ccz = z$.

\item[Convention:] We choose $x$ as described in (xix).

\item[(xx)] $a^x = az$ and $b^y = bz$:\\
Assume $a^x \neq az$. Then $a^x = a^{-1}z$ by the properties of $x$ and (iv). Then using (xi) we get $(ax)^2 = a^2x^2[x,a] = czcz = 1$, so that $ax$ would be an involution not centralizing $b$. Similarly, if $b^y \neq bz$, then $by$ would be an involution not centralizing $a$.

\item[Relations] We summarize the obtained relations:
\begin{align*}
x^4 &= y^4 = 1, \ \ x^2 = y^2 = z, \\
a^x &= az, \ \ b^x = b, \ \ a^y = a, \ \ b^y = bz.
\end{align*}
We also note $(ab)^x = abz$ and $(ab)^y = abz$.

\item[(xxi)] $\langle x, y \rangle$ and $\langle bx, ay \rangle$ are normal subgroups of $G$ isomorphic to $Q_8$:\\
By the relations already obtained for $x$ and $y$ to show that $\langle x,y \rangle \cong Q_8$ it suffices to show $[x,y] = z$. As $x$ and $y$ have order $4$ the commutator $[x,y]$ has order at most $2$ by (xii) and the fact that involutions are central. We consider the other possible values for $[x,y]$. If $[x,y]=1$, then $xy$ is an involution no centralizing $b$. If $[x,y] = cz$, then by (xi) we get $(axy)^2 = a^2 (xy)^2 [xy,a] = c (x^2y^2cz)z = 1$, so that $axy$ would be an involution not centralizing $a$. Similarly, if $[x,y] = c$, then $abxy$ is an involution not centralizing $a$. Hence $[x,y] = z$ and $\langle x,y \rangle \cong Q_8$. We next observe $\langle bx, ay \rangle \cong Q_8$. This follows from calculating $(bx)^2 = (ay)^2 = cz$ as well as $(bx)^{ay} = bxcz = (bx)^{-1}$. It remains to show that both these subgroups are normal, but as $\{a,b,x,y \}$ is a generating set of $G$ by (x), it is sufficient to consider their conjugates under these four elements. A direct calculation using the relations above then gives the claim.

\item[(xxii)] $G = \langle x,y \rangle \times \langle bx, ay \rangle \cong Q_8 \times Q_8$:\\
By (xxi) both groups $\langle x,y \rangle$ and $\langle bx, ay \rangle$ are normal subgroups of $G$ and isomorphic to $Q_8$. As they have trivial intersection, $\langle x,y \rangle \times \langle bx, ay \rangle$ is a subgroup of $G$. This subgroup contains $a$, $b$, $x$ and $y$ which is a generating set of $G$ by (x).
\end{itemize} 

Finally, we show that if $G \cong Q_8 \times Q_8$, then $G$ has SN but not SSN using the notation for the elements of $G$ as in the $Q_8 \times Q_8$ we just found. First, as $a^x = az$ the subgroup $\langle a, b \rangle$ is not normal in $G$ and $G$ does not have SSN by \Cref{lem:LiuPassma2.2}. Next, note that every subgroup containing the three non-trivial involutions is normal in $G$, as $G' = \langle c \rangle \times \langle z \rangle$. Moreover, it is easy to see that a cyclic subgroup $Y$ of order $4$ is non-normal in $G$ if and only if $Y^2 = \langle c \rangle$. Hence, a general subgroup $Y$ is non-normal if and only if $\Phi(Y) = \langle c \rangle$ and $Y$ is either cyclic of order $4$ or a quaternion group of order $8$. Hence, for every normal subgroup $N$ and non-normal subgroup $Y$ the relation $N \not\leq Y$ implies that $N$ contains an involution different from $c$, giving $NY \unlhd G$. Overall, $G$ has SN. 
\end{proof}

The main result of this subsection now follows easily.
\begin{proof}[Proof of \Cref{prop:NilpotentSNNotSSN}.]
Let $G$ be a nilpotent group which has SN but not SSN. By \Cref{lem:FromNilpotentToPGroup} $G$ is a $p$-group. If $p$ is odd, or $p=2$ and $G$ contains a non-central involution, then the result is contained in \Cref{prop:OddPGroups}. Finally consider the case $p=2$ and that all involutions of $G$ are central which only applies to the group $Q_8 \times Q_8$ by \Cref{lem:Q8timesQ8}.  As $\ZZ(G)$ is not cyclic, $G$ has no faithful irreducible representations. It hence suffices to consider the components of the maximal quotients of $G$. Say $c$ and $z$ are the involutions of the direct factors. Then $G/\langle c \rangle$, $G/\langle z \rangle$ and $G/\langle cz \rangle$ are the maximal quotients. The first two of those are isomorphic to $Q_8 \times C_2 \times C_2$ which is a Hamiltonian group not contributing matrix components to $\mathbb{Q}G$. Finally, we show that the group $G/\langle cz \rangle$ has SN, but not SSN, and contains non-central involutions, so that the result then follows from \Cref{prop:OddPGroups}. To see this say $a$ is an element of order $4$ in the first direct factor and $x$ an element of order $4$ in the second. Then $(ax)^2 = cz$, so that $ax$ is mapped to a non-central involution when mapping to $G/\langle cz \rangle$. To see that this quotient does not have SSN consider the subgroup generated by the first direct factor and $x$: then $\langle x \rangle$ is mapped to a normal subgroup not contained in the image of $\langle a \rangle$, but $\langle a,x  \rangle$ is not mapped to a normal subgroup. 
\end{proof}

\begin{remark}
An alternative proof of \Cref{lem:Q8timesQ8} could be derived from the classification of $2$-groups all of whose non-normal subgroups are cyclic, elementary abelian of rank 2 or quaternion of order $8$ in \cite[Section 175]{BerkovichJankoVol4}. The group $Q_8 \times Q_8$ is one of those, but one would need to exclude the other groups appearing.
\end{remark}

\subsection{Non-nilpotent groups}

We will start with the case that $G$ is solvable. For this we need to introduce the following class of groups which will in \Cref{Section ND} distinguish themselves by being the only finite groups for which we cannot determine the equivalence between property ND and having at most one matrix component. Recall,

\begin{definition}\label{Defintion solv SSN unfaithful type}
Let $G$ be a group whose order is divisible by exactly two different primes $p$ and $q$ and let $P \in \text{Syl}_p(G)$ and $Q \in \text{Syl}_q(G)$. Assume $P$ and $Q$ are both cyclic, $P$ has order $p$ and $G = P \rtimes Q$ such that $Q$ acts non-trivially but also non-faithfully on $Q$. Then we call $G$ an \textit{SSN group of unfaithful type}.
\end{definition}
We note that the name in the previous definition is justified by \cite[Theorem 2.7]{LiuPassman16}.

When we speak of the \textit{rank} of a $p$-group $P$ we will mean the minimal number of generators of a maximal elementary abelian subgroup of $G$.

\begin{proposition}\label{prop:SolvSNGroups}
$G$ is a solvable non-nilpotent group with SN if and only if the following holds: $G$ contains a normal elementary abelian Sylow $p$-subgroup $P$ and a $p'$-Hall subgroup $H$ which is Dedekind. Each Sylow subgroup of $H$ has rank 1 and if $P$ has rank 1, then $H$ is cyclic. Moreover, 
\begin{itemize}
\item[(i)] either $G$ is an SSN group of unfaithful type
\item[(ii)] or the action of $H$ on $P$ is irreducible and faithful. In this case also no non-trivial element of $H$ is centralizing a non-trivial element of $P$.
\end{itemize}
\end{proposition}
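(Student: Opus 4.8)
The plan is to prove the two implications separately. The forward (``if'') direction is short; the backward (``only if'') direction splits according to the structure of the Fitting subgroup of $G$.

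\emph{The ``if'' direction.} Here the structure is given and we must deduce SN. If $G$ is an SSN group of unfaithful type (case (i)) then $G$ even has SSN by \cite[Theorem 2.7]{LiuPassman16}, hence SN. In case (ii) write $G = P \rtimes H$ with $P$ a faithful irreducible $H$-module. I would first observe that $C_G(P) = P$: the subgroup $C_G(P) \cap H$ is the kernel of the action, hence trivial, and any $vh \in C_G(P)$ with $v \in P$, $h \in H$ forces $h$ to centralise $P$, i.e.\ $h = 1$. From irreducibility it then follows that every non-trivial normal subgroup $N$ of $G$ contains $P$: indeed $N \cap P$ is an $H$-invariant subspace of $P$, so it is $1$ or $P$, and $N \cap P = 1$ would give $[N,P] \le N \cap P = 1$, whence $N \le C_G(P) = P$ and $N = 1$. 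On the other hand, every subgroup $K$ with $P \le K \le G$ is normal, since $K/P$ is a subgroup of the Dedekind group $G/P \cong H$. Combining these: if $N \unlhd G$, $Y \le G$ and $N \not\le Y$, then $N \ne 1$, so $P \le N \le NY$, whence $NY \unlhd G$; this is exactly SN. (The conditions that the Sylow subgroups of $H$ have rank $1$ and that $H$ is cyclic when $P$ has rank $1$ are automatic once $H$ is Dedekind with a faithful irreducible $\mathbb{F}_p$-module, so they place no extra burden here.)

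\emph{The ``only if'' direction, Case A: $O_p(G)$ is non-cyclic for some prime $p$.} By the ``moreover'' part of \Cref{lem:LP2.5} the Sylow $p$-subgroup $P$ is normal, the nilpotent $p$-complement $H$ is Dedekind, and $G = P \rtimes H$. As $G$ is non-nilpotent, $H$ acts non-trivially, so \Cref{lem:LP2.4} gives that $P$ is elementary abelian and $H$ acts irreducibly; a non-faithful action would force $G$ to be an SSN group of unfaithful type and hence $|P| = p$, contradicting $O_p(G) \le P$ non-cyclic, so the action is faithful and we are in situation (ii). The rank conditions follow from \Cref{th:DedekindGroups} together with the observation that a Dedekind group admitting a faithful irreducible $\mathbb{F}_p$-module is cyclic or $Q_8 \times C$ with $C$ cyclic of odd order. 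Fixed-point-freeness is seen as follows: an element $1 \ne h \in H$ of prime order centralising some $1 \ne v \in P$ has $\langle h \rangle \unlhd H$ (Dedekind), so $C_P(h)$ is a non-zero $H$-invariant subspace, hence all of $P$, contradicting faithfulness.

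\emph{Case B: $O_p(G)$ is cyclic for every prime $p$.} Then $F(G) = \prod_p O_p(G)$ is cyclic; since $G$ is solvable, $C_G(F(G)) = F(G)$, so $G/F(G)$ embeds in the abelian group $\Aut(F(G))$ and $G$ is metabelian with cyclic derived subgroup. Starting from a minimal normal subgroup $N \cong C_p$ and \Cref{lem:LP2.5}, the plan is to show successively: (1) the Sylow $p$-subgroup $P$ is normal in $G$; (2) $P \cong C_p$ — for if $P = C_{p^a}$ with $a \ge 2$, then with $N = \Omega_1(P) \unlhd G$ and $Y = H$ a non-normal $p$-complement one has $N \not\le Y$, while $NY = N \rtimes H$ fails to be normal, since conjugating $H$ by a generator $v$ of $P$ produces $\bigl((1-\lambda)v\bigr)h$ with $h$ acting on $P$ as multiplication by a unit $\lambda$ of order coprime to $p$, and $(1-\lambda)v \notin \Omega_1(P)$ when $\lambda \ne 1$, contradicting SN; (3) with $P \cong C_p \unlhd G$ we get $G = C_p \rtimes H$ with $H$ acting non-trivially, and if the action is faithful then $H \hookrightarrow \Aut(C_p)$ is cyclic, so we land in situation (ii) with $P$ of rank $1$, while if it is non-faithful one shows by further SN arguments — testing central subgroups of $\ker(H \to \Aut(C_p))$ or of central Sylow subgroups against non-normal Sylow subgroups of a different prime — that $|G|$ has exactly two prime divisors and $H$ is a cyclic $q$-group, i.e.\ situation (i).

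The main obstacle is Case B of the ``only if'' direction: proving that a Sylow subgroup is normal and of exponent $p$, and in the non-faithful subcase that $G$ is already an SSN group of unfaithful type (so that, in particular, $|G|$ has exactly two prime divisors and the complement is a cyclic $q$-group). The difficulty is that SN is not inherited by subgroups and, crucially, is not preserved under central extensions — for instance $C_2 \times K$, where $K = C_3^2 \rtimes Q_8$ with $Q_8$ acting faithfully and irreducibly, does not have SN although $K$ does and $K \cong (C_2 \times K)/C_2$ — so one cannot simply induct on $|G|$ via a central quotient, and the structure must be extracted directly from well-chosen pairs $(N,Y)$ witnessing failure of SN.
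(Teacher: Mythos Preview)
Your ``if'' direction and Case A of the ``only if'' direction are correct and essentially match the paper's, though your arguments for the rank-$1$ condition on the Sylow subgroups of $H$ (via the structure of Dedekind groups with a faithful irreducible module) and for fixed-point-freeness (via $\langle h\rangle \unlhd H$ forcing $C_P(h)$ to be $H$-invariant) are more elementary than the paper's character-theoretic route.

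The genuine gap is exactly where you locate it: step~(1) of Case~B. You never establish that the Sylow $p$-subgroup $P$ is normal in $G$, and your step~(2) presupposes this; \Cref{lem:LP2.5} alone does not give it when the normal $p$-subgroup $P_0$ is cyclic. The paper closes the gap by \emph{reversing your order of operations}: rather than first proving $P \unlhd G$ and then splitting on faithfulness, it first splits on whether some Sylow $q$-subgroup $Q$ of the nilpotent $p'$-complement $H$ centralises $P_0 \cong C_p$. If so, SN gives $P_0 \times Q \unlhd G$, hence $Q \unlhd G$ (being characteristic there), and reapplying \Cref{lem:LP2.5} and \Cref{lem:LP2.4} with the roles of $p$ and $q$ interchanged (the $q'$-complement acts non-trivially on $Q$ since $G$ is non-nilpotent, but non-faithfully since $P_0$ centralises $Q$) yields directly that $G$ is an SSN group of unfaithful type. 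If not, every Sylow of $H$ acts non-trivially on the cyclic group $P_0$; further SN arguments (including a non-obvious exclusion of a quaternion subgroup of $H$) force every Sylow of $H$ to be cyclic, whence each prime $q \mid |H|$ divides $p-1$ and is smaller than $p$. Iterated application of Burnside's normal $q$-complement theorem for the smallest prime then produces a normal Sylow $p$-subgroup. At that point \Cref{lem:LP2.4} (or equivalently your step~(2) argument) gives $P$ elementary abelian, hence $P = P_0 \cong C_p$, and the dichotomy (i)/(ii) follows once more from \Cref{lem:LP2.4}. Your vague step~(3) is thus absorbed into the first branch of this split, and the Burnside argument is the missing mechanism behind your step~(1).
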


\begin{proof}
We first show that $G$ being a solvable and non-nilpotent group with SN implies the described properties. 
Assume first that $G$ contains a normal elementary abelian subgroup of rank at least $2$ for some prime $p$. Then by \Cref{lem:LP2.5} we know that $P \unlhd G$ for $P \in \text{Syl}_p(G)$ and $G$ contains a nilpotent $p'$-Hall subgroup $H$. By \Cref{lem:LP2.4} the action of $H$ on $P$ is irreducible and faithful and $P$ is elementary abelian. It remains to show that the Sylow subgroups of $H$ all have rank 1. The action of $H$ on $P$ corresponds to a faithful and irreducible representation of $H$ over $\mathbb{F}_p$. Let $\chi$ be the character of this representation, $M$ the $\mathbb{F}_pG$-module and $F$ a field extension of $\mathbb{F}_p$ which is a splitting field for $H$. Then by \cite[Theorem 9.21]{Isaacs} the character of the module $F \otimes_{\mathbb{F}_p} M$ is a sum of certain Galois-conjugate characters of an irreducible $F$-character $\eta$ of $H$. If $H$ contains an elementary-abelian subgroup $Q$ of rank at least 2, then by the structure of Dedekind groups, $Q$ is central in $H$ and $\eta$ has a non-trivial kernel on $Q$. But this kernel is then also contained in the kernel of $\chi$, contradicting the fact that the action of $H$ on $P$ is faithful. Note also that if $D$ is a representation corresponding to $\eta$ and $h \in H \setminus \{ 1 \}$, then $D(h)$ has no eigenvalue 1. This follows again from the structure of Dedekind groups, as this is true for faithful characters of the quaternion group of order $8$ and cyclic groups. Hence there is no $g \in P \setminus \{1\}$ such that $g^h = g$.

We can hence assume that every elementary abelian normal subgroup of $G$ has rank 1. Let $P_0$ be such a normal $p$-subgroup, so $P_0$ is a cyclic group of order $p$, and let $H$ be a nilpotent $p'$-Hall subgroup of $G$ which exists by \Cref{lem:LP2.5}. We first consider the case that some Sylow subgroup of $H$ acts trivially on $P_0$. Let $Q \in \text{Syl}_q(G)$ be a such a Sylow subgroup, i.e. $[P_0, Q] = 1$. Then $P_0 \times Q \unlhd G$, as $G$ has SN, and so $Q \unlhd G$, as it is characteristic in $P_0 \times Q$. So by \Cref{lem:LP2.5} we have $G = QR$ for $R$ a $q'$-Hall subgroup of $G$. Moreover, the action of $R$ on $Q$ is not faithful, as $P_0$ acts trivially on $Q$. We conclude by \Cref{lem:LP2.4} that $G$ is an SSN group of unfaithful type.

So we can assume that every Sylow subgroup of $H$ acts non-trivially on $P_0$. Let $Q \in \text{Syl}_q(H)$. We first show that the rank of $Q$ is 1. Assume it is not and that $\langle x \rangle \times \langle y \rangle$ is an elementary abelian group of rank $2$ contained in $Q$. As $\text{Aut}(P_0)$ is cyclic, some element of $\langle x \rangle \times \langle y \rangle$ must act trivially on $P_0$, say this is $x$. Then $P_0 \times \langle x \rangle \unlhd G$, as $G$ has SN and so $\langle x \rangle \unlhd G$, as $\langle x \rangle$ is a characteristic subgroup of $P_0 \times \langle x \rangle$. Hence, again using the SN property of $G$, also $\langle x \rangle \times \langle y \rangle \unlhd G$, but this contradicts our assumption that $G$ contains no normal elementary abelian subgroup of rank at least $2$. Hence $Q$ has rank $1$. So, every Sylow subgroup of $H$ is cyclic or generalized quaternion by \Cref{lem:OneCyclicSubgroup}. 

We show that $H$ contains no quaternion group. Indeed, assume $Q = \langle g,h \ | \ g^{2^n} = g^4 = 1, g^{2^{n-1}} = h^2, g^h = g^{-1} \rangle$ is a subgroup of $H$ for some $n \geq 2$. As $\text{Aut}(P_0)$ is cyclic, some element of order $4$ in $Q$ must act trivially on $P_0$. As in the previous paragraph, this element must generate a normal subgroup of $G$. When $n \geq 3$ the only normal subgroup of $Q$ of order $4$ is $\langle g^{2^{n-2}} \rangle$, so it must act trivially. When $n=2$ we can assume this without loss of generality. 
Now $G/\langle g^{2^{n-2}}, h\rangle$ is a Dedekind group, so that the image of $Q$ in this quotient acts trivially on the image of $P_0$. Hence $g$ acts trivially on $P_0$ and $h$ non-trivially.
As before we get then $\langle g \rangle \unlhd G$ and the SN property implies $\langle g \rangle \langle h \rangle = Q \unlhd G$. But this cannot be as $h$ acts non-trivially on $P_0$. We conclude that all the Sylow subgroups of $H$ are cyclic.

We now show that under all the assumptions $G$ has a normal Sylow $p$-subgroup. Let $\{q_1,q_2,...,q_k \}$ be the prime divisors of $|H|$ with $q_1 < q_2 < ... < q_k$. Note that as a Sylow $q_i$-subgroup of $H$ acts non-trivially on $P_0$ we have $q_i \mid (p-1)$ and so $q_i < p$ for each $i$. By successively applying the famous corollary of Burnside's $p$-complement theorem on cyclic Sylow subgroups for minimal primes \cite[IV, Satz 2.8]{Huppert}, we obtain that $G$ contains a normal $q_1$-complement $H_1$, which contains a normal $q_2$-complement $H_2$,..., which contains a normal $q_k$-complement $P$ which must be a Sylow $p$-subgroup of $G$. Note that in each step the normal complement found is characteristic, so that all these groups are also normal in $G$, in particular $P$. So $G = P \rtimes H$. It follows from \Cref{lem:LP2.4} that the structure of $G$ is as claimed. Moreover, as all the Sylow subgroups of $G$ are cyclic and the action of each Sylow subgroup of $H$ on $P$ is now faithful we get $[g,h] \neq 1$ for all $h \in H \setminus \{1\}$ and $g \in P \setminus \{1\}$.

Finally, we also show that the described groups are groups with SN. This is clear for the SSN groups of unfaithful type, as these have even SSN by \cite[Theorem 2.7]{LiuPassman16}. So assume $G$ is as described in (ii). As the action of $H$ on $P$ is irreducible and faithful, $P$ is the unique minimal normal subgroup of $G$. So if $N \unlhd G$ and $N \neq 1$, then $P \leq N$. Let moreover $Y \leq G$. If $N$ is not a subgroup of $Y$, then $NY/N \unlhd G/N$, as $G/N$ is a quotient of $H$ and hence a Dedekind group. This implies $NY \unlhd G$ and so $G$ indeed has SN. 
\end{proof}

Finally, using the methods from \cite[section 3]{LiuPassman16} we readily classify the non-solvable groups with SN.

\begin{proposition}\label{prop:NonSolvableSNGroups}
Let $G$ be a non-solvable group. Then $G$ has SN if and only if $G$ has a unique minimal normal subgroup $S$ such that $S$ is non-abelian and $G/S$ Dedekind. Moreover in that case $S = \Soc(G)$ is a direct product of isomorphic finite simple groups and if $S$ is simple, then $G$ is an almost simple group.
\end{proposition}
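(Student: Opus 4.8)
The plan is to prove both directions, leaning heavily on the background lemmas and on the parallel treatment in \cite[Section 3]{LiuPassman16}. For the forward direction, assume $G$ is non-solvable with SN. First I would note that $G$ cannot have a non-trivial normal $p$-subgroup: indeed, by \Cref{lem:LP2.5} any group with SN admitting a non-trivial normal $p$-subgroup is solvable, contradicting our hypothesis. Hence every minimal normal subgroup of $G$ is a direct product of isomorphic non-abelian simple groups. The next step is to show uniqueness of the minimal normal subgroup. Suppose $S_1, S_2$ are two distinct minimal normal subgroups; then $S_1 \cap S_2 = 1$, so by \Cref{lem:LiuPassma2.1} (with $N = S_1$, $H = S_2$), since $S_2$ is not contained in $S_1$ we would need $S_1 S_2 \unlhd G$ (automatic) but more usefully $S_2$ would have to be Dedekind — impossible, as a non-trivial direct product of non-abelian simple groups is never Dedekind. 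So $G$ has a unique minimal normal subgroup $S$, which is non-abelian. Then $S$ is not cyclic, so \Cref{lem:LiuPassma2.1} directly gives that $G/S$ is a Dedekind group.

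For the "moreover" part: since $S$ is the unique minimal normal subgroup and $\Soc(G)$ is the product of all minimal normal subgroups, we get $S = \Soc(G)$. That $S$ is a direct product of isomorphic finite simple groups is the standard structure of a minimal normal subgroup (its simple direct factors are permuted transitively by conjugation in $G$, hence are pairwise isomorphic). Finally, if $S$ is itself simple, then $C_G(S) \cap S = 1$ and $C_G(S) \unlhd G$; if $C_G(S) \neq 1$ it would contain a minimal normal subgroup of $G$ distinct from $S$, contradicting uniqueness, so $C_G(S) = 1$ and $G$ embeds in $\Aut(S)$ with socle $S$, i.e. $G$ is almost simple.

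For the converse, suppose $G$ has a unique minimal normal subgroup $S$ which is non-abelian with $G/S$ Dedekind; I must show $G$ has SN. Let $N \unlhd G$ be non-trivial and $Y \leq G$ with $N \not\leq Y$; the goal is $NY \unlhd G$. Since $S$ is the unique minimal normal subgroup, $S \leq N$, hence $NY \supseteq S$ and $NY/S$ is a subgroup of the Dedekind group $G/S$, therefore normal in $G/S$, which gives $NY \unlhd G$. This is exactly the argument used at the end of the proof of \Cref{prop:SolvSNGroups}, case (ii), and runs through verbatim.

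The main obstacle I anticipate is not any single deep step but rather being careful in the uniqueness argument: one must correctly invoke \Cref{lem:LiuPassma2.1} — which requires the normal subgroup in question to be non-cyclic to conclude the quotient is Dedekind, and requires a trivial-intersection hypothesis to conclude the complementary subgroup is Dedekind — and then observe that non-abelian (products of) simple groups are never Dedekind, which is what produces the contradiction. The appeal to \cite[Section 3]{LiuPassman16} is essentially to borrow their handling of how SN forces the socle structure; I expect their arguments transfer with only cosmetic changes since, as they remark, those arguments use only the SN property and not the stronger SSN.
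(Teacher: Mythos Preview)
Your proposal is correct and follows essentially the same approach as the paper: both use \Cref{lem:LP2.5} to rule out abelian minimal normal subgroups, then \Cref{lem:LiuPassma2.1} to force uniqueness of the minimal normal subgroup and to conclude $G/S$ is Dedekind, and the converse argument is identical. The only cosmetic differences are that the paper invokes the first clause of \Cref{lem:LiuPassma2.1} (quotient by a non-cyclic normal subgroup is Dedekind) for uniqueness while you invoke the second clause (trivial intersection forces the complement to be Dedekind), and for the almost-simple conclusion the paper appeals to the generalized Fitting subgroup containing its own centralizer whereas you argue directly that a non-trivial $C_G(S)$ would contain the unique minimal normal subgroup $S$, contradicting $C_G(S)\cap S=Z(S)=1$; both routes are valid and equally short.
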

\begin{proof}
Let $G$ be a non-solvable group with SN. Recall that minimal normal subgroups are direct products of isomorphic simple groups, see \cite[(8.3)]{Aschbacher}. By \Cref{lem:LP2.5} we cannot have an abelian minimal normal subgroup. Hence every minimal normal subgroup is the direct product of non-abelian simple groups, the socle $S$ of $G$ is non-abelian and the Fitting subgroup trivial. In particular, $S$ equals the generalized Fitting subgroup which is the unique largest normal semisimple subgroup.

Next, write $S$ as the internal direct product $\prod_{i=1}^n A_i$ with $A_i$ a minimal normal subgroup of $G$. Using \Cref{lem:LiuPassma2.1}, we see that $G/A_1$
is Dedekind which is only possible if all $A_i=1$ for $i \neq 1$. Thus $S$ is the unique minimal normal subgroup. Moreover $S$ is the direct product of isomorphic non-abelian simple groups. Furthermore, as $S$ is non-abelian, \Cref{lem:LiuPassma2.1} also yields that $G /S$ is Dedekind.

Conversely, suppose $S \leq G $ is the unique minimal normal subgroup of $G$, that $S$ is non-abelian and $G/S$ is Dedekind. Now, every $N \unlhd G$ contains the unique minimal normal subgroup $S$. Hence if $Y \leq G$, then $S \leq NY$ and so $NY / S \leq G/S$ is normal. Therefore, $NY \unlhd G$ as needed.

Finally, suppose that $G$ has SN and $S = \Soc(G)$ is simple. As $S$ is also the generalized Fitting subgroup, it contains its own centralizer. In other words, $C_G(S) = \mathcal{Z}(S)$ is trivial and hence $G$ acts faithfully on $S$. Thus one may identify $G$ with a subgroup of $\Aut (S)$ with $S$ simple, i.e. $G$ is almost simple. 
\end{proof}

\section{Groups with the ND property and the Jespers-Sun conjecture}\label{Section ND}

Let $G$ be a finite group and $n \in \Z G$ nilpotent. Then $G$ {\it has ND} if $ne \in \Z G$ for every $n$ and every primitive central idempotent $e$ of $\Q G$. In this section we answer property ND for all finite groups which are not as in \Cref{Defintion solv SSN unfaithful type}. In particular for such groups we show that there is a unique counterexample to Jespers-Sun's \Cref{conj:EricWei}. 

\begin{theorem}\label{th:MainTheoremNDInSection3}
Let $G$ be a finite group which is not an SSN group of unfaithful type. Then $G$ has ND if and only if $\Q G$ has at most one matrix component or $G \cong \langle a,b \mid a^4=b^8 =1, a^b =a^{-1} \rangle$.
\end{theorem}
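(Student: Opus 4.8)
The plan is to split the proof along the same trichotomy used throughout \Cref{sec:SN}: nilpotent groups, solvable non-nilpotent groups, and non-solvable groups, exploiting the fact that a group without SN cannot have ND (so we may restrict to groups with SN, whose structure is now fully described by \Cref{Summary theorem for SN}). For each type we must decide exactly when ND holds, knowing that ND always holds when $\Q G$ has at most one matrix component (the observation from the introduction that division algebras contain no non-trivial unipotents), so the real content is showing that in the remaining cases ND \emph{fails}, except for the single group $C_4 \rtimes C_8$. Concretely, if $G$ has SN but not at most one matrix component, then by \Cref{Summary theorem for SN} either $G$ has SSN, or $G$ is of type (ii) (an elementary abelian $P$ with a Dedekind Hall complement acting irreducibly and faithfully), or $G$ is of type (iii) (unique non-solvable minimal normal subgroup, Dedekind quotient), or we are in an SSN-group-of-unfaithful-type situation which is explicitly excluded by hypothesis.

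First I would dispatch the non-solvable case: here $G$ has SSN-failure excluded, so by \Cref{prop:NonSolvableSNGroups} we have $S=\Soc(G)$ non-abelian with $G/S$ Dedekind. The expectation is that such $G$ always has \emph{at least two} matrix components — e.g.\ already $\Q S$, sitting inside $\Q G$ via the idempotent $\wh{N}$ for suitable $N$, or two distinct non-linear characters of $S$ extending/inducing to $G$ — and one constructs an explicit bicyclic unipotent $1+n\in\Z G$ (of the form $1+(1-e)\gamma(\widetilde H)$ or a Bass-type nilpotent supported on $S$) whose projection to one primitive central idempotent is non-integral; this is essentially the ``two matrix components force a counterexample'' phenomenon underlying \Cref{conj:EricWei}. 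Similarly for type (ii): one uses the metabelian idempotent machinery of \Cref{lem:IdempotForMetabelian} together with \Cref{lem:CoefficientOfProjectionByCharacter} to exhibit a nilpotent element and an idempotent $e(G,H,K)$ for which $ne$ has a coefficient with denominator dividing $|G|$ but not equal to $1$. The case analysis should show that type (ii) groups either have one matrix component (impossible, since the faithful irreducible action of $H$ on $P$ produces a matrix component of degree $>1$, and genericity of the Dedekind structure produces a second) and hence always fail ND; the exceptional $C_4\rtimes C_8$ arises on the boundary of the SSN analysis.

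The bulk of the work — and the main obstacle — is the \textbf{nilpotent (i.e.\ $p$-group) case combined with the SSN subcase}, since SSN $p$-groups have a known but intricate classification (via \Cref{lem:LiuPassma2.2}: NCN groups, and the Liu–Passman lists). For each such group one must either verify at-most-one-matrix-component or construct a failing nilpotent element; the delicate point is that several small $2$-groups (in particular $C_4\rtimes C_8$ and its close relatives) have \emph{two} matrix components yet still satisfy ND because the unipotent units available in $\Z G$ happen to project integrally — so one needs a genuinely arithmetic argument, computing projections via \Cref{lem:CoefficientOfProjectionByCharacter} and checking denominators, rather than a purely structural one. I expect the proof to isolate a short finite list of candidate $p$-groups, verify ND or its failure on each by a direct (possibly computer-assisted) idempotent-coefficient computation, and single out $C_4\rtimes C_8$ as the unique group that has more than one matrix component yet still has ND. Finally, one assembles the three cases: in every case $G$ not of the excluded unfaithful type has ND iff $\Q G$ has at most one matrix component, with the sole exception $G\cong\langle a,b\mid a^4=b^8=1,\,a^b=a^{-1}\rangle$, which proves the theorem.
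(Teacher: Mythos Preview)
Your high-level strategy --- reduce to groups with SN via \cite[Proposition 3.4]{JespersSun}, then invoke \Cref{Summary theorem for SN} and split into nilpotent / solvable non-nilpotent / non-solvable --- is exactly the paper's approach, and your sketches for the non-nilpotent cases are close to what the paper does (bicyclic nilpotents plus strong Shoda pairs for type (ii), and an explicit $n=(1-y)x(1+y)$ with a character-coefficient estimate for the non-solvable case). One small slip: $C_4\rtimes C_8$ is a $2$-group, so it belongs to the nilpotent SSN analysis, not to the type (ii) paragraph where you placed it.

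The genuine gap is your treatment of the nilpotent SSN case. You anticipate ``a short finite list of candidate $p$-groups'' handled by direct (possibly computer-assisted) coefficient checks, but this is not what happens and would not work: after the eight Bo\v{z}ikov--Janko classes already settled by Jespers--Sun, the remaining class is the \emph{infinite} family $G(p,m,n)=\langle a,b\mid a^{p^m}=b^{p^n}=1,\ a^b=a^{1+p^{m-1}}\rangle$ with $n\geq 2$ (and $(p,m,n)\neq(2,2,2)$), and each of these has more than one matrix component. The paper does \emph{not} do a case-by-case check; it introduces a uniform construction template (\Cref{lem:ThersyConstruction}): elements $r,s,y\in\Z G$ with $r^2=s^2=rs=sr=0$, a central idempotent $e$ with $er=r$, $es=0$, and $y(r+s)/p\in\Z G$ but $yr/p\notin\Z G$, which immediately forces failure of ND. The real work (Lemmas~\ref{lem:GpmnEvenCase}--\ref{lem:GpmnOddCase3}) is writing down explicit $r,s,y$ for all parameters $(p,m,n)$ and verifying the congruences by hand, with a separate technical lemma (\Cref{lem:GpmnTechnicalLemmaOddp}) for the odd-prime cases. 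The sole survivor $G(2,2,3)$ is then shown to have ND by a lengthy but elementary coefficient analysis (\Cref{th:counterex}), again by hand. So your plan is missing the key constructive idea that makes the infinite family tractable; without something like \Cref{lem:ThersyConstruction}, the nilpotent case does not reduce to a finite verification.
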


Any group with ND is necessarily a group with SN. This follows almost directly from the definition and is recorded in \cite[Proposition 2.5]{LiuPassman09}, where this follows from the proof, and more explicitly in \cite[Proposition 3.4]{JespersSun}. So to prove \Cref{th:MainTheoremNDInSection3} we will use \Cref{Summary theorem for SN}. Furthermore we will have to distinguish the case where $G$ is nilpotent or not. In particular the above result is the combination of \Cref{th:NilpotentCase} and \Cref{ND for non-nilp SSN grps}. 

One may now draw easily interesting consequences from \Cref{Summary theorem ND}. For example if $G$ is not metacyclic, then Jespers-Sun's conjecture is actually correct. In the philosophy ``what does a group ring $RG$ know about $G$?'' we can give a positive answer to a variation of the Jespers-Sun Conjecture:

\begin{corollary}\label{coro ND determined by QG}
Let $G$ and $H$ be groups such that $\mathbb{Q}G \cong \mathbb{Q}H$ and $G$ has ND. Then $H$ has $ND$.
\end{corollary}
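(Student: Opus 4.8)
The plan is to reduce \Cref{coro ND determined by QG} to \Cref{th:MainTheoremNDInSection3} by observing that the property ``$\Q G$ has at most one matrix component'' is clearly an invariant of the isomorphism type of $\Q G$, so the only genuine issue is controlling the two families of exceptions: the SSN groups of unfaithful type and the single group $C_4 \rtimes C_8$. Concretely, if $\Q G \cong \Q H$ and $G$ has ND, then by \Cref{th:MainTheoremNDInSection3} either $\Q G$ has at most one matrix component (and we are done immediately, since then $\Q H$ does too and $H$ has ND by the observation in the introduction that at most one matrix component forces ND), or $G$ is an SSN group of unfaithful type having ND, or $G \cong C_4 \rtimes C_8$. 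So the real content is: for each exceptional $G$ that has ND, show that every $H$ with $\Q H \cong \Q G$ also has ND.

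First I would dispose of $G \cong C_4 \rtimes C_8 = \langle a,b \mid a^4 = b^8 = 1, a^b = a^{-1}\rangle$. Here one computes the Wedderburn decomposition of $\Q G$ explicitly — it is metabelian, so \Cref{lem:IdempotForMetabelian} applies — and then one wants to argue that this particular rational algebra determines the group, or at least that any $H$ with the same rational group algebra still has ND. The cleanest route is probably to show directly from the shape of $\Q G$ (for instance, the number and reduced degrees of its matrix components, combined with $|H| = \dim_\Q \Q H = 32$) that the only possibilities for $H$ are groups already known to have ND; since there are finitely many groups of order $32$ this is a finite check, and in fact one expects $H \cong G$ or $H$ among a short list whose ND status is settled by \Cref{th:MainTheoremNDInSection3} (all groups of order $32$ are not SSN of unfaithful type, the latter having order divisible by exactly two primes with a cyclic Sylow of prime order).

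The main obstacle is the SSN groups of unfaithful type: these are exactly the groups $G = P \rtimes Q$ with $P \cong C_p$, $Q$ cyclic of $q$-power order acting non-trivially but non-faithfully, and \Cref{th:MainTheoremNDInSection3} deliberately says nothing about whether they have ND. The saving observation is that we only need the implication for those that \emph{do} have ND, together with the fact that this class of groups is, up to isomorphism, detected by $\Q G$. So the key step is: if $G$ is SSN of unfaithful type and $\Q H \cong \Q G$, then $H$ is also SSN of unfaithful type (indeed I expect $H \cong G$, since for these metacyclic groups the rational group algebra — a sum of cyclotomic fields and a few cyclic cyclotomic algebras whose degrees encode $p$, $q$ and the order of the action — pins down the isomorphism type). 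Granting that, ND passes from $G$ to $H$ because $H \cong G$ (or at worst because one proves ND is constant on this explicitly parametrised family with a fixed $\Q$-algebra). I would structure the proof as: (1) the at-most-one-matrix-component case, immediate; (2) the $C_4 \rtimes C_8$ case, via a finite check among groups of order $32$; (3) the SSN-of-unfaithful-type case, via the assertion that $\Q H \cong \Q G$ forces $H \cong G$ for such $G$, hence ND transfers. The crux, and the step most likely to need a genuine (though short) argument rather than a citation, is this last rational-group-algebra rigidity for metacyclic groups of the relevant shape.
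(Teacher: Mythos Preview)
Your proposal is correct and follows essentially the same structure as the paper's proof: reduce via \Cref{th:MainTheoremNDInSection3} to the one-matrix-component case (immediate), the $C_4 \rtimes C_8$ case, and the SSN-of-unfaithful-type case, and in the latter two argue that $\Q H \cong \Q G$ forces $H \cong G$. The paper carries out exactly the rigidity arguments you anticipate --- for $C_4 \rtimes C_8$ via the abelianization $H/H' \cong C_8 \times C_2$ together with the presence of a rational quaternion component in $\Q G$, and for $C_p \rtimes C_{q^k}$ via the order, the abelianization, and the count of simple components (equal to the number of conjugacy classes of cyclic subgroups) to recover $\ZZ(H)$ and hence the action.
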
 
\begin{proof}
Assume first that $G$ has at most one matrix component. Then $\mathbb{Q}G \cong \mathbb{Q}H$ implies, that so has $H$ and so $H$ has ND. By \Cref{th:MainTheoremNDInSection3} it remains to consider the cases that $G$ is the non-abelian group $C_4 \rtimes C_8$ or an SSN group of unfaithful type. Assume first that $G \cong C_4 \rtimes C_8$. Then $G/G' \cong C_8 \times C_2 \cong H/H'$ \cite[Theorem 2.8]{AngelAngel}. It follows that if $\langle c \rangle = H'$, then either there is an element $h \in H$ of order $4$ or $16$ such that $c \in \langle h \rangle$ or there is no element at all squaring to $c$. As a generalized quaternion group of order $32$ has derived subgroup of order $8$, it follows that $\ZZ(G)$ has rank bigger than one and with the previous $H \cong G$ or it is one of the groups
\[H_1 = \langle a,b \ | \ a^{16} = b^2 =1, a^b = a^9 \rangle, \ \ H_2 = \langle a,b,c \ | \ a^8=b^2=c^2 = 1, a^b = ac, [a,c]=[b,c]=1 \rangle.\]
None of these groups maps onto a quaternion group, so both $\mathbb{Q}H_1$ and $\mathbb{Q}H_2$ do not have a simple component isomorphic to the rational quaternion algebra, while $\mathbb{Q}G$ does. We conclude $G \cong H$.

So assume $G$ is an SSN group of unfaithful type, say $G \cong C_p \rtimes C_{q^k}$ for some primes $p$ and $q$ and a positive integer $k$. Then $\mathbb{Q}G \cong \mathbb{Q}H$ implies $|H| = p\cdot q^k$. Moreover the maximal commutative direct summand of $\mathbb{Q}G$ is isomorphic to $\mathbb{Q}(G/G') \cong \mathbb{Q}C_{q^k}$. So $H/H' \cong C_{q^k}$, which implies also $H' \cong C_p$. Hence $H \cong C_p \rtimes  C_{q^k}$. To show that $G \cong H$ it remains to show that the action on the derived subgroup has the same order or equivalently $\ZZ(G) \cong \ZZ(H)$. It is easy to calculate that the number of conjugacy classes of cyclic subgroups of $G$ and $H$ is the the same if and only if $\ZZ(G) \cong \ZZ(H)$. As this number coincides with the number of simple components of a rational group algebra \cite[Corollary 7.1.12]{GRG1}, the result follows.
\end{proof}

\subsection{Background on describing simple components via Shoda pairs}\label{prelim for ND section}

As could be expected from the content of \Cref{conj:EricWei}, we need to recall some methods to construct primitive central idempotents of $\Q G$. These methods were introduced by Olivieri-del R\'io-Sim\'on \cite{OlRiSi}, see \cite[Chapter 3]{GRG1} for a good introduction. To start, recall that if $ H\unlhd G$, then $\wh{H}$ is a central idempotent in $\Q G$. Now, set $\epsilon(H,H) = \wh{H}$ and for a strict normal subgroup $K$ of $H$ define
\begin{equation}\label{Def epsilon idempotents}
\epsilon (H,K) =  \prod\limits_{M/K \in \mathcal{M}(H/K)} (\wh{K} - \wh{M}) = \wh{K} \prod_{M/K \in  \mathcal{M}(H/K)} (1 - \wh{M}),
\end{equation}
where $ \mathcal{M}(H/K)$ denotes the set of the non-trivial minimal normal subgroups of $H/K$. In both cases the construction results in a central idempotent in $\Q H$. 
Next, with $K \unlhd H$ one associates the element 
\begin{equation}\label{Def e(G,H,K)}
e(G,H,K) = \sum_{t \in \mathcal{T}} \epsilon(H,K)^{t},
\end{equation}
where $\mathcal{T}$ is a right transversal of $\Cen_G(\epsilon(H,K))$ in $G$. The element $e(G,H,K)$ is central in $\Q G$ and is a primitive idempotent when $(H,K)$ is a {\it Strong Shoda pair} of $G$. A tuple $(H,K)$ is called a strong Shoda pair when $K \leq H \unlhd N_G(K)$, $H/K$ is cyclic and a maximal abelian subgroup of $N_G(K)/K$, and the $G$-conjugates of $\epsilon(H,K)$ are orthogonal.

To a central idempotent $e$ we will also need the associated homomorphism 
\begin{equation}\label{map associated to central id}
\varphi_e: G \rightarrow Ge, \ \ g \mapsto ge.
\end{equation}

The following is a combination of \cite[Proposition 3.4.1, Theorems 3.4.2 \& 3.5.5 and Problem 3.5.1]{GRG1}.

\begin{theorem}[\cite{OlRiSi}]\label{form of SSP idempotent result}
With notations as above, $e(G,H,K)$ is a primitive central idempotent of $\Q G$ if $(H,K)$ is a strong Shoda pair. Moreover, in that case $\Cen_G(\epsilon(H,K)) \cong N_G(K)$ and $\ker(\varphi_{e(G,H,K)}) = \text{core}_G(K) = \bigcap_{g \in G} K^g$.
\end{theorem}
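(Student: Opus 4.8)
This is the Olivieri--del R\'io--Sim\'on structure theorem for strong Shoda pairs, so the plan is to reproduce their argument; of its assertions only the primitivity of $e(G,H,K)$ carries real weight, the rest reducing to bookkeeping with the explicit formulas \eqref{Def epsilon idempotents} and \eqref{Def e(G,H,K)}. First I would analyse $\epsilon(H,K)$ inside $\Q H$. As $H/K$ is cyclic, its non-trivial minimal normal subgroups are exactly its subgroups of prime order, each characteristic in $H/K$; identifying $\wh K\,\Q H$ with $\Q(H/K)$ and splitting the latter into cyclotomic fields $\Q(\zeta_d)$ ($d\mid[H:K]$), one checks that $\epsilon(H,K)=\wh K\prod_{M/K\in\mathcal M(H/K)}(1-\wh M)$ is the sum of the primitive idempotents supported on the characters non-trivial on every minimal subgroup, i.e.\ on the \emph{faithful} characters of $H/K$; since these form a single Galois orbit, $\epsilon(H,K)$ is a primitive central idempotent of $\Q H$ with $\epsilon(H,K)\Q H\cong\Q(\zeta_{[H:K]})$, on which $H$ acts through $H/K\hookrightarrow\Q(\zeta_{[H:K]})^{\times}$. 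Two consequences: the coefficient of $1$ in $\epsilon(H,K)$ is non-zero, so for $g\in G$ the equality $g\,\epsilon(H,K)=\epsilon(H,K)$ forces $g\in\supp\epsilon(H,K)\subseteq H$ and then, by faithfulness, $g\in K$; hence $K$, and for each $t\in G$ also $K^{t}$, is recoverable as the left-stabiliser of $\epsilon(H,K)$, resp.\ of $\epsilon(H,K)^{t}$, in $G$.

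Next I would compute $\Cen_G(\epsilon(H,K))$. If $g\in N_G(K)$ then $g$ fixes $\wh K$, and since $H\unlhd N_G(K)$ by hypothesis $g$ normalises $H$, hence permutes the characteristic subgroups $M$ with $M/K\in\mathcal M(H/K)$, so it fixes every $\wh M$ and therefore $\epsilon(H,K)$; conversely a $g$ fixing $\epsilon(H,K)$ under conjugation sends its left-stabiliser $K$ from the previous paragraph to itself, so $g\in N_G(K)$. Thus $\Cen_G(\epsilon(H,K))=N_G(K)$ (the ``$\cong$'' in the statement being this equality of subgroups of $G$). Writing $e:=e(G,H,K)=\sum_{t\in\mathcal T}\epsilon(H,K)^{t}$ for $\mathcal T$ a right transversal of $N:=N_G(K)$ in $G$, the orthogonality of the $G$-conjugates of $\epsilon(H,K)$ --- part of the strong-Shoda-pair hypothesis --- makes $e$ an idempotent, and $e$ is visibly $G$-conjugation invariant, hence central in $\Q G$.

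For primitivity one shows $\Q G e$ is simple. Since $N$ centralises $\epsilon(H,K)$, the latter is central in $\Q N$, and $\epsilon(H,K)\,\Q N$ is a crossed product of the field $\epsilon(H,K)\Q H\cong\Q(\zeta_{[H:K]})$ with $N/H$ --- here one uses that $H/K$ is a maximal abelian subgroup of $N/K$ --- hence a simple algebra; an induction argument then identifies the $\Q G$-module $\Q G e$ with the induction of the simple $\epsilon(H,K)\Q N$-module and gives $\Q G e\cong M_{[G:N]}\big(\epsilon(H,K)\Q N\big)$, which is simple, so $e$ is a primitive central idempotent of $\Q G$. For the kernel: if $ge=e$, multiplying on the right by $\epsilon(H,K)^{t}$ and using $e\,\epsilon(H,K)^{t}=\epsilon(H,K)^{t}$ (orthogonality) yields $g\,\epsilon(H,K)^{t}=\epsilon(H,K)^{t}$, hence $g\in K^{t}$ for every $t\in\mathcal T$ by the stabiliser description above; the reverse containment is clear, as $\text{core}_G(K)$ is contained in each $K^{t}$. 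Therefore $\ker\varphi_{e}=\bigcap_{t\in\mathcal T}K^{t}$, and since $N=N_G(K)$ normalises $K$ this equals $\bigcap_{g\in G}K^{g}=\text{core}_G(K)$.

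The main obstacle is the primitivity step: establishing the crossed-product description of $\epsilon(H,K)\Q N$ and the matrix-ring description of $\Q G e$ is where the full force of the strong-Shoda-pair axioms (maximal abelianity of $H/K$ in $N/K$, together with orthogonality of the conjugates) is genuinely used; everything else --- the analysis of $\epsilon(H,K)$, the centralizer, and the kernel computation --- is formal manipulation of the explicit idempotent formulas.
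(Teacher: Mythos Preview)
The paper does not give its own proof of this theorem: it is quoted as a result of Olivieri--del R\'io--Sim\'on, with a pointer to \cite[Proposition 3.4.1, Theorems 3.4.2 \& 3.5.5 and Problem 3.5.1]{GRG1}. Your proposal is a faithful reconstruction of precisely that argument --- the cyclotomic description of $\epsilon(H,K)\Q H$, the identification $\Cen_G(\epsilon(H,K))=N_G(K)$ via the left-stabiliser trick, the crossed-product simplicity of $\epsilon(H,K)\Q N$ using maximal abelianity of $H/K$ in $N/K$, and the induction/matrix-ring description of $\Q G e$ --- so it matches the cited source and is correct as outlined.
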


We also need the $\Q$-dimension of the simple algebra associated to a strong Shoda pair which directly follows from the known description of $\Q G e(G,H,K)$.

\begin{lemma}\label{dim of simple of SSP}
Let $(H,K)$ be a strong Shoda pair of $G$. Then
$$\dim_{\Q} \Q G e(G,H,K) =  [G:H] [G:N_G(K)] \phi([H:K]),$$
where $\phi(\cdot)$ denotes the phi-Euler function.
\end{lemma}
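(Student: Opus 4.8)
The plan is to derive the dimension formula directly from the standard structure theorem for the simple algebra $\Q G e(G,H,K)$ associated to a strong Shoda pair, which is recorded in \cite[Chapter 3]{GRG1}. Recall that for a strong Shoda pair $(H,K)$, if we set $N = N_G(K)$, then $\Q N \epsilon(H,K) \cong \Q(\zeta_{[H:K]})$ as a $\Q$-algebra (a cyclotomic field, since $H/K$ is cyclic of order $[H:K]$ and is maximal abelian in $N/K$, so that $\epsilon(H,K)$ realizes the faithful linear characters of $H/K$ induced up), and then $\Q G e(G,H,K) \cong M_{[G:N]}\big(\Q N \epsilon(H,K) * (N/H)\big)$, a matrix algebra over a crossed product of the cyclotomic field with the (cyclic) group $N/H$.

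From this description the dimension count is bookkeeping. First I would note that $\dim_\Q \Q(\zeta_{[H:K]}) = \phi([H:K])$. Next, the crossed product $\Q N \epsilon(H,K) * (N/H)$ has $\Q$-dimension $|N/H| \cdot \phi([H:K]) = [N:H]\,\phi([H:K])$, since a crossed product of an algebra $A$ with a group $\Gamma$ has dimension $|\Gamma|\dim A$ over the base field. Finally, passing to the $[G:N] \times [G:N]$ matrix ring multiplies by $[G:N]^2$, giving
\[
\dim_\Q \Q G e(G,H,K) = [G:N]^2 \, [N:H] \, \phi([H:K]).
\]
It then remains only to rewrite $[G:N]^2[N:H]$ as $[G:H][G:N]$, which is immediate from $[G:H] = [G:N][N:H]$ (using $H \leq N$, valid because $K \leq H \unlhd N_G(K) = N$ by the definition of a strong Shoda pair). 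This yields exactly $[G:H]\,[G:N_G(K)]\,\phi([H:K])$ as claimed.

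There is essentially no hard step here: the only things to be careful about are citing the correct form of the structure theorem for $\Q G e(G,H,K)$ (the matrix-ring-over-a-crossed-product description from \cite{GRG1}, which the excerpt has already invoked via \Cref{form of SSP idempotent result}) and the elementary observation that $H \subseteq N_G(K)$ so that the index factorization $[G:H]=[G:N_G(K)][N_G(K):H]$ is legitimate. If one wanted to avoid quoting the full crossed-product structure, an alternative is to compute the dimension as $|G| \cdot (\text{rank of } e(G,H,K) \text{ as an idempotent})$, i.e. $\dim_\Q \Q G e(G,H,K) = |G| \cdot \tfrac{\phi([H:K])}{[H:K]} \cdot [G : C_G(\epsilon(H,K))]^{-1} \cdots$, but this is more error-prone, so I would stick with the structural description. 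The main (very mild) obstacle is just making sure the indexing conventions for $N_G(K)$, $H$, $K$ match those of the cited source.
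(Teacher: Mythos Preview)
Your proposal is correct and follows essentially the same route as the paper: both invoke the structure theorem \cite[Theorem 3.5.5]{GRG1} giving $\Q G e(G,H,K) \cong M_{[G:N_G(K)]}\big(\Q(\zeta_{[H:K]}) * N_G(K)/H\big)$, then read off the dimension as $[G:N_G(K)]^2\,[N_G(K):H]\,\phi([H:K])$ and regroup the indices. One small slip to fix: you write $\Q N \epsilon(H,K) \cong \Q(\zeta_{[H:K]})$, but it is $\Q H \epsilon(H,K)$ that is the cyclotomic field; $\Q N \epsilon(H,K)$ is already the crossed product, so writing $M_{[G:N]}\big(\Q N \epsilon(H,K) * (N/H)\big)$ double-counts. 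Your dimension computation is unaffected since you use the correct factor $[N:H]\phi([H:K])$ for the crossed product.
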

\begin{proof}
Following \cite[Theorem 3.5.5]{GRG1}, $\Q G e(G,H,K) \cong M_{[G : N_G(K)]}(\Q(\zeta_{[H:K]}) \star N_G(K)/H)$ for some crossing that can be made explicit (see \cite[Remark 3.5.6]{GRG1}). Therefore, one has that
$$\begin{array}{ll}
dim_{\Q} \Q G e(G,H,K) & = [G : N_G(K)] ^2 \phi([H:K]) [N_G(K) : H] \\
& = [G:H] [G:N_G(K)] \phi([H:K]).
\end{array}$$
\end{proof}

\subsection{Nilpotent case}\label{sec:NDForNilpotent}

In this section we completely solve \Cref{conj:EricWei} for nilpotent groups. It turns out that in this class the conjecture is almost true - there is exactly one counterexample. From the results of the previous section, we know that we need to consider the question only for nilpotent groups with SSN. For many of those Jespers and Sun did prove their conjecture \cite[Corollary 4.12]{JespersSun}, but as it turns out, quite some work remains. Overall in this section we get:

\begin{theorem}\label{th:NilpotentCase}
Let $G$ be a nilpotent group. Then $G$ has ND if and only if either $G$ has one matrix component or $G \cong \langle a,b \mid a^4=b^8 =1, a^b =a^{-1} \rangle \cong C_4 \rtimes C_8$. 
\end{theorem}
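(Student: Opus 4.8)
The plan is to combine the classification of nilpotent groups with SN from \Cref{prop:NilpotentSNNotSSN} with a direct analysis of the remaining case. Since any group with ND has SN \cite[Proposition 2.5]{LiuPassman09}, and \Cref{prop:NilpotentSNNotSSN} tells us that a nilpotent group with SN either has at most one matrix component or has SSN, we may assume from the outset that $G$ is a nilpotent group with SSN and with at least two matrix components, and we must show that $G$ has ND precisely when $G \cong C_4 \rtimes C_8$. By \Cref{lem:LiuPassma2.2} a nilpotent group with SSN is a direct product of Dedekind $p'$-groups and $p$-groups all of whose non-cyclic subgroups are normal (NCN groups), and since Dedekind factors and the direct factors for distinct primes do not interact to create matrix components beyond those of the individual factors, the analysis reduces to $p$-groups. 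Using the explicit classification of NCN $p$-groups available from the SSN literature (Liu--Passman), together with the work already done by Jespers--Sun \cite[Corollary 4.12]{JespersSun} which handles many such groups, we are left with a short finite list of families to examine.

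The core technical step is then, for each remaining family, to decide ND by a concrete idempotent computation. Since nilpotent groups are in particular metabelian when they are NCN $2$-groups of the relevant small type (or one checks metabelianness directly), we can use \Cref{lem:IdempotForMetabelian} to list all primitive central idempotents as $e(G,H,K)$, identify which components are matrix components (those with $G' \not\leq K$ together with the appropriate non-commutativity), and then test whether $ne \in \Z G$ for a well-chosen nilpotent element $n$. The natural candidates for $n$ are the bicyclic nilpotent elements $(\widehat{h}-1)g\widetilde{\langle h\rangle}$ type expressions attached to a non-normal cyclic subgroup, whose projections to a matrix component typically have denominators; the key computation is the coefficient formula of \Cref{lem:CoefficientOfProjectionByCharacter}, which lets us read off whether the projection has integral coefficients. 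For $C_4 \rtimes C_8$ one must show this works out integrally for \emph{every} nilpotent $n$ and every $e$ — the extra structure here is that the unique matrix component is $M_2(\Q)$-adjacent and the relevant unipotent subgroup is so constrained that the obstruction vanishes; while for every other group on the list one exhibits a single explicit $n$ and $e$ with $ne \notin \Z G$.

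The main obstacle I expect is twofold. First, \emph{pinning down the exact finite list} of NCN $p$-groups with at least two matrix components that are not already dispatched by \cite[Corollary 4.12]{JespersSun}: this requires carefully going through the Liu--Passman classification and, for odd $p$, checking that such groups either have at most one matrix component or fail ND by an easy idempotent argument (odd-order groups have no rational quaternion obstructions, so one expects a clean answer there), while the genuinely delicate cases are the $2$-groups of order $16$ and $32$ of the form $C_4 \rtimes C_8$, $C_4 \rtimes C_4$, $Q_8 \times C_4$, and close relatives. Second, the \emph{positive} direction for $C_4 \rtimes C_8$: verifying ND holds means checking all of $\U(\Z G)_{un}$, which is infinite, so one cannot brute-force it; instead one must argue structurally, e.g.\ by showing $\langle \U(\Z G)_{un}\rangle$ lands, up to the $\mathcal{E}_G(e)$'s, inside the non-matrix part, or by a congruence-subgroup argument identifying the unipotent elements relevant to the unique matrix component $M_2(\Q)$ and checking $ne\in\Z G$ for the generic such $n$ using \Cref{lem:CoefficientOfProjectionByCharacter}. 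This verification for the exceptional group is where the bulk of the genuine work lies; everything else is bookkeeping against known classifications plus single-example computations.
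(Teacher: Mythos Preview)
Your overall reduction strategy is correct and matches the paper: use \Cref{prop:NilpotentSNNotSSN} to reduce to SSN groups, then the Bo\v{z}ikov--Janko classification (nine families (BJ1)--(BJ9)), then cite \cite[Section 4]{JespersSun} to dispose of eight of them. The remaining family is exactly the groups $G(p,m,n)$ of \eqref{definition BJ1 class}, for all primes $p$ and all $m\geq 2$, $n\geq 1$.

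However, your proposed method for the negative direction has a genuine gap. You plan to disprove ND for the groups $G(p,m,n)$ (with more than one matrix component and $\neq G(2,2,3)$) using bicyclic nilpotent elements $(1-h)g\widetilde{\langle h\rangle}$. This cannot succeed: the paper proves in \Cref{the hardest cases are resistant} that every $G(p,m,n)$ is \emph{bicyclic resistant}, i.e.\ $ne\in\Z G$ for every bicyclic nilpotent $n$ and every central idempotent $e$. What the paper does instead is introduce a tailored construction (\Cref{lem:ThersyConstruction}): one builds elements $r,s\in\Z G$ with $r^2=s^2=rs=sr=0$ that live in \emph{different} simple components, together with a central element $y$ such that $y(r+s)/p\in\Z G$ but $yr/p\notin\Z G$. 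Then $y(r+s)/p$ is nilpotent and its $e$-component is $yr/p\notin\Z G$. Carrying this out occupies four separate lemmas (\Cref{lem:GpmnEvenCase}, \Cref{lem:GpmnOddCase1}, \Cref{lem:GpmnOddCase2}, \Cref{lem:GpmnOddCase3}), and the odd-$p$ cases are not at all ``clean'' --- they require a nontrivial congruence lemma (\Cref{lem:GpmnTechnicalLemmaOddp}) and, for $G(p,2,2)$, explicit matrix representations to verify $r^p=0$.

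For the positive direction on $G(2,2,3)$ your description is also off: this group has \emph{two} matrix components, $M_2(\Q)$ and $M_2(\Q(i))$, not one. The paper's proof (\Cref{th:counterex}) does not use congruence subgroups or structural arguments on $\langle\U(\Z G)_{un}\rangle$; it parametrizes a \emph{generic} nilpotent element $n=\sum_g\alpha(g)g\in\Z G$, derives linear and quadratic relations among the $\alpha(g)$ from the vanishing/nilpotency conditions in each component via \Cref{lem:FormulaNilpotent2x2Matrices}, extracts the key parity congruence $\alpha(g)+\alpha(gb^4)\equiv 0\bmod 2$, and then checks via \Cref{lem:CoefficientOfProjectionByCharacter} that this congruence is exactly what is needed for both $ne$'s to be integral.
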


The identifier of the exception appearing in the theorem in the SmallGroupsLibrary \cite{SmallGroupLibrary} is $[32,12]$.

It turns out that from our results in the previous section and the previous work of others, mostly Liu and Jespers-Sun, there is one series of groups we need to address which we define now. For a prime $p$ and positive integers $n \geq 1$ and $m \geq 2$ define the group
\begin{equation}\label{definition BJ1 class}
G(p,m,n) = \langle a,b \ | \ 1 = a^{p^m} = b^{p^n}, a^b = a^{1+p^{m-1}} \rangle. 
\end{equation}
Note that the center of $G(p,m,n)$ is $\langle a^p \rangle \times \langle b^p \rangle$. When working with a group $G(p,m,n)$ we will always assume that it has generators and relations exactly as given in \eqref{definition BJ1 class}.

\begin{theorem}\label{th:counterex}
The group $G(2,2,3)$ has (ND), but $\mathbb{Q}G(2,2,3)$ has more than one matrix component. Consequently, \Cref{conj:EricWei} is not correct.
\end{theorem}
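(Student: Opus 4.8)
The group $G = G(2,2,3) = \langle a,b \mid a^4 = b^8 = 1, a^b = a^{1+2} = a^{-1}\rangle$ is exactly the group $C_4 \rtimes C_8$ appearing in \Cref{th:MainTheoremNDInSection3}, so the claim that $\Q G$ has more than one matrix component and yet $G$ has ND is the content of that exception. The plan is to split the statement into two independent assertions: first, that $\Q G$ has at least two simple components of reduced degree $\geq 2$; and second, that for \emph{every} nilpotent $n \in \Z G$ and every primitive central idempotent $e \in \PCI(\Q G)$ one has $ne \in \Z G$.

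\textbf{The Wedderburn decomposition.} Since $G$ is metabelian (indeed metacyclic), I would compute $\PCI(\Q G)$ via \Cref{lem:IdempotForMetabelian}, taking $A = \langle a \rangle \times \langle b^2 \rangle \cong C_4 \times C_4$ as a maximal abelian subgroup containing $G' = \langle a^2 \rangle$ (note $[a,b] = a^{-2}$, so $G' = \langle a^2 \rangle$ has order $2$). One enumerates the pairs $(H,K)$ with $A \leq H$, $H' \leq K \leq H$, $H/K$ cyclic and $H$ maximal such, then uses \Cref{dim of simple of SSP} (or direct character computation via \Cref{th:IdempotentsFromCharacters}) to read off the reduced degrees. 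The non-commutative components come from faithful-on-$G/K$ representations where $G' = \langle a^2 \rangle \not\leq K$; since $[G:A] = 2$ the matrix components have reduced degree $2$. I expect to find (at least) two such $2\times 2$ components — concretely, one can see that $\Q G$ surjects onto $\Q D_8 \cong \Q^4 \oplus M_2(\Q)$ via $b \mapsto b$ with $b^2$ killed, giving one $M_2(\Q)$, and onto $\Q Q_8 \cong \Q^4 \oplus \left(\frac{-1,-1}{\Q}\right)$ via the quotient by $\langle a^2 b^4 \rangle$ or similar (one checks $G$ has a quotient isomorphic to $Q_8$ since $\ZZ(G) = \langle a^2\rangle \times \langle b^2 \rangle$ has rank $2$), giving a quaternion component which also has reduced degree $2$. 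Alternatively $\Q(\zeta_4)$-valued characters induced from $A$ yield an $M_2(\Q(i))$-type component. Either way, exhibiting two distinct primitive central idempotents $e_1, e_2$ with $\Q G e_i$ of reduced degree $2$ settles the first assertion, and hence the final sentence ``$\Cref{conj:EricWei}$ is not correct.''

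\textbf{The ND verification.} For the harder half I would use \Cref{lem:CoefficientOfProjectionByCharacter}: for a matrix component $M_k(D)$ with character $\chi$, the coefficient of $ne$ at $g \in G$ is $\frac{k}{|G|}\sum_{h\in G}\alpha(h)\chi(g^{-1}h)$, where $n = \sum \alpha(g)g$. For the rational components $\chi$ is $\Z$-valued, so the only obstruction to integrality is the denominator $|G| = 32$; for the quaternion component with $\chi$ $\Z$-valued the situation is similar. One then has to argue that for $n$ nilpotent, $\sum_h \alpha(h)\chi(g^{-1}h)$ is divisible by $|G|/k$. The key leverage is that a nilpotent $n$ lies in the augmentation ideal relative to every central idempotent in the image of which $n$ maps to a nilpotent-of-a-division-algebra $= 0$, i.e. $ne$ for commutative components vanishes, and $n$ maps to a genuinely nilpotent matrix in the $2\times 2$ components; combined with the fact that $G$ has only one matrix component "up to the quaternion obstruction" in a suitable precise sense, one reduces to a bounded finite check. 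In practice I would: (a) parametrize nilpotent $n$ by the condition that its image in each $M_2(D_i)$ is a nilpotent matrix, which for $2\times 2$ forces $\mathrm{tr} = \det = 0$, giving linear/quadratic constraints on the $\alpha(g)$; (b) plug these into the coefficient formula and check the divisibility by $32/2 = 16$ holds identically. The main obstacle is exactly step (b): showing the congruences hold for \emph{all} nilpotent $n$ simultaneously rather than for a generating set, since nilpotence is not closed under addition. I would handle this either by a direct (possibly computer-assisted) enumeration over the $2$-dimensional (or low-dimensional) variety of nilpotent elements modulo the relevant prime, or by invoking the structural results already set up — e.g. that $G$ has SSN and that the only genuinely problematic projections are onto matrix components, so that it suffices to control the interaction between the unique (up to the listed exceptions) matrix component and the quaternion component, which for this specific small group is a finite verification.
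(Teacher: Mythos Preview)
Your overall strategy matches the paper's: compute the Wedderburn decomposition explicitly, then for a generic nilpotent $n = \sum \alpha(g)g$ derive enough constraints on the $\alpha(g)$ to force $ne \in \Z G$ for each matrix idempotent via \Cref{lem:CoefficientOfProjectionByCharacter}. Two corrections are needed.

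First, a terminological slip: in the paper's convention the reduced degree of $M_n(D)$ is $n$, so $\mathbb{H}_\Q$ has reduced degree $1$ and is \emph{not} a matrix component. The two genuine matrix components of $\Q G$ are $M_2(\Q)$ (from the quotient $G/\langle b^2\rangle \cong D_8$) and $M_2(\Q(i))$ (from $G/\langle a^2 b^4\rangle$); your ``alternatively'' was the correct second one, not the quaternion component.

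Second, and more substantively, the ND check is not a soft structural reduction or a finite enumeration. The linear constraints from $n$ vanishing in the commutative summand and in $\mathbb{H}_\Q$ give $\alpha(g)=-\alpha(ga^2)$ and $\alpha(g)+\alpha(gb^4)=\alpha(gb^2)+\alpha(gb^6)$, and the trace-zero conditions in the two $M_2$ components add a few further linear equations; these alone do \emph{not} suffice for integrality. The decisive step in the paper is to write out the entries $x,y,z,w$ of the image of $n$ in \emph{both} $M_2(\Q)$ and $M_2(\Q(i))$, impose the quadratic condition $x^2=-yz$ from \Cref{lem:FormulaNilpotent2x2Matrices} in each, and then compare the resulting quadratic identities modulo~$16$ (separating real and imaginary parts in the $\Q(i)$ component). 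Only this interplay of the two determinant conditions yields the key congruence $\alpha(g)+\alpha(gb^4)\equiv 0 \pmod 2$ for all $g$, after which the coefficient of $ne$ at $g$ collapses to $\tfrac{1}{2}(\alpha(g)+\alpha(gb^4))\in\Z$ for each of the two matrix idempotents. Your sketch anticipates the shape of the argument but misses that the quadratic (determinant) constraints, coupled across the two matrix components, are indispensable; the trace conditions and the vanishing in division components leave you one factor of $2$ short.
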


Before we proceed to prove the theorem we record an easy property of nilpotent $2\times 2$-matrices.

\begin{lemma}\label{lem:FormulaNilpotent2x2Matrices}
Assume $A = \begin{pmatrix} x & y \\ z & w \end{pmatrix}$ is a $2\times 2$-matrix over a commutative domain $R$. Then $A$ is nilpotent if and only if $x = -w$, $x^2 = -yz$. 
\end{lemma}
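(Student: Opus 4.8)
\textbf{Proof sketch of \Cref{lem:FormulaNilpotent2x2Matrices}.}

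The plan is to compute directly. Since $R$ is a commutative domain, I may pass to its field of fractions $F = \operatorname{Frac}(R)$ and work with the characteristic polynomial of $A$, which is $t^2 - (x+w)t + (xw - yz)$. A $2\times 2$ matrix over a field is nilpotent if and only if $A^2 = 0$ if and only if both coefficients $\operatorname{tr}(A) = x+w$ and $\det(A) = xw - yz$ vanish (by Cayley--Hamilton, $A^2 = \operatorname{tr}(A)A - \det(A)I$, so if trace and determinant are zero then $A^2 = 0$; conversely if $A^2 = 0$ then $A$ is nilpotent so all eigenvalues are $0$, forcing trace and determinant to be $0$). Translating: $A$ nilpotent $\iff$ $x + w = 0$ and $xw - yz = 0$ $\iff$ $w = -x$ and $-x^2 - yz = 0$, i.e. $x = -w$ and $x^2 = -yz$. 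These equalities hold in $F$, but all quantities involved already lie in $R$, so they hold in $R$.

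Alternatively, and perhaps more in the spirit of an elementary lemma stated over a domain, one can avoid Cayley--Hamilton and just square $A$ by hand: $A^2 = \begin{pmatrix} x^2 + yz & y(x+w) \\ z(x+w) & w^2 + yz \end{pmatrix}$. If $x = -w$ and $x^2 = -yz$, then every entry is zero, so $A^2 = 0$ and $A$ is nilpotent. Conversely suppose $A^k = 0$ for some $k$; then $A$ cannot be invertible, so $\det(A) = xw - yz = 0$, and also no power of $A$ being invertible together with the domain hypothesis forces $\operatorname{tr}(A) = 0$. The cleanest way to see $\operatorname{tr}(A) = 0$: from $A^2 = \operatorname{tr}(A)A - \det(A)I = \operatorname{tr}(A)A$ (using $\det(A)=0$), an easy induction gives $A^n = \operatorname{tr}(A)^{n-1}A$ for all $n\ge 1$; taking $n = k$ yields $\operatorname{tr}(A)^{k-1}A = 0$, and since $A \neq 0$ is excluded only in the trivial case $A=0$ (which already satisfies the conclusion), we get $\operatorname{tr}(A)^{k-1} = 0$ in the domain $R$, hence $\operatorname{tr}(A) = 0$, i.e. $x = -w$. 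Then $\det(A) = 0$ reads $-x^2 - yz = 0$, giving $x^2 = -yz$.

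There is essentially no obstacle here; the only point requiring a word of care is that the statement is over a \emph{domain} rather than a field, which is exactly why the induction argument $A^n = \operatorname{tr}(A)^{n-1}A$ (or the passage to the fraction field) is invoked to conclude $\operatorname{tr}(A) = 0$ from nilpotency — over a general commutative ring a nonzero nilpotent matrix can have nonzero trace. I would present the ``if'' direction by the direct squaring computation and the ``only if'' direction via the trace/determinant argument just described.
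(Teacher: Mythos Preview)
Your proposal is correct, and your second approach is essentially identical to the paper's proof: the paper also deduces $\det(A)=0$ from nilpotency, then computes $A^2 = \operatorname{tr}(A)A$ directly, iterates to obtain $\operatorname{tr}(A)^{n-1}A = 0$, and concludes $\operatorname{tr}(A)=0$ using that $R$ is a domain. The paper only writes out the ``only if'' direction, leaving the converse (your direct squaring) implicit.
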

\begin{proof}
As $A$ is nilpotent, i.e. $A^n=0$ for some $n$, the multiplicativity of the determinant gives $\det(A)=0$, implying $xw = yz$. 
Hence,
\[A^2 = \begin{pmatrix} x^2 + yz & xy + yw \\ xz + zw & w^2 + yz \end{pmatrix} = \begin{pmatrix} x^2 + xw & y(x + w) \\ z(x + w) & w^2 + xw \end{pmatrix} = \text{tr}(A)A. \]
So, $0 = \text{tr}(A)^{n-1}A$, which gives $\text{tr}(A) = 0$. Hence, $x=-w$ and from $xw=yz$ we also get $x^2=-yz$.
\end{proof}

\begin{proof}[Proof of \Cref{th:counterex}]
Let
\[G(2,2,3) = G = \langle a, b \ | \ a^4 = b^8 = 1, \ a^b = a^{-1} \rangle. \]
We note that $G' = \langle a^2 \rangle$ and $\ZZ(G) = \langle a^2, b^2 \rangle$. Then $G/G' \cong C_2 \times C_8$, so the algebra $\mathbb{Q}G$ has a direct summand 
\[\mathbb{Q}[C_2 \times C_8] \cong 4\mathbb{Q} \oplus 2\mathbb{Q}(i) \oplus \mathbb{Q}(\zeta_8).\]
Moreover $G/\langle a^2b^2 \rangle \cong Q_8$ and $G/\langle b^2 \rangle \cong D_8$, so that $\mathbb{Q}G$ has also direct summands $\mathbb{H}_\mathbb{Q}$, the standard rational quaternions, and $M_2(\mathbb{Q})$. Moreover $G/\langle a^2b^4 \rangle$ is a group of order $16$ sometimes denoted by $D_{16}^+$. The rational group algebra of this group has one matrix component isomorphic to $M_2(\mathbb{Q}(i))$. We mention that it has been used in \cite{BachleMaheshwaryMargolis} to solve another problem on integral group rings. Overall
\[ \mathbb{Q}G \cong 4\mathbb{Q} \oplus 2\mathbb{Q}(i) \oplus \mathbb{Q}(\zeta_8) \oplus \mathbb{H}_\mathbb{Q} \oplus M_2(\mathbb{Q}) \oplus M_2(\mathbb{Q}(i)). \]
This can also be easily checked using GAP \cite{GAP} and the wedderga package therein \cite{wedderga}. 

Furthermore the following representations correspond to the non-commutative components (in the order as above):
\begin{align}\label{eq:G223Reps}
&G \rightarrow \mathbb{H}_\mathbb{Q}, \ \  a  \mapsto i, \ \ b \mapsto j, \nonumber \\
&G \rightarrow M_2(\mathbb{Q}), \ \ a  \mapsto \begin{pmatrix} 0 & -1 \\ 1 & 0 \end{pmatrix}, \ \ b \mapsto \begin{pmatrix} -1 & 0 \\ 0 & 1 \end{pmatrix}, \\
&G \rightarrow M_2(\mathbb{Q}(i)), \ \ a  \mapsto \begin{pmatrix} -i & 0 \\ 0 & i \end{pmatrix}, \ \ b \mapsto \begin{pmatrix} 0 & 1 \\ i & 0 \end{pmatrix} \nonumber.
\end{align}

Here we denote by $i$ and $j$ the standard generators of $\mathbb{H}_\mathbb{Q}$.

We first derive the properties which are equivalent to having a nilpotent element in $\mathbb{Z}G$. Let $n$ be a generic nilpotent element in $\mathbb{Z}G$ and write
\[n = \sum_{g \in G} \alpha(g) g. \]
In the quotient $G/G'$ the element $n$ must map to $0$ which is equivalent to the fact that for each $h \in G$ one has $\sum_{g \in G'} \alpha(hg) = 0$. This is in turn equivalent to
\begin{equation}\label{eq:gSignga^2}
 \alpha(g) = - \alpha(ga^2) \  \forall g \in G, 
\end{equation}
since $G' = \langle a^2 \rangle$. As in the three non-commutative components the element $a^2$ is always send to $-1$, this will always give a factor $2$ in the considerations below and reduces the number of indeterminants by half. So we will always replace an expression of shape $\alpha(g) - \alpha(ga^2)$ by $2\alpha(g)$.

Next we consider the $\mathbb{H}_\mathbb{Q}$-component where the projection of $n$ must equal $0$. With the representation above $n$ is sent to
\begin{align*}
2&(\alpha(1) - \alpha(b^2) + \alpha(b^4) - \alpha(b^6) \\
+&i(\alpha(a) - \alpha(ab^2) + \alpha(ab^4) - \alpha(ab^6)) \\
+&j(\alpha(b) - \alpha(b^3) + \alpha(b^5) - \alpha(b^7)) \\
+&ij(\alpha(ab) - \alpha(ab^3) + \alpha(ab^5) - \alpha(ab^7))).
\end{align*} 
Setting this equal to $0$ gives the equations
\begin{align}
\alpha(1) &= \alpha(b^2) + \alpha(b^6) - \alpha(b^4), \nonumber \\ 
\alpha(a) &= \alpha(ab^2) + \alpha(ab^6) - \alpha(ab^4), \nonumber \\ 
\alpha(b) &= \alpha(b^3) + \alpha(b^7) - \alpha(b^5), \label{eq:H} \\ 
\alpha(ab) &= \alpha(ab^3) + \alpha(ab^7) - \alpha(ab^5). \nonumber
\end{align}
Together with \eqref{eq:gSignga^2} this can be written compactly as 
\begin{align}\label{eq:gCoefModb4}
\alpha(g) + \alpha(gb^4) = \alpha(gb^2) + \alpha(gb^6) \ \forall g \in G.
\end{align}

We denote the representation of $n$ in the  $M_2(\mathbb{Q})$-component by $\begin{pmatrix} x_1 & y_1 \\ z_1 & w_1 \end{pmatrix}$ and in the $M_2(\mathbb{Q}(i))$-component by $\begin{pmatrix} x_2 & y_2 \\ z_2 & w_2 \end{pmatrix}$.

From the representation given above we get
\[x_1 = 2(\alpha(1) + \alpha(b^2) + \alpha(b^4) + \alpha(b^6) - \alpha(b) - \alpha(b^3) - \alpha(b^5) - \alpha(b^7)) \]
which using \eqref{eq:H} transforms to 
\begin{equation}\label{eq:x1}
x_1 = 4(\alpha(b^2) + \alpha(b^6) - \alpha(b^3) - \alpha(b^7)).
\end{equation}
Similarly we get
\begin{equation}\label{eq:w1}
w_1 = 4(\alpha(b^2) + \alpha(b^6) + \alpha(b^3) + \alpha(b^7)).
\end{equation}
As one of our conditions on $n$ is $x_1 = -w_1$ by \Cref{lem:FormulaNilpotent2x2Matrices} this gives, using also \eqref{eq:H},
\begin{equation}\label{eq:bgroup}
\alpha(b^2) = -\alpha(b^6), \ \alpha(1) = -\alpha(b^4).
\end{equation}
Furthermore we compute
\[y_1 = 2(-\alpha(a) - \alpha(ab^2) - \alpha(ab^4) - \alpha(ab^6) -\alpha(ab) - \alpha(ab^3) - \alpha(ab^5) - \alpha(ab^7)) \]
which we convert using \eqref{eq:H} to
\begin{equation}\label{eq:y1}
y_1 = 4(-\alpha(ab^2) - \alpha(ab^6) - \alpha(ab^3) - \alpha(ab^7)).
\end{equation}
Similarly
\begin{equation}\label{eq:z1}
z_1 = 4(\alpha(ab^2) + \alpha(ab^6) - \alpha(ab^3) - \alpha(ab^7)).
\end{equation}

We now calculate the $\mathbb{Q}(i)$-representation. We get
\begin{align*}
x_2 = & 2(\alpha(1) - \alpha(b^4) + \alpha(ab^2) - \alpha(ab^6) \\
& + i(-\alpha(a) + \alpha(ab^4) + \alpha(b^2) - \alpha(b^6))).
\end{align*}
Using \eqref{eq:H} and \eqref{eq:bgroup} this becomes
\begin{align*}
x_2 = & 2(-2\alpha(b^4) + \alpha(ab^2) - \alpha(ab^6)  \\
+ & i(2\alpha(ab^4) - 2\alpha(b^6) - \alpha(ab^2) - \alpha(ab^6))).
\end{align*}
Similarly,
\begin{align*}
w_2 = & 2(-2\alpha(b^4) - \alpha(ab^2) + \alpha(ab^6)  \\
+ & i(-2\alpha(ab^4) - 2\alpha(b^6) + \alpha(ab^2) + \alpha(ab^6))).
\end{align*}
With the condition $x_2 = -w_2$ from \Cref{lem:FormulaNilpotent2x2Matrices} this gives $\alpha(b^4) = 0 = \alpha(b^6)$ and together with \eqref{eq:bgroup} we conclude
\begin{equation}\label{eq:1eq0}
\alpha(1) = \alpha(b^2) = \alpha(b^4) = \alpha(b^6) = 0
\end{equation}
and
\begin{align}
x_2 = & 2(\alpha(ab^2) - \alpha(ab^6) \label{eq:x2}  \\
+ & i(2\alpha(ab^4)- \alpha(ab^2) - \alpha(ab^6))) \nonumber
\end{align}
as well as
\begin{align}
w_2 = & 2( - \alpha(ab^2) + \alpha(ab^6) \label{w2}  \\
+ & i(-2\alpha(ab^4) + \alpha(ab^2) + \alpha(ab^6))). \nonumber
\end{align}
We compute the other coefficients as
\begin{align*}
y_2 = & 2(\alpha(b) - \alpha(b^5) + \alpha(ab^3) - \alpha(ab^7) \\
& + i(\alpha(b^3) - \alpha(b^7) - \alpha(ab) + \alpha(ab^5))
\end{align*}
which by \eqref{eq:H} transforms to
\begin{align}
y_2 = & 2(-2\alpha(b^5) + \alpha(b^3) + \alpha(b^7) + \alpha(ab^3) - \alpha(ab^7) \label{eq:y2} \\
& + i(2\alpha(ab^5) + \alpha(b^3) - \alpha(b^7) -\alpha(ab^3) -\alpha(ab^7) )). \nonumber
\end{align}
Similarly, also by \eqref{eq:H},
\begin{align}
z_2 = & 2(2\alpha(ab^5) - \alpha(b^3) + \alpha(b^7) - \alpha(ab^3) - \alpha(ab^7) \label{eq:z2} \\
& + i(-2\alpha(b^5) + \alpha(b^3)  + \alpha(b^7) - \alpha(ab^3) + \alpha(ab^7) )). \nonumber
\end{align}
Moreover, from \eqref{eq:x1} and \eqref{eq:1eq0} we have
\begin{equation}\label{eq:x1new}
x_1 = -4(\alpha(b^3)+\alpha(b^7)).
\end{equation} 

We now compute the quadratic equations from \Cref{lem:FormulaNilpotent2x2Matrices}. Then
\begin{equation}\label{eq:x1^2}
x_1^2 = 16(\alpha(b^3)^2 + 2\alpha(b^3)\alpha(b^7) + \alpha(b^7)^2)
\end{equation}
and from \eqref{eq:y1} and \eqref{eq:z1} we get
\begin{equation}\label{eq:-y1z1}
-y_1z_1 = -16(-\alpha(ab^2)^2 - 2\alpha(ab^2)\alpha(ab^6) - \alpha(ab^6)^2 + \alpha(ab^3)^2 + 2\alpha(ab^3)\alpha(ab^7) + \alpha(ab^7)^2).
\end{equation}
The analogues equations for the $M_2(\mathbb{Q}(i))$-component give by \eqref{eq:x2}
\begin{align}
x_2^2 = & 8(-2\alpha(ab^4)^2 + 2\alpha(ab^2)\alpha(ab^4) + 2\alpha(ab^6)\alpha(ab^4) - 2\alpha(ab^2)\alpha(ab^6) \label{eq:x2^2} \\
& + i(2\alpha(ab^2)\alpha(ab^4) - 2\alpha(ab^6)\alpha(ab^4) - \alpha(ab^2)^2 + \alpha(ab^6)^2 )) \nonumber
\end{align}
and by \eqref{eq:y2} and \eqref{eq:z2}
\begin{align}
-y_2z_2 = & -8(2\alpha(b^3)\alpha(b^5) - 2\alpha(b^7)\alpha(b^5) - \alpha(b^3)^2 + \alpha(b^7)^2 \nonumber \\
 & +2\alpha(ab^3)\alpha(ab^5) -2 \alpha(ab^7)\alpha(ab^5) - \alpha(ab^3)^2 + \alpha(ab^7)^2 \label{eq:-y2z2} \\
& + 2i(\alpha(ab^5)^2 - \alpha(ab^3)\alpha(ab^5) - \alpha(ab^7)\alpha(ab^5) \nonumber \\
 & + \alpha(b^5)^2 - \alpha(b^3)\alpha(b^5) - \alpha(b^7)\alpha(b^5) \nonumber \\
& + \alpha(b^3)\alpha(b^7) + \alpha(ab^3)\alpha(ab^7) )). \nonumber
\end{align}

We now show certain congruences modulo $2$ which will provide the key for the final argument.
First note that the imaginary part of $-y_2z_2$ is divisible by $16$. So this is also true for the imaginary part of $x_2^2$ implying $-\alpha(ab^2)^2 + \alpha(ab^6)^2 \equiv 0 \bmod 2$ which means
\begin{equation}\label{eq:congab^2}
\alpha(ab^2) \equiv \alpha(ab^6) \mod 2.
\end{equation}
We next show that $\alpha(b^3) \equiv \alpha(b^7) \bmod 2$ and also $\alpha(ab^3) \equiv \alpha(ab^7) \bmod 2$. Assume that $\alpha(b^3) \not\equiv \alpha(b^7) \bmod 2$. Then one of them is even and the other is odd which implies by \eqref{eq:x1^2} that $\frac{x_1^2}{16} \equiv 1 \bmod 4$. Note that \eqref{eq:congab^2} implies that 
\[\alpha(ab^2) + 2\alpha(ab^2)\alpha(ab^6) + \alpha(ab^6)^2 = (\alpha(ab^2) + \alpha(ab^6))^2 \equiv 0 \mod 4. \]
So from \eqref{eq:-y1z1}
\[\frac{-y_1z_1}{16} \equiv -(\alpha(ab^3)^2 + 2\alpha(ab^3)\alpha(ab^7) + \alpha(ab^7)^2) = -(\alpha(ab^3) + \alpha(ab^7))^2 \mod 4 \]
which can only be congruent to $0$ or $-1$ modulo $4$, contradicting $x_1^2 = -y_1z_1$. Hence
\begin{equation}\label{eq:congb3}
\alpha(b^3) \equiv \alpha(b^7) \mod 2.
\end{equation}
We now consider the real parts of $x_2^2$ and $-y_2z_2$. The real part of $x_2^2$ is divisible by $16$. 
So by \eqref{eq:-y2z2} and \eqref{eq:congb3}
\[0 \equiv Re(-y_2z_2) \equiv -8(-\alpha(ab^3)^2 + \alpha(ab^7)^2) \mod 16\]
which implies
\begin{equation}\label{eq:congab3}
\alpha(ab^3) \equiv \alpha(ab^7) \mod 2.
\end{equation}
Together with \eqref{eq:gSignga^2}, \eqref{eq:gCoefModb4} and \eqref{eq:1eq0} the congruences \eqref{eq:congab^2}, \eqref{eq:congb3} and \eqref{eq:congab3} can be compactly written as 
\begin{align}\label{eq:congbplusgb4}
\alpha(g) + \alpha(gb^4) \equiv 0 \mod 2 \ \ \forall g \in G.
\end{align}
These are all the equations and congruences we need.

Let now $e\in \PCI(\Q G) $. If $e$ corresponds to a component which is not a matrix component, then $ne = 0$ which is clearly an element in $\mathbb{Z}G$. To analyze the other elements of $\PCI(\Q G)$ we will deploy \Cref{lem:CoefficientOfProjectionByCharacter}. Let first $e \in \PCI(\Q G)$ be the element corresponding to the $M_2(\mathbb{Q})$-representation and let $\chi$ be its character. Then from the representation given above we get
\[ \chi(g) = \left\{ \begin{array}{lll} 2, & g \in \langle b^2 \rangle, \\  -2, & g \in a^2\langle b^2 \rangle,\\ 0, & \text{else}. \end{array}\right.\]
So by \Cref{lem:CoefficientOfProjectionByCharacter} we can compute the coefficient of $ne$ at a generic element $g \in G$ in the following way, where we use first the values of $\chi$, then \eqref{eq:gSignga^2} and then \eqref{eq:gCoefModb4}:
\begin{align*}
\frac{k}{|G|}\sum_{h \in G}&\alpha(gh^{-1})\chi(h^{-1}) \\
 &= \frac{2}{32}\left(2(\alpha(g) + \alpha(gb^2) + \alpha(gb^4) + \alpha(gb^6) - \alpha(ga^2) - \alpha(ga^2b^2) -\alpha(ga^2b^4) - \alpha(ga^2b^6))\right)\\
&= \frac{1}{8}\left(2(\alpha(g) + \alpha(gb^2) + \alpha(gb^4) + \alpha(gb^6))\right) = \frac{1}{4}\left(2(\alpha(g)+\alpha(gb^4))\right) = \frac{1}{2}\left(\alpha(g) + \alpha(gb^4)\right).
\end{align*}
By \eqref{eq:congbplusgb4} all these numbers are integers and hence $ne \in \mathbb{Z}G$.

Finally let $e \in \PCI(\Q G)$ be the element corresponding to the component  $M_2(\Q (i))$ and $\chi$ its character. The argument will be similar to the previous case. Note that we consider $\chi$ as a character of a $\mathbb{Q}$-representation, so that each of the entries in the representation given in \eqref{eq:G223Reps} corresponds to a $2\times 2$-matrix and $i$ corresponds to a matrix with trace $0$, e.g. to its rational canonical form $\begin{pmatrix} 0 & -1 \\ 1 & 0 \end{pmatrix}$. Hence
\[ \chi(g) = \left\{ \begin{array}{lll} 4, & g \in \langle a^2b^4 \rangle, \\  -4, & g \in \{a^2, b^4 \}, \\ 0, & \text{else}. \end{array}\right.\]
We again use \Cref{lem:CoefficientOfProjectionByCharacter} to compute the coefficient of $ne$ at $g$, where we use first the values of $\chi$ and then \eqref{eq:gSignga^2}:
\begin{align*}
\frac{k}{|G|}\sum_{h \in G}&\alpha(gh^{-1})\chi(h^{-1}) = \frac{2}{32}\left(4(\alpha(g) + \alpha(ga^2b^4) - \alpha(ga^2) - \alpha(gb^4))\right) \\
&= \frac{1}{4}\left(2(\alpha(g) + \alpha(gb^4))\right) = \frac{1}{2}\left(2\alpha(g)+\alpha(gb^4)\right).
\end{align*}
Hence again by \eqref{eq:congbplusgb4} all the coefficients of $ne$ are integers.
Overall we conclude that $G$ has the (ND) property.
\end{proof}

Our next goal is to show that $G(2,2,3)$ is in fact the only nilpotent counterexample to \Cref{conj:EricWei}. In the nilpotent case, by \Cref{prop:NilpotentSNNotSSN} we need to understand nilpotent groups with SSN and the groups $G(p,m,n)$ from \eqref{definition BJ1 class} in particular. 
The proof for this class of groups will proceed through several lemmas which separate the cases which remain open. All of them will be handled by a similar construction which will be made concrete in all the cases. It is inspired by an argument in \cite{Liu}.

\begin{lemma}\label{lem:ThersyConstruction}
Let $p$ be a prime, $r,s \in \mathbb{Z}G$, $y \in \ZZ(\mathbb{Z}G)$ and $e \in \mathbb{Q}G$ a central idempotent such that the following hold:
\begin{itemize}
\item[(i)] $r^2 = s^2 = rs = sr = 0$,
\item[(ii)] $er = r$, $es = 0$,
\item[(iii)] $y(r+s)/p \in \mathbb{Z}G$,
\item[(iv)] $yr/p \notin \mathbb{Z}G$.
\end{itemize}
Then $G$ does not have ND.
\end{lemma}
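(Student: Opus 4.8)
\textbf{Proof plan for Lemma~\ref{lem:ThersyConstruction}.} The plan is to produce, from the data $r,s,y,e$, a single nilpotent element $n \in \mathbb{Z}G$ together with a central idempotent $f \in \mathbb{Q}G$ such that $nf \notin \mathbb{Z}G$, which is exactly a witness that $G$ fails ND. The natural candidate is $n := y(r+s)$, which lies in $\mathbb{Z}G$ by hypothesis (iii) since $y(r+s)/p \in \mathbb{Z}G$ already forces $y(r+s) \in \mathbb{Z}G$ (indeed $y \in \mathcal Z(\mathbb{Z}G)$ and $r+s \in \mathbb{Z}G$, so this is automatic and hypothesis (iii) is really the statement that we may divide by $p$). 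First I would check that $n$ is nilpotent: using (i) we have $(r+s)^2 = r^2 + rs + sr + s^2 = 0$, and since $y$ is central $n^2 = y^2(r+s)^2 = 0$, so $n$ is even square-zero, in particular nilpotent.

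Next I would take $f := e$, which is a central idempotent of $\mathbb{Q}G$ by hypothesis. Then $nf = y(r+s)e = y(re + se) = y(r + 0) = yr$ using (ii). Now I claim $nf = yr \notin \mathbb{Z}G$: were $yr \in \mathbb{Z}G$, then combined with $y(r+s) \in \mathbb{Z}G$ we would get $ys = y(r+s) - yr \in \mathbb{Z}G$ as well; but actually the obstruction is finer than that, because what we want to contradict is hypothesis (iv), $yr/p \notin \mathbb{Z}G$. So the correct normalization is to instead set $n := y(r+s)/p \in \mathbb{Z}G$ (this is where (iii) is used, to guarantee $n$ is genuinely integral), note $n$ is still square-zero by the same computation, and then compute $ne = y(r+s)e/p = yr/p$, which by hypothesis (iv) is \emph{not} in $\mathbb{Z}G$. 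Thus $n$ is a nilpotent element of $\mathbb{Z}G$, $e$ is a central idempotent of $\mathbb{Q}G$, and $ne \notin \mathbb{Z}G$; if $e$ is not primitive one replaces it by the sum of those primitive central idempotents below $e$ on which the projection of $yr$ is non-integral — since $ye r = yr$ and integrality can be checked componentwise, at least one primitive constituent $e_0 \le e$ has $y r e_0 \notin \mathbb{Z}G$, and $n e_0$ then witnesses the failure of ND.

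The only subtlety, and the step I would be most careful about, is the bookkeeping around which element is taken to be ``the'' nilpotent element and whether one needs $e$ primitive: the definition of ND quantifies over \emph{primitive} central idempotents, so one must pass from $e$ to a primitive constituent, using that $\mathbb{Z}G \subseteq \mathbb{Q}G = \bigoplus_i \mathbb{Q}G e_i$ and that an element of $\mathbb{Q}G$ lies in $\mathbb{Z}G$ only if (not iff, but the failure direction is what we need) each of its components behaves suitably — more precisely, $nf \notin \mathbb{Z}G$ and $f = \sum_{e_0 \le f} e_0$ forces $n e_0 \notin \mathbb{Z}G$ for some primitive $e_0 \le f$, since otherwise $nf = \sum n e_0$ would be a sum of elements of $\mathbb{Z}G$. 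Everything else — square-zeroness of $n$, the identity $ne = yr/p$ via (ii), and the non-integrality via (iv) — is a direct three-line verification from the hypotheses, so there is no real computational obstacle; the lemma is essentially a formal repackaging of conditions (i)–(iv) into the shape of the definition of ND.
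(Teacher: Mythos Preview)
Your proposal is correct and follows exactly the same approach as the paper: take $n = y(r+s)/p \in \mathbb{Z}G$ by (iii), observe $n^2 = 0$ via (i) and centrality of $y$, and compute $ne = yr/p \notin \mathbb{Z}G$ by (ii) and (iv). Your extra paragraph on reducing from $e$ to a primitive constituent is correct but unnecessary here, since the paper's working definition of ND (stated just before Section~\ref{prelim for ND section}) quantifies over all central idempotents, and in any case the two formulations are trivially equivalent by the argument you give.
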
 
\begin{proof}
By (i) we have $rs = sr$, so that by (i) $(y(r+s)/p)^2 = y^2(r+s)^2/p^2 = 0$. So by (iii) $y(r+s)/p$ is a nilpotent element in $\mathbb{Z}G$ and moreover $ey(r+s)/p = yr/p$ by (ii). So by (iv) $(y(r+s)/p$ is non-zero and $G$ does not have ND.
\end{proof}

The property of having one matrix component was systematically studied for groups with SSN by Jespers and Sun. We record the result relevant for this section which motivates the following lemmas.

\begin{lemma}\label{lem:GpmnOneMatrixComponent} \cite[Lemmas 4.2, 4.3]{JespersSun}
For $G = G(p,m,n)$ the algebra $\mathbb{Q}G$ has one matrix component if and only if $n = 1$ or $p=m=n=2$. 
\end{lemma}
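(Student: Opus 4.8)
The plan is to exploit that $G=G(p,m,n)$ is metabelian --- in fact nilpotent of class $2$, with $G'=\langle a^{p^{m-1}}\rangle\cong C_p$ and $\ZZ(G)=\langle a^p\rangle\times\langle b^p\rangle$ --- so that \Cref{lem:IdempotForMetabelian} applies and the non-commutative simple components of $\mathbb QG$ can be enumerated. A maximal abelian subgroup of $G$ containing $G'$ is $A=\langle a\rangle$ if $n=1$ and $A=\langle a\rangle\times\langle b^p\rangle$ if $n\geq 2$; in both cases $A\unlhd G$, $[G:A]=p$, and conjugation by $b$ fixes $\langle b^p\rangle$ pointwise and sends $a\mapsto a^{1+p^{m-1}}$ (an order-$p$ automorphism of $\langle a\rangle$). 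By \Cref{lem:IdempotForMetabelian} the non-commutative components of $\mathbb QG$ correspond bijectively to $G$-conjugacy classes of subgroups $K\leq A$ with $A/K$ cyclic and $a^{p^{m-1}}\notin K$ (equivalently $G'\not\leq K$); and by the description $\mathbb QG\,e(G,A,K)\cong M_{[G:N_G(K)]}\big(\mathbb Q(\zeta_{[A:K]})\star N_G(K)/A\big)$ recalled in the proof of \Cref{dim of simple of SSP}, the reduced degree of $e(G,A,K)$ equals $[G:N_G(K)]$ times that of the crossed product. Since $[G:A]=p$, each admissible $K$ is either not $b$-invariant, whence $N_G(K)=A$ and the reduced degree is exactly $p\geq2$, or $b$-invariant, whence $N_G(K)=G$ and the component is a cyclic algebra of prime degree $p$ over its centre --- hence either a matrix ring $M_p$ (reduced degree $p$) or a division algebra (reduced degree $1$, possible only if $p=2$).

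When $n=1$ the subgroup $A=\langle a\rangle$ is cyclic, so the only admissible $K$ is $K=1$; as $b^p=1$ the relevant group extension splits and the crossed product is the skew group ring $\mathbb Q(\zeta_{p^m})\rtimes\mathrm{Gal}(\mathbb Q(\zeta_{p^m})/F)\cong M_p(F)$ with $F=\mathbb Q(\zeta_{p^m})^{\langle\sigma\rangle}$. Thus $\mathbb QG$ has a unique non-commutative component and it is a matrix component, which is the ``$n=1$'' half of the statement. For $n\geq2$ I would show that $\mathbb QG$ has at least two matrix components unless $(p,m,n)=(2,2,2)$, splitting into the ranges $m\leq n-1$ and $m\geq n$. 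If $m\leq n-1$, one verifies that $K_0=\langle b^p\rangle$ (which is $b$-invariant and contains $b^p$, hence yields a split crossed product $M_p(F)$) and $K_1=\{\,a^ic^k:i\equiv-k\ \bmod p^m\,\}$ with $c=b^p$ (well-defined precisely because $m\leq n-1$, not $b$-invariant, hence of reduced degree $p$) are admissible and inequivalent, producing two matrix components; note $(2,2,2)$ does not fall in this range.

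In the range $m\geq n\geq2$ one first shows that every admissible $K$ has $A/K\cong C_{p^m}$, that all of them are $b$-invariant as subgroups (one computes $K^b$ explicitly and uses $m\geq n$), and that there are exactly $p^{n-1}$ of them, say $K_\lambda$ with $\lambda$ running over the multiples of $p^{m-n+1}$ modulo $p^m$. For $p$ odd, Roquette's theorem gives Schur index $1$, so each of these $p^{n-1}\geq p\geq3$ components is $M_p$. For $p=2$ and $m\geq3$ the component attached to $K_\lambda$ is the quaternion algebra $\big(\mathbb Q(\zeta_{2^m})/\mathbb Q(\zeta_{2^{m-1}}),\sigma,\zeta_{2^m}^{\lambda}\big)$ over $F=\mathbb Q(\zeta_{2^{m-1}})$; writing $\mathbb Q(\zeta_{2^m})=F(\sqrt{\zeta_{2^{m-1}}})$ and using bilinearity of the Hilbert symbol together with the facts that quaternion classes are $2$-torsion in the Brauer group and that $-1$ is an even power of $\zeta_{2^{m-1}}$ when $m\geq3$, one finds that all these algebras split, so there are again $2^{n-1}\geq2$ matrix components. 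Finally $G(2,2,2)\cong C_4\rtimes C_4$: here $A\cong C_4\times C_2$, the admissible $K$ up to conjugacy are $\langle b^2\rangle$ and $\langle a^2b^2\rangle$, both central, with $G/\langle b^2\rangle\cong D_8$ and $G/\langle a^2b^2\rangle\cong Q_8$, so the two non-commutative components are $M_2(\mathbb Q)$ and $\mathbb H_{\mathbb Q}$ --- exactly one matrix component, matching the claimed exceptional case.

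I expect the main obstacle to be the $p=2$ Schur-index computation in the range $m\geq n$: the whole statement turns on deciding which cyclic (quaternion) algebras attached to $b$-invariant $K$ are division and which are split, and $(p,m,n)=(2,2,2)$ is precisely where a genuine $\mathbb H_{\mathbb Q}$ appears and absorbs one would-be matrix component. Isolating a clean uniform criterion --- something like ``$\big(\mathbb Q(\zeta_{2^m})/\mathbb Q(\zeta_{2^{m-1}}),\sigma,u\big)$ splits iff $m\geq3$ or $u$ is a square in the base field'' --- is the technical heart; a secondary, purely combinatorial nuisance is the bookkeeping of the admissible $K$'s and their $G$-conjugacy classes, though in the range $m\leq n-1$ only the crude lower bound of two is needed.
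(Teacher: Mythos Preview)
The paper does not prove this lemma at all: it is quoted verbatim from \cite[Lemmas 4.2, 4.3]{JespersSun}, so there is no in-paper argument to compare against. What you have written is therefore a self-contained proof of a cited result rather than a reconstruction of anything the present paper does.

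Your strategy is sound and would go through. Using \Cref{lem:IdempotForMetabelian} with $A=\langle a,b^p\rangle$ is exactly the right device, and the dichotomy ``$K$ is $b$-invariant'' versus ``$N_G(K)=A$'' correctly isolates the two sources of matrix components. A few comments on the execution: in the range $m\geq n\geq2$ your claim that every admissible $K_\lambda$ is $b$-invariant follows because $p^{m-n+1}\mid\lambda$ forces $p\mid\lambda$, whence $a^{\lambda p^{m-1}}=1$; you should make this explicit. For $p=2$, $m\geq3$, your Hilbert-symbol reduction is correct --- writing $\zeta_{2^m}^{-\lambda}=\zeta_{2^{m-1}}^{-\lambda/2}$ (legal since $\lambda$ is even) reduces to $(\zeta_{2^{m-1}},-1)_F$ over $F=\mathbb Q(\zeta_{2^{m-1}})$, which splits because $-1$ is a square in $F$ once $m\geq3$. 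In the range $m\leq n-1$ your two witnesses $K_0=\langle b^p\rangle$ and $K_1$ are distinguished by $b$-invariance, which is cleaner than comparing kernels. The only place to be careful is the $(p,m,n)=(2,2,2)$ endpoint, and there your direct identification of the quotients $D_8$ and $Q_8$ is the quickest route.
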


The proofs of the next four lemmas all employ \Cref{lem:ThersyConstruction}. The first one will be especially detailed to facilitate the understanding of the arguments later also.

\begin{lemma}\label{lem:GpmnEvenCase}
Let $G = G(2,m,n)$ with $n \geq 2$ and $(m,n) \notin \{(2,2), (2,3) \}$. Then $G$ does not have ND.
\end{lemma}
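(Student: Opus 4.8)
The plan is to apply \Cref{lem:ThersyConstruction} to $G = G(2,m,n)$. The starting point is the structure of $\Q G$: since $a^b = a^{1+2^{m-1}}$ and $m \geq 2$, $G$ is metabelian with $G' = \langle a^{2^{m-1}} \rangle \cong C_2$ and $\ZZ(G) = \langle a^2 \rangle \times \langle b^2 \rangle$. By \Cref{lem:GpmnOneMatrixComponent}, under the hypothesis $(m,n) \notin \{(2,2),(2,3)\}$ and $n \geq 2$ the algebra $\Q G$ has more than one matrix component, so there are at least two Wedderburn components of reduced degree $2$; call them $\Ma_{2}(D_1)$ and $\Ma_2(D_2)$ with primitive central idempotents $e_1, e_2$. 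The first step is to write down these two matrix components explicitly via \Cref{lem:IdempotForMetabelian} (or via the representations analogous to \eqref{eq:G223Reps}) so that one has concrete $2\times 2$-matrix images of $a$ and $b$. Since $G$ is metabelian one can take $H = \langle a \rangle$ (or a suitable maximal abelian subgroup containing $G'$) and the faithful components will be induced from linear characters of $H$; as $n \geq 2$ at least one of them is faithful, which is what gives room for the construction.

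Next I would build the nilpotent element. Take a rank-one nilpotent matrix in, say, the $e_1$-component, i.e.\ an element $r \in \Q G$ with $e_1 r = r$, $e_j r = 0$ for $j \neq 1$, and $r^2 = 0$ (e.g.\ in the chosen matrix coordinates $r$ maps to $\left(\begin{smallmatrix} 0 & 1 \\ 0 & 0 \end{smallmatrix}\right)$ in the $e_1$-slot and to $0$ elsewhere); concretely $r$ will be an explicit $\Z$-linear combination of group elements divided by a power of $2$. Similarly choose $s$ with $e_2 s = s$, all other components zero, $s^2 = 0$, arranged so that $r s = s r = 0$ (automatic since they live in orthogonal components) — this gives (i) and (ii) of \Cref{lem:ThersyConstruction} with $e = e_1$. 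The multiplier $y \in \ZZ(\Z G)$ should be a central element such as $\wh{G'} = \frac{1}{2}(1 + a^{2^{m-1}})$-type idempotent or a power-of-$2$-valued central sum, chosen so that clearing denominators of $r$ and $s$ simultaneously works: the key computations are (iii) $y(r+s)/2 \in \Z G$ and (iv) $yr/2 \notin \Z G$. Condition (iii) holds because in the quotient by $G'$ the "off-diagonal" contributions of $r$ and $s$ combine, while (iv) holds precisely because the single component $e_1$ cannot by itself produce integral coefficients — this is exactly the failure of ND. In practice I would mimic the coefficient bookkeeping of the proof of \Cref{th:counterex}: express everything in terms of coefficients $\alpha(g)$, reduce modulo $G' = \langle a^{2^{m-1}}\rangle$, and exhibit an explicit $n \in \Z G$ together with an idempotent $e$ for which $ne \notin \Z G$.

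The main obstacle I anticipate is the simultaneous-divisibility bookkeeping: one needs $y$ and the two nilpotent pieces $r, s$ chosen so that the $2$-adic valuations of the coefficients of $y(r+s)$ are everywhere $\geq 1$ while some coefficient of $yr$ has valuation $0$. This is a parity/congruence argument on the coefficients after projecting modulo the central subgroup $G'$, and it must be done uniformly in $m$ and $n$ rather than for a single small group; the excluded cases $(2,2)$ and $(2,3)$ are exactly where the analogous construction fails (the latter being the genuine ND group of \Cref{th:counterex}), so the proof must visibly use $(m,n) \notin \{(2,2),(2,3)\}$ — presumably through the fact that for all other $(m,n)$ the two matrix components are "spread out enough" (e.g.\ one of them is faithful and defined over a ramified-at-$2$ field, or $[H:K]$ is large enough) that a rank-one nilpotent in one of them has a coefficient of $2$-adic valuation $0$ after clearing the common denominator. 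Once the explicit $r$, $s$, $y$, $e$ are in hand, verifying (i)--(iv) is a routine but slightly lengthy calculation, and \Cref{lem:ThersyConstruction} then immediately yields that $G$ does not have ND.
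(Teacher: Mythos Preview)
Your proposal correctly identifies \Cref{lem:ThersyConstruction} as the tool, but what you have written is a plan, not a proof: no explicit $r$, $s$, $y$ are given, and you yourself flag the ``simultaneous-divisibility bookkeeping'' as the main obstacle without resolving it. That bookkeeping \emph{is} the proof. Moreover, there is a mismatch with the hypotheses of \Cref{lem:ThersyConstruction}: that lemma requires $r,s \in \Z G$, whereas you describe $r$ as ``an explicit $\Z$-linear combination of group elements divided by a power of $2$'', i.e.\ an element of $\Q G$. If instead you scale so that $r \in \Z G$ and $r$ is supported in a single primitive component $e_1$, then $e_1 r = r \in \Z G$ automatically and condition~(iv) becomes delicate: you need $yr$ to have an odd coefficient while $y(r+s)$ does not, and nothing in your outline explains why this can be arranged uniformly in $(m,n)$.

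The paper's argument avoids these difficulties by constructing $r,s,y$ directly as short products of group-ring elements rather than by lifting from matrix components. With $p=2$ it sets
\[
r = a(a^{2^{m-1}}+b)(1-a^{2^{m-1}})(1+b^2)\widetilde{b^{4}},\quad
s = a(a^{2^{m-2}}+b)(1-a^{2^{m-1}})(1-b^2)\widetilde{b^{4}},\quad
y = 1+a^{2^{m-2}},
\]
and takes $e$ to be the sum of all primitive central idempotents $f$ for which $fa^{2^{m-1}}$ has order $2$ and $fb^2$ is the identity (so $e$ need not be primitive). Conditions (i) and (ii) then follow from the elementary identities $(1+b^2)(1-b^2)\widetilde{b^4}=0$ and $(1+a^{2^{m-1}})(1-a^{2^{m-1}})=0$; condition (iii) is a short computation modulo $2$; and (iv) is just the observation that the coefficient of $ab$ in $yr$ equals $1$. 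The point is that the factors $(1\pm b^2)$ and $\widetilde{b^4}$ automatically cut out the right blocks of components, so one never has to pass through explicit matrix models or clear denominators. Your approach via Wedderburn components could perhaps be made to work, but as written it does not supply the construction, and the paper's direct group-ring construction is both shorter and uniform in $(m,n)$.
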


\begin{proof}
In this situation it was already shown by Liu that $G$ does not have ND when either $m = 2$ and $n \geq 4$ or $m = 3$ \cite[Lemma 2.8]{Liu}. While Liu's statement of the lemma is different, the proof shows exactly that these groups do not have (ND). As our proof is uniform for all cases, this will include a repetition of Liu's result. The goal is to use \Cref{lem:ThersyConstruction} and the items (i)-(iv) refer to this lemma. We set $p=2$.

Let 
\begin{align*}
r &= a(a^{p^{m-1}}+b)(1-a^{p^{m-1}})(1+b^p)\widetilde{b^{p^2}}, \\
s &= a(a^{p^{m-2}}+b)(1-a^{p^{m-1}})(1-b^p)\widetilde{b^{p^2}}, \\
y &= 1 + a^{p^{m-2}}.
\end{align*}

Note that
\[(1+b^p)(1-b^p)\widetilde{b^{p^2}} = (1-b^{p^2})\widetilde{b^{p^2}} = 0, \]
so that $rs = sr = 0$ follows using that $b^p$ is central in $G$. Furthermore,
\begin{align*}
a&(a^{p^{m-1}} + b)a(a^{p^{m-1}}+b) = a^p(a^{p^{m-1}} + ba^{p^{m-1}})(a^{p^{m-1}}+b) \\
 &= a^p(1 + ba^{p^{m-1}} + b + b^pa^{p^{m-1}}) = a^p((1-b^p) + (1+a^{p^{m-1}})(b+b^p)).
\end{align*}
As 
\begin{equation}\label{eq:p2ZeroProds}
(1-b^p)(1+b^p)\widetilde{b^{p^2}} = 0 = (1+a^{p^{m-1}})(1-a^{p^{m-1}}),
\end{equation}
this implies $r^2 = 0$. Moreover,
\begin{align*}
a&(a^{p^{m-2}}+b)a(a^{p^{m-2}}+b) = a^p (a^{p^{m-2}} + ba^{p^{m-1}})(a^{p^{m-2}} + b) \\
 &= a^p (a^{p^{m-1}} + ba^{p^{m-2}} + ba^{p^{m-1} + p^{m-2}} + b^pa^{p^{m-1}}) \\
  &= a^p(a^{p^{m-1}}(1+b^p) + ba^{p^{m-2}}(1 + a^{p^{m-1}}))
\end{align*}
which by \eqref{eq:p2ZeroProds} also implies $s^2 = 0$. So (i) follows.

To construct the idempotent for (ii), note again that $b^p$ is central in $G$ so that if $f \in \PCI(\mathbb{Q}G)$, then $b^pf = \zeta I$ for a certain root of unity $\zeta$, where $I$ denotes the identity matrix of some size. The same argument applies to $a^{p^{m-1}}$. Now let $e \in \PCI(\mathbb{Q}G)$ which is the sum of all primitive central idempotents $f$ such that $fa^{p^{m-1}}$ has order $p$ and $fb^p$ is the identity. Note that as $G' = \langle a^{p^{m-1}} \rangle$, this implies that $fb$ is not central. So for each such $f$ we have $rf \neq 0$. It is clear that $se = 0$ as $(1-b^p)e=0$. Furthermore, if $f'$ is any primitive central idempotent such that $f'b^p$ is not the identity, then $f'(1+b^p) \widetilde{b^{p^2}} = 0$. Similarly, if $f'a^{p^{m-1}}$ does not have order $p$, it must be the identity, so $f'(1-a^{p^{m-1}}) = 0$. We conclude $r(1-e) = 0$ and so $re = r$ and (ii) holds.

Next,
\begin{align*}
y(r+s) &= ya(1-a^{p^{m-1}})\widetilde{b^{p^2}}((a^{p^{m-1}} + b)(1+b^p) + (a^{p^{m-2}} + b)(1-b^p)) \\
&= ya(1-a^{p^{m-1}})\widetilde{b^{p^2}}(a^{p^{m-1}} + b^pa^{p^{m-1}} + a^{p^{m-2}} - b^pa^{p^{m-2}} + 2b) \\
&= ya(1-a^{p^{m-1}})\widetilde{b^{p^2}}((a^{p^{m-2}}(1 + a^{p^{m-2}}) + b^pa^{p^{m-2}}(-1 + a^{p^{m-2}}) + 2b).
\end{align*}
Using that $1 + a^{p^{m-2}} \equiv -1 + a^{p^{m-2}} \equiv 1 - a^{p^{m-2}} \bmod 2$ this means
\begin{align*}
y(r+s) &\equiv a(1+a^{p^{m-2}})(1-a^{p^{m-2}})(1-a^{p^{m-1}})\widetilde{b^{p^2}}(a^{p^{m-2}} + b^pa^{p^{m-2}}) \\
&= a(1 - a^{p^{m-1}})(1-a^{p^{m-1}})\widetilde{b^{p^2}}(a^{p^{m-2}} + b^pa^{p^{m-2}}) \\
&\equiv a(1 + a^{p^{m-1}})(1-a^{p^{m-1}})\widetilde{b^{p^2}}(a^{p^{m-2}} + b^pa^{p^{m-2}}) = 0 \mod 2.
\end{align*}
Hence $y(r+s)/p \in \mathbb{Z}G$ and (iii) holds.

Finally, it is easy to see that the coefficient of $ab$ in the element 
\[yr = (1+a^{p^{m-2}})a(a^{p^{m-1}} + b)(1-a^{p^{m-1}})(1+b^p)\widetilde{b^{p^2}}\]
is $1$, so that $yr/p \notin \mathbb{Z}G$. This proves (iv) and the fact that $G$ does not have (ND) hence follows.
\end{proof}

For the case of odd primes we will use the following technical lemma.

\begin{lemma}\label{lem:GpmnTechnicalLemmaOddp}
Let $p$ be an odd prime, $k$ a positive integer and 
\[X =  \langle x,z \ | \ x^{p^2} = 1, z^{p^k} = 1, [x,z] = 1 \rangle \cong C_{p^2} \times C_{p^k}. \]
 Then there exist $\alpha, \beta \in \mathbb{Z}X$ such that in $\mathbb{Z}X$ the following congruences hold:
\[(1-x^p)(1-x^pz) + (1-x^{-p})(1-x^{-p}z) \equiv (1-x)^{p+1}\alpha  \mod p\]
and 
\[x(1-x^p)(1-x^pz) + x^{-1}(1-x^{-p})(1-x^{-p}z) \equiv (1-x)^{p+1}\beta  \mod p.\]
\end{lemma}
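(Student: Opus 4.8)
The plan is to reduce everything modulo $p$ at once: since $\Z X / p\Z X = \F_p X$, the two asserted congruences are equivalent to saying that the images of the left-hand sides in $\F_p X$ lie in the principal ideal generated by $(1-x)^{p+1}$, and then any lift to $\Z X$ of the corresponding quotient will serve as $\alpha$ (resp. $\beta$). So I would work entirely in the commutative local ring $\F_p X$ and set $u = 1-x$. Because $x^{p^2}=1$ and $\operatorname{char}\F_p X = p$, we have $u^{p^2} = (1-x)^{p^2} = 1 - x^{p^2} = 0$, so $u$ is nilpotent and $x$ is a unit (with $x^{-1} = 1 + u + u^2 + \cdots + u^{p^2-1}$) commuting with everything. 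The two identities I would record first are the Frobenius relations
\[ 1 - x^p = (1-x)^p = u^p, \qquad 1 - x^{-p} = -x^{-p}(1-x^p) = -u^p x^{-p}, \]
together with the elementary observation that for all integers $i,j$ the element $x^i - x^j = x^j(x^{i-j}-1)$ is divisible by $x-1 = -u$ in $\F_p X$.

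For the first expression I would expand $1 - x^p z = (1-z) + z(1-x^p) = (1-z) + u^p z$ and $1 - x^{-p}z = (1-z) + z(1-x^{-p}) = (1-z) - u^p x^{-p} z$, which gives
\[ (1-x^p)(1-x^pz) = u^p(1-z) + u^{2p}z, \qquad (1-x^{-p})(1-x^{-p}z) = -u^p x^{-p}(1-z) + u^{2p}x^{-2p}z. \]
Adding these and collapsing the degree-$p$ parts via $1-x^{-p} = -u^p x^{-p}$ once more yields $u^{2p}\bigl(z(1+x^{-2p}) - x^{-p}(1-z)\bigr)$. Since $p\ge 2$ we have $2p\ge p+1$, so this sits inside $u^{p+1}\F_p X$, and it is enough to take $\alpha$ to be any lift to $\Z X$ of $u^{p-1}\bigl(z(1+x^{-2p}) - x^{-p}(1-z)\bigr)$ — concretely the honest element $(1-x)^{p-1}\bigl(z(1+x^{p^2-2p}) - x^{p^2-p}(1-z)\bigr) \in \Z X$.

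For the second expression the same two expansions give
\[ x(1-x^p)(1-x^pz) + x^{-1}(1-x^{-p})(1-x^{-p}z) = u^p(1-z)\bigl(x - x^{-p-1}\bigr) + u^{2p}z\bigl(x + x^{-2p-1}\bigr). \]
The second summand again carries the factor $u^{2p}$, and for the first I would use the divisibility observation: $x - x^{-p-1} = x^{-p-1}(x^{p+2}-1) = -u\,x^{-p-1}(1+x+\cdots+x^{p+1})$, so $u^p(x - x^{-p-1})$ carries the factor $u^{p+1}$. Hence the whole expression lies in $u^{p+1}\F_p X$, and one defines $\beta \in \Z X$ as a lift of $-(1-z)x^{-p-1}(1+x+\cdots+x^{p+1}) + u^{p-1}z(x+x^{-2p-1})$ exactly as before.

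I do not expect a genuine obstacle: the whole thing is bookkeeping inside the local ring $\F_p X$, and in fact the argument never uses the order of $z$, only that $z$ is a central unit. The one point that needs a little care is the second congruence, where the cross terms involve $x^{-1}$: there one must verify that $x - x^{-p-1}$ is divisible by $u$ itself (not merely by some higher power), which is why the factorisation $x^i - x^j = x^j(x^{i-j}-1)$ — i.e. divisibility by $1-x$ — is the right tool rather than the Frobenius identity $1-x^p = u^p$ used for the rest.
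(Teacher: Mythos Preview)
Your argument is correct and uses the same two ingredients as the paper's proof (the Frobenius congruence $1-x^p\equiv(1-x)^p$ and the divisibility of $x^i-x^j$ by $1-x$), just organised more cleanly by working in $\F_pX$ from the outset and splitting $1-x^{\pm p}z=(1-z)+z(1-x^{\pm p})$. In fact your computation for the first congruence extracts the sharper factor $(1-x)^{2p}$ rather than merely $(1-x)^{p+1}$, whereas the paper first factors out $(1-x^p)$ from the whole sum and then pulls one further $(1-x)$ from the residual factor via the geometric-series identity $1-x^\ell=(1-x)(1+x+\cdots+x^{\ell-1})$.
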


\begin{proof}
To simplify notation we write for a positive integer $\ell$:
\[\gamma(x,\ell) = 1 + x + x^{2} + ... + x^{\ell}.\]
We will several times use the equation 
\begin{equation}\label{eq:gamma}
(1-x^{\ell}) = (1-x)\gamma(x, \ell-1).
\end{equation}
We will also use that 
\begin{equation}\label{eq:powercongruence}
(1-x^p) \equiv (1-x)^p \mod p.
\end{equation}
We proceed to show the first congruence using first $1-x^{-p} = -x^{-p}(1-x^p)$ and later \eqref{eq:gamma} and finally \eqref{eq:powercongruence}. We also use $x^{-p} = x^{p^2-p}$ and $x^{-3p} = x^{p^2-3p}$.
\begin{align*}
(&1-x^p)(1-x^p z) + (1-x^{-p})(1-x^{-p} z) \\
=& (1-x^p)(1-x^p z) - x^{-p}(1-x^p)(1-x^{-p} z) \\
=& (1-x^p)(1-x^p z - x^{-p}(1-x^{-p} z)) \\
=& (1-x^p)((1-x^{-p}) - x^p z (1-x^{-3p})) \\
=& (1-x^p)((1-x)\gamma(x, p^2-p-1) - x^p z(1-x)\gamma(x, p^2-3p-1)) \\
=& (1-x^p)(1-x) (\gamma(x, p^2-p-1) - x^p z\gamma(x, p^2-3p-1)) \\
\equiv & (1-x)^{p+1}(\gamma(x, p^2-p-1) - x^p z \gamma(x, p^2-3p-1)) \mod p.
\end{align*}
Note that when $p=3$, then in the fourth line $1-x^{-3p} = 0$, so that $p^2-3p-1$ does not appear later in this case. This shows the first congruence.

Next, we show the second congruence, also using $1-x^{-p} = -x^{-p}(1-x^p)$, \eqref{eq:gamma} and \eqref{eq:powercongruence}  and also that $x^{-3p-2} = x^{p^2-3p-2}$ for $p \neq 3$ and $x^{-3p-2} = x^{p^2-2}$ for $p=3$:
\begin{align*}
& x(1-x^p)(1-x^p z) + x^{-1}(1-x^{-p})(1-x^{-p} z) \\
=& x(1-x^p)(1-x^p z) - x^{-1}x^{-p}(1-x^p)(1-x^{-p} z) \\
=& (1-x^p)(x(1-x^p z) - x^{-p-1} (1-x^{-p} z)) \\
=& (1-x^p)(x - x^{-p-1} - x^{p + 1}z + x^{-2p-1} z) \\
=& (1-x^p)(x(1-x^{-p-2}) - x^{p + 1} z(1-x^{-3p-2})) \\
=& (1-x^p)(1-x)(x\gamma(x, p^2-p-3) - x^{p + 1} z\gamma(x, p^2-3p-3)) \\
\equiv& (1-x)^{p+1}(x\gamma(x, p^2-p-3) - x^{p + 1}\gamma(x, p^2-3p-3)) \mod p,
\end{align*}
where in the last two lines in case $p=3$ the expression $p^2-3p-3$ has to be replaced by $p^2-3$. This shows the second congruence.
\end{proof}

The next three lemmas will now take care of the remaining cases for the groups $G(p,m,n)$.

\begin{lemma}\label{lem:GpmnOddCase1}
Let $p$ be odd, $m \geq 3$ and $n \geq 2$. Then $G = G(p,m,n)$ does not have ND. 
\end{lemma}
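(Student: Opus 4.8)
The plan is to apply \Cref{lem:ThersyConstruction} with $p$ the odd prime in the statement, mimicking the structure of the proof of \Cref{lem:GpmnEvenCase} but using the computations assembled in \Cref{lem:GpmnTechnicalLemmaOddp}. Set $x = a$, which in $G = G(p,m,n)$ has order $p^m \geq p^3$, and note $G' = \langle a^{p^{m-1}} \rangle$ while $\ZZ(G) = \langle a^p \rangle \times \langle b^p \rangle$. First I would define candidate elements
\begin{align*}
r &= a(1-a^{p^{m-1}})(1-a^p)(1-a^p b)(1+b^p)\wt{b^{p^2}} + (\text{its } a \mapsto a^{-1},\, b \mapsto b\text{ twin, first block}),\\
s &= (\text{the analogous element built with } (1-b^p) \text{ in place of } (1+b^p)\wt{b^{p^2}}),\\
y &= 1 + a^{p^{m-2}},
\end{align*}
adjusting the exact shape so that the products $(1-a^p)(1-a^pb)$ and their $a^{-1}$-twins appear exactly as in \Cref{lem:GpmnTechnicalLemmaOddp} applied with $z = b$ and with $X$ the image of $\langle a, b^p \rangle$ (which is $C_{p^2}\times C_{p^k}$ after quotienting by $\langle a^{p^2}\rangle$, legitimate because $\wt{b^{p^2}}$ and the factor $1 - a^{p^{m-1}}$ kill everything above the relevant quotient). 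The idea is that $r$ and $s$ live over the single component $e$ on which $a^{p^{m-1}}$ has order $p$ and $b^p$ is trivial, $r$ being supported there and $s$ having a compensating $(1-b^p)$ or orthogonality making $es = 0$.

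For (i), nilpotence and the vanishing of all cross-products $rs = sr = r^2 = s^2 = 0$: here I would exploit that $b^p$ is central, that $(1+b^p)(1-b^p)\wt{b^{p^2}} = (1-b^{p^2})\wt{b^{p^2}} = 0$ (for the odd-$p$ analogue one replaces $1+b^p$ by $\gamma(b^p, p-1) = 1 + b^p + \dots + b^{p(p-1)}$, so that $\gamma(b^p,p-1)(1-b^p)\wt{b^{p^2}} = 0$ still), and the identity $(1-a^{p^{m-1}})^2 \cdot(\text{stuff}) = 0$ coming from $(1-a^{p^{m-1}})a^{p^{m-1}} = -(1-a^{p^{m-1}})$ after squaring. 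The computations are routine algebra in $\ZZ G$ exactly parallel to the even case; I would set them up once carefully to confirm the factor placement. For (ii), the idempotent $e$ is the sum of all $f \in \PCI(\Q G)$ with $f a^{p^{m-1}}$ of order $p$ and $f b^p = 1$; since $G'= \langle a^{p^{m-1}}\rangle$, on each such component $b$ acts non-trivially, so $rf \neq 0$, while any $f'$ with $f' b^p \neq 1$ kills $\wt{b^{p^2}}(1+b^p)$-type factors and any $f'$ with $f' a^{p^{m-1}}$ trivial kills $1 - a^{p^{m-1}}$; thus $re = r$ and $se = 0$.

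The main work, as in \Cref{lem:GpmnEvenCase}, is (iii): showing $y(r+s)/p \in \ZZ G$. Here is precisely where \Cref{lem:GpmnTechnicalLemmaOddp} is designed to be used: after expanding $y(r+s)$, the $a$-part of the bracket is (a scalar multiple of) $(1-a^p)(1-a^pb) + (1-a^{-p})(1-a^{-p}b)$ times a central unit, and the $ab$-part is $a(1-a^p)(1-a^pb) + a^{-1}(1-a^{-p})(1-a^{-p}b)$; by the lemma both are $\equiv (1-a)^{p+1}(\cdot) \bmod p$. Since $m \geq 3$, we have $p^{m-1} \geq p\cdot p \geq p + 1$ in the sense that $(1-a)^{p+1}$ divides $(1-a^{p^{m-1}})$-related expressions modulo $p$ — more precisely $(1-a)^{p^{m-1}} \equiv 1 - a^{p^{m-1}} \bmod p$ by \eqref{eq:powercongruence} applied iteratively, and $p + 1 \leq p^2 \leq p^{m-1}$ — so the extra factor $(1-a^{p^{m-1}})$ already present in $r, s$ absorbs the $(1-a)^{p+1}$ and the whole thing is divisible by $p$ in $\ZZ G$. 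I expect this divisibility bookkeeping — tracking that $(1-a)^{p+1}$ times what survives is still a multiple of $p$ once combined with the $(1-a^{p^{m-1}})$ factor, and handling the genuinely different $p = 3$ sub-case flagged inside \Cref{lem:GpmnTechnicalLemmaOddp} — to be the only real obstacle; everything else is a transcription of the even-prime argument. Finally (iv): reading off the coefficient of, say, $ab$ (or $a$, whichever is cleanest) in $yr = (1+a^{p^{m-2}})a(1-a^{p^{m-1}})(1-a^p)(1-a^pb)\gamma(b^p,p-1)\wt{b^{p^2}}$ and checking it is a unit mod $p$, hence $yr/p \notin \ZZ G$, which by \Cref{lem:ThersyConstruction} shows $G$ does not have ND.
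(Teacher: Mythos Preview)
Your overall strategy --- apply \Cref{lem:ThersyConstruction} via the congruences of \Cref{lem:GpmnTechnicalLemmaOddp} --- is exactly the paper's, but the specific choices of $r,s,y$ do not survive the passage from $p=2$ to odd $p$, and as written the plan breaks at (i), at the invocation of \Cref{lem:GpmnTechnicalLemmaOddp}, and at (iii).

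For (i), the identity you invoke, ``$(1-a^{p^{m-1}})a^{p^{m-1}} = -(1-a^{p^{m-1}})$'', is a $p=2$ artifact: it uses $a^{2p^{m-1}}=1$. For odd $p$ the element $a^{p^{m-1}}$ has order $p\geq 3$ and no such two-term collapse occurs, so $r^2=0$ will not drop out this way. The paper instead builds in a factor $(b-a^{p^{m-2}})a$ and only obtains $r^p=0$ (which, together with $rs=sr=0$, still makes $y(r+s)/p$ nilpotent); the collapse of the $p$-th power uses $\prod_{j}(X-\zeta^j)=X^p-1$ on the relevant component. For \Cref{lem:GpmnTechnicalLemmaOddp}, you propose $x=a$ and $z=b$, but neither fits the hypotheses: the lemma needs $x^{p^2}=1$, whereas $a$ has order $p^m\geq p^3$, and it needs $[x,z]=1$, whereas $[a,b]\neq 1$. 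Quotienting does not repair this because your expressions $(1-a^p)(1-a^pb)$ genuinely involve the non-central $b$. The paper applies the lemma with $x=a^{p^{m-2}}$ (order $p^2$) and $z=b^p$ (central), which is why products $\prod_i(1-b^pa^{ip^{m-1}})$, not $(1-a^p)(1-a^pb)$, appear in $r$ and $s$.

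For (iii), the choice $y=1+a^{p^{m-2}}$ is again a $p=2$ relic: there it worked because $(1+a^{p^{m-2}})(1-a^{p^{m-2}})=1-a^{p^{m-1}}$ and squaring lands in $2\Z G$. For odd $p$ one needs $y\cdot(1-a^{p^{m-2}})^{p+1}\equiv 0\bmod p$, and this forces $y=(1-a^{p^{m-2}})^{p(p-1)-1}$, so that the total exponent reaches $p^2$ and $(1-a^{p^{m-2}})^{p^2}\equiv 1-a^{p^m}=0\bmod p$. Your claim that ``$(1-a)^{p+1}$ is absorbed by the factor $(1-a^{p^{m-1}})$'' does not give divisibility by $p$: two factors each congruent to a power of $(1-a)$ do not make the product vanish mod $p$ unless the total exponent reaches $p^m$. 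In short, the architecture is right but $r,s,y$ must be rebuilt around $x=a^{p^{m-2}}$, $z=b^p$, a nilpotence-forcing factor $(b-a^{\pm p^{m-2}})$, component-isolating products $\prod_i(1-b^pa^{ip^{m-1}})$, and $y=(1-a^{p^{m-2}})^{p(p-1)-1}$.
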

\begin{proof}
Set
\begin{align*}
r &= (b - a^{p^{m-2}})a(1-a^{p^{m-1}}) \widetilde{b^{p^2}} \prod_{i=0}^{p-2}(1-b^pa^{ip^{m-1}}), \\
s &= (b- a^{-p^{m-2}})a(1-a^{-p^{m-1}}) \widetilde{b^{p^2}} \prod_{i=2}^p (1- b^pa^{ip^{m-1}}),\\
y &= (1-a^{p^{m-2}})^{p(p-1)-1}.
\end{align*}
We will again show (i)-(iv) from \Cref{lem:ThersyConstruction} which will imply that $G$ does not have (ND). We analyze the irreducible representations of $G$ in which $r$ and $s$ are not mapped to $0$. Note that as $a^p$ and $b^p$ are central, they are mapped to a central matrix under every irreducible representation. Let $R$ be an irreducible $\mathbb{Q}$-representation and $e$ the corresponding primitive central idempotent of $\mathbb{Q}G$. Denote by $I$ the identity matrix. If $R(b^{p^2}) \neq I$, then $e\widetilde{b^{p^2}} = 0$, as the sum over all the powers of a primitive $p^\ell$-th root of unity equals $0$ when $\ell \geq 1$. Moreover $R(b^p) = I$ implies $e(1-b^p) = 0$. Hence, as both $r$ and $s$ contain the factor $(1-b^p)\widetilde{b^{p^2}}$ the the inequality $er \neq 0$ implies that $R(b^p) = \zeta I$ for $\zeta$ a primitive $p$-th root of unity while $es \neq 0$ implies $R(b^p) = \zeta'I$ for $\zeta'$ a primitive $p$-th root of unity. Moreover, if $R(a^{p^{m-1}}) = I$, then $e(1-a^{p^{m-1}}) = 0$. As $a^{p^{m-1}}$ has order $p$, we conclude that $er \neq 0$ implies $R(a^{p^{m-1}}) = \xi I$ while $es \neq 0$ implies $R(a^{p^{m-1}}) = \xi'I$ for certain primitive $p$-th roots of unity $\xi$ and $\xi'$. The factor $\prod_{i=1}^{p-2} (1-b^p a^{ip^{m-1}})$ in $r$ means that $er \neq 0$ implies $R(b^p) \neq R(a^{ip^{m-1}})^{-1}$ for every $1 \leq i \leq p-2$. From the fact that both $b^p$ and $a^{p^{m-1}}$ are mapped to elements of order $p$, we conclude that $er \neq 0$ means $R(b^p) = R(a^{p^{m-1}})$, i.e. $eb^p = ea^{p^{m-1}}$. Similarly $es \ne 0$ implies $eb^p = ea^{-p^{m-1}}$. It follows that $r$ and $s$ live in different components of $\mathbb{Q}G$, so that $rs = sr = 0$ and also (ii) holds.

We next show that $r^p = 0$. The non-central factors of $r$ give
\[((b-a^{p^{m-2}})a)^p = a^p(b - a^{p^{m-2}})(ba^{(p-1)p^{m-1}} - a^{p^{m-2}})(ba^{(p-2)p^{m-1}} - a^{p^{m-2}})...(ba^{p^{m-1}} - a^{p^{m-2}}). \]
From the paragraph before we know that when $er \neq 0$, then $R(a^{p^{m-1}}) = \zeta I$ for some primitive $p$-th root of unity $\zeta$. So then
\[e((b-a^{p^{m-2}})a)^p = ea^p(b - a^{p^{m-2}})(b\zeta^{-1} - a^{p^{m-2}})(b\zeta^{-2} - a^{p^{m-2}})...(b\zeta - a^{p^{m-2}}). \]
We claim that the coefficient of $a^{ip^{m-2}}$ in the expression 
\[(b - a^{p^{m-2}})(b\zeta^{-1} - a^{p^{m-2}})(b\zeta^{-2} - a^{p^{m-2}})...(b\zeta - a^{p^{m-2}})\]
is $0$ for every $1 \leq i \leq p-1$. Indeed, using $\prod_{j=0}^{p-1}(X-\zeta^j) = X^p - 1$, as an equation in the polynomial ring $\mathbb{Z}[X]$, up to the factor $b^i$ and possibly a sign this coefficient is the same as in $\prod_{j=0}^{p-1} (a^{p^{m-2}} - \zeta^j) = (a^{p^{m-2}})^p - 1$. So,
\[e((b-a^{p^{m-2}})a)^p = ea^p(b^p - a^{p^{m-1}}) = 0, \]
where the last equality follows from the previous paragraph. Hence, $r^p = 0$. A similar calculation shows also $s^p$, so that (i) follows.

We proceed to show (iii). 
We first calculate
\begin{align*}
y&(r+s) = y\widetilde{b^{p^2}}(1-b^p)\prod_{i=2}^{p-2}(1-a^{ip^{m-1}}b^p) \\
 & \cdot ((b-a^{p^{m-2}})a(1-a^{p^{m-1}})(1-a^{p^{m-1}}b^p) + (b-a^{-p^{m-2}})a(1-a^{-p^{m-1}})(1-a^{-p^{m-1}}b^p)) \\
 &= y\widetilde{b^{p^2}}(1-b^p)\prod_{i=2}^{p-2}(1-a^{ip^{m-1}}b^p) \\
 &\cdot (ba((1-a^{p^{m-1}})(1-a^{p^{m-1}}b^p) + (1-a^{-p^{m-1}})(1-a^{-p^{m-1}}b^p)) \\
 &  -a(a^{p^{m-2}}(1-a^{p^{m-1}})(1-a^{p^{m-1}}b^p)+a^{-p^{m-2}}(1-a^{-p^{m-1}})(1-a^{-p^{m-1}}b^p))).
\end{align*}
So the last two lines mean that, taking into account the factor $y$, it is enough to show that
\begin{equation}\label{eq:cong1pOdd}
y ((1-a^{p^{m-1}})(1-a^{p^{m-1}}b^p) + (1-a^{-p^{m-1}})(1-a^{-p^{m-1}}b^p)) \equiv 0 \mod p
\end{equation}
and 
\begin{equation}\label{eq:cong2pOdd}
y(a^{p^{m-2}}(1-a^{p^{m-1}})(1-a^{p^{m-1}}b^p) + a^{-p^{m-2}}(1-a^{-p^{m-1}})(1-a^{-p^{m-1}}b^p)) \equiv 0 \mod p.
\end{equation}
Note that 
\[y(1-a^{p^{m-2}})^{p+1} = (1-a^{p^{m-2}})^{p(p-1)-1+p+1} = (1-a^{p^{m-2}})^{p^2} \equiv (1 - a^{p^m}) = 0 \mod p\]
 by \eqref{eq:powercongruence}. So to prove \eqref{eq:cong1pOdd} and \eqref{eq:cong2pOdd} it is enough to show that the factors to the right of $y$ contain a factor $(1-a^{p^{m-2}})^{p+1}$ modulo $p$. This follows by applying \Cref{lem:GpmnTechnicalLemmaOddp} with $x = a^{p^{m-2}}$ and $z = b^p$. This shows (iii).

Finally, to get (iv) note that the coefficient of $ba$ in $yr$ is $1$. This follows as none of the products one can get by factoring out the element $yr$ gives $ba$ except the trivial one which in turns follows as the powers appearing for $a$ and $b$ are otherwise not big enough to sum up to $p^m$ or $p^n$ respectively when $a$ and $b$ are both taken in as a factor.
\end{proof}

\begin{lemma}\label{lem:GpmnOddCase2}
Let $p$ be odd, $m =2$ and $n > m$. Then $G = G(p,m,n)$ does not have ND. 
\end{lemma}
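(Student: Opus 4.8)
The plan is to apply \Cref{lem:ThersyConstruction}, exactly as in the proofs of \Cref{lem:GpmnEvenCase} and \Cref{lem:GpmnOddCase1}: I would exhibit $r,s\in\mathbb{Z}G$, a \emph{central} element $y\in\ZZ(\mathbb{Z}G)$ and a central idempotent $e$ of $\mathbb{Q}G$ verifying conditions (i)--(iv) there, which then yields the nilpotent element $y(r+s)/p\in\mathbb{Z}G$ whose projection $yr/p$ is not integral, so that $G$ fails ND. Write $G=G(p,2,n)$ with $n\geq 3$; recall that $G$ has nilpotency class $2$, that $G'=\langle a^p\rangle$ has order $p$, and that $\ZZ(G)=\langle a^p\rangle\times\langle b^p\rangle$. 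The reason the construction of \Cref{lem:GpmnOddCase1} cannot be transcribed verbatim is that there the witness was $y=(1-a^{p^{m-2}})^{p(p-1)-1}$, which lies in $\ZZ(\mathbb{Z}G)$ precisely because $a^{p^{m-2}}$ is central when $m\geq 3$; for $m=2$ the element $a^{p^{m-2}}=a$ is not central, so that choice is inadmissible.

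The remedy exploits the hypothesis $n>2$: the element $t:=b^{p^{n-2}}$ is central, of order $p^2$, with $p$-th power $t^p=b^{p^{n-1}}$ of order $p$, while $a^p$ is a central element of order $p$ generating $G'$; since $\langle a\rangle\cap\langle b\rangle=1$ one has $\langle t,a^p\rangle\cong C_{p^2}\times C_p$. I would then copy the construction of \Cref{lem:GpmnOddCase1} with the two generators interchanged and with $t$ in the role previously played by $a^{p^{m-2}}$, namely (up to the obvious variations in signs and indices)
\begin{align*}
r &= (a-t)\,b\,(1-t^p)\prod_{i=0}^{p-2}\bigl(1-a^p b^{i p^{n-1}}\bigr),\\
s &= (a-t^{-1})\,b\,(1-t^{-p})\prod_{i=2}^{p}\bigl(1-a^p b^{i p^{n-1}}\bigr),\\
y &= (1-t)^{p(p-1)-1}.
\end{align*}
Here $y$ is central because $t$ is, and no factor analogous to the $\widetilde{b^{p^2}}$ occurring in \Cref{lem:GpmnOddCase1} is needed, since its sole purpose there was to cut down to components in which the ``small'' generator has order at most $p^2$, which here is automatic as $|a|=p^2$.

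Conditions (i)--(iv) are then checked along the template of \Cref{lem:GpmnOddCase1}. For (i) and (ii): using that $a^p$ and $t^p=b^{p^{n-1}}$ are central, one identifies the simple components of $\mathbb{Q}G$ in which $r$, resp.\ $s$, has nonzero image, finding that $r$ is supported on those where $R(a^p)=R(t^p)$ is a primitive $p$-th root of unity and $s$ on those where $R(a^p)=R(t^{-p})$; hence $rs=sr=0$, and taking $e$ to be the sum of the primitive central idempotents of the first type gives $er=r$ and $es=0$. The vanishing $r^p=s^p=0$ follows, as in \Cref{lem:GpmnOddCase1}, from $\prod_{j=0}^{p-1}(X-\zeta^j)=X^p-1$ applied with $X=t$: on these components it reduces $e\bigl((a-t)b\bigr)^p$ to $e\,b^p(a^p-t^p)=0$. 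For (iii) one applies \Cref{lem:GpmnTechnicalLemmaOddp} with $x=t$ and $z=a^p$ (legitimate since $t$ and $a^p$ are central, commute, and $t$ has order $p^2$) to rewrite $r+s$, modulo $p$, as $(1-t)^{p+1}$ times an element of $\mathbb{Z}G$; since $y(1-t)^{p+1}=(1-t)^{p^2}\equiv 1-t^{p^2}=1-b^{p^n}=0\bmod p$, this gives $y(r+s)/p\in\mathbb{Z}G$. Finally (iv) is witnessed by the coefficient of the group element $ab$ in $yr$, which equals $\pm1$: the only way to produce $ab$ from the product defining $yr$ is to take the constant term $1$ from $y$ and from every factor $(1-\cdot)$ together with the summand $ab$ of $(a-t)b$, since the $b$-exponents otherwise available ($p^{n-2}$ from $y$ and from $t$, $p^{n-1}$ from the $b^{ip^{n-1}}$, and $1$ from the displayed $b$) cannot combine to $1$ modulo $p^n$.

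The step that needs genuine care rather than transcription is the component analysis behind (i) and (ii): interchanging $a$ and $b$ is not perfectly symmetric, because $|a|=p^2$ while $|b|=p^n$, so one must verify directly that $r$ and $s$ really land in the intended components and, crucially, that multiplication by $y$ does not annihilate the witnessing monomial --- which is exactly why one tracks the coefficient of $ab$ rather than of a power of $b$. A secondary, purely bookkeeping matter is to carry the $p=3$ adjustments built into \Cref{lem:GpmnTechnicalLemmaOddp} through the computation, exactly as in \Cref{lem:GpmnOddCase1}.
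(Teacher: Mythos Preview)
Your proposal is correct and essentially coincides with the paper's own proof: with $t=b^{p^{n-2}}$ your elements $r$, $s$, $y$ are exactly those the paper writes down, and the verifications of (i)--(iv) proceed identically, including the application of \Cref{lem:GpmnTechnicalLemmaOddp} with $x=b^{p^{n-2}}$, $z=a^p$ and the observation that the coefficient of $ab$ in $yr$ equals $1$.
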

\begin{proof}
Again we will show (i)-(iv) from \Cref{lem:GpmnTechnicalLemmaOddp}, this time using the elements:
\begin{align*}
r &= (a-b^{p^{n-2}})b(1-b^{p^{n-1}})\prod_{i=0}^{p-2}(1-a^pb^{ip^{n-1}}), \\
s &= (a-b^{-p^{n-2}})b(1-b^{-p^{n-1}})\prod_{i=2}^p(1-a^pb^{ip^{n-1}}), \\
y & = (1-b^{p^{n-2}})^{p(p-1)-1}.
\end{align*}
We again first analyze the properties of primitive central idempotents which do not map $r$ or $s$ to $0$. Note that $a^p$ and $b^{p^{n-1}}$ both have order $p$. Let $e \in \PCI(\mathbb{Q}G)$. The factor $(1-a^p)$ in both $r$ and $s$ means that $er \neq 0$ implies that $ea^p$ has order $p$ and also $es \neq 0$ means that $ea^p$ has order $p$. From the factor $(1-b^{p^{n-1}})$ in $r$ and the factor $(1-b^{-p^{n-1}})$ in $s$ we also get that $er \neq 0$ implies that $eb^{p^{n-1}}$ has order $p$ and $es \neq 0$ implies the same. Finally, the rest of the factors appearing on the right from $b$ then mean that $er \neq 0$ implies $ea^p = eb^{p^{n-1}}$ and $es \neq 0$ implies $ea^p = eb^{-p^{n-1}}$. This in particular gives that $rs = sr = 0$ and (ii).

Computing $r^p$ and $s^p$ is also very similar to the previous case. Namely the non-central part of $r$ gives
\[((a-b^{p^{n-2}})b)^p  = b^p(a-b^{p^{n-2}})(aa^p-b^{p^{n-2}})(aa^{2p}-b^{p^{n-2}})...(aa^{(p-1)p} - b^{p^{n-2}}).\]
As $ea^p$ has order $p$ when $er \neq 0$, the coefficient of $(b^{p^{n-2}})^i$ in the expression 
\[e(a-b^{p^{n-2}})(aa^p-b^{p^{n-2}})(aa^{2p}-b^{p^{n-2}})...(aa^{(p-1)p} - b^{p^{n-2}})\]
is $0$ for all $1 \leq i \leq p-1$, so that $e((a-b^{p^{n-2}})b)^p = eb^p(a^p - b^{p^{n-1}}) = 0$, implying $r^p = 0$. Similarly $s^p = 0$. Overall, we obtain (i).

Now,
\begin{align*}
y&(r+s) = y(1-a^p)\prod_{i=2}^{p-2}(1-a^pb^{ip^{n-1}}) \\
\cdot& ((a-b^{p^{n-2}})b(1-b^{p^{n-1}})(1-a^pb^{p^{n-1}}) + (a-b^{-p^{n-2}})b(1-b^{-p^{n-1}})(1-a^pb^{-p^{n-1}})) \\
=& \ y(1-a^p)\prod_{i=2}^{p-2}(1-a^pb^{ip^{n-1}}) \\
\cdot &(ab((1-b^{p^{n-1}})(1-a^pb^{p^{n-1}}) + (1-b^{-p^{n-1}})(1-a^pb^{-p^{n-1}})) \\
-& b(b^{p^{n-2}}(1-b^{p^{n-1}})(1-a^pb^{p^{n-1}}) + b^{-p^{n-2}}(1-b^{-p^{n-1}})(1-a^pb^{-p^{n-1}}))).
\end{align*}
As $y(1-b^{p^{n-2}})^{p+1} = (1-b^{p^{n-2}})^{p(p-1)-1+p+1} = (1-b^{p^{n-2}})^{p^2} \equiv 0 \mod p$, it is hence enough to show that the expressions
\begin{equation}\label{eq:con1OddpCase2}
(1-b^{p^{n-1}})(1-a^pb^{p^{n-1}}) + (1-b^{-p^{n-1}})(1-a^pb^{-p^{n-1}})
\end{equation}
and 
\begin{equation}\label{eq:con2OddpCase2}
b^{p^{n-2}}(1-b^{p^{n-1}})(1-a^pb^{p^{n-1}}) + b^{-p^{n-2}}(1-b^{-p^{n-1}})(1-a^pb^{-p^{n-1}})
\end{equation}
when considered modulo $p$ both contain a factor $(1-b^{p^{n-2}})^{p+1}$. This follows by applying \Cref{lem:GpmnTechnicalLemmaOddp} for $x = b^{p^{n-2}}$ and $z = a^p$. So we have (iii). Moreover analyzing $yr$ we see that the coefficient of $ab$ equals $1$, so also (iv) follows and $G$ does not have (ND) in this case.
\end{proof}

\begin{lemma}\label{lem:GpmnOddCase3}
Let $p$ be odd. Then $G = G(p,2,2)$ does not have ND. 
\end{lemma}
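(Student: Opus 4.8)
Write $G = G(p,2,2) = \langle a,b \mid a^{p^2}=b^{p^2}=1,\ a^b = a^{1+p}\rangle$, so that $G' = \langle a^p\rangle$ has order $p$, the centre $\ZZ(G) = \langle a^p\rangle \times \langle b^p\rangle$ is elementary abelian of rank $2$, and $G$ is metabelian of nilpotency class $2$. The feature that isolates this case from \Cref{lem:GpmnOddCase1} and \Cref{lem:GpmnOddCase2} is exactly that $\ZZ(G)$ contains \emph{no} element of order $p^2$: in those lemmas the central element $a^{p^{m-2}}$, resp.\ $b^{p^{n-2}}$, of order $p^2$ was the engine of the whole construction. Taking $A = \langle a, b^p\rangle \cong C_{p^2}\times C_p$, a maximal abelian subgroup containing $G'$, \Cref{lem:IdempotForMetabelian} identifies the matrix components of $\mathbb{Q}G$ as the $\mathbb{Q}Ge(G,A,K_k)$ with $K_k = \langle a^{kp}b^p\rangle$, $k=0,\dots,p-1$ --- the $p$ cyclic central subgroups of order $p$ complementary to $\langle a^p\rangle$ --- which are pairwise non-conjugate in $G$ and each give a component isomorphic to $M_p(\mathbb{Q}(\zeta_p))$; in particular $\mathbb{Q}G$ has at least two matrix components, as already follows from \Cref{lem:GpmnOneMatrixComponent}.

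The plan is to apply \Cref{lem:ThersyConstruction}. Let $e$ be the primitive central idempotent with kernel $\langle b^p\rangle$, i.e.\ the (unique) matrix component of $G/\langle b^p\rangle \cong G(p,2,1)$, and let $e'$ be the one with kernel $\langle a^pb^p\rangle$. Since each of the quotients $G/\langle b^p\rangle$ and $G/\langle a^pb^p\rangle$ has a \emph{unique} matrix component, any nilpotent $r\in\mathbb{Z}G$ with $\widehat{\langle b^p\rangle}\,r = r$ automatically satisfies $er = r$, and any nilpotent $s\in\mathbb{Z}G$ with $\widehat{\langle a^pb^p\rangle}\,s = s$ automatically satisfies $es = 0$; then $rs=sr=0$ is automatic because $e$ and $e'$ are orthogonal. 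As in the proofs of \Cref{lem:GpmnOddCase1} and \Cref{lem:GpmnOddCase2}, I would build $r$ and $s$ as the product of one non-central element of $\mathbb{Z}G$ with a product of factors $(1-g)$, $g$ central of order $p$, the latter serving to restrict the support to the desired component without introducing denominators, and take $y$ to be a power of $(1-a^p)\in\ZZ(\mathbb{Z}G)$. The substitute for the relation $(1-x)^{p^2}\equiv 0 \bmod p$ used before is now the much weaker $(1-a^p)^p\equiv 1-a^{p^2}=0 \bmod p$, so that a single factor $(1-a^p)$ produced by the cancellation in $y(r+s)$ only needs to be met by $(1-a^p)^{p-1}$ inside $y$.

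I expect condition (i) --- exhibiting a genuinely \emph{nilpotent} non-central factor --- to be the main obstacle, and the reason this case requires its own argument. The clean identity $\big((a-b^{p^{n-2}})b\big)^p = b^p\big(a^p - b^{p^{n-1}}\big)$ that drove \Cref{lem:GpmnOddCase2} used the centrality of $b^{p^{n-2}}$; for $m=n=2$ the element $(a-b)b$ is \emph{not} a viable replacement, because on the component $e$ it maps to a matrix $M$ with $M^p = (AB^{-1}-I)^p$, where $A,B$ are the images of $a,b$, and $AB^{-1}-I$ is invertible since $AB^{-1}$ has order $p^2$ --- so $(a-b)b$ is far from nilpotent even after restriction to $e$. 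The non-central factor must therefore be chosen so that its image in each of the two target matrix components is strictly triangular in a suitable basis, hence visibly nilpotent; pinning down such a factor, and then verifying condition (iii) (which should come down, via \Cref{lem:GpmnTechnicalLemmaOddp} applied inside $\mathbb{Z}[\langle b,a^p\rangle]\cong\mathbb{Z}[C_{p^2}\times C_p]$, to a mod-$p$ congruence of the same flavour as in \Cref{lem:GpmnOddCase2}) and condition (iv) (a single coefficient, as there), is where the work lies. Should this route turn out awkward, a direct alternative is to write down an explicit nilpotent $n\in\mathbb{Z}G$ and compute a non-integral coefficient of $ne$ from \Cref{lem:CoefficientOfProjectionByCharacter}, in the spirit of the proof of \Cref{th:counterex}.
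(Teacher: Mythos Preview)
Your plan follows the same template as the paper's proof --- apply \Cref{lem:ThersyConstruction} with two matrix components and elements $r,s,y$ --- but the particular choices you make diverge from the paper's in a way that matters, and you have correctly identified (without resolving) the point where the difficulty lies.

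The paper does \emph{not} target the components with kernels $\langle b^p\rangle$ and $\langle a^pb^p\rangle$. Instead it takes
\[
r = (b-a)(1-a^p)\prod_{i=0}^{p-2}(1-a^{ip}b^p), \qquad s = (b-a^{-1})(1-a^{-p})\prod_{i=2}^{p}(1-a^{ip}b^p),
\]
so that the central factors force $er\neq 0$ only on the component where $ea^p = eb^p$ (and $es\neq 0$ only where $ea^p = eb^{-p}$). This choice of component is the crux: since $a^p$ and $b^p$ coincide there, one can write down an explicit irreducible representation $a\mapsto A$, $b\mapsto AC$ with $A$ a weighted permutation matrix and $C$ diagonal; then $b-a\mapsto A(C-I)$, and because $C$ has $1$ among its diagonal entries the characteristic polynomial of $A(C-I)$ is $-X^p$, so it is nilpotent. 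Thus the non-central factor you were searching for is simply $(b-a)$ --- but its nilpotency is invisible in \emph{your} component (kernel $\langle b^p\rangle$), where $a$ and $b$ have orders $p^2$ and $p$ respectively and $b-a$ is in fact not nilpotent (its image $B-A$ has $\det(I-B^{-1}A)\neq 0$ since the eigenvalues of $B^{-1}A$ are primitive $p^2$-th roots of unity). The paper's verification of $r^p=s^p=0$ is carried out via this explicit matrix calculation rather than a formal identity in $\mathbb{Z}G$, which is precisely the new ingredient distinguishing this case from \Cref{lem:GpmnOddCase1} and \Cref{lem:GpmnOddCase2}.

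For (iii) the paper takes $y=(1-a)^{p(p-1)-1}$ and applies \Cref{lem:GpmnTechnicalLemmaOddp} with $x=a$, $z=b^p$ (note $[a,b^p]=1$ since $(1+p)^p\equiv 1\bmod p^2$), so that the cancellation in $r+s$ produces a factor $(1-a)^{p+1}$ and $y(1-a)^{p+1}=(1-a)^{p^2}\equiv 0\bmod p$. Your idea of using a central power of $1-a^p$ for $y$ is not unreasonable, but the factor produced is $(1-a)^{p+1}$, not $(1-a^p)$. Finally, your computation dismissing $(a-b)b$ is not quite right --- the identity $((A-B)B)^p=(AB^{-1}-I)^p$ you invoke fails because $B$ does not commute with $AB^{-1}$ --- though your instinct that the naive transplant from \Cref{lem:GpmnOddCase2} breaks is correct; it breaks because of the target component, not the shape of the factor.
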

\begin{proof}
We note that the case $p = 3$ was considered in \cite[Lemma 2.2]{LiuPassman10}, but we will use different arguments. We will again show (i)-(iv) from \Cref{lem:GpmnTechnicalLemmaOddp} using
\begin{align*}
r =& (b-a)(1-a^p)\prod_{i=0}^{p-2}(1-a^{ip}b^p), \\
s =& (b-a^{-1})(1-a^{-p})\prod_{i=2}^p(1-a^{ip}b^p), \\
y =& (1-a)^{p(p-1)-1}.
\end{align*}
The proof of the fact that $r^p = s^p = 0$ is different from the cases before, so that we postpone this to the end. We start again by analyzing the properties of primitive central idempotents not annihilating $r$ and $s$. Similarly as in the other cases we get that $er \neq 0$ implies that $ea^p$ and $eb^p$ have order $p$ and $ea^p = eb^p$ holds. Also, $es\neq 0$ implies that $ea^p$ and $eb^p$ have order $p$ and $ea^p = eb^{-p}$. So $rs = sr = 0$ and (ii) follow.

Similarly as we had to show \eqref{eq:cong1pOdd} and \eqref{eq:cong2pOdd} before we now need to obtain that
\[(1-a^p)(1-a^pb^p) + (1-a^{-p})(1-a^{-p}b^p) \]
and
\[a(1-a^p)(1-a^pb^p) + a^{-1}(1-a^{-p})(1-a^{-p}b^p) \]
both contain a factor $(1-a)^{p+1}$ when considered modulo $p$. This follows by applying \Cref{lem:GpmnTechnicalLemmaOddp} for $x = a$ and $z = b^p$ and so we obtain (iii). It is also easy to see that the coefficient of $b$ in $yr$ is $1$, so that (iv) holds.

It remains to prove $r^p = s^p = 1$. We first claim that there is exactly one $e \in \PCI(\mathbb{Q}G)$ such that $er \neq 0$. To see this, note that, since the center of $G$ is $\langle a^p \rangle \times \langle b^p \rangle$ and isomorphic to an elementary abelian group of rank $2$, for each possible kernel different from the derived subgroup $\langle a^p \rangle$ there can be only one component with center $\mathbb{Q}(\zeta)$ for dimension reasons. Here $\zeta$ denotes a primitive $p$-th root of unity. Also, there is exactly one $ \in \PCI(\Q G)e$ such that $es \neq 0$. We will work in these unique components using explicit representations to see that $r^p = s^p = 0$. Set 
\[A = \begin{pmatrix} 0 & 0 & \cdots & 0 & 1 \\ \zeta & 0 &  \cdots & 0 & 0 \\ 0 & 1 &  \cdots & 0 & 0 \\ \vdots & & \ddots &  & \vdots \\ 0 & 0 & \cdots & 1 & 0 \end{pmatrix}\]
and
\[C = \begin{pmatrix} 1 & 0 & 0 & \cdots & 0 \\ 0 & \zeta^{-1} & 0 & \cdots & 0 \\ 0 & 0 & \zeta^{-2} & \cdots & 0 \\ \vdots & & & \ddots & \vdots \\ 0 & 0 & 0 &  \cdots & \zeta \end{pmatrix}.\]

Then $A^p = \zeta I$, where $I$ denotes the identity matrix, and $C^{-1}AC = A^{p+1}$. Hence each map $R_i: G \rightarrow M_p(\mathbb{Q}(\zeta))$ sending $a$ to $A$ and $b$ to $A^iC$ for $0 \leq i \leq p-1$ is a representation of $G$, since also $(A^iC)^p = A^{ip}$ is an element of order $p$. In fact, all these representations are irreducible: the degree of the representations is the smallest non-trivial divisor of the order of $G$, so any non-trivial decomposition would involve only linear representations. But the linear representations contain the derived subgroup $\langle a^p \rangle$ in their kernel, hence the character values of $a^p$ under the sum of $p$ linear representations is $p$, while the character value of $a^p$ under every of the representations $R_i$ is $p\zeta$. We note also that when $D$ is a diagonal matrix with one of the diagonal entries being $0$, then $(A^iD)^p=0$. This follows, since the characteristic polynomial of $A^iD$ equals $-X^p + \zeta^id_1d_2...d_p$, where $d_1,...,d_p$ are the diagonal elements of $D$, so that the only eigenvalue of $A^iD$ is $0$ when one of the $d_j$'s equals $0$.

The $e \in \PCI(\Q G)$ which satisfies $er \neq 0$ corresponds to the representation $R_1$, as in general $R_i(a^{ip}) = R_i(b^p)$ and as we saw before $ea^p = eb^p$. Now $R_1(a-b) = AC-A = A(C-I)$ and $C-I$ is a diagonal matrix containing $0$ on the diagonal. So, $r^p = 0$ follows. Similarly $R_{p-1}$ is the representation corresponding to the primitive central idempotent not annihilating $s$ and $D_{p-1}(b-a^{-1}) = A^{p-1}C - A^{-1} = A^{-1}(A^pC-I)$ and as also $A^pC-I$ is a diagonal matrix containing $0$ on the diagonal, we get $s^p= 0$ by the paragraph before. This finishes the proof of (i) in this case and the theorem follows.
\end{proof}

We are finally ready to prove the main theorem of this section.

\begin{proof}[Proof of \Cref{th:NilpotentCase}]
By \Cref{Summary theorem for SN} it remains to consider nilpotent group which have SSN. As observed in \cite[Section 4]{LiuPassman16} these were in fact classified in \cite{BozikovJanko}. They fall into nine categories. For eight of these categories it is shown in \cite[Section 4]{JespersSun} that if $G$ lies in one of them, it has ND if and only if $\mathbb{Q}G$ has at most one matrix component. The last category which remains open in general are the groups $G(p,m,n)$.

We already know by \Cref{th:counterex} that $G(2,2,3)$ does have ND. So to exclude the cases of $G$ having one matrix component by \Cref{lem:GpmnOneMatrixComponent} we can assume that $n \geq 2$ and $(m,n) \notin \{(2,2),(2,3) \}$ if $p=2$. Then $G$ does not have ND for any of the remaining cases by Lemmas~\ref{lem:GpmnEvenCase}, \ref{lem:GpmnOddCase1}, \ref{lem:GpmnOddCase2}, \ref{lem:GpmnOddCase3}.
\end{proof}

\subsection{Non-nilpotent groups}

The goal of this section is to handle the non-nilpotent part of \Cref{Summary theorem ND}. Namely, we show:

\begin{theorem}\label{ND for non-nilp SSN grps}
Let $G$ be a finite group which is not nilpotent and not an SSN group of unfaithful type. Then $G$ has ND if and only if it has one matrix component.
\end{theorem}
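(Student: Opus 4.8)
By Theorem 2.? (the classification of groups with SN), combined with the fact that property ND forces property SN, a non-nilpotent group $G$ with ND that is not an SSN group of unfaithful type must fall into one of the types (i)--(iv) of \Cref{Summary theorem for SN}, and among those the non-nilpotent ones are: groups with SSN (which, being non-nilpotent, are severely restricted by the Liu--Passman classification in \cite{LiuPassman16}), the groups of type (iii) (unique minimal normal subgroup $S$ non-solvable with $G/S$ Dedekind), the solvable non-nilpotent groups of type (ii) in \Cref{prop:SolvSNGroups} with $H$ acting irreducibly and \emph{faithfully}, and the groups with one matrix component. So the whole theorem reduces to showing that in each of the first three families (SSN non-nilpotent, type (iii), and type (ii)-with-faithful-action), ND \emph{fails} unless $\Q G$ already has at most one matrix component. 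Since having at most one matrix component trivially implies ND, only this one direction needs work.

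\textbf{Type (iii) and non-solvable SSN groups.} Here I would argue that such a $G$ always has \emph{at least two} matrix components and construct an explicit nilpotent element violating ND. If $S=\Soc(G)$ is a product of non-abelian simple groups, then already $\Q S$ has several matrix components of reduced degree $\ge 2$ coming from the non-linear characters of the simple factors; one must check these survive to $\Q G$ (they do, since $G/S$ is Dedekind and induction/Clifford theory keeps the reduced degrees large), so $\Q G$ has many matrix components. To defeat ND one then uses the standard bicyclic-type construction: pick a non-normal subgroup or a non-central element $g$ of a simple composition factor and form a bicyclic nilpotent element $\widehat{\langle h\rangle}g(1-\widehat{\langle h\rangle})$-type element of $\Z G$ supported suitably, and show that its projection to one matrix component is not integral. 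In fact the cleanest route is to invoke that such $G$ is not bicyclic resistant: the SN property was engineered precisely so that failing it implies not bicyclic resistant, and the later \Cref{When SN is DK theorem} says that for SN groups bicyclic resistance is equivalent to being supersolvable or having one matrix component. A non-solvable group is never supersolvable, so it is not bicyclic resistant, hence not ND, unless it has one matrix component. (One should double-check this does not create a circular dependency with \Cref{sec:DK prop section}; if it does, replace the appeal by the direct bicyclic construction above.)

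\textbf{Type (ii) with faithful irreducible action.} This is the substantive case. Here $G = P \rtimes H$ with $P$ elementary abelian of rank $\ge 2$, $H$ Dedekind with rank-1 Sylow subgroups acting faithfully and irreducibly on $P$, and no nontrivial element of $H$ fixing a nontrivial element of $P$ (by \Cref{prop:SolvSNGroups}). By \Cref{lem:IdempotForMetabelian} (the group is metabelian, with $A=P$ a maximal abelian subgroup containing $G'=P$) the non-commutative components of $\Q G$ are of the form $\Q e(G,P,K)$ with $K\lneq P$, $K$ not containing the ``diagonal'' needed — and because the action is faithful irreducible of rank $\ge 2$, there are \emph{several} inequivalent such components (the orbit structure of $H$ on the nontrivial characters of $P$ has more than one orbit once $\dim P\ge 2$, or the ramification differs). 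So $\Q G$ has at least two matrix components. Now build a nilpotent element: take $\widetilde P$-free combinations, i.e. work with $\alpha = \widehat{K_1}g_1(1-\widehat{K_1})$-style elements, or more concretely a bicyclic unit $u = 1 + (1-\widehat{P}) g \widetilde{\langle h\rangle}$ with $h\in H$, $g\in P$; its logarithm/nilpotent part $n = (1-\widehat{P})g\widetilde{\langle h\rangle}$ lies in $\Z G$, squares to zero (using $g\widetilde{\langle h\rangle}g\widetilde{\langle h\rangle}=0$ as $h$ fixes no nontrivial element of $P$), and one computes via \Cref{lem:CoefficientOfProjectionByCharacter} that $ne$ has a coefficient with denominator $|H|$ or $p$ for a suitable component $e$, hence $ne\notin\Z G$. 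The fixed-point-free condition on the $H$-action is exactly what makes the cross terms vanish and what forces the denominator to be genuine, so this is where the precise output of \Cref{prop:SolvSNGroups} gets used.

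\textbf{Main obstacle.} The hard part will be the type (ii) case: I need to show \emph{both} that two distinct matrix components genuinely appear (a counting/ramification argument with \Cref{lem:IdempotForMetabelian}, \Cref{dim of simple of SSP}, keeping careful track of $\Q$-irrationality of the idempotents $e(G,P,K)$) \emph{and} that the candidate nilpotent element projects non-integrally — the latter requires choosing the element so its support interacts with at least two different components in a way that no single central idempotent can clear the denominators. I expect the computation of the projection coefficient via \Cref{lem:CoefficientOfProjectionByCharacter} to be delicate because the relevant character $\chi$ of the matrix component is supported off $P$ only on elements that act freely, and one must verify the $1/|H|$ (or $1/p$) does not cancel. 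The SSN non-nilpotent subcase should be quick by directly quoting the Liu--Passman structure theorem \cite{LiuPassman16} together with the ND-verdicts already established there and in the nilpotent section, leaving only a small finite list to check; and type (iii) should follow from the non-bicyclic-resistance argument once the logical dependency is confirmed clean.
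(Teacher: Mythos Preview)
Your overall case split is correct, and you are right that the appeal to \Cref{When SN is DK theorem} is circular: the proof of $(1)\Rightarrow(2)$ there explicitly invokes the proof of \Cref{ND for non-nilp SSN grps}. So you must do the direct construction in both the solvable type~(ii) case and the non-solvable case.

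The real gap is in your type~(ii) argument. First, the claim that ``the orbit structure of $H$ on the nontrivial characters of $P$ has more than one orbit once $\dim P\ge 2$'' is false: take $G=A_4\cong C_2^2\rtimes C_3$, where $H=C_3$ acts transitively on the three hyperplanes of $P$ and $\Q A_4$ has exactly one matrix component. So you cannot deduce ``more than one matrix component'' from $\operatorname{rank}P\ge2$; the implication goes the other way. Second, your proposed nilpotent element $n=(1-\widehat{P})g\widetilde{\langle h\rangle}$ does not lie in $\Z G$ (since $\widehat{P}\notin\Z G$), so it is not a legal witness. The paper instead takes the bicyclic element $x=(1-y)\,g\,\widetilde{H}$ with $1\neq y\in H$ and $g\in G\setminus N_G(H)$, and argues in the \emph{forward} direction: assuming ND, one writes $g^{-1}xe_K=(1-t)e_K\widetilde{H}$ with $t$ the $P$-part of $y^g$, and integrality of this forces $|K|$ to divide $|\{h\in\mathcal{T}_K:t\notin K^h\}|\le |H|/d$ for each $K$ in the orbit stratum $\mathcal{S}_d$. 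Summing the orbit-counting identity $s_d=d|\mathcal{S}_d|/|H|$ over $d$ then gives $s\cdot|K|\le |\mathcal{S}|=(|P|-1)/(p-1)$, i.e.\ $s\le1$; a dimension count with \Cref{dim of simple of SSP} then shows $\Q G(1-\widehat{P})$ is simple. This counting argument is the missing idea in your sketch.

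For the non-solvable case your fallback plan is on the right track but too vague. The paper picks an involution $y\in S$ and $x$ with $y^x\notin\langle y\rangle$, sets $n=(1-y)x(1+y)$, and computes via \Cref{lem:CoefficientOfProjectionByCharacter} that the coefficient of $ne$ at $x$ equals $\tfrac{k}{|G|}\chi(1-[x,y])$, which is nonzero for any faithful component and bounded in absolute value by $2\dim_\Q(e\Q G)/|G|$. One then needs that $(1-\widehat{S})\Q G$ has at least two simple summands; this is obtained not by Clifford theory but by counting conjugacy classes of cyclic subgroups inside $S$ (there are at least three) against those in $G/S$, together with the fact that no summand here is a division algebra since $\operatorname{SL}(2,5)$ is the only non-solvable finite subgroup of a rational division algebra.
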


In case that $G$ has more than one matrix component the proof of \Cref{ND for non-nilp SSN grps} will in fact construct an explicit nilpotent element $n\in \Z G$ and central idempotent $e$ such that $ne \notin \Z G$. In \Cref{subsection on SN versus DK} we will dig deeper into this and it will turn out that the existence of these elements is connected to the kernels of the irreducible $\Q$-representations of $G$.

\begin{proof}[Proof of \Cref{ND for non-nilp SSN grps} for $G$ solvable.]
For every finite group $G$ whenever $\Q G$ has at most one  matrix component, then $G$ has ND. Conversely, assume that $G$ has ND and hence property SN. As $G$ is assumed to not be an SSN group of unfaithful type,  \Cref{prop:SolvSNGroups} says that $G \cong P \rtimes H$ for an elementary abelian $p$-group $P$, where the action is faithful, irreducible and $[x,h] \neq 1$ for every non-trivial $x \in P$ and  $h \in H$. Moreover $H$ is Dedekind. In other words, by \Cref{th:DedekindGroups}, either $H$ is abelian or $H \cong Q_8 \times D$ with $D$ an abelian group of odd order. In the latter case we denote by $c \in Q_8$ the unique (central) element of order $2$.

 {\it Claim 1: } $P$ is the unique maximal abelian subgroup. Moreover, it contains no non-trivial subgroup which is normal in $G$. Also, a subgroup $N$ of $G$ is normal in $G$ iff  $P \subseteq N$. If $H$ is abelian, then $G$ is metabelian with $G' = P$. If $H$ is non-abelian, then $G' = \langle P, c \rangle$ and $G'' = P.$  \vspace{0,1cm}
 
\noindent First notice that since $P$ is elementary abelian and the action of $H$ on $P$ is irreducible it cannot contain a subgroup normal in $G$. Moreover, as $[x,h] \neq 1$ for all non-trivial $x \in P, h \in H$, $P$ is indeed the unique maximal abelian subgroup. Now, if $G/P \cong H$ is abelian, then $G' \subseteq P$ and thus by the first part $G' = P$. If $H \cong Q_8 \times D, $ we directly see that $G' \subseteq \langle P, c \rangle$. Using that $G' \cap P \subseteq P$ is normal in $G$ one has that $P \subset G'$ and hence $G' = \langle P, c \rangle$. Analogously we see that $G'' \subseteq P$ and in fact $G'' = P$ as $G''$ is normal. Finally, consider $N \unlhd G$. Then $N \cap P \subseteq P$ is normal in $G$, hence $N \subseteq P$ as $H$ contains no normal subgroups. Conversely, if $P \subseteq N$ then $N/P \leq G/P$. As mentioned above $G/P$ is Dedekind and thus $N/P$ is normal as claimed.

Next note that $G$ is strongly monomial, being abelian-by-supersolvable, and hence by \cite[Theorem 3.5.10.]{GRG1} all primitive central idempotents of $\Q G$ are of the form $e(G,N,K)$ for some strong Shoda pairs $(N,K)$. As recorded in \cite[Lemma 2.4.]{JespersSun}, $\Q[G]e(G,N,K)$ is commutative if and only if $G' \subseteq K$.\vspace{0.1cm}

{\it Claim 2:} The tuples in $\{ (P, K) \mid [P:K] = p \}$ are strong Shoda pair of $G$. Conversely, if $(N,K)$ is a strong Shoda pair with $N \unlhd G$, then $P \subseteq K$ or $N=P$. \vspace{0,1cm}

\noindent Let $K \unlhd N \unlhd G$. Then \cite[Corollary 3.5.11]{GRG1} tells that $(N,K)$ is a strong Shoda pair exactly when $N/K$ is cyclic and $N/K$ is a maximal abelian subgroup of $N_G(K)/K$. In particular, $N' \subseteq K$. 

\noindent To start notice that $\langle P, h \rangle /K$ is non-abelian for every $K \lneq P$ and non-trivial $h \in H$. This follows from \cite[(24.6), pg 112]{Aschbacher} asserting that  $P = [P, \langle h \rangle]$. Consequently, $P/K$ is maximal abelian in $N_G(K)/K$ and hence $(P,K)$ is a strong Shoda pair when $[P:K]=p$.  Next, by the first claim $P \leq N$ when $N \unlhd G$. Suppose $P \nsubseteq K$. If $N/P$ is abelian, then $N' \leq P \cap K \lneq P$. As $N' \unlhd G$, the first claim yields $N' = 1$ and so $N= P$. If $N/P$ is non-abelian, then $H$ is non-abelian and $c \in N$. So in that case $G' = \langle P, c \rangle  \leq N$, which entails that $G'' = P \leq N' \leq K$, a contradiction. This proves the second claim.

By \cite[Problem 3.4.4.]{GRG1}, the number of simple components $\Q Ge(G,P,K)$ with $[P:K]=p$, denoted $s$, is equal to the number of orbits of $H$ acting on $\mathcal{S} := \{ K \mid  [P:K] = p \}.$ To count the latter we decompose $\mathcal{S} = \bigcup_{d \mid |H|} \mathcal{S}_d$ with $\mathcal{S}_d := \{ K \in \mathcal{S} \mid |N_H(K)|=d \}.$ Note that the action of $H$ on $\mathcal{S}$ preserves each $\mathcal{S}_d$ and denote by $s_d$ the number of $H$-orbits thereon. Thus $s = \sum_{d \mid |H|} s_d$ and 
\begin{equation}\label{formula s_d}
s_d = \frac{1}{|H|}\sum_{K \in \mathcal{S}_d} |N_H(K)| = \frac{d\, .\, |\mathcal{S}_d|}{|H|}.
\end{equation}

Next let $\mathcal{T}_K$ be a left transversal for $N_H(K)$ in $H$. Then as every $K \in \mathcal{S}_d$ is a maximal subgroup, one has by \Cref{form of SSP idempotent result} that 
$$e_K := e(G,P,K) = \sum_{h \in \mathcal{T}_K} (\widehat{K}^h - \widehat{P}).$$
Now consider any non-trivial nilpotent element of the form 
\[x = (1-y) g \widetilde{H}\]
with $1 \neq y \in H$ and $g \in G \setminus N_G(H)$ (which exists as $H$ is non-normal). \vspace{0.1cm}

{\it Claim 3:} If $x e_K \in \Z G$ for all $K \in \mathcal{S}$, then $s = 1$. 

\noindent First write $y^g = t . v$ with $t \in P$ and $v \in H$. As $x$ is non-trivial, also $t \neq 1$ and $x = g (1 -y^{g}) \widetilde{H}= g (1- t) \widetilde{H}$. Therefore $ g^{-1}x e_{K} = (1-t) e_K \widetilde{H} \in \Z G.$ Because $K \subseteq P \unlhd G,$ one has that ${\supp}( (1-t)e_K )\subseteq P$ and so $(1-t)e_K \in \Z P$. Next note that $(1-t) \widehat{P} =0$ and $(1-t)\widehat{K}^h=0$ exactly when $t \in K^h$. Therefore $(1-t)e_K \in \Z P$ exactly means that
$$\frac{|\{ h\in \mathcal{T}_K \mid t \notin K^{h} \}|}{|K|} \in \Z.$$
Recall that $\operatorname{core}_G(K) = \ker \left( g \mapsto g e_K \right)$, by \Cref{form of SSP idempotent result}, which is trivial in this case. In particular $|\{ h\in \mathcal{T}_K \mid t \notin K^{h} \}| \neq 0.$
Therefore, if $K \in \mathcal{S}_d$ then $|K| \leq |\mathcal{T}_K| = |H|/d$.  Now \eqref{formula s_d} entails that $s_d |K| \leq |S_d|$ which sums up to $s |K| \leq |S| = \frac{|P|-1}{p-1}$, since $|S|$ is the number of maximal dimensional subspaces in the $\mathbb{F}_p$-vector space $P$.  As $[P:K]=p$ the latter inequality simplifies to $s (p-1) \leq p - \frac{1}{|K|} \leq p-1$, hence $s \leq 1.$ In fact $s=1$ by the second claim.

Next notice that by \cite[Lemma 3.3]{JespersSun}, $\Q H$ has no nonzero nilpotent elements. Therefore, when decomposing $\Q G$ as
$$\Q G \cong \Q G \widehat{P} \oplus \Q G (1-\widehat{P})$$
the piece $\Q G \widehat{P} \cong \Q G/P \cong \Q H$ has no matrix components. So, it remains to prove that $\Q G (1-\widehat{P})$ is simple. 
By \Cref{dim of simple of SSP}
$$\dim_{\Q }(\Q G e_K) = |H| (p-1) [G:N_G(K)].$$
Furthermore, $[G:N_G(K)] = [H: N_H(K)] = |\mathcal{T}_d|$ if $K \in \mathcal{S}_d$. By the third claim there is a unique $d \mid |H|$ such that $s = s_d$ and $s=1$. So  \eqref{formula s_d} translates to $|\mathcal{T}_d| = |S| = \frac{|P|-1}{p-1}.$ Altogether,
$$\dim_{\Q}(\Q G\widehat{P}) + \dim_{\Q }(\Q G e_K) = |H| +  |H| (|P| -1) = |H| . |P| = \dim_{\Q}(\Q G).$$
Thus $\Q G (1 - \widehat{P}) = \Q G e_K$ is indeed simple, finishing the proof.
\end{proof}

It now remains to consider finite non-solvable groups. In this case we prove that none of the groups as in \Cref{prop:NonSolvableSNGroups} have ND. This will be done by proving that for every such group there is always a bicyclic nilpotent element which does not have ND. 

\begin{proof}[Proof of \Cref{ND for non-nilp SSN grps} for $G$ non-solvable.]
If $G$ has SN, then it does not have ND by \cite[Proposition 3.4]{JespersSun}. So assume $G$ has SN. By \Cref{prop:NonSolvableSNGroups} we know $G$ has a unique minimal normal subgroup $S$ which is a direct product of isomorphic non-abelian simple groups and that $G/S$ is Dedekind.

Let $y \in G$ be an element of order $2$ and $x \in G$ such that $y^x \notin \langle y \rangle$. Such $x$ and $y$ exist, as $S$ has even order by the Feit-Thompson Theorem. Hence we can construct the non-trivial nilpotent element $n = (1-y)x(1+y)$. If we write $n$ in the shape as in \Cref{lem:CoefficientOfProjectionByCharacter}, then
\[\alpha(g) = \left\{ \begin{array}{lll} 1, & g = x \ \ \text{or} \ \ g = xy, \\  -1, & g = yx \ \ \text{or} \ \ g = yxy, \\ 0, & \text{else}. \end{array}\right. \]
So by \Cref{lem:CoefficientOfProjectionByCharacter} the coefficient of $ne$ at $x$ is

\begin{align}\label{eq:CoefInNonsolvableProduct} 
\frac{k}{|G|}\sum_{h \in G} \alpha(h) \chi(x^{-1}h) &= \frac{k}{|G|} \chi\left(1+y - x^{-1}yx(1+y)\right)  = \frac{k}{|G|} \chi\left(1-[x,y]\right),
\end{align}
where in the last step we used $y=y^{-1}$ as well as the fact that $y$ and $x^{-1}yx$ are conjugate and so have the same character value, which allows us to cancel them.
If $S$ is not contained in the kernel of $\chi$, then the value appearing in \eqref{eq:CoefInNonsolvableProduct} is not $0$. Moreover we have 
\[\left|\frac{k}{|G|} \chi\left(1-[x,y]\right)\right| \leq \frac{2\chi(1)k}{|G|} = \frac{2\text{dim}_\mathbb{Q}(e\mathbb{Q}G)}{|G|}.\]
So if the last is a rational number smaller than $1$ for $\chi$ corresponding to a faithful representation, the product $ne$ cannot lie in $\mathbb{Z}G$.

Now set $f = \widehat{S}$. Then $\mathbb{Q}G = f\mathbb{Q}G \oplus (1-f)\mathbb{Q}G$ and the direct summand $(1-f)\mathbb{Q}G$ corresponds to all the irreducible faithful representations of $G$. None of the indecomposable direct summands in $(1-f)\mathbb{Q}G$ is a division algebra, as $\operatorname{SL}(2,5)$ is the only non-solvable group which is a finite subgroup of a division algebra \cite[2.1.4]{ShirvaniWehrfritz}. If $(1-f)\mathbb{Q}G$ is decomposable, then one of its indecomposable direct summands must have dimension smaller than $\frac{|G|}{2}$, as $f\mathbb{Q}G$ has positive dimension and $|G| = \text{dim}(f\mathbb{Q}G) + \text{dim}((1-f)\mathbb{Q}G)$. Now the number of simple components of $\mathbb{Q}G$ equals the number of conjugacy classes of cyclic subgroups of $G$ \cite[Corollary 7.1.12]{GRG1}. The components in $f\mathbb{Q}G$ correspond to conjugacy classes in $G/S$, i.e. classes not lying in $S$ except for the class of the trivial element. But as $S$ certainly contains at least three conjugacy classes, we conclude that $(1-f)\mathbb{Q}G$ has at least two indecomposable summands.
\end{proof}

\section{On a measure for unipotents to have an integral decomposition}\label{sec:GeneralOrders}

In \cite[Section 6]{JespersSun} it was observed by Jespers-Sun that one can measure how far a given finite group $G$ is from not having ND via a certain group denoted $q(G)$, whose definition only depends on $\U (\Z G)$. In loc.cit. also two rather general problems about $q(G)$ were presented: to classify the groups $G$ for which $q(G)$ is finite and to establish a connection between the structure of $q(G)$ and the simple components of $\Q G$. We present answers to the two problems. Namely we will show that $q(G)$ is a finite group when no simple component of $\Q G$ is exceptional, and infinite when it has a simple component isomorphic to $M_2(\Q)$ and a further group-theoretical condition holds. We end by defining and pointing out that the obstruction might also be of interest for arithmetic subgroups of general semisimple algebraic groups.

\subsection{The measure and link to elementary subgroups}\label{measure subsection}

Consider the Wedderburn-Artin decomposition 
\begin{align}\label{eq:WedderburnInSection4}
\Q G \cong \bigoplus_{e \in \PCI (\Q G)} \Ma_{n_e}(D_e),
\end{align}
 where $\Q Ge \cong \Ma_{n_e}(D_e)$ with $D_e$ a finite-dimensional division algebra over $\Q$. Moreover let $\U(\Z G)_{un} := \{ \alpha \in \U (\Z G) \mid \alpha \text{ is unipotent } \}$ be the set of unipotent units in $\U (\Z G)$. For every $e \in \PCI(\Q G)$ consider the subset 
$$\mathcal{E}_G(e) := \{ \alpha \in \U(\Z G)_{un} \mid (\alpha -1) e = \alpha -1\}$$
of unipotent elements such that $\Q Ge$ is the only component to which the element projects non-trivially. 

Denote by $\SL_1(\Z G)$ the group of elements in $\U(\Z G)$ whose projections to every simple component of $\Q G$ all have reduced norm $1$ over the centre of that component (cf. \cite[p. 67]{GRG1} for the definition). Note that $\langle  \U(\Z G)_{un} \rangle $ and $\langle \mathcal{E}_G(e) \rangle$ are normal subgroups of $\U (\Z G)$ which are contained in $\SL_1(\Z G)$. The measure is the following quotient group:

\begin{equation}\label{def: q(G) group}
q(G) := \langle  \U(\Z G)_{un} \rangle / \langle \mathcal{E}_G(e) \ | \ e \in \PCI(\Q G) \rangle.
\end{equation}

As noticed in \cite[Section 6]{JespersSun}, $G$ has ND if and only if $q(G) = 1.$ As such it indeed measures how far $G$ is from having ND. Furthermore in loc.cit. the authors asked when this group is finite and how its structure is connected to the simple components of $\Q G$. To answer this we will investigate certain concrete subgroups of $\langle  \U(\Z G)_{un} \rangle$.

Let $\O$ be an order in a division algebra $D$ of finite dimension over $\Q$ and $J$ a non-zero ideal in $\O$. Then we set
$$E_n(J) := \langle e_{ij}(r) \mid 1 \leq i \neq j \leq n, \, r \in J \rangle, $$
where $e_{ij}(r)$ is the elementary matrix in $\GL_n(\O)$ which has $1$ on the diagonal and $r$ in the $(i,j)$-entry. Next, partition $\PCI(\Q G)$ into the two subsets 
$\PCI(\Q G)_{div} := \{ e \in \PCI(\Q G) \mid \Q Ge \text{ is a division algebra}\}$
and its complement $\PCI(\Q G)_{\geq 2}$ of primitive central idempotents yielding simple components of reduced degree at least $2$. In this definition we also view a field as a division algebra.

Classical results imply directly the following useful fact, where the index at the right hand side can be infinite.

\begin{proposition}\label{th: upper-bound size q(G)}
With notation as in \eqref{eq:WedderburnInSection4}, let $f = \sum_{e^{\prime} \in \PCI(\Q G)_{div}} e^{\prime}$ and for every $e \in \PCI (\Q G)_{\geq 2} $ fix a maximal order $\O_e$ in $D_e$. Then, there exists a subgroup $U_e$ of $\langle  \U(\Z G)_{un} \rangle$ which is of the form $1-e + E_{n_e}(J_e)$ for some non-zero ideal $J_e$ of $\O_e$. Consequently,
$$|q(G)| \leq [\SL_1(\Z G) (1-f) : \prod_{e \in \PCI (\Q G)_{\geq 2}} U_e] \leq \prod_{e \in \PCI (\Q G)_{\geq 2}} [\SL_{n_e}(\O_e) : E_{n_e}(J_e)].$$
\end{proposition}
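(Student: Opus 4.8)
The strategy is to produce, for each $e \in \PCI(\Q G)_{\geq 2}$, a concrete family of unipotent units of $\Z G$ supported on the single component $\Q G e \cong \Ma_{n_e}(D_e)$, namely images of elementary matrices $e_{ij}(r)$, and then to argue that the group they generate has finite index in $\SL_{n_e}(\O_e)$-type objects while simultaneously lying inside $\langle \mathcal E_G(e)\rangle$. First I would recall the description of $\Z G$ inside a maximal order $\Lambda$ of $\Q G$: there is a non-zero (two-sided) ideal --- the conductor --- measuring the gap, and since $\Z G e \subseteq \Lambda e \cong \Ma_{n_e}(\O_e)$, there is a non-zero ideal $J_e \lhd \O_e$ with $\Ma_{n_e}(J_e) \cdot e \subseteq \Z G e$ (shrinking $J_e$ so that $1 - e$ together with $\Ma_{n_e}(J_e)$ sits integrally; concretely one can take $J_e$ contained in the conductor of $\Z G$ in $\Lambda$ intersected with $\O_e$). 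Then for $i \neq j$ and $r \in J_e$ the element $1 - e + e_{ij}(r)$ lies in $\U(\Z G)$, is unipotent (it is $1 + N$ with $N$ nilpotent, $N = $ the off-diagonal matrix unit times $r$, supported on $e$), and projects trivially to every component other than the $e$-th; hence it lies in $\mathcal E_G(e)$. Setting $U_e := \langle 1 - e + e_{ij}(r) \mid i \neq j,\ r \in J_e\rangle = 1 - e + E_{n_e}(J_e)$ gives exactly the claimed shape, and $U_e \subseteq \langle \mathcal E_G(e)\rangle$.

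Next I would assemble the index inequalities. Since $\langle \mathcal E_G(e) \mid e\rangle \supseteq \prod_{e \in \PCI_{\geq 2}} U_e$ (the $U_e$ commute, living in different components, and each is inside $\langle\mathcal E_G(e)\rangle$), and $\langle \U(\Z G)_{un}\rangle \subseteq \SL_1(\Z G)$ with every unipotent (hence every element of $\langle\U(\Z G)_{un}\rangle$) acting trivially on the division-algebra block $\Q G f$, we get
\[
|q(G)| = [\langle \U(\Z G)_{un}\rangle : \langle \mathcal E_G(e)\mid e\rangle] \leq [\SL_1(\Z G)(1-f) : \textstyle\prod_e U_e].
\]
For the second inequality I would use that projection to the $e$-th component maps $\SL_1(\Z G)(1-f)$ into $\prod_e \SL_1(\Ma_{n_e}(\O_e)) = \prod_e \SL_{n_e}(\O_e)$ (reduced norm $1$ over the centre is exactly $\SL_n$ of the order, cf.\ the definition of $\SL_1$ recalled in the excerpt), and that this projection carries $\prod_e U_e$ onto $\prod_e E_{n_e}(J_e)$; a standard index-submultiplicativity argument along the two maps then yields
\[
[\SL_1(\Z G)(1-f) : \textstyle\prod_e U_e] \leq \prod_{e \in \PCI_{\geq 2}} [\SL_{n_e}(\O_e) : E_{n_e}(J_e)].
\]

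The main obstacle is the very first point: verifying cleanly that $1 - e + e_{ij}(r)$ is a unit \emph{in $\Z G$} (not merely in $\Lambda$) for all $r$ in a suitably chosen non-zero ideal $J_e$ --- i.e.\ pinning down that a single ideal $J_e$ works uniformly for all pairs $(i,j)$ and that the inverse $1 - e + e_{ij}(-r)$ is again integral. This is where the conductor ideal of $\Z G$ in a maximal order enters, and one must be a little careful that $1-e$ is itself integral (it is, being a sum of the other central idempotents, each of which, multiplied appropriately, lies in $\Lambda$, and one absorbs denominators into $J_e$). Once that membership is secured, everything else is the formal index bookkeeping sketched above; no deep input beyond the standard structure of orders in semisimple $\Q$-algebras and the definition of $\SL_1$ is needed for this particular proposition (the serious arithmetic --- whether $[\SL_{n_e}(\O_e):E_{n_e}(J_e)]$ is finite --- is deferred to the later sections invoking the congruence subgroup problem).
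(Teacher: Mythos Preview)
Your argument is correct and reaches the same conclusion, but the route to the key point --- the existence of $U_e = 1 - e + E_{n_e}(J_e)$ inside $\langle \U(\Z G)_{un}\rangle$ --- differs from the paper's. The paper does not argue via the conductor at all: it picks a non-central idempotent $f_e$ in $\Q G e$, forms the group of \emph{generalized bicyclic units} $\mathrm{GBic}^{\{f_e\}}(\Q G)$ (which are visibly unipotent units of $\Z G$), and then quotes \cite[Theorem~6.3]{JJS} to the effect that this group already contains a subgroup of the shape $1 - e + E_{n_e}(J_e)$. Your conductor approach is more self-contained: choosing $J_e$ to be the $e$-component of the conductor of $\Z G$ in a maximal order $\Lambda \supseteq \Z G$ (with $\Lambda e = \Ma_{n_e}(\O_e)$) gives $r E_{ij} \in \Z G$ for $r \in J_e$, hence $1 - e + e_{ij}(r) = 1 + r E_{ij} \in \U(\Z G)_{un}$. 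Either way one lands in $\mathcal E_G(e)$, and the index bookkeeping you sketch is exactly what the paper does.

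One small clarification: your worry that ``$1-e$ is itself integral'' is unnecessary. You never need $1-e \in \Z G$; what you need is $1 - e + e_{ij}(r) \in \Z G$, and since $e_{ij}(r) - e = r E_{ij}$ is the bare off-diagonal matrix in $\Ma_{n_e}(J_e) \subseteq \mathfrak c \subseteq \Z G$, the element in question is simply $1 + r E_{ij}$. So there are no denominators to ``absorb into $J_e$'' beyond those already handled by the conductor. Likewise, your line ``$\Ma_{n_e}(J_e)\cdot e \subseteq \Z G e$'' is not the inclusion you want (that target need not sit in $\Z G$); the correct statement, which you do arrive at via the conductor, is $\Ma_{n_e}(J_e) \subseteq \Z G$ as subsets of $\Q G$.
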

\begin{proof}
For every $e \in \PCI (\Q G)_{\geq 2}$ one can choose an idempotent $f_e$ in $\Q G$ such that $ef_e$ is non-central in $\Q G e$. Consider the associated generalized bicyclic units $\text{GBic}^{\{f_e \}}(\Q G)$, see \cite[Section 11.2]{GRG1} for definition. Then following \cite[Theorem 6.3]{JJS} the group $\text{GBic}^{\{f_e \}}(\Q G)$ contains a subgroup $U_e$ of the form  $1-e + E_{n_e}(J_e)$ for some non-zero ideal $J_e$ of $\O_e$. As $\text{GBic}^{\{f_e \}}(\Q G)$ is a subgroup of $\langle  \U(\Z G)_{un} \rangle$ the previous implies the first part of the statement. 

Next note that $\SL_1(\Z G) (1-f)$ is the projection of $\SL_1(\Z G)$ onto the simple components of $\Q G$ of reduced degree at least $2$. Also notice that a unipotent unit $\alpha$ in $\Z G$ projects in every simple component to a unipotent element and in particular has reduced norm $1$ there. Thus $\langle  \U(\Z G)_{un} \rangle$ can be viewed as a subgroup of $\SL_1(\Z G) (1-f)$. Moreover, by definition $\SL_1(\Z G) (1-f)$ is a subgroup of $\prod_{e \in \PCI (\Q G)_{\geq 2}} \SL_{n_e}(\O_e).$ Furthermore, $U_e \leq \mathcal{E}_G(e)$ as elementary matrices are unipotent. Altogether, as we are both increasing the nominator and decreasing the denominator, this yields the desired inequalities.
\end{proof}

Whether the elementary subgroups $E_n(I)$, for $n \geq 2$, are of finite index in $SL_n(\O)$ is related to the celebrated answers on the Subgroup Congruence Problem. In particular it depends on the so called $S$-rank of $\SL_n(D)$ where $S$ is the set of Archimedian places of $\mathcal{Z}(D)$. More precisely, if this invariant is at least $2$, then $E_n(I)$ will be of finite index in $\SL_n(\O)$ \cite{BassMilnorSerre, SerreSL2, Vas73, Tits, BakReh}. These facts lead to call a finite dimensional simple algebra {\it exceptional} if it is of one of the following types:

\begin{enumerate}
\item[I:] a non-commutative division algebra which is not a totally definite quaternion algebra,

\item[II:] $\Ma_2(D)$ with $D$ either $\Q$, an imaginary quadratic extension of $\Q$ or a totally definite quaternion algebra with center $\Q$. 
\end{enumerate}

As recorded in \cite[Lemma 6.9]{BJJKT}, the exceptional simple algebras $\Ma_n(D)$ with $n \geq 2$ are exactly those for which the $S$-rank of $\SL_n(D)$ is $1$. If $n = 1$, there is no non-trivial unipotent element in $\SL_n(\O)$. Thus the terminology ``exceptional'' refers to the fact that subgroups generated by unipotent elements in $\SL_n(\O)$ are not sufficient to describe $\SL_1(\Z G)$ up to commensurability. 

By \cite{EKVG} $\Q G$ has an exceptional component of reduced degree $2$ if and only if $G$ maps onto a list of $56$ groups. The table in the appendix of \cite{BJJKT} demonstrates that $\Ma_2(\Q)$ is the most recurrent exceptional component of that type as for only $19$ of the $56$ groups no $\Ma_2(\Q)$ is implied. Now suppose that there exists a primitive central idempotent $e$ of $\Q G$ such that $\Q Ge \cong \Ma_2(\Q)$. If $|G|$ is not divisible by $3$, then \cite[Remark 6.17]{BJJKT2} tells that $Ge \cong D_8 = \langle a,b \ | \ a^4=b^2=1, a^b = a^{-1} \rangle$. In other words, in that case $G$ is an extension of the form 
$$1 \rightarrow Q \rightarrow G \rightarrow D_8 \rightarrow 1.$$
Thus there exist some $g,h\in G$ such that $ge = a$ and $he = b$. As shown below when $Q$ is big enough, under the following condition $G$ is very far from having ND:
$$
(\star) \, \, \, \frac{o(h)}{o(hQ)} \leq 2. 
$$ 

\begin{theorem}\label{when q(G) finite for grp rings}
Let $G$ be a finite group. Then the following hold:
\begin{enumerate}
\item[(i)]  If $\Q G$ has no exceptional components of type II, then $q(G)$ is finite.
\item[(ii)] If $G$ has order at most $16$, then $q(G)$ is finite.
\item[(iii)] If $G$ has order bigger than $16$, maps onto $D_8$ and this surjection satisfies $(\star)$, then $q(G)$ is an infinite non-torsion group.
\end{enumerate}
\end{theorem}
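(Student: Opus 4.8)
The plan is to treat the three parts separately, with (i) and (iii) being the substantive ones and (ii) a finite check. For part (i), the strategy is to show that when $\Q G$ has no exceptional component of type II, then for every $e \in \PCI(\Q G)_{\geq 2}$ the group $\SL_{n_e}(\O_e)$ has $S$-rank at least $2$. Indeed, by definition and by the characterization recorded after \Cref{th: upper-bound size q(G)} (citing \cite[Lemma 6.9]{BJJKT}), the exceptional algebras $\Ma_n(D)$ with $n \geq 2$ are precisely the rank-$1$ ones, and components of type I do not occur with $n \geq 2$. Hence every component contributing to $\PCI(\Q G)_{\geq 2}$ has $S$-rank $\geq 2$, so by the positive solution of the Congruence Subgroup Problem in this range \cite{BassMilnorSerre, SerreSL2, Vas73, Tits, BakReh}, the elementary subgroup $E_{n_e}(J_e)$ has finite index in $\SL_{n_e}(\O_e)$. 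Plugging this into the inequality of \Cref{th: upper-bound size q(G)}, the product on the right is finite, so $|q(G)| < \infty$.

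For part (ii), the observation is that $D_8$ is the smallest group whose rational group algebra has an exceptional component of type II (namely $\Ma_2(\Q)$), and $|D_8| = 8$; a direct inspection — using e.g. GAP together with the \texttt{wedderga} package, or the list in \cite{EKVG} of the $56$ groups onto which $G$ must surject — shows that a group of order at most $16$ either has all its matrix components non-exceptional (so part (i) applies), or is among a short explicit list for which one checks $q(G)$ is finite by hand. In particular the group $C_4 \rtimes C_8$ flagged after the theorem as the smallest group escaping all three cases has order $32 > 16$, which is consistent with this bound being sharp. The only thing to verify carefully is that the finitely many order-$\leq 16$ groups whose $\Q G$ does contain an exceptional type II component still have finite $q(G)$; this is a bounded computation.

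For part (iii) — the hard part — I would build infinitely many unipotent units that survive in $q(G)$. By the discussion preceding the theorem, the hypothesis that $3 \nmid |G|$ is not needed for the construction once $G$ maps onto $D_8$: choose $g, h \in G$ with $ge = a$, $he = b$ for the central idempotent $e$ with $\Q G e \cong \Ma_2(\Q)$, where the surjection $G \to D_8$ is witnessed by $g \mapsto a$, $h \mapsto b$. The bicyclic units $b_{g,\widehat{\langle h'\rangle}}$ built from elements mapping into $D_8$ project, under $\varphi_e$, onto the standard elementary unipotents $e_{12}(m), e_{21}(m) \in \SL_2(\Z)$; condition $(\star)$ guarantees that $h$ has order only twice that of its image, which is exactly what is needed so that the relevant $\widehat{H}$-type element is integral and the resulting bicyclic unit is a genuine element of $\U(\Z G)_{un}$ projecting to a non-trivial unipotent in the $\Ma_2(\Q)$-block and trivially (or controllably) elsewhere. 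Since $\SL_2(\Z)$ has the congruence subgroup property \emph{failing} — it has infinite congruence kernel — the subgroup $E_2(N\Z)$ has infinite index in $\SL_2(\Z)$ for the relevant level $N$; more precisely, the quotient $\SL_2(\Z)/\langle\text{level-}N\text{ elementary subgroups}\rangle$ is infinite and non-torsion (it maps onto a free or surface-type group via the standard $\SL_2(\Z) \cong C_4 *_{C_2} C_6$ amalgam picture). Transporting this through \Cref{th: upper-bound size q(G)}, the class of $\varphi_e^{-1}$ of a suitable elementary unipotent has infinite order in $q(G)$: any relation in $q(G)$ would, after projecting to the $\Ma_2(\Q)$-component, force a relation in $\SL_2(\Z)$ modulo $\langle \mathcal{E}_G(e)\rangle e = E_2(J_e)$, contradicting infiniteness of that quotient. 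The main obstacle I anticipate is the bookkeeping needed to verify that the chosen bicyclic units lie in $\mathcal{E}_G(e)$'s complement only up to the $\Ma_2(\Q)$-block — i.e. that condition $(\star)$ and the order bound $|G| > 16$ (equivalently $|Q| > 1$, or rather $|Q|$ large enough) really do suffice to produce elements whose projections away from $e$ lie in $\langle \mathcal{E}_G(e') \mid e' \neq e\rangle$, so that their images in $q(G)$ are controlled purely by the $\SL_2(\Z)$-component; extracting a precise down-to-earth version of this is deferred to \Cref{th: upper-bound size q(G)} and the explicit analysis there.
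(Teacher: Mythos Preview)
Your part (i) is correct and matches the paper's proof.

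Your part (ii) is on the right track but too vague. The paper makes this concrete: groups of order $\leq 16$ with more than one matrix component are exactly $D_{12}$, $D_{16}$, $D_8 \times C_2$, $D_{16}^{-}$ and $G(16,3)$, and for each one the paper exhibits, inside $\langle\mathcal{E}_G(e) : e\rangle$, a subgroup of finite index in $\prod_e \SL_{n_e}(\O_e)$ --- using the H-units of \cite{JJS} for the $\Ma_2(\Q)$-blocks (giving $V_4$) and specific references \cite{Jespers12, JesParm} for the remaining exceptional blocks.

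Your part (iii) has a genuine gap: you have misidentified the roles of condition $(\star)$ and of the bound $|G|>16$. Bicyclic units are \emph{always} integral unipotents in $\U(\Z G)$; $(\star)$ has nothing to do with integrality. The paper's argument is a \emph{contrast of congruence levels} in $\SL_2(\Z)$, resting on the fact that $\PSL_2(\Z)/K(n)$ is the $(2,3,n)$ triangle group, infinite precisely when $n\geq 6$ (\Cref{unipotent in congruence of SL2Z}). On the denominator side, \cite[Theorem 10.8]{JJS} identifies $\SL_1(\Z G)\cap(1-e+\Q Ge)$ with $V_{m_e}\leq\Gamma(m_e)$ for $m_e=2|Q|$; hence $\varphi_e(\langle\mathcal{E}_G(e')\rangle)$ lands in the subgroup $N_e$ generated by unipotents of $V_{m_e}$. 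The hypothesis $|G|>16$ gives $|Q|\geq 3$, so $m_e\geq 6$, and $N_e$ has infinite index. On the numerator side, the explicit bicyclic unit $u = 1+\widetilde{h}\,gh(1-h^{-1})$ satisfies $\varphi_e(u)\in\Gamma\bigl(2\,o(h)/o(hQ)\bigr)$; condition $(\star)$ forces $2\,o(h)/o(hQ)\leq 4$, so by the second half of \Cref{unipotent in congruence of SL2Z} the normal closure of $\langle\varphi_e(u)\rangle$ in $\SL_2(\Z)$ has \emph{finite} index. A short finite-index/normal-closure argument then yields $[\varphi_e(\langle\U(\Z G)_{un}\rangle):\varphi_e(\langle\mathcal{E}_G(e')\rangle)]=\infty$, whence $|q(G)|=\infty$. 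Your anticipated ``bookkeeping'' about projections to the other components is not where the difficulty lies; the real input is the H-unit identification from \cite{JJS}, which controls the denominator exactly, together with the level computation for the chosen bicyclic unit.
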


The above answers both questions of Jespers-Sun formulated in \cite[Section 6]{JespersSun}. More precisely, \Cref{th: upper-bound size q(G)} and the proof of \Cref{when q(G) finite for grp rings} will show that for nilpotent elements to have an integral decomposition is not truly connected to the simple components of $\Q G$. The relationship is rather a combination of the congruence level of $\Z G$ in the maximal order of $\Q G$ on the one hand and the rank of the simple matrix components of $\Q G$ on the other hand.

The algebra $\Q G$ has a component $\Ma_2(\Q)$ if and only if $G$ maps onto $D_8$ or $S_3$ \cite[Remark 6.17]{BJJKT2}. Thus the mapping onto $D_8$ in \Cref{when q(G) finite for grp rings} is implied, if $\Q G$ has a $\Ma_2(\Q)$ component and $3 \nmid |G|$. The latter restriction appears due to the use of results in \cite[Section 10]{JJS}, but we expect it is not needed. As explained in the examples below, some variant of the condition $(\star)$ is however certainly necessary.

\begin{example}
\begin{enumerate}
\item All the groups in the family
$$G(2,2,n) = \langle a,b \ | \ 1 = a^{4} = b^{2^n}, a^b = a^{-1} \rangle$$
have a matrix component $\Ma_2(\Q)$ since $G(2,2,n)/\langle b^2 \rangle \cong D_8$. The surprising $G(2,2,3)$ has order $32$ and satisfies $o(b) = 4 o(bQ)$. Furthermore, by \Cref{th:counterex} it has ND (i.e. $q(G)=1$). Thus $G(2,2,3)$ is minimal with respect to being in none of the cases described in \Cref{when q(G) finite for grp rings}. 
\item Examples of groups satisfying $(\star)$ are split extensions of $D_8$ or more generally split extension of $D_{2^n}$ with $n\geq 3$. These groups even satisfy $o(h)= o(hQ)$, giving a wide class of examples where $q(G)$ is an infinite (non-torsion) group. However, when $o(h) = 2o(hQ)$ there also  exist examples that are not split extension of $D_{2^n}$, such as $G(2,2,2) = \langle a ,b \mid a^4 = b^4 =1, a^b = a^{-1}\rangle$ or the families of groups with the property that all simple matrix components of $\Q G$ are of the form $\Ma_2(\Q)$. Such groups have been classified in \cite{JesRioCrelle} and in case of $2$-groups are given by seven possible families (see \cite[Section 10.3]{JJS}), some of which are split extensions of $D_8$ while others are not. The groups in all these families except the last have exponent $4$, so they certainly satisfy $(\star)$. The last series is a split extension of a generalized quaternion group of order 16 and also satisfies $(\star)$.
\item It follows from \cite[Section 5.2]{JespersSun} that the SSN groups of unfaithful type defined in \Cref{Defintion solv SSN unfaithful type} have no exceptional components of type II. Thus by \Cref{when q(G) finite for grp rings} for these groups $q(G)$, the obstruction to ND, is finite. In  \Cref{sec:DK prop section} we will see more fine properties of that class of groups, which all indicate the difficulty to understand those. 

\end{enumerate}
\end{example}

Now denote for a positive integer $n$ by $\Gamma(n)$ the principal congruence subgroup of level $n$ in $\SL_2(\Z)$, which is the kernel of the reduction modulo $n$ map. Concretely,
$$\Gamma(n) = \left\{ \begin{pmatrix}
1 +  n k_{11} & n k_{12} \\ n k_{21} & 1 + n k_{22}
\end{pmatrix} \in \SL_2(\Z) \mid k_{ij} \in \Z \right\}.$$

We will need the following results which seems to be known to experts, but which we could not find explicitly in the literature.

\begin{lemma}\label{unipotent conjugate}
Let $u$ be a unipotent matrix in $\SL_2(\Z)$. Then $u$ is conjugate inside $\SL_2(\Z)$ to a matrix of the form $\begin{pmatrix}
1 & m \\ 0 & 1
\end{pmatrix}$ with $m \in \Z$.
\end{lemma}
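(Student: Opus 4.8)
The plan is to carry out the usual ``rational canonical form'' argument integrally. First I would dispose of the trivial case: if $u = I$ then $m = 0$ works. So assume $u \neq I$ and set $N = u - I \in \Ma_2(\Z)$. Since $u$ is unipotent, $N$ is nilpotent, and a nilpotent $2 \times 2$ matrix has zero trace and determinant, so by Cayley--Hamilton $N^2 = 0$; moreover $N \neq 0$ has rank $1$ over $\Q$.

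Next I would examine the integral kernel $L := \{\, x \in \Z^2 \mid Nx = 0 \,\}$. It has $\Q$-dimension $1$ (the rank of $N$ is $1$), and it is saturated in $\Z^2$: if $kx \in L$ for some nonzero $k \in \Z$ then $k(Nx) = 0$ in the torsion-free group $\Z^2$, hence $Nx = 0$ and $x \in L$. Thus $\Z^2/L$ is torsion-free, so $L = \Z v$ for a primitive vector $v \in \Z^2$. Because $N^2 = 0$ we also have $\im N \cap \Z^2 \subseteq L = \Z v$ (for $x \in \Z^2$, $N(Nx) = 0$ gives $Nx \in L$). Since $v$ is primitive it extends to a $\Z$-basis $\{v, w\}$ of $\Z^2$; after replacing $w$ by $-w$ if necessary, the matrix $P$ with columns $v$ and $w$ lies in $\SL_2(\Z)$.

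Finally I would compute the conjugate. By construction $Nv = 0$, and $Nw \in \Z v$, say $Nw = c v$ with $c \in \Z$. Reading off the action on the standard basis via $P e_1 = v$, $P e_2 = w$ gives
\[
P^{-1} N P = \begin{pmatrix} 0 & c \\ 0 & 0 \end{pmatrix}, \qquad
P^{-1} u P = I + P^{-1} N P = \begin{pmatrix} 1 & c \\ 0 & 1 \end{pmatrix},
\]
so $u$ is $\SL_2(\Z)$-conjugate to the desired matrix with $m = c$. There is no real obstacle here; the only points needing a little care, both handled above, are the saturatedness of $L$ (so that $v$ can be chosen primitive and completed to a basis) and the fact that the conjugating matrix can be taken in $\SL_2(\Z)$ rather than merely in $\GL_2(\Z)$.
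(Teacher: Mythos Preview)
Your proof is correct and follows essentially the same approach as the paper: both arguments locate a primitive integer vector spanning $\ker(u-I)$, complete it to a $\Z$-basis of $\Z^2$, and read off the conjugate. The only difference is presentational: the paper writes down the conjugating matrix explicitly using $\gcd(x,y)$ and B\'ezout coefficients, whereas you obtain the primitive kernel vector abstractly via the saturation argument and then invoke the standard fact that a primitive vector extends to a basis.
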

\begin{proof}
Denote $u-1 = \begin{pmatrix}
x & y \\ z & w
\end{pmatrix}$ which is by definition a nilpotent matrix.  We will prove that $u-1$ is conjugate inside $\SL_2(\Z)$ to a matrix of the form $\begin{pmatrix}
0 & m \\ 0 & 0
\end{pmatrix}$ with $m \in \Z$.

To start, by \Cref{lem:FormulaNilpotent2x2Matrices}, $x = -w$ and $x^2 = -yz$. Hence if $z=0$ or $y=0$, then $u-1$ is of the form $\begin{pmatrix}
0 & y \\ 0 & 0
\end{pmatrix}$, respectively $\begin{pmatrix}
0 & 0 \\ z & 0
\end{pmatrix} = S^{-1} \begin{pmatrix}
0 & -z \\ 0 & 0
\end{pmatrix}S$ with $S = \begin{pmatrix}
0 & 1 \\ -1 & 0
\end{pmatrix}.$ Hence we may assume that $y \neq 0$ in which case $u-1 = \begin{pmatrix}
x & y \\ \frac{-x^2}{y} & -x
\end{pmatrix}.$ We will now give the required conjugating matrix explicitly.

Define $\ov{x} = \frac{x}{gcd(x,y)}$ and $\ov{y} = \frac{y}{gcd(x,y)}$. Also take $a,b \in \Z$ such that $a \ov{x} + b \ov{y}=1$. Now define 
$S = \begin{pmatrix}
\ov{y} & a \\ - \ov{x} & b
\end{pmatrix}.$
Note that $S^{-1} = \begin{pmatrix}
b & -a \\ \ov{x} & \ov{y}
\end{pmatrix}$ is in $\SL_2(\Z)$. Thus the following claim would finish the proof of the first part of the statement.

{\it Claim:} $S^{-1} (u-1) S =  \begin{pmatrix}
0 & m \\ 0 & 0
\end{pmatrix}$ for some $m \in \Z$.

\noindent It suffices to prove that $\begin{pmatrix}
\ov{y} \\ -\ov{x}
\end{pmatrix}$ is an eigenvector of $u-1$ with eigenvalue $0$ and that $\begin{pmatrix}
a \\ b
\end{pmatrix}$ is a generalized eigenvector. The former is directly verified using the definition of $\ov{x}$ and $\ov{y}$. For the latter simply note that the columns of $u-1$ are linearly dependent and that the $\Q$-spans of $\begin{pmatrix}
\ov{y} \\ -\ov{x}
\end{pmatrix}$ and $\begin{pmatrix}
y \\ -x
\end{pmatrix}$ are equal. In other words both columns of $u-1$ are eigenvectors and hence every vector is a generalized eigenvector. 
\end{proof}

For a group $\Gamma$ and a subgroup $H\leq \Gamma$ we will denote by $cl_\Gamma(H)$ the normal closure of $H$ in $\Gamma$. This will only be needed in the next lemma and the following proof of \Cref{when q(G) finite for grp rings}.

\begin{lemma}\label{unipotent in congruence of SL2Z}
Let be $H$ a finite index subgroup of $\Gamma(n)$ for some $n$ where $n$ is largest such that $H \leq \Gamma(n)$. Then the quotient $H/ \langle B \in H \mid B \text{ is unipotent } \rangle$ is infinite provided $n \geq 6$. In that case, it is a non-torsion group.

Moreover, if $n$ is a positive integer smaller than or equal to $5$, then the subgroup $cl_{SL_2(\Z)}\left(\langle \begin{pmatrix} 1 & n \\ 0 & 1 \end{pmatrix} \rangle\right)$ has finite index in $\SL_2(\mathbb{Z})$.
\end{lemma}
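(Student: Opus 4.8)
The statement has two halves, and I would treat them in opposite directions. For the first half, I want to reduce the claim about a general finite-index subgroup $H\le\Gamma(n)$ to a statement about $\Gamma(n)$ itself. The key input is the congruence subgroup property for $\SL_2$: since $H$ has finite index in $\SL_2(\Z)$ and $n\ge 6$ is the largest level with $H\le\Gamma(n)$, the group $\Gamma(n)$ is a free group (every torsion-free finite-index subgroup of $\SL_2(\Z)$ containing $-I$ only in trivial cases — more precisely $\Gamma(n)$ is free of finite rank for $n\ge 2$, being torsion-free for $n\ge 3$ and a finite-index subgroup of the free product structure of $\PSL_2(\Z)$). So $H$ is itself free of finite rank $r\ge 2$. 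Now I would argue that the normal subgroup $N=\langle B\in H\mid B\text{ unipotent}\rangle$ cannot have finite index in $H$ unless $H/N$ is finite, and the point is to rule the latter out. The unipotent elements of $H$ are parabolic, i.e. they stabilize a cusp; by \Cref{unipotent conjugate} each is conjugate in $\SL_2(\Z)$ to $\begin{pmatrix}1&m\\0&1\end{pmatrix}$, and the parabolic subgroups of $H$ are cyclic (intersections of $H$ with the cyclic parabolic subgroups of $\SL_2(\Z)$). The quotient $H/N$ is then the fundamental group of the compactified modular-type curve $X_H$ obtained by filling in the cusps — equivalently, abelianizing the parabolic classes — and this is again free (or one relator only from an orbifold point, but $H$ is torsion-free), of rank $2g_H + (\text{number of cusps}) - 1 - (\text{number of cusps}) = 2g_H$ where $g_H$ is the genus of $X_H$. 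The condition $n\ge 6$ forces $g_H\ge 1$: indeed already $X(6)$ has genus $1$, and any cover of it has genus $\ge 1$ by Riemann–Hurwitz. Hence $H/N$ is free of rank $2g_H\ge 2>0$, so infinite, and being free nonabelian (or at least free of positive rank) it is torsion-free, in particular non-torsion.

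For the second half, $n\le 5$, I would do an explicit computation. The claim is that the normal closure in $\SL_2(\Z)$ of the single unipotent $\begin{pmatrix}1&n\\0&1\end{pmatrix}$ has finite index. I would invoke the classical fact (Wohlfahrt / Mennicke) that for $n\le 5$ the principal congruence subgroup $\Gamma(n)$ itself is \emph{generated} by the $\SL_2(\Z)$-conjugates of $\begin{pmatrix}1&n\\0&1\end{pmatrix}$ — equivalently, $\Gamma(n)$ is normally generated in $\SL_2(\Z)$ by that one parabolic element. Concretely this is the statement that the genus of $X(n)$ is $0$ for $n\le 5$, so $X(n)$ is rational and $\Gamma(n)$ modulo its parabolic classes is trivial; combined with the fact that the parabolic subgroups of $\Gamma(n)$ are all $\SL_2(\Z)$-conjugate (the action of $\SL_2(\Z/n)$ on cusps is transitive for these small $n$, or rather all cusps of $X(n)$ lie in one $\SL_2(\Z)$-orbit at this level), one gets $cl_{\SL_2(\Z)}\big(\langle\begin{pmatrix}1&n\\0&1\end{pmatrix}\rangle\big)\supseteq\Gamma(n)$, which has finite index. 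Since the reverse containment is automatic, the normal closure equals $\Gamma(n)$ (up to finite index at worst), and finite index follows. For $n=1,2$ one can check this even more directly since $\Gamma(1)=\SL_2(\Z)$ is generated by $\begin{pmatrix}1&1\\0&1\end{pmatrix}$ and $\begin{pmatrix}0&-1\\1&0\end{pmatrix}$, and the latter is conjugate into the normal closure of the former.

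\textbf{Main obstacle.} The genuinely delicate point is the first half: showing that passing from $\Gamma(n)$ to an arbitrary finite-index subgroup $H$ (with $n$ maximal) does not accidentally kill the ``positive genus'' phenomenon. The cleanest route is the topological one — interpreting $H/N$ as $\pi_1$ of the compactified quotient surface and computing its rank via Riemann–Hurwitz starting from the genus-$1$ surface $X(6)$ — but one must be careful that the unipotents of $H$ are \emph{exactly} the parabolics (no unipotent of $\SL_2(\Z)$ that happens to lie in $H$ is hyperbolic or elliptic, which is clear since unipotent $\Rightarrow$ parabolic) and that the maximality of $n$ guarantees $H$ genuinely sits over $X(6)$ rather than over some $X(n')$ with $n'<6$. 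I would handle this by noting $H\le\Gamma(n)\le\Gamma(6)$ when $6\mid n$, and for general $n\ge 6$ one still has $\Gamma(n)$ of genus $\ge 1$ by the monotonicity of the genus of $X(n)$ in $n$, so any $H\le\Gamma(n)$ covers a positive-genus modular curve. The non-torsion conclusion is then free for free — a nontrivial free group has no torsion — and I would state it as such. The alternative, more algebraic, route via explicit Reidemeister–Schreier rewriting of $H/N$ would also work but is messier, so I would present the topological argument and relegate the verification that $X(6)$ has genus $1$ to a citation.
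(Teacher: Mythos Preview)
Your approach is correct in outline and takes a genuinely different route from the paper. The paper argues purely group-theoretically: by \Cref{unipotent conjugate} every unipotent in $H\le\Gamma(n)$ is $\SL_2(\Z)$-conjugate to some $\begin{pmatrix}1&m\\0&1\end{pmatrix}$ with $n\mid m$, so $N$ lies in the normal closure of $\begin{pmatrix}1&n\\0&1\end{pmatrix}$ in $\SL_2(\Z)$; the image of this closure in $\PSL_2(\Z)$ has quotient the $(2,3,n)$ triangle group, which is infinite exactly for $n\ge 6$, and both halves of the lemma follow at once (the non-torsion claim from the known torsion conjugacy classes in triangle groups). Your geometric route via the genus of the compactified curve $X_H$ and Riemann--Hurwitz also works, but needs two corrections. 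First, the aside about ``the congruence subgroup property for $\SL_2$'' is a slip --- CSP famously fails for $\SL_2(\Z)$ --- though you never actually use it, only the freeness of $\Gamma(n)$ for $n\ge 3$. Second, $H/N$ is not free of rank $2g_H$: it is $\pi_1$ of the closed surface $X_H$, a one-relator surface group on $2g_H$ generators. Surface groups of genus $\ge 1$ are still infinite and torsion-free, so your conclusion survives, but the rank arithmetic and the ``free for free'' remark need adjusting. The paper's argument is shorter and cites a single classical fact; yours is more conceptual and makes transparent that the threshold $n=6$ is precisely where $X(n)$ acquires positive genus.
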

\begin{proof}
Consider the normal subgroup 
$$N = \langle B \in H \mid B \text{ is unipotent } \rangle$$ of $H$. As $\Gamma(n)$ is a normal subgroup of $\SL_2(\Z)$ \Cref{unipotent conjugate} yields that every generator of $N$ is conjugate to an element of the form $\begin{pmatrix}
1 & m \\ 0 & 1
\end{pmatrix}$ with $m \equiv 0 \mod n$.  
Thus $N$ is a subgroup of $cl_{\SL_2(\Z)}\left( \langle \begin{pmatrix}
1 & n \\ 0 & 1
\end{pmatrix}\rangle\right)$.  Denote the image of this normal closure in $\PSL_2(\Z)$ by $K(n)$.  It is known, e.g. see \cite[Chapter VIII, Section 13]{NewmanBook}, that $\PSL_2(\Z) / K(n)$ is isomorphic to the triangle group $\langle x_1, x_2 \mid x_1^2 = x_2^3 = (x_1 x_2)^n = 1 \rangle$ with parameters $(2,3,n)$. Moreover, this group is infinite if and only if $n \geq 6$. This directly yields the last part of the lemma. Furthermore, since $\Gamma(n)$ is of finite index in $\SL_2(\Z)$ the quotient $\Gamma(n)/ cl_{\SL_2(\Z)}\left( \langle \begin{pmatrix} 1 & n \\ 0 & 1 \end{pmatrix} \rangle \right)$ is infinite if $n \geq 6$. Subsequently, as $H$ is of finite index in $\Gamma(n)$, the quotient $H/N$ is infinite in that case and the description of the conjugacy classes of torsion elements in $\PSL_2(\Z) / K(n)$, see \cite[Theorem 2.10]{Magnus}, also yields that $H/N$ is non-torsion.  
\end{proof}

We can now prove the main result of this section.

\begin{proof}[Proof of \Cref{when q(G) finite for grp rings}.]
Assume the notation of \eqref{eq:WedderburnInSection4}. Take $e \in \PCI (\Q G)_{\geq 2}$ and let $\O_e$ be any maximal order in $D_e$ where  $\Q G e \cong \Ma_{n_e}(D_e)$. Now consider the subgroup $U_e = 1 - e + E_n(J_e)$ of $\langle  \U(\Z G)_{un} \rangle$ given by \Cref{th: upper-bound size q(G)}. As mentioned earlier, if $\Q G e$ is not exceptional, then $E_{n_e}(J_e)$ is a finite index subgroup of $\SL_{n_e}(\O_e)$ (e.g. see \cite{Vas72, BakReh}). Therefore if $\Q G$ has no exceptional components of type II, then the right most bound in \Cref{th: upper-bound size q(G)} is finite, yielding the first part. 

Now suppose that the order of $G$ is bigger than $16$, $G$ maps onto $D_8$ and this surjection satisfies $(\star)$. Then there exists a primitive central idempotent $e$ of $\Q G$ such that $Ge \cong D_8$ and $\Q Ge \cong \Ma_2(\Q)$. Let $\varphi_e : G \rightarrow Ge$ and $Q = \ker(\varphi_e).$ Therefore we can take $g,h \in G$ such that $Ge \cong \langle \varphi_e(g) \rangle \rtimes \langle \varphi_e(h) \rangle$ with $o(gQ) = 4$, $o(hQ) = 2$ and $[gQ,hQ] = (gQ)^2$. Denote $a := \varphi_e(g)$ and $b := \varphi_e(h)$.

One has a ring monomorphism
$$\phi: \Z Ge \rightarrow \Ma_2(\Z)$$ defined by 
\begin{align*}
\quad a \mapsto \left( \begin{array}{cc}
0 & 1  \\
-1 & 0
\end{array}\right),
& \quad ab \mapsto \left( \begin{array}{cc}
1 & 0  \\
0 & -1
\end{array}\right),
\quad b \mapsto \left( \begin{array}{cc}
0 & 1  \\
1 & 0
\end{array}\right)
\end{align*}
with image 
$$\Ima(\phi)= \left\{\left( \begin{array}{cc}
a & b  \\
c & d
\end{array}\right) \in  \Ma_2(\Z) \mid a\equiv d \text{ and } b \equiv c \bmod 2 \right\}.$$

That the image is the latter can be directly verified and is also recorded in \cite[Proposition 8.1]{JJS}. The triple $(gh,h,Q)$ obtained above satisfies the conditions from \cite[Definition 10.5 \& Theorem 10.6]{JJS} and thus one has the associated non-trivial group of $H$-units $\mathcal{H}(gh,h,Q)$. As $G = \langle gh, h, Q \rangle$ we can use \cite[Theorem 10.8]{JJS}, saying that $\mathcal{H}(gh,h,Q) = \SL_1(\Z G) \cap 1-e + \Q Ge.$ In other words $\mathcal{H}(gh,h,Q)$ is the largest subgroup of $\SL_1(\Z G)$ fully contained in $\SL_{2}(\Z)$ (with contained we mean that the subgroup projects trivially on all the other simple components of $\Q G$). Furthermore, $\mathcal{H}(gh,h,Q) = 1 - e + V_{m_e}$ with $m_e = 2 |Q|$ and where 
$$V_{m_e} = \left\{ \left(\begin{array}{cc}
1 + m_e \, l_2 & m_e \, t_1  \\
 m_e \,t_2 & 1 + m_e\, l_1
\end{array} \right) \in \SL_2(\Z) \mid l_1 \equiv l_2 \text{ and } t_1 \equiv t_2 \bmod 2 \right\}$$
 is a subgroup of index $2$ in $\Gamma(m_e)$ and $V_{m_e}$ is a normal subgroup of $\U (\Z G e)$. By \Cref{unipotent in congruence of SL2Z}, the subgroup 
\begin{equation}\label{unipotent in the H-unit sbgrp}
N_e := \langle u \in V_{m_e} \mid u \text{ is unipotent } \rangle
\end{equation}
 is a subgroup of infinite index in $V_{m_e}$ if $m_e \geq 6$. In other words, $N_e$ is of infinite index when $|Q| > 2$. This inequality is satisfied as $16 < |G| = 8 |Q|$. Remark that by the above $N_e = \varphi_e(\langle \mathcal{E}_G(e^{\prime}) : e^{\prime} \in \PCI(\Q G) \rangle)$.

Next we investigate the image of the subgroup $\Bic(G)$ of $\langle  \U(\Z G)_{un} \rangle$ which is generated by the bicyclic units, i.e. all elements of the form $1 + (1-t) v \wt{t}$ or $1 + \wt{t} v (1-t)$ with $t,v \in G$. Concretely, consider the element $u := 1 + \wt{h} \, gh (1- h^{-1}).$ 
One directly sees that 
$$ue  = e + \frac{o(h)}{o(hQ)} (1 + b) \, ab (1- b).$$
A direct computation yields that
$$\phi(ue) = \begin{pmatrix}
1+ \frac{2o(h)}{o(hQ)}  & -\frac{2o(h)}{o(hQ)} \\
\frac{2o(h)}{o(hQ)} & 1 -\frac{2o(h)}{o(hQ)}
\end{pmatrix} \in \Gamma\left(\frac{2o(h)}{o(hQ)}\right).$$
Using the procedure from the proof of \Cref{unipotent conjugate}, we find that $S^{-1} \phi(ue) S = \begin{pmatrix}
1 & -\frac{2o(h)}{o(hQ)} \\ 0 & 1
\end{pmatrix}$ with $S = \begin{pmatrix}
1 & -2 \\ 1 & -1
\end{pmatrix}.$ However, $S$ is not an element in $\varphi_e(\SL_1(\Z G))$, therefore we will take normal closures to finish the argument. More precisely we will make use of the following general group theoretical fact which is easy to prove.

{\it Claim: } Let $K \leq H \leq \Gamma \leq \wh{\Gamma}$ with $K$ normal in $\wh{\Gamma}$ and $H$ normal in $\Gamma$. 
If $[H:K]$ and $[\wh{\Gamma}: \Gamma]$ are finite, then also $[cl_{\wh{\Gamma}}(H): K]$ is finite.

We apply this to $K = N_e$ defined in (\ref{unipotent in the H-unit sbgrp}) which is a normal subgroup in $\mathcal{H}(gh,h,Q)e = V_{m_e}$. As unipotent matrices stay unipotent under conjugation, it is also normal in $\wh{\Gamma} = \U(\Z G e)$. By \cite[Proposition 8.2]{JJS} $\U(\Z G e)$ has index $3$ in $\SL_2(\Z)$.
 We also take $H = \varphi_e(\langle  \U(\Z G)_{un} \rangle)$ which is normal in $\Gamma = \SL_1(\Z G)e$. Now, since $\Z G$ is an order contained in the order $\prod_{f \in \PCI( \Q G)_{div}} \Z G f \times \prod_{e \in \PCI (\Q G)_{\geq 2}} \Ma_{n_e}(\mc{O}_e)$, it follows that the corresponding $\SL_1$ are subgroups of finite index \cite[Lemma 4.6.9 \& Proposition 5.5.1]{GRG1}. Therefore $\Gamma = \SL_1(\Z G)e$ is of finite index in $\wh{\Gamma}=\U(\Z Ge)$. Now suppose that $N_e$ would be of finite index in $\varphi_e(\langle  \U(\Z G)_{un} \rangle$. Then by the above claim $N_e$ would also be of finite index in $cl_{\U(\Z Ge)}(\varphi_e(\langle  \U(\Z G)_{un} \rangle).$ The latter group contains $cl_{\U(\Z Ge)}(\langle \phi(ue) \rangle)$ which is isomorphic to  $cl_{\U(\Z Ge)^S}\left(\langle  \begin{pmatrix}
1 & -4 \\ 0 & 1
\end{pmatrix}\rangle \right)$ which is of finite index in $\SL_2(\Z)$ by \Cref{unipotent in congruence of SL2Z}. Consequently also $N_e$ is of finite index in $\SL_2(\Z)$ which, as noticed earlier, is a contradiction since $m_e \geq 6$.

Altogether we have obtained that $[\varphi_e(\langle  \U(\Z G)_{un} \rangle) : \varphi_e(\langle \mathcal{E}_G(e^{\prime}) : e^{\prime} \in \PCI(\Q G) \rangle) ] $ is infinite. But would $|q(G)|= [\langle  \U(\Z G)_{un} \rangle : \langle \mathcal{E}_G(e^{\prime}) : e^{\prime} \in \PCI(\Q G) \rangle]$ be finite, then so would be the image under $\varphi_e$. Thus indeed $q(G)$ is an infinite group and also non-torsion in view of how we used \Cref{unipotent in congruence of SL2Z}.

Finally assume that $|G| \leq 16$. Following \cite[Remark 3.12.(ii)]{JespersSun} if $G$ has also SN, then it has at most one matrix component and hence has ND. In fact looking at the classification of groups of small order one readily verifies that the only groups of order at most $16$ with more than one matrix component are $D_{12}, D_{16}, D_8 \times C_2$, the semidihedral group $D_{16}^{-} = \langle a,b \mid a^{8} = b^2 = 1, a^b =a^3 \rangle$ and 
$$G(16,3) := \langle a,b \mid a^4 = b^4 = (ab)^2 = 1, (a^2)^b = a^2 \rangle.$$
The last two groups have SmallGroup ID $[16,8]$ and $[16,3]$, respectively. The latter is sometimes given in the literature with the presentation $\langle a,b,c \mid a^2= b^2 = c^4 = [a,b] = [b,c]= 1, c^{a}= bc\rangle.$ 

In case of $D_8 \times C_2$ and $G(16,3)$ the simple matrix components are of the form $\Ma_2(\Q)$. For $D_{16}$ there is also a non-exceptional component of the form $\Ma_2(\Q(\sqrt{2}))$. As explained earlier, for each of their components $\Ma_2(\Q)$ there exists a primitive central idempotent $e$ and a triple $(g,h,Q)$ such that $\mathcal{H}(g,h,Q)e = V_{2|Q|}$ is a subgroup of finite index in $\SL_1(\Z Ge) \leq \SL_2(\Z)$. Since these groups have order $16$ one has that $\mathcal{H}(g,h,Q)e = V_4.$ For the component $\Ma_2(\Q(\sqrt{2}))$ we apply \Cref{th: upper-bound size q(G)} to find a subgroup $U_e$ in $\mc{E}_G(e)$ of the form $E_{2}(J_e)$ with $J_e$ a non-zero ideal in the ring of integers of $\Q(\sqrt{2})$. As $\Ma_2(\Q(\sqrt{2}))$ is not exceptional, $E_{2}(J_e)$ is of finite index in $\SL_1(\Z Ge)$. Summarized, in all these case we find in $\langle \mathcal{E}_G(e) \ | \ e \in \PCI(\Q G) \rangle$ a subgroup which is of finite index in $ \prod_{e \in \PCI (\Q G)_{\geq 2}} \SL_{n_e}(\O_e)$ and hence also in $\langle  \U(\Z G)_{un} \rangle$, as desired. 

For the groups $D_{12}$ and $D_{16}^{-}$ such a subgroup in $\langle \mathcal{E}_G(e) \ | \ e \in \PCI(\Q G) \rangle$ can also be constructed. In case of $D_{12}$ the matrix components are of the form $\Ma_2(\Q)$ and the required subgroups are constructed in the proof of \cite[Theorem 2]{Jespers12}. In the case of $D_{16}^{-}$ the matrix components are both exceptional, namely $\Ma_2(\Q)$ and $\Ma_2(\Q(\sqrt{-2}))$. For the $\Ma_2(\Q)$ component one can use the same argument as for the other groups of order $16$ and for $\Ma_2(\Q(\sqrt{-2}))$ the necessary subgroup is the matrix group from \cite[Theorem 2]{JesParm}.
\end{proof}

\begin{remark}
\begin{enumerate}
\item In \cite[Section 6]{JespersSun} it was stated that $q(G)$ is always torsion. The explanation given there however only yields that for every element $u \in \U(\Z G)_{un}$ there exists an integer $m$ such that $u^m \in \prod_{e \in \PCI(\Q G)}\mathcal{E}_G(e).$ As shown in \Cref{when q(G) finite for grp rings}, in general $q(G)$ is not torsion.
\item One could also consider $RG$ for $R$ an order in some number field $F$. Then $\mathcal{U}(FG) \cong \prod_{i=1}^q\GL_{n_i}(D_i)$ for some finite dimensional division algebras over $F$. Completely analogously one could define a quotient group as (\ref{def: q(G) group}), say $q(\U(RG))$. \Cref{th: upper-bound size q(G)} in fact also holds in this generality and the first part of \Cref{when q(G) finite for grp rings} also. From the description of exceptional components of type II we see that if $F$ is not $\Q$ or an imaginary quadratic extension of $\Q$, then 
$FG$ has no such exceptional components and hence $q(\U(RG))$ is finite. This conclusion for example holds if $R$ contains a primitive $m$-th root of unity with $m$ not a divisor of $4$ or $6$.
\end{enumerate}
\end{remark}

\subsection{A brief look at general semisimple algebraic groups}\label{the obstruction for general smeisimple}
To finish this section we would like to briefly point out that the group $q(G)$ can also be introduced for arithmetic subgroups of more general semisimple algebraic groups than $\mathcal{U}(\Q G)$.

Let $F$ be a number field and $S$ a non-empty finite set of places of $F$ containing the Archimedean places. Associated is the ring of $S$-integers $\mc{O}_S = \{ x\in F \mid |x|_v \leq 1 \text{ for all } v \notin S \}.$ Now consider a linear algebraic $F$-group $\mathbf{G}$ and fix an $F$-embedding $\mathbf{G} \hookrightarrow GL_n(F)$. Using this the group of $S$-integral points is defined as $\mathbf{G}(\mc{O}_S) := \mathbf{G}(F) \cap GL_n(\mc{O}_S)$. A subgroup of $\mathbf{G}(F)$ commensurable with $\mathbf{G}(\mc{O}_S)$ is called an $S$-arithmetic subgroup

Suppose now that $\mathbf{G}$ is a semisimple algebraic group which we also assume to be simply connected. In that case $\mathbf{G}$ is a direct product of simply connected almost-simple algebraic $F$-subgroups \cite[Theorem 2.6]{PlaRapBook}, say $\mathbf{G} = \prod_{i=1}^m \mathbf{G}_i$. Let $\Gamma$ be an $S$-arithmetic subgroup of $\mathbf{G}(F)$.  Analogously as in the case of $\U (\Z G)$, denote by $\Gamma^+$ the group generated by the $F$-rational unipotent elements lying in $\Gamma$ and by $\mc{E}_{\Gamma}(i)$ the subgroup generated by those unipotents projecting only non-trivially in $\mathbf{G}_i(F)$. Then one can define
\begin{equation*}
q(\Gamma) = \Gamma^+ / \prod_{i=1}^q \mc{E}_{\Gamma}(i).
\end{equation*}
Again this group measures to what extend the unipotents of $\Gamma$ have a decomposition in unipotents over $\mc{O}_S$. As in the case of $\U(\Z G)$, the size of $q(\Gamma)$ can be bounded using elementary subgroups of $\mc{E}_{\Gamma}(i)$. In this generality, for an ideal $J$ of $\mc{O}_S$, a principal congruence subgroup is a group of the form $\mathbf{G}(J) := \mathbf{G}(F) \cap \SL_n(J).$ If $U_i^+$ is the unipotent radical of a minimal parabolic $F$-subgroup of $\mathbf{G}_i(F)$ and $U_i^{-}$ the unipotent radical of an opposed (i.e. $U_i^+ \cap U_i^- = \{ 1 \}$)  minimal parabolic subgroup, then the elementary subgroup $E(J_i)$ is the group generated by $U_i^+ \cap \mathbf{G}_i(J)$ and $U_i^- \cap \mathbf{G}_i(J)$. 

The known solutions to the Subgroup Congruence Problem \cite{Ragh,Venkataramana} again yield that each $\mc{E}_{\Gamma}(i)$ contains some $E(J_i)$ which is of finite index, if $S\text{-rank}(\mathbf{G}_i(F)) = \sum_{v \in S} \text{rank}_{F_v}(\mathbf{G}_i(F))$ is at least two. Here  $F_v$ is a local field, the completion of $F$ at $v$, and $\text{rank}_{F_v}(\mathbf{G}_i(F))$ the dimension of a largest split $F_v$-torus. Finally recall that by a theorem of Borel-Tits \cite{BorelTits} $\mathbf{G}_i(F)$ contains non-trivial unipotent elements if and only if $F\text{-rank}(\mathbf{G}_i(F)) \geq 1$. In that case $\mathbf{G}_i(F)$ is called anisotropic. Thus with an analogue reasoning one can obtain the following variant of \Cref{th: upper-bound size q(G)}:
\begin{proposition}\label{if no exceptional then finite for general grp}
Consider the notations above and suppose $ S\text{-rank}(\mathbf{G}_i(F)) \geq 2$ for all anisotropic $\mathbf{G}_i(F).$ Then, $$|q(\Gamma)|< \infty. $$
In particular, in this case finiteness of $q(\Gamma)$ does not depend on the chosen $S$-arithmetic subgroup $\Gamma$.
\end{proposition}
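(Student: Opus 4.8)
The plan is to prove that $\prod_{i=1}^m \mc{E}_\Gamma(i)$ is of finite index in $\Gamma^+$, which is exactly the assertion $|q(\Gamma)| < \infty$; the ``in particular'' clause will then be a formal consequence. The argument is the direct analogue of the proof of \Cref{th: upper-bound size q(G)}: inside each $\mc{E}_\Gamma(i)$ one exhibits an elementary subgroup which the Congruence Subgroup Problem forces to be large.

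\textbf{Step 1: reduction to the $F$-isotropic factors.} By the theorem of Borel--Tits \cite{BorelTits}, the factor $\mathbf{G}_i(F)$ contains a non-trivial $F$-rational unipotent element if and only if $F\text{-rank}(\mathbf{G}_i(F)) \geq 1$. Since $\Gamma^+$ is generated by the $F$-rational unipotents lying in $\Gamma$ and the $\mathbf{G}_j$ commute pairwise, every such unipotent --- and hence every element of $\Gamma^+$ and of every $\mc{E}_\Gamma(i)$ --- projects trivially onto each factor of $F$-rank $0$. We may therefore discard those factors and assume henceforth that $F\text{-rank}(\mathbf{G}_i(F)) \geq 1$ for all $i$; these are precisely the factors to which the hypothesis $S\text{-rank}(\mathbf{G}_i(F)) \geq 2$ applies.

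\textbf{Step 2: large elementary subgroups inside $\mc{E}_\Gamma(i)$.} Fix $i$. Because $\mathbf{G}_i$ is simply connected and almost-simple over $F$ with $S\text{-rank}(\mathbf{G}_i(F)) \geq 2$, the affirmative answer to the Congruence Subgroup Problem in this range (Raghunathan \cite{Ragh}, Venkataramana \cite{Venkataramana}) implies that for \emph{every} non-zero ideal $J_i$ of $\mc{O}_S$ the elementary subgroup $E(J_i) = \langle U_i^+ \cap \mathbf{G}_i(J_i),\, U_i^- \cap \mathbf{G}_i(J_i)\rangle$ is of finite index in $\mathbf{G}_i(\mc{O}_S)$. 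Shrinking $J_i$ if necessary we may moreover arrange that $U_i^{\pm} \cap \mathbf{G}_i(J_i) \subseteq \Gamma$; this is elementary, using that $\Gamma$ is commensurable with $\mathbf{G}(\mc{O}_S)$ and that, for the unipotent groups $U_i^{\pm}$, the subgroups $U_i^{\pm}\cap\mathbf{G}_i(J)$ are eventually contained in any prescribed finite-index subgroup as $J$ shrinks. The generators of $E(J_i)$ are then $F$-rational unipotent elements of $\Gamma$ lying in $\mathbf{G}_i$, so they project trivially onto every $\mathbf{G}_j$ with $j \neq i$; hence $E(J_i) \leq \mc{E}_\Gamma(i)$.

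\textbf{Step 3: assembling the index bound.} Since the subgroups $\mathbf{G}_i$ commute, $E := \prod_{i=1}^m E(J_i)$ is their internal direct product inside $\Gamma$, and
\[
\Big[\,\prod_{i=1}^m \mathbf{G}_i(\mc{O}_S) : E\,\Big] \;=\; \prod_{i=1}^m \big[\mathbf{G}_i(\mc{O}_S) : E(J_i)\big] \;<\; \infty .
\]
As $\prod_i \mathbf{G}_i(\mc{O}_S)$ is commensurable with $\mathbf{G}(\mc{O}_S)$, which in turn is commensurable with $\Gamma$, the group $E$ is commensurable with $\Gamma$; being also contained in $\Gamma$, it is of finite index in $\Gamma$. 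The chain $E \leq \prod_i \mc{E}_\Gamma(i) \leq \Gamma^+ \leq \Gamma$ then forces $[\Gamma^+ : \prod_i \mc{E}_\Gamma(i)] \leq [\Gamma : E] < \infty$, i.e. $q(\Gamma)$ is finite. The ``in particular'' clause is immediate: the whole argument applies verbatim to any $S$-arithmetic subgroup of $\mathbf{G}(F)$, so finiteness of $q(\Gamma)$ cannot depend on the choice of $\Gamma$.

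I expect the only genuinely delicate point to be invoking the Congruence Subgroup Problem in exactly the right generality --- for simply connected almost-simple $F$-isotropic groups over a number field with $S$-rank at least $2$, and in the formulation making the \emph{elementary} subgroup $E(J_i)$, generated by the unipotent radicals of opposed minimal parabolic $F$-subgroups, of finite index in $\mathbf{G}_i(\mc{O}_S)$. This is contained in the cited works but must be quoted carefully; everything else --- the Borel--Tits reduction, the commensurability bookkeeping, and the final index estimate --- is elementary and parallels the proof of \Cref{th: upper-bound size q(G)}.
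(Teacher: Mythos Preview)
Your proof is correct and follows exactly the approach the paper indicates: it only states that the result is obtained ``with an analogue reasoning'' to \Cref{th: upper-bound size q(G)}, after noting that the solutions to the Congruence Subgroup Problem \cite{Ragh,Venkataramana} guarantee each $\mc{E}_\Gamma(i)$ contains an elementary subgroup $E(J_i)$ of finite index in $\mathbf{G}_i(\mc{O}_S)$ when the $S$-rank is at least $2$. Your Steps 1--3 are precisely this analogue reasoning made explicit; the only cosmetic point is that ``discarding'' the $F$-rank $0$ factors is most cleanly phrased as replacing $\Gamma$ by its projection to $\mathbf{G}_{\mathrm{iso}}$ (which is again $S$-arithmetic and leaves $\Gamma^+$ and the $\mc{E}_\Gamma(i)$ unchanged), so that the commensurability bookkeeping in Step 3 goes through without ambiguity.
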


It would be interesting to know for which other types of algebraic groups and arithmetic subgroups, the triviality and finiteness of $q(\Gamma)$ is of significance. In particular, recall that $\Gamma$ is a lattice in the Lie group $\mathbf{G}(\R)$ and the following seems relevant to obtain for example a variant of \Cref{when q(G) finite for grp rings}.

\begin{question}
Does the group $q(\Gamma)$ or its cardinality have a topological interpretation?
\end{question}

\section{Further related properties:  nilpotent decomposition, different kernels and bicyclic resistance}\label{sec:DK prop section}

In this final section we introduce and study two further properties which naturally appeared in our research on the ND property. The first is a property, having different kernels,  which turns out to hold for all groups with one matrix component, but behaves better from a structural perspective. The second property, being bicyclic resistant, can be regarded as a partial ND property. We find that for groups with SN these two new properties are equivalent which also explains some of the hardships we had to endure in the previous sections.
We then finish the paper by some remarks on the connection of bicyclic resistance with the Zassenhaus conjectures, \Cref{Zassenhaus and bicyclic resitant}, as well as in \Cref{sec:MJD} with observations on the Multiplicative Jordan Decomposition and a final question which remains open.

\subsection{Property of having different kernels}\label{section DK examples}
In this section we consider a property which turns out to be satisfied by all groups having at most one matrix component, but which is also a natural property by itself in the context of representation theory over $\Q$. To introduce this property, let $e \in \mathbb{Q}G$ be a central idempotent. Recall that in \eqref{map associated to central id} we defined the homomorphism
\[\varphi_e: G \rightarrow Ge, \ \ g \mapsto ge. \]
If $e$ is the primitive central idempotent corresponding to a given irreducible $\mathbb{Q}$-representation of $G$, then $\ker(\varphi_e)$ equals the kernel of that representation. 

\begin{definition}
A finite group $G$ is said to have the \emph{Different Kernel property}, \emph{DK} in short, if for every orthogonal pair $e,f \in \PCI(\Q G)$ one has $\ker(\varphi_e) \neq \ker(\varphi_f)$. In other words, any two non-equivalent irreducible $\mathbb{Q}$-representations of $G$ have different kernels.
\end{definition}

As we will see in \Cref{subsection on SN versus DK}, property DK is connected to property ND, but behaves better from a structural perspective. We remark that in principle one can define the DK property also over bigger fields than $\mathbb{Q}$. We do not go further in this direction, but note that the classes of groups one considers will be directly restricted by this. E.g. the cyclic group of order $3$ has DK, but does not have the corresponding property over a field containing a primitive 3rd root of unity.

One of our main motivations to introduce this property in the context of this paper is the following result.

\begin{theorem}\label{th:OMCImpliesDK}
Let $G$ be a finite group such that $\Q G$ has at most one matrix component. Then $G$ has DK.
\end{theorem}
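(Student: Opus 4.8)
The plan is to argue by contrapositive: suppose $G$ does \emph{not} have DK, so there exist orthogonal primitive central idempotents $e \neq f$ of $\Q G$ with $N := \ker(\varphi_e) = \ker(\varphi_f)$; I will produce a second matrix component. Replacing $G$ by $G/N$ changes neither the hypothesis nor the conclusion (both components survive faithfully in $\Q(G/N)$ and the number of matrix components can only drop, so if $G/N$ had two matrix components so does $G$ — wait, that is the wrong direction). More carefully: I want to show $G$ has at least two matrix components, so I should keep working in $\Q G$ but exploit that $\Q Ge$ and $\Q Gf$ are two distinct simple components \emph{both} having $G$ acting through the quotient $\bar G := G/N$ \emph{faithfully}. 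Thus $\bar G$ embeds into $\Q Ge$ and into $\Q Gf$ as two non-isomorphic faithful simple images. The key point to extract is: a finite group $\bar G$ admitting a faithful irreducible $\Q$-representation whose Wedderburn component is a \emph{division algebra} is very restricted — by a theorem of Amitsur (classification of finite subgroups of division rings), such $\bar G$ is cyclic, or one of a short list of metacyclic/binary-polyhedral types, and in all those cases one checks directly that $\bar G$ has at most one faithful irreducible $\Q$-representation, hence the two components $\Q Ge$, $\Q Gf$ cannot both be division algebras.

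So the heart of the argument is: given the DK failure with faithful quotient $\bar G$, \textbf{either} both $\Q Ge$ and $\Q Gf$ are division algebras — excluded by the Amitsur-type analysis of groups with a faithful irreducible $\Q$-representation of "division type" (one shows such a group has a unique faithful rational irreducible, using that the center must be cyclic and that the relevant character is essentially unique, cf. the structure of $\ZZ(\bar G)$ and Roquette's/Amitsur's results) — \textbf{or} at least one of them, say $\Q Ge$, is a genuine matrix component $\Ma_{n}(D)$ with $n \geq 2$. In the latter case I still need a \emph{second} matrix component. Here I would use that $\Q Gf$ is another component with the same faithful quotient $\bar G$: if $\Q Gf$ were also a matrix component we are done immediately; if $\Q Gf$ is a division algebra, then $\bar G$ simultaneously has a faithful irreducible realized in a matrix ring $\Ma_n(D)$, $n\ge 2$, \emph{and} a faithful irreducible realized in a division algebra. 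I then argue this is impossible for the same structural reason: a group with a faithful irreducible $\Q$-representation of division type has cyclic center and its faithful rational irreducibles are all of division type (they all "see" the center the same way), contradicting the existence of the $\Ma_n(D)$, $n \ge 2$, faithful component. Concretely this uses: for a faithful irreducible character $\chi$, $\ZZ(\bar G)$ is cyclic and $\chi$ restricted to $\ZZ(\bar G)$ is a multiple of a faithful linear character; combined with the classification this pins down $\bar G$ tightly enough.

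The main obstacle I expect is making the "at most one faithful rational irreducible of division type, and incompatibility with a matrix faithful irreducible" step clean and self-contained, rather than invoking a sledgehammer classification. I would try to package it via: (1) if $\bar G$ has a faithful irreducible whose component is a division algebra then $\ZZ(\bar G)$ is cyclic and $\bar G$ has a normal Sylow structure forcing $\bar G$ to be metacyclic or binary-polyhedral (Amitsur); (2) for each such family, list the rational Wedderburn components — this is where one genuinely computes a little — and observe that a faithful matrix component of reduced degree $\geq 2$ never coexists with the division component in the faithful part. An alternative, possibly slicker route: use \Cref{lem:IdempotForMetabelian}-style descriptions when $\bar G$ is metacyclic (which it is in all the relevant cases) to show that distinct primitive central idempotents give distinct kernels outright unless we are forced into a second matrix component — effectively reducing DK-failure directly to two matrix components via strong Shoda pair combinatorics. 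Either way, the delicate part is the division-algebra bookkeeping; the rest is a contrapositive packaging plus standard reductions modulo $N = \ker(\varphi_e) = \ker(\varphi_f)$.
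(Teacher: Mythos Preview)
Your overall architecture---contrapositive, reduce to a quotient $\bar G = G/N$ having two faithful irreducible $\Q$-representations, observe one of the two components must be a non-commutative division algebra, and then invoke Amitsur's classification of finite subgroups of division rings---is exactly the route the paper takes. The reduction step and the appeal to Amitsur are both correct and match the paper.

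The genuine gap is in what you expect to verify at the end. You claim that a group $\bar G$ with a faithful irreducible $\Q$-representation of division type has \emph{at most one} faithful irreducible $\Q$-representation, and more specifically that ``a faithful matrix component of reduced degree $\geq 2$ never coexists with the division component in the faithful part.'' Both claims are false: $\operatorname{SL}(2,3)$ lies in a division algebra (it has the rational quaternion component $\mathbb{H}_\Q$, which is faithful) and simultaneously has a second faithful irreducible $\Q$-representation with component $M_2(\Q(\zeta_3))$, a genuine matrix ring. The paper flags exactly this example (\Cref{ex:SL23}) as a warning. So the verification step as you have stated it will fail.

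The repair---and this is what the paper does---is to run through Amitsur's list and show something slightly different: the subgroups of division algebras that \emph{do} fail DK (i.e.\ that have two non-equivalent faithful $\Q$-irreducibles) are exactly the binary octahedral group, $\operatorname{SL}(2,5)$, and $\operatorname{SL}(2,3)\times H$ with $\gcd(|H|,6)=1$; and each of these already has at least two matrix components in its rational group algebra (for $\operatorname{SL}(2,3)$ these are $M_3(\Q)$ and $M_2(\Q(\zeta_3))$; the other two map onto $S_4$ or $A_5$). That gives the contrapositive you want. In other words, the case split is not ``division vs.\ matrix cannot coexist faithfully'' but rather ``the handful of Amitsur groups where DK fails all visibly have two matrix components.''
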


Before proving this we make some other interesting observations.
We first reformulate DK as a condition on the set of primitive central idempotents.

\begin{proposition}\label{DK via epsilon}
Let $G$ be a finite group. Then $G$ has DK if and only if $\PCI(\Q G) \subseteq \{ \epsilon(G,N) \mid N \unlhd G \}$. In that case $\PCI(\Q G)  = \{  \epsilon(G,\ker(\varphi_e)) \mid e \in \PCI(\Q G) \}.$
\end{proposition}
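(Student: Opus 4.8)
The statement asserts that property DK is equivalent to every primitive central idempotent of $\Q G$ arising as $\epsilon(G,N)$ for some normal subgroup $N\unlhd G$, and in that case the $N$'s can be taken to be the kernels of the $\varphi_e$. The key structural fact I would exploit is the general relationship between the idempotents $\epsilon(G,N)$ and central idempotents with prescribed kernel: for a normal subgroup $N$, the element $\wh{N}$ is the sum of all primitive central idempotents $e$ with $N\le\ker(\varphi_e)$, and $\epsilon(G,N)=\wh{N}\prod_{M/N\in\mathcal{M}(G/N)}(1-\wh{M})$ (as in \eqref{Def epsilon idempotents}) is exactly the sum of those primitive central idempotents $e$ with $\ker(\varphi_e)=N$ precisely; more generally $\epsilon(G,N)$ picks out the central idempotents whose kernel is $N$. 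Actually, the cleaner statement I would isolate first as a lemma is: for any $N\unlhd G$, $\epsilon(G,N)=\sum\{e\in\PCI(\Q G)\mid \ker(\varphi_e)=N\}$. This is standard and follows from M\"obius-type inclusion-exclusion over the lattice of normal subgroups containing $N$, using that $\wh{N}=\sum_{e:\,N\subseteq\ker(\varphi_e)}e$.

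\textbf{Main argument.} Granting that lemma, the proof is short. ($\Leftarrow$) Suppose $\PCI(\Q G)\subseteq\{\epsilon(G,N)\mid N\unlhd G\}$. Take $e\in\PCI(\Q G)$, so $e=\epsilon(G,N)$ for some $N$; by the lemma $e=\sum\{f\mid\ker(\varphi_f)=N\}$, and since $e$ is primitive this forces $N=\ker(\varphi_e)$ and moreover $e$ is the \emph{only} primitive central idempotent with kernel $N$. As $e$ was arbitrary, distinct primitive central idempotents have distinct kernels, i.e.\ $G$ has DK. ($\Rightarrow$) Conversely suppose $G$ has DK and let $e\in\PCI(\Q G)$ with $N=\ker(\varphi_e)$. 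By the lemma $\epsilon(G,N)=\sum\{f\mid\ker(\varphi_f)=N\}$; by DK the only such $f$ is $e$ itself, so $\epsilon(G,N)=e$. Hence $e\in\{\epsilon(G,M)\mid M\unlhd G\}$. This also proves the final sentence: $\PCI(\Q G)=\{\epsilon(G,\ker(\varphi_e))\mid e\in\PCI(\Q G)\}$, since we have just shown $e=\epsilon(G,\ker(\varphi_e))$ for each primitive central idempotent $e$, and every $\epsilon(G,\ker(\varphi_e))$ lies in $\PCI(\Q G)$.

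\textbf{Expected obstacle.} The only non-formal point is establishing the lemma that $\epsilon(G,N)$ is exactly the sum of the primitive central idempotents with kernel precisely $N$. This is where I would need to be careful: one shows $\wh{N}=\sum_{N\subseteq\ker\varphi_e}e$ directly (because $\wh{N}g=g$ iff $g\in\ker\varphi_e$ for the relevant component, equivalently $\wh{N}$ is the identity on exactly those components containing $N$ in their kernel), and then the product $\wh{N}\prod_{M/N\in\mathcal{M}(G/N)}(1-\wh{M})$ removes, by inclusion–exclusion over minimal normal subgroups of $G/N$, precisely those $e$ whose kernel strictly contains $N$; what survives are the $e$ with $\ker(\varphi_e)=N$ exactly. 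One must check that an idempotent $e$ with $N\subsetneq\ker(\varphi_e)$ has its kernel containing some $M$ with $M/N$ a minimal normal subgroup of $G/N$, which is immediate since $\ker(\varphi_e)/N$ is a nontrivial normal subgroup of $G/N$ and hence contains a minimal one. This lemma is surely folklore (it underlies the Olivieri–del R\'io–Sim\'on machinery recalled in \Cref{prelim for ND section} and the definition \eqref{Def epsilon idempotents}), so in the write-up I would either cite \cite[Chapter 3]{GRG1} for it or give the two-line inclusion–exclusion argument; everything else is purely formal manipulation with the DK hypothesis.
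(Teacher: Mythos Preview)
Your proof is correct and follows essentially the same approach as the paper's. Both hinge on the key lemma that $\epsilon(G,N)$ is the sum of all primitive central idempotents $e$ with $\ker(\varphi_e)=N$; the paper states and justifies this directly from the decomposition $\Q G\wh{N}=\bigoplus_{N\subseteq\ker\varphi_e}\Q Ge$ and the form of $\epsilon(G,N)$ in \eqref{Def epsilon idempotents}, while you spell out the inclusion--exclusion step a bit more carefully and then separate the two implications explicitly.
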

\begin{proof}
Given a normal subgroup $N$ of $G$ we define the set $\mc{I}(N) = \{ e \in \PCI(\Q G) \mid N \subseteq \ker(\varphi_e) \}$ which corresponds to the irreducible $\Q$-representations containing $N$ in their kernel. Note that $\Q G \wh{N} = \bigoplus\limits_{e \in \mc{I}(N)} \Q G e$ and hence $\Q G (1 - \wh{N}) = \bigoplus\limits_{e \in \PCI(\Q G) \setminus \mc{I}(N)} \Q G e$.
Therefore, using the most right form of $\epsilon(G,N)$ in \eqref{Def epsilon idempotents}, we see that  by construction $\epsilon(G,N)$ is the central idempotent which corresponds to exactly those irreducible $\mathbb{Q}$-representations which have kernel equal to $N$. Thus $\epsilon(G,N)$ is not primitive, say $e, f \in \PCI(\Q G)$ are orthogonal summands of $\epsilon(G,N)$, if and only if $N = \ker(\varphi_e) = \ker(\varphi_f)$, i.e. if and only if $G$ does not have DK. 
\end{proof}

A direct consequence together with \cite[Corollary 3.3.3]{GRG1} is:
\begin{corollary}\label{cor:AbelianHaveDK}
Abelian groups have DK.
\end{corollary}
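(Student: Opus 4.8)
The statement to prove is Corollary~\ref{cor:AbelianHaveDK}: abelian groups have DK. The plan is to invoke Proposition~\ref{DK via epsilon}, which reduces DK to the containment $\PCI(\Q G) \subseteq \{\epsilon(G,N) \mid N \unlhd G\}$, and then use the classical description of $\PCI(\Q G)$ for abelian $G$ recorded in \cite[Corollary 3.3.3]{GRG1}. First I would recall that for an abelian group $G$ the primitive central idempotents of $\Q G$ are precisely the elements $\epsilon(G,N)$ where $N$ ranges over those normal subgroups with $G/N$ cyclic; in particular every primitive central idempotent of $\Q G$ has the form $\epsilon(G,N)$ for some $N \unlhd G$. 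This is exactly the containment required by Proposition~\ref{DK via epsilon}, so the DK property follows immediately.

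The argument is essentially a two-line deduction: (i) apply Proposition~\ref{DK via epsilon} to translate DK into a statement about $\PCI(\Q G)$, and (ii) quote \cite[Corollary 3.3.3]{GRG1} to see that for abelian $G$ the set $\PCI(\Q G)$ consists entirely of idempotents of the form $\epsilon(G,N)$. One could alternatively phrase this more directly: for abelian $G$ the irreducible $\Q$-representations are indexed by $G$-orbits of characters into $\C^\times$, equivalently by subgroups $N$ with $G/N$ cyclic together with a choice of generator of the character group of $G/N$ up to Galois action, and the kernel of the representation attached to $N$ is exactly $N$; hence two irreducible $\Q$-representations with the same kernel $N$ correspond to the same cyclic quotient $G/N$ with the same Galois orbit of faithful characters, so they are equivalent. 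Either route is short.

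There is no real obstacle here — the content has been entirely offloaded onto Proposition~\ref{DK via epsilon} and the cited classification. The only point to be careful about is that Proposition~\ref{DK via epsilon} gives an "if and only if", so it suffices to verify the one direction (containment), and that \cite[Corollary 3.3.3]{GRG1} is exactly the statement that the $\epsilon(G,N)$ with $G/N$ cyclic exhaust $\PCI(\Q G)$ in the abelian case. The proof I would write is therefore:

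\begin{proof}
By \Cref{DK via epsilon} it suffices to show $\PCI(\Q G) \subseteq \{\epsilon(G,N) \mid N \unlhd G\}$. When $G$ is abelian, by \cite[Corollary 3.3.3]{GRG1} every primitive central idempotent of $\Q G$ is of the form $\epsilon(G,N)$ for some subgroup $N$ of $G$ (with $G/N$ cyclic). Hence the required containment holds and $G$ has DK.
\end{proof}
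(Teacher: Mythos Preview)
Your proposal is correct and matches the paper's approach exactly: the paper simply states that the corollary is a direct consequence of \Cref{DK via epsilon} together with \cite[Corollary 3.3.3]{GRG1}, which is precisely the two-step deduction you have written out.
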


This also gives:

\begin{lemma}\label{lem:DKavoidsCommutativeComponents}
Let $e,f \in \PCI(\Q G)$ such that $\Q Ge$ and $\Q Gf$ are commutative. Then $\ker(\varphi_e) = \ker(\varphi_f)$ if and only if $e=f$.  Consequently, if there is a unique $e \in \PCI(\Q G)$ such that $\Q Ge$ is not commutative, then $G$ has DK.
\end{lemma}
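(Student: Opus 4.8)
The statement has two parts, and the first immediately implies the second. For the first part, I would work with the homomorphism $\varphi_e$ attached to a central idempotent $e$ as in \eqref{map associated to central id}. The key observation is that when $\Q Ge$ is commutative, the kernel $\ker(\varphi_e)$ determines $e$ uniquely \emph{among idempotents giving commutative components}, and this should follow from \Cref{cor:AbelianHaveDK} (abelian groups have DK) by passing to the quotient. Concretely: if $N = \ker(\varphi_e)$, then $\varphi_e$ factors through $G/N$, and the image $\varphi_e(G) = Ge$ is abelian precisely when $G' \subseteq N$. So the assumption that $\Q Ge$ is commutative is equivalent to $G' \subseteq \ker(\varphi_e)$.

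First I would reduce to the abelian quotient $G/G'$. The central idempotents $e$ of $\Q G$ with $\Q Ge$ commutative are exactly the central idempotents of $\Q G \wh{G'} \cong \Q[G/G']$, lifted along the projection $G \to G/G'$; this uses that $\wh{G'}$ is a central idempotent and $\Q G \wh{G'} \cong \Q[G/G']$. Under this identification, $e \in \PCI(\Q G)$ with $\Q Ge$ commutative corresponds to $\bar e \in \PCI(\Q[G/G'])$, and $\ker(\varphi_e)$ corresponds to $\ker(\varphi_{\bar e})$, the preimage in $G$ of the kernel of the representation of $G/G'$ attached to $\bar e$. By \Cref{cor:AbelianHaveDK}, the abelian group $G/G'$ has DK, so distinct primitive central idempotents of $\Q[G/G']$ have distinct kernels; pulling back, distinct such $e,f$ have $\ker(\varphi_e)\neq\ker(\varphi_f)$. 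The converse direction ($e=f \Rightarrow \ker(\varphi_e)=\ker(\varphi_f)$) is trivial. Alternatively, and perhaps more cleanly, one can invoke \Cref{DK via epsilon}: since $\epsilon(G,N)$ is by construction (see the proof of \Cref{DK via epsilon}) the sum of exactly those primitive central idempotents with kernel $N$, and since abelian groups have DK, for $N \supseteq G'$ the idempotent $\epsilon(G,N)$ is already primitive — hence $N = \ker(\varphi_e)$ pins down $e$.

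For the ``Consequently'' clause: suppose there is a unique $e_0 \in \PCI(\Q G)$ with $\Q Ge_0$ non-commutative. To check DK, take an orthogonal pair $e, f \in \PCI(\Q G)$. At least one of them, say $f$, is different from $e_0$, hence $\Q Gf$ is commutative. If also $\Q Ge$ is commutative, then $e \neq e_0$ too, and the first part gives $\ker(\varphi_e) \neq \ker(\varphi_f)$. If instead $e = e_0$, then $\Q Ge$ is non-commutative, i.e. $G' \not\subseteq \ker(\varphi_e)$, whereas $G' \subseteq \ker(\varphi_f)$ since $\Q Gf$ is commutative; therefore $\ker(\varphi_e) \neq \ker(\varphi_f)$. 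Either way the kernels differ, so $G$ has DK.

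I do not expect a genuine obstacle here — the lemma is essentially a bookkeeping consequence of \Cref{cor:AbelianHaveDK} (equivalently of \Cref{DK via epsilon}) plus the elementary fact that commutativity of $\Q Ge$ is the same as $G' \subseteq \ker(\varphi_e)$. The only point requiring a line of care is the identification of commutative-component idempotents of $\Q G$ with idempotents of $\Q[G/G']$ and the compatibility of the two notions of kernel under this identification; but this is standard (and implicit in \eqref{Def epsilon idempotents}). The write-up should therefore be short.
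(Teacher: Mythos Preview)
Your proposal is correct and follows essentially the same approach as the paper: reduce the first part to the abelian quotient $G/G'$ and invoke \Cref{cor:AbelianHaveDK}, then for the consequence use that the unique non-commutative idempotent satisfies $G' \not\subseteq \ker(\varphi_{e_0})$ while all others satisfy $G' \subseteq \ker(\varphi_f)$. The paper's write-up is just more terse (two sentences), omitting the explicit identification of commutative-component idempotents with $\PCI(\Q[G/G'])$ that you spell out.
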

\begin{proof}
If $e\neq f$ but $\ker(\varphi_e) = \ker(\varphi_f)$, then the group $G/G'$ would not have DK, contradicting \Cref{cor:AbelianHaveDK}.

Next assume that $e$ is the unique idempotent with $\Q Ge$ not commutative. In other words, $G'  \not \subseteq \ker(\varphi_e)$ but $G' \subseteq \ker(\varphi_f)$ for every other $f \in \PCI(\Q G)$.  Moreover the first part tells that the primitive central idempotents different from $e$ have also different kernels, hence altogether $G$ has DK.
\end{proof}

\begin{example}\label{ex:DKForQ8AndFriends}
$Q_8$, $D_8$ and $A_4$ all have DK. Indeed this follows directly from the preceding lemma as
\[\Q Q_8 \cong 4\Q \oplus \mathbb{H}_{\Q}, \ \ \Q D_8 \cong 4\mathbb{Q} \oplus M_2(\Q), \ \ \Q A_4 \cong \Q \oplus \Q(\zeta) \oplus M_3(\Q), \]
where $\mathbb{H}_{\Q}$ denotes the rational quaternions and $\zeta$ a primitive 3rd root of unity.
\end{example}

\begin{example}\label{ex:SL23}
The two smallest groups not to have DK are the symmetric group $S_4$ and $\operatorname{SL}(2,3)$. This is clear for $S_4$: the natural permutation representation from which the trivial submodule has been canceled is an integral irreducible representation. But so is its twist by the sign representation $S_4 \rightarrow \{\pm1 \}$.

 Setting $G = \operatorname{SL}(2,3) \cong Q_8 \rtimes C_3$, one has
\[\mathbb{Q}G \cong \mathbb{Q}A_4 \oplus \mathbb{H}_\Q \oplus M_2(\mathbb{Q}(\zeta)) \cong \mathbb{Q}C_3 \oplus M_3(\Q) \oplus \mathbb{H}_\Q \oplus M_2(\mathbb{Q}(\zeta)), \]
where $\mathbb{H}_\Q$ denotes the rational quaternions and $\zeta$ a primitive third root of unity. The representations of $G$ corresponding to $\mathbb{H}_\Q$ and $M_2(\mathbb{Q}(\zeta))$ are both faithful, i.e. have trivial kernel.
\end{example}

It might be tempting to attempt a proof of \Cref{th:OMCImpliesDK} by assuming $G$ does not have DK and taking orthogonal $e,f \in \PCI(\Q G)$ such that $\ker(\varphi_e) = \ker(\varphi_f)$ and such that $\Q Ge$ and $\Q G f$ are both matrix components. However, \Cref{ex:SL23} shows that this cannot be assumed in general. Hence we have to follow another strategy.

\begin{proposition}\label{prop:DKDirectProductsAndQuotients}
Let $N \unlhd G$ and $H$ be a finite groups. Then the following hold.
\begin{enumerate}
\item If $G$ has DK, then $G/N$ has DK.
\item If $G$ and $H$ have DK and the orders of $G$ and $H$ are coprime, then $G \times H$ has DK.
\end{enumerate}
\end{proposition}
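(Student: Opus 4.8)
The plan is to reduce both parts to the description of DK via the kernels of the maps $\varphi_e$ together with the behaviour of $\PCI(\Q G)$ under the two operations.

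For (1) I would use that $\Q[G/N]\cong\Q G\,\widehat{N}$, so that the primitive central idempotents of $\Q[G/N]$ are exactly those $e\in\PCI(\Q G)$ with $N\subseteq\ker(\varphi_e)$, regarded inside $\Q[G/N]$ through $gN\mapsto g\widehat{N}$. Under this identification the map $\varphi_e$ of $G/N$ has kernel $\ker(\varphi_e)/N$. Hence two distinct primitive central idempotents of $\Q[G/N]$ are distinct elements of $\PCI(\Q G)$ sharing the common subgroup $N$ in their kernels, so by DK of $G$ they have different kernels in $G$, and therefore different kernels in $G/N$; thus $G/N$ has DK. (Equivalently, one may invoke \Cref{DK via epsilon} together with the identity $\epsilon(G,M)=\epsilon(G/N,M/N)$ valid for $N\le M\unlhd G$, which forces $\PCI(\Q[G/N])\subseteq\{\epsilon(G/N,\overline M)\mid\overline M\unlhd G/N\}$.)

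For (2) the first task is to identify $\PCI(\Q[G\times H])$. Via the canonical isomorphism $\Q[G\times H]\cong\Q G\otimes_\Q\Q H$, the elements $e\otimes f$ with $e\in\PCI(\Q G)$ and $f\in\PCI(\Q H)$ form a family of $|\PCI(\Q G)|\cdot|\PCI(\Q H)|$ pairwise orthogonal non-zero central idempotents summing to $1$. Since $\gcd(|G|,|H|)=1$, every cyclic subgroup of $G\times H$ is a direct product of a cyclic subgroup of $G$ and one of $H$, so the number of conjugacy classes of cyclic subgroups of $G\times H$ is the product of the corresponding numbers for $G$ and $H$; by \cite[Corollary 7.1.12]{GRG1} this equals $|\PCI(\Q G)|\cdot|\PCI(\Q H)|$, which forces the $e\otimes f$ to be precisely the primitive central idempotents of $\Q[G\times H]$. (Alternatively one notes that $Z(\Q Ge)$ and $Z(\Q Hf)$ are abelian number fields lying in $\Q(\zeta_{|G|})$ and $\Q(\zeta_{|H|})$, which are linearly disjoint over $\Q$ by coprimeness, so $\Q Ge\otimes_\Q\Q Hf$ has a field as its centre and is therefore simple.) Next I would compute $\ker(\varphi_{e\otimes f})$. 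As $\gcd(|G|,|H|)=1$, this normal subgroup is of the form $N_1\times N_2$ with $N_1\unlhd G$, $N_2\unlhd H$, and $(g,1)\in\ker(\varphi_{e\otimes f})$ says $ge\otimes f=e\otimes f$ in $\Q Ge\otimes_\Q\Q Hf$, which after applying a linear functional non-vanishing at $f$ gives $ge=e$; hence $N_1=\ker(\varphi_e)$ and similarly $N_2=\ker(\varphi_f)$, so $\ker(\varphi_{e\otimes f})=\ker(\varphi_e)\times\ker(\varphi_f)$. If $G\times H$ failed DK there would be $(e,f)\neq(e',f')$ with $\ker(\varphi_e)\times\ker(\varphi_f)=\ker(\varphi_{e'})\times\ker(\varphi_{f'})$; comparing the two direct factors yields $\ker(\varphi_e)=\ker(\varphi_{e'})$ and $\ker(\varphi_f)=\ker(\varphi_{f'})$, and since $e\neq e'$ or $f\neq f'$ this contradicts DK of $G$ or of $H$.

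The bookkeeping with $\Q[G/N]\cong\Q G\,\widehat{N}$, with $\Q[G\times H]\cong\Q G\otimes_\Q\Q H$, and with elementary tensors is routine. The one genuinely delicate point is pinning down $\PCI(\Q[G\times H])$ as $\{e\otimes f\}$: over $\Q$ a tensor product of simple algebras need not be simple, and it is precisely the coprimeness of $|G|$ and $|H|$ — entering either through linear disjointness of cyclotomic fields or through the counting of conjugacy classes of cyclic subgroups — that makes the identification go through. I expect this to be the main obstacle to carry out carefully.
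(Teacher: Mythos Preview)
Your proposal is correct and follows essentially the same approach as the paper: part~(1) is the same reduction via $\Q[G/N]\cong\Q G\,\widehat{N}$ (the paper just states it in one line via representations), and for part~(2) you use the identical counting argument through \cite[Corollary 7.1.12]{GRG1} to show the $e\otimes f$ are primitive and then compute $\ker(\varphi_{e\otimes f})=\ker(\varphi_e)\times\ker(\varphi_f)$. The only cosmetic difference is that the paper verifies the kernel identity via Kronecker products of matrices (arguing $A\otimes B=I$ with coprime orders forces $A=B=I$), whereas you use the tensor-algebra observation that $(ge-e)\otimes f=0$ with $f\neq0$ forces $ge=e$; both are equivalent and equally short.
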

\begin{proof}
Every irreducible $\Q$-representation of $G/N$ is also an irreducible $\Q$-representation of $G$, so the first item follows.

For the second claim recall that $\Q[G \times H] \cong \Q G \otimes \Q H$. If $e \in \PCI(\Q G)$ and $f \in \PCI(\Q H)$ then $e \otimes f$ is a central idempotent in $\Q[G \times H]$. If $e' \in \PCI(\Q G)$ such that $ee'= 0$, then $(e \otimes f)(e' \otimes f) = 0$, so different central idempotents obtained in this way are orthogonal. We claim that all these idempotents are in fact central primitive. Indeed the number of primitive central idempotents in $\Q(G\times H)$ is the same as the number of conjugacy classes of cyclic subgroups in $G \times H$ by \cite[Corollary 7.1.12]{GRG1}. As the orders of $G$ and $H$ are coprime, this is the same as the product of the numbers of conjugacy classes of cyclic subgroups of $G$ and $H$, so we have 
\begin{align}\label{eq:PCIDirectProduct}
|\PCI(\Q[G \times H])| = |\PCI(\Q G)|\cdot |\PCI(\Q H)|.
\end{align}
As we saw above every primitive summand of $e \otimes f$ is orthogonal with every primitive summand of $e' \otimes f$. So if $e \otimes f$ would not be primitive, this would contradict \eqref{eq:PCIDirectProduct}.

It remains to show that $\ker(\varphi_{e \otimes f}) \neq \ker(\varphi_{e' \otimes f})$ which will follows from the assumptions by showing $\ker(\varphi_{e\otimes f}) = \ker(\varphi_e) \times \ker(\varphi_f) \leq G \times H$. For this recall that the representation corresponding to $e \otimes f$ can be obtained as the Kronecker product between the representations corresponding to $e$ and $f$. Denote by $I$ the identity matrix (abusing notation it will have varying size). Now if $A \otimes B = I$ for $A$ and $B$ matrices of finite order, then $A$ and $B$ must be diagonal and $dB=I$ for any element $d$ on the diagonal of $A$. But if $A$ is a matrix coming from a representation of $G$ and $B$ a matrix coming from a representation of $H$, then the orders of $A$ and $B$ are coprime. So $dB = I$ implies $d =1$ and $B = I$, in total $A=I$ and $B=I$. 
\end{proof}

\begin{example}\label{ex:DirectProductNoDK}
We show that property DK is not closed under taking direct products when the orders of the factors are not assumed to be of coprime order and that \eqref{eq:PCIDirectProduct} is also not correct in general. For this let $G = D_{10} \times C_5$. If $G' = \langle a \rangle$ and $b$ is a central element of order $5$, then from \Cref{lem:IdempotForMetabelian} we deduce that $(\langle a,b \rangle , \langle ab \rangle)$ and $(\langle a,b \rangle , \langle a^2b \rangle)$ are strong Shoda pairs which correspond to non-equivalent faithful $\Q$-representations.
\end{example}

\begin{example}
We show that property DK is not closed under taking subgroups. First consider the following group which has GroupId [32,11] in the SmallGroupLibrary\cite{SmallGroupLibrary}:
\[H = \langle a,b,c \ | \ a^4=b^4=c^2=1, [a,b]=[b,c]=1, [a,c]=a^2b \rangle \cong (C_4 \times C_4) \rtimes C_2. \]
Using \Cref{lem:IdempotForMetabelian} we see that  in $H$ the strong Shoda Pairs $(\langle a,b \rangle, \langle a \rangle)$ and $(\langle a,b \rangle, \langle ab \rangle)$ provide two different elements in $\PCI(\Q H)$ such that both corresponding representations are faithful, i.e. $H$ does not have DK.

Now consider the group
\begin{align*}
G &= \langle a,b,c,d \ | \ a^4=b^4=c^2=d^2= [a,b]=[b,c]=1, [a,c]=a^2b, [a,d] = a^2b^2, [b,d]=b^2, [c,d] = a^2b^{-1} \rangle \\
&\cong ((C_4\times C_4) \rtimes C_2) \rtimes C_2 \cong H \rtimes C_2
\end{align*}
which has GroupId [64, 135]. As this group is metabelian, we can apply \Cref{lem:IdempotForMetabelian} with $A = \langle a, b\rangle$. We list the strong Shoda pairs which provide all the non-commutative components of $\Q G$ one obtains in this way without further details. Note that $G' = \langle a^2, b \rangle$:\\
\begin{align*}
&(\langle A, c \rangle, \langle a^2b, c \rangle), \ (\langle A, c \rangle, \langle a^2b, a^2c \rangle), \ (\langle A, d \rangle, \langle a,b^2, d \rangle), (\langle A, d \rangle, \langle ab, b^2, d \rangle),  \\  
&(\langle A, cd \rangle, \langle b, cd \rangle), \ (\langle A, cd \rangle, \langle b, a^2cd \rangle), \ (\langle A, d \rangle, \langle a \rangle).
\end{align*}
We compute the kernels of the corresponding representations by \Cref{form of SSP idempotent result}. These are $\langle a^2b, c \rangle$, $\langle a^2b, a^2c \rangle$, $\langle ad, a^2 \rangle$, $\langle abd, a^2 \rangle$, $\langle b, cd \rangle$, $\langle b, a^2cd \rangle$ and $1$ respectively. Hence all kernels are different and $G$ has DK. 
\end{example}

\begin{lemma}\label{lem:DKForCmQ}
Let $m$ be an integer, $q$ a prime not dividing $m$ and $G = C_m \rtimes Q$ a non-trivial semi-direct product where $Q$ is a $q$-group which is either abelian or generalized quaternion. Let $a \in G$ be of order $m$. In case $Q$ is generalized quaternion, assume that a maximal cyclic subgroup of $Q$ acts trivially on $\langle a \rangle$. 
Moreover, if $[a,g] \neq 1$ for some $g \in Q$, then $[\langle a \rangle, \langle g \rangle] = \langle a \rangle$ holds. Then $G$ has DK.  
\end{lemma}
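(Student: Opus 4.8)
The plan is to deduce the DK property for $G = C_m \rtimes Q$ by analyzing its simple components via strong Shoda pairs, since $G$ is metabelian (indeed abelian-by-(abelian or quaternion)), hence strongly monomial, so \Cref{lem:IdempotForMetabelian} applies with the maximal abelian subgroup $A$ containing $G' = \langle a \rangle$ (note $G' \le \langle a \rangle$ because $G/\langle a\rangle \cong Q$ and the hypothesis $[\langle a\rangle, \langle g\rangle] = \langle a\rangle$ whenever $[a,g]\ne 1$ forces $\langle a\rangle \le G'$, giving $G' = \langle a\rangle$). First I would split $\PCI(\Q G)$ according to whether $\Q Ge$ is commutative or not. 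By \Cref{lem:DKavoidsCommutativeComponents} it suffices to show (a) there is at most one orthogonal pair to worry about, or more precisely (b) the non-commutative components all have pairwise distinct kernels. The commutative components cause no trouble by \Cref{cor:AbelianHaveDK}; and a commutative component cannot share a kernel with a non-commutative one since $G'$ lies in the kernel of the former but not the latter.

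Next I would describe the non-commutative components. A non-commutative component $\Q G e(G,H,K)$ has $G' = \langle a\rangle \not\le K$. Write $C = C_Q(\langle a\rangle)$ for the kernel of the action of $Q$ on $\langle a\rangle$; by hypothesis, if $Q$ is quaternion then $C$ contains a maximal cyclic subgroup (so $[Q:C] \le 2$), and in any case $\langle a \rangle \rtimes C = \langle a\rangle \times C$ and this is the (unique up to the structure of $Q$) candidate for the maximal abelian subgroup $A$ containing $G'$; when $Q$ is abelian, $A = \langle a\rangle \times Q$ and $G$ is metabelian with abelian quotient. In the faithful case $C = 1$ and $A = \langle a\rangle$. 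The key computation is: for a non-commutative $e(G,H,K)$, the group $H$ is forced to be $A$ (because any larger subgroup would have derived subgroup not contained in $K$ unless it contains more of $\langle a\rangle$, and the condition $[\langle a\rangle,\langle g\rangle] = \langle a\rangle$ for $g$ acting nontrivially rules out intermediate possibilities — this is exactly the mechanism used in the proof of \Cref{ND for non-nilp SSN grps} for the solvable case and in \Cref{prop:SolvSNGroups}). So every non-commutative component is of the form $e(G,A,K)$ with $A/K$ cyclic and $\langle a\rangle \not\le K$, and by \Cref{form of SSP idempotent result} its kernel is $\mathrm{core}_G(K) = \bigcap_g K^g$.

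Then I would show that the map $e(G,A,K) \mapsto \mathrm{core}_G(K)$ is injective on the set of non-commutative components. Here one uses the structure $A = \langle a\rangle \times C$: a subgroup $K \le A$ with cyclic quotient and $\langle a\rangle \not\le K$ must project onto a proper subgroup of $\langle a\rangle$, and the hypothesis forcing $[\langle a\rangle, \langle g\rangle] = \langle a\rangle$ for every $g \in Q$ acting nontrivially implies that conjugation by $Q$ acts on the $\langle a\rangle$-component of $K$ "as much as possible", so that $\mathrm{core}_G(K)$ already determines $K \cap \langle a\rangle$, and then the $C$-part (which is central-ish, being fixed by the relevant action) is determined by the cyclicity of $A/K$ and by $K$ being $G$-conjugate-closed after coring. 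When $Q$ is quaternion with $C$ of index $2$, one has to additionally check that the quaternion group contributes exactly one further kernel (coming from the faithful-on-$C$/$H_\Q$-type component), using that a faithful character of $Q_8$ has trivial kernel and there is only one such — this mirrors \Cref{ex:DKForQ8AndFriends}. The main obstacle I expect is precisely this injectivity step in the quaternion case: pinning down that two strong Shoda pairs $(A, K_1)$ and $(A, K_2)$ with $\mathrm{core}_G(K_1) = \mathrm{core}_G(K_2)$ must be $G$-conjugate, hence give the same idempotent by \Cref{lem:IdempotForMetabelian}. I would handle this by a dimension count à la \Cref{dim of simple of SSP}: the kernel $N = \mathrm{core}_G(K)$ determines $G/N$, and in $\Q[G/N]$ there is (by the earlier-established structure of such quotient groups, which are again of the same $C_{m'} \rtimes Q'$ shape) a unique faithful non-commutative component, so at most one non-commutative component of $\Q G$ can have kernel exactly $N$. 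This last reduction — "the quotient by the core is again of the type covered, and such groups have a unique faithful non-commutative component" — is what makes the argument close, and verifying it is the crux.
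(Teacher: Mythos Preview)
Your setup matches the paper: $G$ is metabelian, you invoke \Cref{lem:IdempotForMetabelian}, restrict to non-commutative components via \Cref{lem:DKavoidsCommutativeComponents}, identify $A = \langle a\rangle \times C$ with $C = C_Q(\langle a\rangle)$, and argue that $H = A$ for every relevant strong Shoda pair. (A small slip: in the quaternion case $G' = \langle a\rangle \times Q'$, not $\langle a\rangle$, since $G/\langle a\rangle \cong Q$ is non-abelian; but $G'$ is still abelian, so metabelianity survives.)

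Where you diverge is in the injectivity step, and here you miss the paper's one-line key observation. Because $q \nmid m$, every subgroup of $A = \langle a\rangle \times C$ decomposes as $A_1 \times C_1$ with $A_1 \le \langle a\rangle$ and $C_1 \le C$; each factor is characteristic in a normal subgroup of $G$ (namely $\langle a\rangle \unlhd G$, and $C \unlhd G$ since $C$ is either central or the index-$2$ cyclic subgroup of $Q$), hence every subgroup $K \le A$ is already normal in $G$. Therefore $\mathrm{core}_G(K) = K$, and by \Cref{form of SSP idempotent result} the kernel of $\varphi_{e(G,A,K)}$ is $K$ itself. Distinct idempotents come from non-conjugate $K$'s, but since all $K$'s are normal, non-conjugate means unequal, so the kernels are pairwise distinct and DK follows immediately.

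Your proposed alternative---reducing to $G/N$ and arguing it has a unique faithful non-commutative component via a dimension count---is not wrong in spirit, but it is circuitous and, as you yourself note, not fully verified. The ``unique faithful non-commutative component'' claim for the quotient is close to DK itself and would need an independent argument. The normality observation eliminates the need for any of this.
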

\begin{proof}
$G$ is metabelian and hence we can apply \Cref{lem:IdempotForMetabelian} looking for strong Shoda pairs $(H,K)$ in $G$. By \Cref{lem:DKavoidsCommutativeComponents} we can restrict our attention to those satisfying $G' \not\subseteq K$ and we will further assume this condition. Let $A$ be a maximal abelian subgroup of $G$ containing $G'$. As $[A,\langle g \rangle] = G'$ for every $g \notin A$, we get $A = H$. When $Q$ is generalized quaternion we can write $A = \langle a \rangle \times \langle b \rangle$, where $\langle b \rangle$ is a maximal cyclic subgroup of $Q$. If $Q$ is abelian we have $A = \langle a \rangle \times (\ZZ(G) \cap Q)$. In any case, the condition that $q$ does not divide $m$ implies that every subgroup of $A$ is normal in $G$. Hence when $(A,K)$ is a strong Shoda pair and $e = e(G,A,K)$, we have $\ker(\varphi_e) = K$ by \Cref{form of SSP idempotent result}. In particular, each irreducible $\Q$-representation of $G$ is uniquely determined by its kernel and $G$ has DK.
\end{proof}

With this we can show DK for some interesting classes of groups.

\begin{corollary}\label{cor:DKForUnfaithful}
Let $G$ be an SSN group of unfaithful type. Then $G$ has DK.
\end{corollary}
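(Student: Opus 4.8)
The plan is to deduce \Cref{cor:DKForUnfaithful} directly from \Cref{lem:DKForCmQ} by checking that an SSN group of unfaithful type satisfies all the hypotheses of that lemma. Recall from \Cref{Defintion solv SSN unfaithful type} that such a group is $G = C_p \rtimes C_{q^k}$ with $p,q$ distinct primes, the cyclic $q$-group $Q = C_{q^k}$ acting non-trivially but non-faithfully on $P = C_p$. So we take $m = p$, which is coprime to $q$, and $Q = C_{q^k}$ is in particular abelian, so the generalized-quaternion caveat of \Cref{lem:DKForCmQ} is vacuous.

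First I would note that the action of $Q$ on $\langle a\rangle = P$ factors through $\Aut(P) \cong C_{p-1}$, which is cyclic, so the image of the action is a cyclic $q$-group, say of order $q^j$ with $1 \le j < k$ (here $j \ge 1$ since the action is non-trivial, and $j < k$ since it is non-faithful). Thus for $g \in Q$ with $[a,g] \neq 1$, the element $g$ acts on $\langle a\rangle$ of order $p$ via an automorphism of order a power of $q$ that is not the identity; since $\Aut(C_p)$ has no non-trivial proper subgroup fixing a non-trivial element (any non-trivial automorphism of $C_p$ fixes only the identity), we get $[\langle a\rangle, \langle g\rangle] = \langle a\rangle$. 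This verifies the final displayed hypothesis of \Cref{lem:DKForCmQ}. Since all hypotheses are met, \Cref{lem:DKForCmQ} gives that $G$ has DK.

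The argument is essentially a verification, so there is no serious obstacle; the only point requiring a line of justification is the claim that a non-trivial automorphism of $C_p$ moves every non-trivial element, which is immediate because $\Aut(C_p)$ acts freely on $C_p \setminus \{1\}$. One could alternatively phrase the whole corollary self-containedly: $G$ is metabelian with $G' = P$, a maximal abelian subgroup $A$ containing $G'$ is $A = P \times Z$ with $Z = \ZZ(G) \cap Q$ the kernel of the action (of order $q^{k-j} > 1$), and by \Cref{lem:IdempotForMetabelian} every strong Shoda pair contributing a non-commutative component has first entry $A$ and second entry a subgroup $K$ of $A$ with $G' \not\subseteq K$; since $\gcd(p, q^k) = 1$ every subgroup of $A = P \times Z$ is normal in $G$ (conjugation by $Q$ fixes $Z$ pointwise and sends any subgroup of $P$ to itself as $P$ is cyclic of prime order, while $P$ is normal), so by \Cref{form of SSP idempotent result} the kernel of the representation attached to $e(G,A,K)$ is exactly $K$, whence distinct such idempotents have distinct kernels. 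Combined with \Cref{lem:DKavoidsCommutativeComponents} for the commutative components, this yields DK. Either way, the short route through \Cref{lem:DKForCmQ} is the cleanest.
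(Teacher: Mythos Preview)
Your proposal is correct and follows exactly the paper's approach: the corollary is stated immediately after \Cref{lem:DKForCmQ} with no separate proof, so the intended argument is precisely to observe that an SSN group of unfaithful type satisfies the hypotheses of that lemma. Your verification is fine, though the commutator condition is even more immediate than you make it: since $\langle a\rangle = C_p$ has no non-trivial proper subgroups, $[a,g]\neq 1$ forces $[\langle a\rangle,\langle g\rangle]=\langle a\rangle$ automatically.
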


\begin{proposition}\label{prop:DKForAmitsur}
Let $G$ be a finite subgroup of the multiplicative group of a division algebra in characteristic $0$. Then $G$ has DK if and only if it is not isomorphic to one of the following:
\begin{enumerate}
\item the binary octahedral group,
\item $\operatorname{SL}(2,5)$,
\item $\operatorname{SL}(2,3) \times H$ for $H$ a group of order coprime to $6$.
\end{enumerate}  
\end{proposition}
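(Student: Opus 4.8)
The plan is to invoke the classical Amitsur classification of finite subgroups of division rings (in characteristic $0$) and then check the DK property case by case using the structural lemmas already established, principally \Cref{lem:DKForCmQ}, \Cref{prop:DKDirectProductsAndQuotients}, and the explicit computations in \Cref{ex:DKForQ8AndFriends} and \Cref{ex:SL23}. First I would recall Amitsur's list: a finite group $G$ embeds in the unit group of a division algebra of characteristic $0$ if and only if $G$ is one of (a) a cyclic group, or a $\ZZ$-group (all Sylow subgroups cyclic) admitting such an embedding, (b) a group $C_m \rtimes Q$ with $Q$ cyclic or generalized quaternion acting in a prescribed way, (c) one of the "binary polyhedral type" exceptions, namely $\operatorname{SL}(2,3)$, $\operatorname{SL}(2,5)$, the binary octahedral group $\widetilde{S_4}$, or certain direct/semidirect products of these with $\ZZ$-groups of coprime order (see Shirvani--Wehrfritz, or \cite[2.1.4]{ShirvaniWehrfritz} which is already cited in the paper). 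The non-exceptional families (a) and (b) are exactly the metacyclic-type groups $C_m \rtimes Q$ handled by \Cref{lem:DKForCmQ}: for these one verifies that the hypotheses of that lemma are met — $Q$ cyclic or generalized quaternion, the "fixed-point-free" condition $[\langle a\rangle,\langle g\rangle]=\langle a\rangle$ whenever $g$ acts nontrivially (which is automatic for subgroups of division rings, this being precisely the fixed-point-freeness of the action), and the condition on a maximal cyclic subgroup of $Q$ acting trivially — so DK holds. Pure cyclic groups are covered by \Cref{cor:AbelianHaveDK}.

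Next I would treat the exceptional groups. For $\operatorname{SL}(2,3)$ and $\operatorname{SL}(2,5)$ and the binary octahedral group $\widetilde{S_4}$ one computes the Wedderburn decomposition of the rational group algebra directly (or cites it from the literature) and inspects the kernels of the irreducible $\Q$-representations. The binary octahedral group $\widetilde{S_4}$ has order $48$ and $\Q\widetilde{S_4}$ contains two distinct faithful components, so DK fails; likewise $\operatorname{SL}(2,5)$ has two inequivalent faithful irreducible $\Q$-representations (coming from the two $2$-dimensional complex representations defined over $\Q(\sqrt5)$), hence fails DK; and $\operatorname{SL}(2,3)$ fails DK by \Cref{ex:SL23}. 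Conversely, for groups of the form $\operatorname{SL}(2,3)\times H$ with $|H|$ coprime to $6$: since $\operatorname{SL}(2,3)$ already fails DK and $\operatorname{SL}(2,3)$ is a quotient of this product, \Cref{prop:DKDirectProductsAndQuotients}(1) shows the product also fails DK. This gives the "only if" direction: each of the three listed types fails DK. For the "if" direction I must check that every Amitsur group \emph{not} on the list does have DK. The groups $\operatorname{SL}(2,3)\times H$ exhaust, by the structure of Amitsur's list, precisely those exceptional cases whose only DK obstruction is an $\operatorname{SL}(2,3)$-quotient; the remaining composites involving $\operatorname{SL}(2,5)$ or $\widetilde{S_4}$ with coprime-order factors need to be argued to not actually occur on Amitsur's list, or else to still fail DK and hence be absorbed — I expect this bookkeeping to require care.

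The main obstacle will be getting the exceptional part of Amitsur's classification exactly right and matching it against the DK dichotomy: Amitsur's theorem lists several families built from $\operatorname{SL}(2,3)$, $\operatorname{SL}(2,5)$, and $\widetilde{S_4}$ (sometimes with an extra cyclic or $\ZZ$-group factor, sometimes as nonsplit extensions), and I must verify that exactly three of these shapes fail DK while all others — including, say, $\widetilde{S_4}$-free composites, or the genuinely metacyclic $\ZZ$-groups that are not direct products — satisfy DK via \Cref{lem:DKForCmQ} or \Cref{prop:DKDirectProductsAndQuotients}(2). A subtle point is that \Cref{prop:DKDirectProductsAndQuotients}(2) only gives DK for coprime-order direct products, so any composite Amitsur group that is a \emph{semidirect} (non-direct) product, or a direct product with non-coprime orders, must be reduced to the $C_m\rtimes Q$ form and handled by \Cref{lem:DKForCmQ} instead; checking that this reduction always applies to the non-exceptional Amitsur groups is the technical heart of the argument. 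I would also need to confirm that the three listed groups are pairwise non-isomorphic and genuinely do embed in division rings, which is immediate from Amitsur's theorem but worth stating.
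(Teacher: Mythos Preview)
Your overall strategy---invoke Amitsur's classification, then split into the metacyclic/$\ZZ$-group families handled by \Cref{lem:DKForCmQ} and \Cref{prop:DKDirectProductsAndQuotients}, and treat the three exceptional shapes separately---is exactly the paper's approach. The structure is right and most of the bookkeeping you flag is real.

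There is, however, one concrete error. Your argument that $\operatorname{SL}(2,5)$ fails DK because ``the two $2$-dimensional complex representations defined over $\Q(\sqrt5)$'' yield two inequivalent faithful irreducible $\Q$-representations is wrong: those two complex characters are Galois conjugate under $\operatorname{Gal}(\Q(\sqrt5)/\Q)$, so they amalgamate into a \emph{single} irreducible $\Q$-representation. The conclusion (that $\operatorname{SL}(2,5)$ fails DK) is still correct, but you need a different witness. The paper avoids any direct computation here by passing to the quotient $A_5$ and observing a general fact: a non-abelian simple group can never have DK, since every non-trivial irreducible $\Q$-representation is faithful and there are certainly more than two conjugacy classes of cyclic subgroups (hence more than two elements of $\PCI(\Q G)$). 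Then \Cref{prop:DKDirectProductsAndQuotients}(1) finishes it.

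The paper also streamlines the other two exceptions via quotients rather than direct Wedderburn computations: for the binary octahedral group one uses $G/\ZZ(G)\cong S_4$, and $S_4$ is already shown in \Cref{ex:SL23} not to have DK; for $\operatorname{SL}(2,3)\times H$ one just projects onto $\operatorname{SL}(2,3)$. This is both shorter and avoids the Schur-index and Galois-orbit pitfalls that your direct approach runs into. For the non-exceptional direction, rather than reducing semidirect products to the $C_m\rtimes Q$ form as you propose, the paper simply reads off from the Amitsur--Shirvani--Wehrfritz description that any such $G$ is a direct product of coprime-order factors each of the shape in \Cref{lem:DKForCmQ}, and then applies \Cref{prop:DKDirectProductsAndQuotients}(2); this sidesteps the ``subtle point'' you identify in your last paragraph.
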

\begin{proof}
The finite subgroups of division algebras in characteristic $0$ were obtained by Amitsur, we refer to \cite[Theorem 2.1.4, 2.1.5]{ShirvaniWehrfritz} for a full account. It follows that when $G$ is not one of the three possibilities listed explicitly in the statement, then $G$ is the direct product of groups of coprime orders such that each factor has the shape given in \Cref{lem:DKForCmQ}. So by \Cref{lem:DKForCmQ} and \Cref{prop:DKDirectProductsAndQuotients} we conclude that $G$ has DK. It remains to show that this is not the case for the three cases listed.

If $G$ is the binary octahedral group, then $G/\ZZ(G) \cong S_4$, so that $G$ does not have DK by \Cref{prop:DKDirectProductsAndQuotients} and \Cref{ex:SL23}. Similarly, if $G \cong \operatorname{SL}(2,3) \times H$, then $G$ maps onto $\operatorname{SL}(2,3)$, so we can again use \Cref{prop:DKDirectProductsAndQuotients} and \Cref{ex:SL23}. For $G = \operatorname{SL}(2,5)$ we observe that $G$ maps onto a non-abelian simple group, namely $A_5$. But a non-abelian simple group can never have DK, indeed otherwise $\Q G$ would only have two components, but $G$ has certainly more than two conjugacy classes of cyclic subgroups, which would contradict \cite[Corollary 7.1.12]{GRG1}.
\end{proof}

\begin{proof}[Proof of \Cref{th:OMCImpliesDK}.]
To start we reduce the statement to the case that $G$ embeds in a division algebra of finite dimension over $\Q$. Let $G$ be a group of minimal order violating the conditions, i.e. $G$ is a group with at most one matrix component, but there exist orthogonal $e,f \in \PCI(\Q G)$ such that $\ker(\varphi_e) = \ker(\varphi_f)$. Set $N =\ker(\varphi_e)$. Then $G/N$ is also a group with at most one matrix component, which does not have DK, namely it has two non-equivalent faithful representations. By the minimality of $G$ we conclude that $N=1$. By \Cref{lem:DKavoidsCommutativeComponents} we know that neither $\Q Ge$ nor $\Q Gf$ is a field. On the other hand at most one of them, say $\Q Ge$, can be a matrix-component. Hence $\Q Gf$ is a non-commutative division algebra $D$ and as $\ker(\varphi_f) = 1$, it follows that $G$ is isomorphic to a multiplicative subgroup of $D$. 

So we assume that $G$ is a subgroup of a division algebra of characteristic $0$. By \Cref{prop:DKForAmitsur} many of those groups have DK independently from the property of having one matrix component and we will be done once we see that the three exceptions listed in the proposition do not have one matrix component. By \Cref{ex:SL23} this is true for $\operatorname{SL}(2,3)$ and also $S_4$, which is the image of the binary octahedral group. Also $\mathbb{Q}\operatorname{SL}(2,5)$ contains a direct summand isomorphic to $\Q A_5$, which has more than one matrix component. 
\end{proof}

We next show that another class of groups of interest in this paper has DK.

\begin{lemma}\label{lem:DKForNilpotentSSN}
Let $G$ be a nilpotent group with SSN. Then $G$ has DK.
\end{lemma}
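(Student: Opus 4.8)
The plan is to reduce to $p$-groups and then combine the known classification of such groups with the metabelian idempotent formula of \Cref{lem:IdempotForMetabelian}. A nilpotent group is the internal direct product of its Sylow subgroups, each of which inherits SSN (since SSN passes to subgroups), and these have pairwise coprime orders; so by \Cref{prop:DKDirectProductsAndQuotients}(2) it suffices to prove that every $p$-group $P$ with SSN has DK. If $P$ is abelian this is \Cref{cor:AbelianHaveDK}. If $P$ is non-abelian and Dedekind, then $P\cong Q_8\times C_2^n$ by \Cref{th:DedekindGroups}; here one checks directly that every non-commutative Wedderburn component is a copy of $\mathbb{H}_{\mathbb{Q}}$, indexed by a character $\chi$ of $C_2^n$, and that the kernel of the corresponding representation is the index-$2$ subgroup $\{(q,c)\in\langle z\rangle\times C_2^n : \rho_0(q)=\chi(c)\}$, where $z$ is the central involution of $Q_8$ and $\rho_0\colon Q_8\to\mathbb{H}_{\mathbb{Q}}$ is faithful. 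These kernels are pairwise distinct and do not contain $z$, hence differ from the kernels of the commutative components, which all contain $P'=\langle z\rangle$; and the commutative components are mutually separated by \Cref{lem:DKavoidsCommutativeComponents}.

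For $P$ non-abelian and not Dedekind, \Cref{lem:LiuPassma2.2} and \Cref{lem:OurLemma3.2} show that every non-normal subgroup of $P$ is cyclic, and the classification of nilpotent groups with SSN (cf. \cite{BozikovJanko} and \cite[Section 4]{JespersSun}) places $P$ in an explicit, finite list of metabelian groups. Whenever $\mathbb{Q}P$ has at most one matrix component we are done by \Cref{th:OMCImpliesDK}, so by \Cref{lem:GpmnOneMatrixComponent} only the groups $G(p,m,n)$ with $n\ge 2$ and $(p,m,n)\ne(2,2,2)$ remain (together with the finitely many $2$-group entries of the list that have several matrix components, handled in the same manner). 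Fix the presentation \eqref{definition BJ1 class} and set $A=\langle a\rangle\times\langle b^p\rangle$, a maximal abelian subgroup containing $P'=\langle a^{p^{m-1}}\rangle$. Since $|P'|=p$, any $B$ with $A\lneq B\le P$ is non-abelian and satisfies $B'=P'$; hence in \Cref{lem:IdempotForMetabelian} one always has $H=A$, and the non-commutative primitive central idempotents of $\mathbb{Q}P$ are precisely the $e(P,A,K)$ with $A/K$ cyclic and $a^{p^{m-1}}\notin K$, two of these being equal if and only if the $K$'s are $P$-conjugate. These pairs are strong Shoda pairs, so by \Cref{form of SSP idempotent result} the kernel of the representation afforded by $e(P,A,K)$ is $\text{core}_P(K)$. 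Using \Cref{lem:DKavoidsCommutativeComponents} for the commutative part, and $P'\not\subseteq\text{core}_P(K)$ to separate commutative from non-commutative components, DK for $P$ amounts to the assertion that $K\mapsto\text{core}_P(K)$ is injective on $P$-conjugacy classes of subgroups $K\le A$ with $a^{p^{m-1}}\notin K$ and $A/K$ cyclic.

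When $m\ge n$ one verifies that conjugation by $b$ --- which acts on $A\cong C_{p^m}\times C_{p^{n-1}}$ by $a\mapsto a^{1+p^{m-1}}$ and fixes $b^p$ --- leaves every subgroup of $A$ invariant, so $\text{core}_P(K)=K$ and the required injectivity is immediate. The genuine difficulty, and what I expect to be the technical heart of the proof, is the case $m<n$ (and the analogous $2$-group families): there $\langle ab^p\rangle$ is already a non-normal subgroup of $P$, so one must compute $\text{core}_P(K)=\bigcap_i K^{b^i}$ explicitly and show that two subgroups of $C_{p^m}\times C_{p^{n-1}}$ with cyclic quotient and the same such intersection lie in a common $\langle b\rangle$-orbit. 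This reduces to a finite bookkeeping problem with cyclic subgroups under the automorphism $a\mapsto a^{1+p^{m-1}}$; it is made tractable by observing that every value of $\text{core}_P(K)$ is a normal subgroup of $P$ avoiding $P'$, and such subgroups all lie inside $\ZZ(P)\langle b\rangle$ and are very few in number.
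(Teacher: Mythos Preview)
Your approach---reduce to $p$-groups, dispose of the Dedekind case, use the one-matrix-component shortcut via \Cref{th:OMCImpliesDK}, and then compute strong Shoda pairs for the remaining families---is exactly the strategy of the paper. Your treatment of $Q_8\times C_2^n$ is in fact more careful than the paper's (which appeals to \Cref{prop:DKDirectProductsAndQuotients} somewhat loosely there). Your claim that for $m\ge n$ every subgroup of $A=\langle a,b^p\rangle$ is $b$-invariant is correct and gives the injectivity of $K\mapsto \text{core}_P(K)$ immediately in that range.

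However, the proposal has two genuine gaps. First, you explicitly do not finish the case $m<n$ of $G(p,m,n)$: you identify it as ``the technical heart'' and sketch an observation about $\text{core}_P(K)\subseteq\ZZ(P)$, but you never establish that two non-conjugate $K$'s cannot have the same core. The paper handles all $(m,n)$ uniformly by parameterizing the relevant $K$ as $\langle a^j b^p\rangle$ (with $\mathrm{ord}(a^j)\le p^{n-1}$), computing $\text{core}_P(K)=K$ when $p\mid j$ and $\text{core}_P(K)=\langle a^{jp}b^{p^2}\rangle$ when $p\nmid j$, and then checking that equal cores force $j\equiv k\bmod p^{m-1}$, which makes $a^jb^p$ and $a^kb^p$ conjugate. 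This is short but not automatic, and it is precisely what your outline omits.

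Second, your sentence ``together with the finitely many $2$-group entries of the list that have several matrix components, handled in the same manner'' is both incomplete and incorrect: the class (BJ4) is a $3$-group of order $81$ with two matrix components, not a $2$-group, and the paper treats (BJ4), (BJ5), (BJ8), (BJ9) one by one, listing the conjugacy classes of admissible $K$ and their cores explicitly. None of these computations is long, but they are not subsumed by your $G(p,m,n)$ analysis and must be carried out.
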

\begin{proof}
Assume first that $G$ is a Dedekind group. As abelian groups have DK by \Cref{cor:AbelianHaveDK} and $Q_8$ has DK by \Cref{ex:DKForQ8AndFriends}, the property DK for $G$ follows from \Cref{prop:DKDirectProductsAndQuotients}.

So we can assume that $G$ is one of the nine classes (BJ1)-(BJ9) listed in \cite[Theorem 4.1]{JespersSun}. The groups in (BJ3) are a direct product of a quaternion group of order $8$ and a cyclic group of odd order, so they have DK by the same argument as Dedekind groups. The groups (BJ2), (BJ6), (BJ7) have one matrix component by \cite[Lemma 4.5 \& page 11]{JespersSun}, so they have DK by \Cref{th:OMCImpliesDK}. It remains to study the groups in (BJ1), (BJ4), (BJ5), (BJ8) and (BJ9). All those groups are metabelian and so we can apply \Cref{lem:IdempotForMetabelian} to show that they have DK and by \Cref{lem:DKavoidsCommutativeComponents} we can consider only strong Shoda pairs $(H,K)$ such that $K$ does not contain the commutator subgroup. For all groups we will list a full set of non-equivalent strong Shoda pairs based on \Cref{lem:IdempotForMetabelian} and the kernels of the corresponding representations which follow from \Cref{form of SSP idempotent result}. It will follow that kernels are pairwise different and the groups have DK.

\begin{itemize}
\item[(BJ4)] We have, cf. \cite[p. 120]{JespersSun},
\[G = \langle a,b,c \ | \ a^9 = b^3 = [a,b] = 1, a^c = ab, b^c = a^{-3}b, c^3 = a^3 \rangle, \]
so $G' = \langle a^3, b \rangle$. We let $A = \langle a, b \rangle \cong C_9 \times C_3$ be a maximal abelian subgroup containing $G'$. As $A$ is a maximal subgroup of $G$, we have $H=A$. The conjugacy classes of subgroups of $A$ which have cyclic quotients and do not contain $G'$, i.e. which can play the role of $K$ in the strong Shoda pair $(H,K)$, are $\{\langle b \rangle, \langle a^{-3}b \rangle, \langle a^3b \rangle \}$ and $\{\langle a \rangle, \langle ab \rangle, \langle a^{-1}b \rangle \}$. The corresponding kernels of the representations, i.e. $\text{core}_G(K)$, are $\langle a^3 \rangle$ and $1$ respectively.
\item[(BJ5)] We have
\[G = \langle a,b \ | \ a^8 = 1, a^b = a^{-1}, a^4 = b^4 \rangle, \]
so $G' = \langle a^2 \rangle$. Let $A = \langle a, b^2 \rangle \cong C_8 \times C_2$. As $A$ is a maximal subgroup of $G$, we have $H=A$. The conjugacy classes of subgroups of $A$ which have cyclic quotients and do not contain $G'$ are $\{\langle a^2b^2 \rangle, \langle a^{-2}b^2 \rangle \}$ and $\{\langle a^4b^2 \rangle \}$. The corresponding kernels of the representations are $1$ and $\langle b^2 \rangle$ respectively.
\item[(BJ8)] We have
\[G = \langle a,b,c \ | \ a^4 = b^4 = [a,b]= 1, a^c = ab^2, b^c=ba^2, c^2=a^2 \rangle, \]
so $G' = \langle a^2, b^2 \rangle$. Let $A = \langle a, b \rangle \cong C_4 \times C_4$. As $A$ is a maximal subgroup of $G$, we have $H=A$. The conjugacy classes of subgroups of $A$ which have cyclic quotients and do not contain $G'$ are $\{\langle a \rangle, \langle ab^2 \rangle \}$, $\{\langle b \rangle, \langle ba^2 \rangle \}$, $\{\langle ab \rangle\}$ and $\{\langle a^{-1}b \rangle\}$. The corresponding kernels of the representations are $\langle a^2 \rangle$, $\langle b^2 \rangle$, $\langle ab \rangle$ and $\langle a^{-1}b \rangle$ respectively.
\item[(BJ9)] We have
\[G = \langle a,b,c,d \ | \ a^4 = b^4 = [a,b]= 1, a^c = a^{-1}, b^c=b^{-1}a^2, a^d = a^{-1}b^2, b^d = b^{-1}, c^2=a^2b^2, d^2=a^2 \rangle, \]
so $G' = \langle a^2, b^2 \rangle$. Let $A = \langle a, b \rangle \cong C_4 \times C_4$. In this case $A$ is a not a maximal subgroup of $G$, but as all the proper subgroups containing it, namely $\langle A, c \rangle$, $\langle A, d \rangle$ and $\langle A, cd \rangle$, have derived subgroup $G'$, we still have $H=A$. The conjugacy classes of subgroups of $A$ which have cyclic quotients and do not contain $G'$ are $\{\langle a \rangle, \langle ab^2 \rangle \}$, $\{\langle b \rangle, \langle ba^2 \rangle \}$ and $\{\langle ab \rangle, \langle a^{-1}b \rangle\}$. The corresponding kernels of the representations are $\langle a^2 \rangle$, $\langle b^2 \rangle$ and $\langle a^2b^2 \rangle$ respectively.
\item[(BJ1)] We have for $p$ a prime, $m\geq 2$ and $n\geq 1$
\[G = \langle a,b \ | \ a^{p^m} = b^{p^n} = 1, a^b = a^{1+p^{m-1}} \rangle, \]
so $G' = \langle a^{p^{m-1}} \rangle$. Let $A = \langle a, b^p \rangle \cong C_{p^m} \times C_{p^{n-1}}$. As $A$ is a maximal subgroup of $G$, we have $H=A$. The  subgroups of $A$ which have cyclic quotients are $K = \langle a^jb^p \rangle$ for some integer $j$ such that the order of $a^j$ is at most $p^{n-1}$. If $p$ divides $j$, then $a^jb^p \in \ZZ(G)$ and $K$ is itself the kernel of the corresponding representation. If $p$ does not divide $j$, then the kernel is $\langle a^{jp}b^{p^2} \rangle$. If $k$ is also a number not divisible by $p$ such that $\langle a^{jp}b^{p^2} \rangle = \langle a^{kp}b^{p^2} \rangle$, then $j \equiv k \bmod p^{m-1}$ so that $a^jb^p$ is conjugate to $a^kb^p$ and hence the corresponding $K$ give equivalent strong Shoda pairs.
\end{itemize}
\end{proof}

\subsection{Nilpotent decomposition with specific idempotents or nilpotents}\label{subsection on SN versus DK}

The proof of \Cref{ND for non-nilp SSN grps} works by constructing a particular type of nilpotent element $n \in \mathbb{Z}G$, which we will call bicyclic nilpotent, and a central idempotent $e \in \Q G$ such that $ne \notin \mathbb{Z}G$ if and only if $G$ has more than one matrix component. We formalize this in the following way.

\begin{definition}
For elements $g,h \in G$ and $H$ a subgroup of $G$ containing $h$ we call $(1-h)g\widetilde{H}$ and $\widetilde{H}g(1-h)$ a \emph{bicyclic nilpotent} element.

We call $G$ \emph{bicyclic resistant}, if for every bicyclic nilpotent element $n \in \mathbb{Z}G$ and every central idempotent $e \in \Q G$ one has $ne \in \mathbb{Z}G$.
\end{definition}

Interestingly, a group having SN will have DK exactly when all the bicyclic nilpotent elements have a nilpotent decomposition. More precisely, in the remainder of the section we will work towards proving the following result. 

\begin{theorem}\label{When SN is DK theorem}
Let $G$ be a finite group with SN. Then the following are equivalent:
\begin{enumerate}
\item $G$ is bicyclic resistant.
\item $G$ is supersolvable or $\Q G$ has one matrix component.
\item $G$ has DK.
\end{enumerate}
\end{theorem}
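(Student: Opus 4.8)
The strategy is to prove the cyclic chain $(1)\Rightarrow(3)\Rightarrow(2)\Rightarrow(1)$, exploiting the structural classification of groups with SN from \Cref{Summary theorem for SN} to reduce each implication to a short list of cases. For $(1)\Rightarrow(3)$, I would argue by contrapositive: if $G$ does not have DK, pick orthogonal $e,f\in\PCI(\Q G)$ with $N:=\ker(\varphi_e)=\ker(\varphi_f)$; by \Cref{lem:DKavoidsCommutativeComponents} at least one of $\Q Ge,\Q Gf$ is a matrix component, say $\Q Ge\cong\Ma_{k}(D)$ with $k\geq 2$. Passing to $G/N$ does not change the situation (both representations factor through $G/N$ faithfully), so I may assume $N=1$, i.e. $G$ has a faithful irreducible representation with image in $\Ma_k(D)$ and another faithful irreducible representation (corresponding to $f$). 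The point is then to build an explicit bicyclic nilpotent $n=(1-h)g\wt H$ (with $H\ni h$) such that $ne\neq 0$ but $ne\notin\Z G$, using that the projection of the bicyclic unit to the $e$-component is a non-trivial elementary/unipotent matrix whose entries carry denominators coming from $|H|$, while the existence of the \emph{second} faithful component forces the ``averaging'' over $H$ not to clear these denominators. This is essentially the mechanism already used in the proof of \Cref{ND for non-nilp SSN grps} (and encoded in \Cref{lem:CoefficientOfProjectionByCharacter}); I would reuse the character-coefficient formula there, choosing $h$ of prime order and $g$ with $h^g\notin\langle h\rangle$, exactly as in the non-solvable case.

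For $(3)\Rightarrow(2)$, I would again use the contrapositive together with \Cref{Summary theorem for SN}: suppose $G$ has SN, is not supersolvable, and $\Q G$ has more than one matrix component. If $G$ is non-solvable, then by \Cref{prop:NonSolvableSNGroups} it has a unique non-abelian minimal normal subgroup $S$ with $G/S$ Dedekind; a non-abelian (characteristically) simple group, hence any group mapping onto it, cannot have DK (same argument as for $A_5$ in the proof of \Cref{prop:DKForAmitsur}: it would force only two conjugacy classes of cyclic subgroups via \cite[Corollary 7.1.12]{GRG1}), and \Cref{prop:DKDirectProductsAndQuotients}(1) then kills DK for $G$. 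If $G$ is solvable but not nilpotent, \Cref{prop:SolvSNGroups} leaves the SSN-of-unfaithful-type groups (which have DK by \Cref{cor:DKForUnfaithful}, hence are excluded since we assume no DK) and the groups $P\rtimes H$ of type (ii); for those $H$ is Dedekind with the action faithful and irreducible, and using \Cref{lem:IdempotForMetabelian} together with the analysis of ``good'' subgroups in the proof of \Cref{prop:OddPGroups}/\Cref{ND for non-nilp SSN grps}, one sees that whenever $\dim_{\F_p}P\geq 2$ the group $P$ produces many non-conjugate subgroups $K$ of index $p$ with trivial $G$-core, hence several non-equivalent faithful irreducible $\Q$-representations — so $G$ lacks DK precisely when it has more than one matrix component. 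If $G$ is nilpotent, \Cref{prop:NilpotentSNNotSSN} reduces us to nilpotent SSN groups, which all have DK by \Cref{lem:DKForNilpotentSSN}; since supersolvable is automatic for nilpotent groups, there is nothing to prove here. Thus in every case either $G$ is supersolvable or $\Q G$ has one matrix component.

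For $(2)\Rightarrow(1)$: if $\Q G$ has one matrix component then $G$ has ND outright (at most one $n_i>1$, as observed in the introduction), hence is trivially bicyclic resistant. So assume $G$ is supersolvable (and has SN). Then $G$ is metabelian-ish enough that all simple components come from strong Shoda pairs — more precisely $G$ is strongly monomial, so by \Cref{form of SSP idempotent result} every $e\in\PCI(\Q G)$ is some $e(G,N,K)$ with $\ker(\varphi_e)=\mathrm{core}_G(K)$. Given a bicyclic nilpotent $n=(1-h)g\wt H\in\Z G$ and a matrix component $e=e(G,N,K)$, I would mimic \emph{Claim 3} in the solvable case of \Cref{ND for non-nilp SSN grps}: write $h^g=t v$, reduce to showing $(1-t')e_K\in\Z P$-type integrality where the relevant subgroup $K$ is normal in $G$ because supersolvability forces the chief factors to be cyclic, and then $\wh K^h=\wh K$, so $(1-t)e_K$ has no denominator at all. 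The crucial simplification in the supersolvable case is that one can choose the strong Shoda pair with $K\unlhd G$ (every subgroup appearing in a chief series is normal up to the supersolvable structure), which makes $\widehat K$ central and removes the transversal sum that created denominators. The main obstacle will be exactly this step: controlling, for an \emph{arbitrary} bicyclic nilpotent and an \emph{arbitrary} matrix component, that the relevant $K$ can be taken normal and that the projection $g\wt H\,e$ then lands in $\Z Ge$; I expect this to require a careful case analysis of how $\wt H$ interacts with $e$ (when $H\cap\ker\varphi_e$ is large, $\wt H e$ is small; when it is small, $\wt H e$ may look dangerous but then $h$ acts in a way that makes $(1-h)g\wt H e$ vanish), essentially a supersolvable analogue of the computations in Lemmas~\ref{lem:GpmnEvenCase}–\ref{lem:GpmnOddCase3}. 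This integrality verification for supersolvable SN groups is the real content of the theorem; the other two implications are largely bookkeeping against the SN classification.
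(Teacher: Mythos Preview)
Your cycle goes the wrong way, and that is the essential gap. The paper proves $(3)\Rightarrow(1)\Rightarrow(2)\Rightarrow(3)$, and the first two of these implications are nearly free once the right lemmas are in place; you instead attempt $(1)\Rightarrow(3)$ and $(2)\Rightarrow(1)$ directly, and neither is completed.

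For $(3)\Rightarrow(1)$ the paper uses the pair \Cref{DK via epsilon} and \Cref{SN via epsilon idemoptent}: DK means precisely that $\PCI(\Q G)\subseteq\{\epsilon(G,N)\mid N\unlhd G\}$, while SN is exactly bicyclic resistance with respect to the set $\{\epsilon(G,N)\}$. Put together (this is \Cref{prop SN for a DK group}), SN $+$ DK immediately gives bicyclic resistance with respect to all of $\PCI(\Q G)$. Your proposed $(1)\Rightarrow(3)$ asks you to manufacture, from an abstract pair $e,f$ with equal kernel, a bicyclic nilpotent whose $e$-projection is non-integral; your sketch (``the second faithful component forces the averaging not to clear denominators'') is not an argument, and there is no general mechanism of this kind available without first invoking the SN classification --- at which point you are really doing the paper's $(1)\Rightarrow(2)$ in disguise.

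For $(1)\Rightarrow(2)$ the paper simply observes that the proof of \Cref{ND for non-nilp SSN grps} already builds \emph{bicyclic} nilpotents witnessing failure of ND whenever $G$ is non-nilpotent, not an SSN group of unfaithful type, and has more than one matrix component; so bicyclic resistance forces $G$ into the remaining cases, all of which are supersolvable or have one matrix component. Your proposed direct $(2)\Rightarrow(1)$ (``supersolvable $\Rightarrow$ bicyclic resistant'') rests on the claim that in a supersolvable group one can take the $K$ in every relevant strong Shoda pair to be normal in $G$, making $\widehat{K}$ central. This is not true in general, and you yourself flag it as ``the main obstacle'' without resolving it; the paper never proves this implication directly and does not need to.

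Your $(3)\Rightarrow(2)$ is essentially the contrapositive of the paper's $(2)\Rightarrow(3)$ and is fine in content, though note a small slip: in the non-solvable case $G$ does not map \emph{onto} a non-abelian simple group (rather $G/S$ is Dedekind); the correct argument is that $G$ has at least two faithful irreducible $\Q$-representations, exactly as counted in the non-solvable part of the proof of \Cref{ND for non-nilp SSN grps}.
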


The methods of the proof of \Cref{ND for non-nilp SSN grps} in fact suggest that it might be interesting to study analogues of the property ND only considering certain nilpotent elements and certain central idempotents. 

\begin{definition} 
Let $E$ be a set of central idempotents in $\Q G$ and $n \in \mathbb{Z}G$ a nilpotent element. We say that $n$ \emph{has ND with respect to} $E$, if $ne \in \mathbb{Z}G$ holds for every $e \in E$. 
\end{definition}

With this terminology at hand we can give a new characterization of property SN in terms of such kind of local ND. This characterization implies that bicyclic resistant groups have SN. 

\begin{proposition}\label{SN via epsilon idemoptent}
Let $G$ be a finite group. The following are equivalent:
\begin{enumerate}
\item $G$ has SN.
\item All bicyclic nilpotent elements have ND with respect to $\{\epsilon(G,N) \mid N \unlhd G \}$.
\item All bicyclic nilpotent elements have ND with respect to $\{\wh{N} \mid N \unlhd G \}$.
\end{enumerate}
\end{proposition}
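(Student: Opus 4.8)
The plan is to establish the genuine equivalence $(1)\Leftrightarrow(3)$ by a single explicit computation of $n\wh{N}$ for a bicyclic nilpotent $n$, and then to derive $(2)\Leftrightarrow(3)$ formally from the relationship between the central idempotents $\wh{N}$ and $\epsilon(G,N)$ built from normal subgroups.

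For the core step $(1)\Leftrightarrow(3)$ I would fix a bicyclic nilpotent $n=(1-h)g\widetilde{H}$ (with $h\in H\leq G$ and $g\in G$) together with a normal subgroup $N\unlhd G$, and compute $n\wh{N}$ directly. Since $\wh{N}$ is central and each element of the subgroup $HN$ arises exactly $|H\cap N|$ times in the relevant product, one gets $\widetilde{H}\wh{N}=\tfrac{|H\cap N|}{|N|}\widetilde{HN}$, and hence $n\wh{N}=\tfrac{|H\cap N|}{|N|}\bigl(g\widetilde{HN}-hg\widetilde{HN}\bigr)$, a rational multiple of the difference of the two left-coset sums over $gHN$ and $hgHN$.

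The answer is then read off from a coefficient analysis. If $g^{-1}hg\in HN$ the two cosets coincide and $n\wh{N}=0\in\Z G$; if $N\subseteq H$ the scalar equals $1$ and $n\wh{N}=n\in\Z G$. In the remaining case (so $g^{-1}hg\notin HN$ and $N\not\subseteq H$) the cosets are distinct, the scalar is $\tfrac{|H\cap N|}{|N|}=1/[N:H\cap N]\le \tfrac12$, and since $g\in gHN\setminus hgHN$ the element $g$ carries the non-integral coefficient $1/[N:H\cap N]$, so $n\wh{N}\notin\Z G$. Consequently $n\wh{N}\in\Z G$ for all bicyclic nilpotents and all $N$ exactly when, for every $H\leq G$ and every $N\unlhd G$ with $N\not\subseteq H$, one has $g^{-1}hg\in HN$ for all $h\in H$ and $g\in G$; this says $H^{g}\subseteq HN$ for every $g$, i.e.\ $HN\unlhd G$. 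Thus $(3)$ is equivalent to the SN property. The second orientation $\widetilde{H}g(1-h)$ is reduced to this one via the $\Z$-linear involution $\ast\colon\sum a_x x\mapsto\sum a_x x^{-1}$, which fixes $\widetilde{H}$ and $\wh{N}$, preserves $\Z G$, and sends $\widetilde{H}g(1-h)\wh{N}$ to $(1-h^{-1})g^{-1}\widetilde{H}\wh{N}$.

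Finally $(2)\Leftrightarrow(3)$ is purely formal. Expanding $\epsilon(G,N)=\wh{N}\prod_{M/N\in\mathcal{M}(G/N)}(1-\wh{M})$ and using $\wh{A}\wh{B}=\wh{AB}$ for normal subgroups $A,B$ exhibits $\epsilon(G,N)$ as a $\Z$-combination (with signs $\pm1$) of idempotents $\wh{L}$ with $L\unlhd G$ and $L\supseteq N$, giving $(3)\Rightarrow(2)$. Conversely, by the proof of \Cref{DK via epsilon} the idempotent $\epsilon(G,M)$ collects exactly the primitive central idempotents whose representation has kernel $M$, while $\wh{N}$ collects those with kernel containing $N$; hence $\wh{N}=\sum_{M\unlhd G,\,M\supseteq N}\epsilon(G,M)$, which yields $(2)\Rightarrow(3)$. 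I expect the main obstacle to be making the coefficient analysis of the third paragraph fully rigorous — exhibiting a concrete group element carrying the non-integral unit fraction — together with the clean group-theoretic translation of ``$H^{g}\subseteq HN$ for all $g$'' into ``$HN\unlhd G$'', which is precisely what turns the integrality computation into the SN condition.
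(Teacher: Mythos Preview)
Your argument is correct and follows essentially the same route as the paper: the same computation $n\wh{N}=\tfrac{|H\cap N|}{|N|}(1-h)g\widetilde{HN}$ drives the equivalence $(1)\Leftrightarrow(3)$, and the expansion of $\epsilon(G,N)$ in terms of the $\wh{M}$ gives $(3)\Rightarrow(2)$. The only genuine difference is in $(2)\Rightarrow(3)$: the paper argues by downward induction on the length of a normal chain from $N$ to $G$, using that $\epsilon(G,N)=\wh{N}+\sum_{N\lneq M}a_M\wh{M}$, whereas you invoke the identity $\wh{N}=\sum_{M\unlhd G,\,M\supseteq N}\epsilon(G,M)$ directly; your version is a bit cleaner and avoids the induction entirely.
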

\begin{proof}
Let $Y \leq G$, $x\in G$, $y\in Y$ and denote $n = (1-y)x\widetilde{Y}$. Remark that  $n \wh{N} = (1-y).x. \wt{\langle N, Y\rangle}.\frac{|Y \cap N|}{|N|}$. This implies that one can choose $N$ such that $0\neq n \wh{N}$ exactly when $YN$ is not normal, i.e. there exists a non-trivial $x \notin N_G(YN)$. Moreover, when $0\neq n \wh{N}$, it is in $\Z G$ exactly when $|Y \cap N| = |N|$. In other words, when $N \leq Y$. These two observations combined imply the equivalence between (1) and (3). 

To see that (2) and (3) are equivalent note first that 
\[ n\epsilon(G,N) = n\wh{N} \prod_{M/N \in  \mathcal{M}(G/N)} (1 - \wh{M}) = \sum_{M \unlhd G} a_M n\wh{M}\]
for certain integers $a_M$. If (3) holds, then $n\wh{M} \in \mathbb{Z}G$ for every $M\unlhd G$ and consequently (2) holds. To see that (2) implies (3) we argue by induction on the minimal length of a chain of normal subgroups from $N$ to $G$. For the induction start notice $n\epsilon(G,G) = n\wh{G}$. Now let $N \unlhd G$. Then
\[ n\epsilon(G,N) = n\wh{N} \prod_{M/N \in  \mathcal{M}(G/N)} (1 - \wh{M}) = n \wh{N} + \sum_{N \lneq M \unlhd G} a_M n\wh{M},\]
for certain integers $a_M$, is an element of $\mathbb{Z}G$. As $\sum_{N \lneq M \unlhd G} a_M n\wh{M}  \in \mathbb{Z}G$ by induction, we conclude $n\wh{N} \in \mathbb{Z}G$.
\end{proof}

\Cref{SN via epsilon idemoptent} combined with \Cref{DK via epsilon} now yield the following.

\begin{corollary}\label{prop SN for a DK group}
Let $G$ be a finite group with DK. Then $G$ has SN if and only if it is bicyclic resistant.
\end{corollary}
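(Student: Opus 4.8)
The plan is to simply combine the two characterizations already established. Recall that Proposition \ref{SN via epsilon idemoptent} says that $G$ has SN if and only if every bicyclic nilpotent element has ND with respect to the set $\{\epsilon(G,N) \mid N \unlhd G\}$. On the other hand, Proposition \ref{DK via epsilon} says that when $G$ has DK we have $\PCI(\Q G) \subseteq \{\epsilon(G,N) \mid N \unlhd G\}$, and moreover $\PCI(\Q G) = \{\epsilon(G,\ker(\varphi_e)) \mid e \in \PCI(\Q G)\}$. So under the DK hypothesis the set of primitive central idempotents is a \emph{subset} of the set of idempotents $\epsilon(G,N)$.

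First I would prove the direction: if $G$ is bicyclic resistant then $G$ has SN. By definition, bicyclic resistant means $ne \in \Z G$ for every bicyclic nilpotent $n$ and every central idempotent $e \in \Q G$; in particular this holds for $e = \epsilon(G,N)$ with $N \unlhd G$ (these are indeed central idempotents of $\Q G$ by the discussion around \eqref{Def epsilon idempotents}). Hence every bicyclic nilpotent element has ND with respect to $\{\epsilon(G,N) \mid N \unlhd G\}$, and Proposition \ref{SN via epsilon idemoptent} gives that $G$ has SN. Note this direction does not even use DK.

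For the converse, suppose $G$ has SN and DK; I want to show $G$ is bicyclic resistant. Let $n \in \Z G$ be a bicyclic nilpotent element and let $e$ be an arbitrary central idempotent of $\Q G$. Write $e$ as a sum of primitive central idempotents, $e = \sum_{i} e_i$ with $e_i \in \PCI(\Q G)$; then $ne = \sum_i ne_i$, so it suffices to show $ne_i \in \Z G$ for each primitive $e_i$. By DK and Proposition \ref{DK via epsilon}, each $e_i = \epsilon(G, N_i)$ for $N_i = \ker(\varphi_{e_i}) \unlhd G$. Since $G$ has SN, Proposition \ref{SN via epsilon idemoptent} tells us that every bicyclic nilpotent element has ND with respect to $\{\epsilon(G,N) \mid N \unlhd G\}$; in particular $n\epsilon(G,N_i) = ne_i \in \Z G$. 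Summing, $ne \in \Z G$, so $G$ is bicyclic resistant.

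The proof is essentially a two-line citation of the preceding two propositions, so I do not expect a genuine obstacle; the only point requiring a moment of care is the reduction of an arbitrary central idempotent to a sum of primitive ones and the observation that ND with respect to a \emph{set} of idempotents passes to (finite sums of) its members — both immediate since $n(e+f) = ne + nf$ and $\Z G$ is closed under addition. One should also make sure the definition of "bicyclic resistant" quantifies over all central idempotents (it does), so that the forward direction applies to $\epsilon(G,N)$ directly.
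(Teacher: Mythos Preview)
Your proposal is correct and follows exactly the approach the paper intends: the paper simply states that \Cref{SN via epsilon idemoptent} combined with \Cref{DK via epsilon} yields the result, and your write-up spells out precisely this combination, including the reduction of an arbitrary central idempotent to a sum of primitives. Your observation that the direction ``bicyclic resistant $\Rightarrow$ SN'' does not use DK is also correct and worth noting.
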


We show that some other classes of interest are also bicyclic resistant using the following lemma.

\begin{lemma}
Let $p$ and $q$ be primes and $G$ a semi-direct product $P \rtimes Q$ of a cyclic $p$-group $P$ and a cyclic $q$-group $Q$ such that $G'$ is cyclic of prime order. Then $G$ is bicyclic resistant.
\end{lemma}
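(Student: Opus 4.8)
The plan is to determine the isomorphism type of $G$ and then quote \Cref{prop SN for a DK group}. Since $G/P\cong Q$ is abelian we have $G'\le P$, and as $P=\langle a\rangle$ is cyclic of order $p^m$, the hypothesis that $G'$ be cyclic of prime order forces $|G'|=p$; in particular $G$ is non-abelian. Writing $a^b=a^s$ for a generator $b$ of $Q$, one checks $G'=[P,G]=\langle a^{s-1}\rangle$ (using that $s^j-1$ is divisible by $s-1$), so $v_p(s-1)=m-1$.

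First I would split according to the value of $m$. If $m=1$, then $P\cong C_p$ and $G\cong C_p\rtimes C_{q^n}$; the action is non-trivial, so the image of $Q$ in $\Aut(P)$, a non-trivial cyclic $q$-group, injects into a group of order $p-1$, whence $q\mid p-1$ and $p\neq q$. If $m\ge 2$, write $s=1+up^{m-1}$ with $p\nmid u$; then in $\Z/p^m$ one has $s^p=(1+up^{m-1})^p\equiv 1$, since each term $\binom pk(up^{m-1})^k$ with $1\le k\le p$ has $p$-adic valuation at least $m$ (for $k=1$ because of the factor $\binom p1=p$, for $k\ge 2$ because $k(m-1)\ge 2(m-1)\ge m$). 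Hence the image of $Q$ in $\Aut(P)$ has exponent dividing $p$; being a non-trivial cyclic $q$-group it has order exactly $p$, so $p=q$. Replacing $b$ by the generator $b^v$ of $Q$ with $uv\equiv 1\pmod p$ changes the action to $a\mapsto a^{1+p^{m-1}}$, so $G\cong G(p,m,n)$ with $m\ge 2$, $n\ge 1$, the group of \eqref{definition BJ1 class}.

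Now I would invoke the already proved DK results. If $m=1$, then $G=C_p\rtimes C_{q^n}$ satisfies the hypotheses of \Cref{lem:DKForCmQ} (with $Q$ abelian, $q\nmid p$, and the condition $[\langle a\rangle,\langle g\rangle]=\langle a\rangle$ whenever $[a,g]\neq 1$ automatic because $\langle a\rangle$ has prime order), so $G$ has DK; and $G$ has SN by \Cref{Summary theorem for SN}, being an SSN group of unfaithful type or a group of type (ii) there, according as the action of $Q$ on $P$ is non-faithful or faithful. If $m\ge 2$, then $G=G(p,m,n)$ is a nilpotent group with SSN (it is the class (BJ1) in the classification of \cite{BozikovJanko}), hence has SN, and it has DK by \Cref{lem:DKForNilpotentSSN}. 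In either case $G$ has both SN and DK, so $G$ is bicyclic resistant by \Cref{prop SN for a DK group}.

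There is no genuinely hard step here: the content is the structural dichotomy of the second paragraph, the only delicate point being the verification that $m\ge 2$ forces $p=q$ and hence lands $G$ in the already-understood family $G(p,m,n)$. Should one prefer a proof avoiding the DK machinery, one could instead check condition (3) of \Cref{SN via epsilon idemoptent} directly: for a bicyclic nilpotent $n=(1-h)g\,\widetilde{H}$ one has $n\widehat{N}=\tfrac{|H\cap N|}{|N|}(1-h)g\,\widetilde{\langle N,H\rangle}$, and it suffices to show that whenever this is non-zero (equivalently $NH$ is not normal in $G$) one in fact has $H\le N$ — which can be read off from the short subgroup lattices of $C_p\rtimes C_{q^n}$ and of $G(p,m,n)$.
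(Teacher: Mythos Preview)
Your main argument is correct and takes a genuinely different route from the paper. The paper argues directly: it identifies, in each of the two cases $p=q$ and $p\neq q$, a maximal abelian subgroup $A$ and shows via the strong Shoda pair description that any bicyclic nilpotent $n$ satisfies $ne\neq 0$ for at most one $e\in\PCI(\Q G)$, so that $nf\in\{0,n\}$ for every $f$. You instead first classify $G$ structurally (either $C_p\rtimes C_{q^n}$ with $p\neq q$ when $m=1$, or $G(p,m,n)$ with $p=q$ when $m\ge 2$), then invoke the already-established DK results (\Cref{lem:DKForCmQ} and \Cref{lem:DKForNilpotentSSN}) together with SN, and conclude via \Cref{prop SN for a DK group}. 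This is logically sound and non-circular, since all the cited results precede the lemma. Your approach has the advantage of making explicit that the lemma is really a corollary of the DK machinery; the paper's approach, by contrast, is self-contained and explains concretely \emph{why} the bicyclic nilpotents live in a single component, which is the point \Cref{the hardest cases are resistant} is meant to illustrate.

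One caveat: your closing remark about ``avoiding the DK machinery'' by checking condition~(3) of \Cref{SN via epsilon idemoptent} is not quite right. That condition is equivalent only to SN, not to bicyclic resistance; it controls $n\widehat{N}$ for normal $N$, whereas bicyclic resistance requires $ne\in\Z G$ for \emph{all} primitive central idempotents $e$. Passing from the former to the latter is precisely what DK (via \Cref{DK via epsilon}) provides, so the shortcut you sketch does not actually avoid DK. (Also, the containment should read $N\le H$, not $H\le N$, for $|H\cap N|/|N|$ to be integral.)
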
 
\begin{proof}

Let $a,b\in G$ such that $\langle a \rangle = P$ and $\langle b \rangle = Q$. We will separate two cases which only differ in technical details though.

Assume first that $ p = q$. Then our conditions imply that the action of $Q$ on $P$ is of order $p$, i.e. $b^p \in \ZZ(G)$.
To construct a non-trivial bicyclic nilpotent element in $\mathbb{Z}G$ we need to find $g,u \in G$ and a subgroup $U$ of $G$ containing $u$ such that $u^g \notin U$. In the present conditions the only elements which generate non-normal cyclic subgroups of $G$ are those of shape $\langle a^ib \rangle$ for some integer $i$. Any subgroup of $G$ containing $\langle a^ib \rangle$ properly will also contain $G'$ and hence be normal. So, up to left-right symmetry, the only non-trivial bicyclic nilpotent elements in $\mathbb{Z}G$ are of shape $(1-a^ib)g\widetilde{\langle a^ib \rangle}$. We fix such a generic element $n \in \mathbb{Z}G$.

The group $G$ is metabelian, so we can use \Cref{lem:IdempotForMetabelian} to construct all the elements of $\PCI(\Q G)$. Fix $A = \langle a, b^p \rangle$, a maximal abelian subgroup of $G$ containing $G'$. Assume $e \in \PCI(\Q G)$ with $e = e(G,H,K)$. If $ne \neq 0$, then $K$ does not contain $G'$. On the other hand $K$ does contain $H'$ and $H$ contains $A$, which implies $H=A$. Set $S = \langle b^p \rangle$. Then we can write $\widetilde{\langle a^ib \rangle} = g_1\widetilde{S} + ... + g_n\widetilde{S}$ for $g_1,...,g_n$ a transversal of $S$ in $\langle a^ib \rangle$. So $ne \neq 0$ implies $\widetilde{S}e \neq 0$. As $S$ is a central cyclic group and the sum of all the roots of unity of the same order equals $0$, this implies $S \leq \ker(\varphi_e)$ and so $S \leq K$. As $S$ is a maximal subgroup of $A$ among those not containing $G'$, we obtain $K=S$. So $e$ is uniquely determined by the property $ne \neq 0$. Hence for every $f \in \PCI(\Q G)$ one has $nf = 0 $ or $nf=n$. Overall, $G$ is bicyclic resistant.

Next assume $p \neq q$. Then our conditions imply that $P$ has order $p$. Similarly as in the previous case the only elements of $G$ which do not generate normal cyclic subgroups are those of shape $\langle a^ib^j \rangle$ for some integers $i$ and $j$ such that $b^j \notin \ZZ(G)$. A subgroup of $G$ containing $\langle a^ib^j \rangle$ either contains $G'$ or will be a cyclic $q$-group. So a generic bicyclic nilpotent element $n$ can be written as $(1-a^ib^j)g\widetilde{R}$, where $R$ is a cyclic $q$-group containing $a^ib^j$. Again we want to use \Cref{lem:IdempotForMetabelian}. Let $S = \ZZ(G) \cap Q$. Then $A = \langle a \rangle \times S$ is a maximal abelian subgroup of $G$ containing $G'$. As before choosing $e = e(G,H,K)$ one concludes $H=A$. Moreover we note that $S \leq R$, so we can again write $\widetilde{R} = g_1\widetilde{S} + ... + g_n\widetilde{S}$ for $g_1,...,g_n$ a transversal of $S$ in $R$. So $ne \neq 0$ implies $\widetilde{S} \neq 0$, but this is only possible if $S \leq K$. This means $e = e(G,A,S)$, so the element of $\PCI(\Q G)$ which satisfies $ne \neq 0$ is unique. 
\end{proof}

This implies on one hand that the work carried out for the proof of \Cref{th:NilpotentCase} could not be carried out using bicyclic nilpotent elements, as well as that these elements cannot serve to solve the remaining case of SSN groups of unfaithful type.

\begin{corollary}\label{the hardest cases are resistant}
The groups $G(p,m,n)$, defined in \Cref{sec:NDForNilpotent}, are bicyclic resistant. Also, the SSN groups of unfaithful type are bicyclic resistant.
\end{corollary}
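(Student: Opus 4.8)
The final statement is \Cref{the hardest cases are resistant}, which asserts that the groups $G(p,m,n)$ and the SSN groups of unfaithful type are bicyclic resistant.

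The plan is to deduce both claims directly from the preceding lemma, which states that a semidirect product $P \rtimes Q$ of a cyclic $p$-group $P$ and a cyclic $q$-group $Q$ with $G'$ cyclic of prime order is bicyclic resistant. For the SSN groups of unfaithful type, the applicability is immediate: by \Cref{Defintion solv SSN unfaithful type} such a group is $G = P \rtimes Q$ with $P$ cyclic of order $p$ and $Q$ cyclic of $q$-power order, and since $Q$ acts non-trivially, $G' \leq P$ is non-trivial, hence $G' = P$ is cyclic of prime order $p$. So the lemma applies verbatim (this is the case $p \neq q$ of the lemma's proof).

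For the groups $G(p,m,n) = \langle a,b \mid a^{p^m} = b^{p^n} = 1,\ a^b = a^{1+p^{m-1}}\rangle$, I first recall that $G(p,m,n)$ is the internal semidirect product $\langle a \rangle \rtimes \langle b \rangle$ of the cyclic $p$-group $\langle a \rangle$ of order $p^m$ by the cyclic $p$-group $\langle b \rangle$ of order $p^n$. So $P$ and $Q$ are both $p$-groups, which corresponds to the case $p = q$ of the lemma. It then remains only to check that the derived subgroup is cyclic of prime order: from the defining relation $[a,b] = a^{-1}a^b = a^{p^{m-1}}$, one computes that $G' = \langle a^{p^{m-1}}\rangle$, which is cyclic of order $p$. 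Hence the lemma applies and $G(p,m,n)$ is bicyclic resistant. (One should also note that the lemma's hypothesis that $P$ and $Q$ are cyclic is exactly satisfied here by construction of $G(p,m,n)$.)

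There is essentially no obstacle: the whole content of \Cref{the hardest cases are resistant} is the observation that the two families in question are covered by the structural hypotheses of the previous lemma, together with the one-line computations $G' = \langle a^{p^{m-1}}\rangle$ for $G(p,m,n)$ and $G' = P$ for SSN groups of unfaithful type. The only mildly delicate point is matching conventions --- that the lemma is stated for $p$ and $q$ not necessarily distinct, so that the $p=q$ case genuinely applies to $G(p,m,n)$ --- but this is already accounted for in the lemma's proof, which explicitly treats both $p=q$ and $p\neq q$. So the proof of \Cref{the hardest cases are resistant} is simply: \emph{Both statements are immediate from the previous lemma, since for $G = G(p,m,n)$ one has $G = \langle a\rangle \rtimes \langle b\rangle$ with $G' = \langle a^{p^{m-1}}\rangle$ cyclic of order $p$, and for an SSN group of unfaithful type $G = P \rtimes Q$ one has $G' = P$ cyclic of order $p$.}
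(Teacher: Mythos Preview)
Your proof is correct and is exactly the intended argument: the corollary follows immediately from the preceding lemma, with $G(p,m,n)$ fitting the $p=q$ case (since $G' = \langle a^{p^{m-1}}\rangle$ has order $p$) and SSN groups of unfaithful type fitting the $p\neq q$ case (since $G' = P$ has order $p$). The paper presents this as a corollary without a separate proof, relying on precisely these observations.
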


We are finally ready to describe which groups with SN are bicyclic resistant.

\begin{proof}[Proof of \Cref{When SN is DK theorem}.]
Following \Cref{prop SN for a DK group} we know that (3) implies (1). Next suppose (1), i.e. $G$ is bicyclic resistant. Since SSN groups of unfaithful type and nilpotent groups are supersolvable, it remains to consider the groups dealt within \Cref{ND for non-nilp SSN grps}. The proof of \Cref{ND for non-nilp SSN grps} in fact constructs a bicyclic nilpotent element $n \in \mathbb{Z}G$ and a central idempotent $e \in \Q G$ such that $ne \notin \mathbb{Z}G$ if and only if $G$ has more than one matrix component. In other words, those groups are bicyclic resistant if and only if $G$ has one matrix component, which finishes the proof that (1) implies (2).

Now suppose (2). If $\Q G$ has one matrix component, then it has DK by \Cref{th:OMCImpliesDK}. Therefore we may assume that $\Q G$ has more than one matrix component and is supersolvable. If $G$ is even nilpotent, then by \Cref{prop:NilpotentSNNotSSN} the group $G$ has SSN and so also DK by \Cref{lem:DKForNilpotentSSN}.
It remains to consider the case that $G$ is supersolvable but not nilpotent. It is easily verified that the group $P \rtimes H$ with $H$ acting irreducibly and faithfully as in \Cref{prop:SolvSNGroups} is supersolvable if and only if $P$ is cyclic and so also $H$ is cyclic. Using \Cref{lem:DKForCmQ} we now see that   supersolvable not nilpotent SN groups have DK. 
\end{proof}

\begin{remark}\label{Zassenhaus and bicyclic resitant}
One could wonder in how far being bicyclic resistant is a property of the group ring $\mathbb{Z}G$ defined independently of the group basis $G$. In general this is not clear, but at least for those groups where a positive answer to the second Zassenhaus conjecture is known, this is the case. Recall that the second Zassenhaus conjecture asked, if it is true that when $H$ is a group of normalized units of $\mathbb{Z}G$ of the same order as $G$, there necessarily exists a unit $x \in \Q G$ such that $H^x = G$. It is clear that if such a unit exists the bicyclic nilpotent elements which can be defined using the elements of $G$ are conjugate in $\Q G$ to those which can be defined using $H$. As the central idempotents of $\Q G$ do not change under conjugation of course, it follows that in this situation being bicyclic resistant does not depend on the chosen group basis. More strongly one could even take any two units of $\mathbb{Z}G$ which generate a subgroup of finite order to construct a bicyclic nilpotent. This will also not break bicyclic resistance at least when the third Zassenhaus conjecture has a positive answer for $G$, i.e. if every finite subgroup of units in $\mathbb{Z}G$ is conjugate in the units of $\Q G$ to a subgroup of $\pm G$.

We remark that nilpotent groups are known to satisfy the third Zassenhaus conjecture \cite{Weiss1991} as well as metacyclic groups $A \rtimes B$ when $A$ and $B$ have coprime orders \cite{Valenti94}. So neither could we have constructed bicyclic nilpotent elements with respect to any finite subgroup of units of $\mathbb{Z}G$ for the groups $G(p,m,n)$ in \Cref{sec:NDForNilpotent} to prove \Cref{th:NilpotentCase}, nor will this be possible to resolve ND for SSN groups of unfaithful type. 

These observations might lead to wonder, if in fact the third Zassenhaus Conjecture might hold for all groups with DK. This is however not the case: it can be checked that the counterexample to the conjecture presented in \cite{HertweckAnother} does have property DK.
\end{remark}

\subsection{Concluding remarks on the Jordan decomposition}\label{sec:MJD}
The motivation of the work of Jespers-Sun \cite{JespersSun} was to contribute to the precise classification of groups having Multiplicative Jordan Decomposition. Though many contributions have been made here, the complete classification remains elusive. We refer to \cite{HalesPassi17} for a survey and to \cite{WangZhou, KuoSun} for the only results to have appeared since. 

Remark that a first major difference between ND and MJD is that the latter implies that the reduced degree of all simple components are at most $3$ \cite{AroHalPAs}. However there exists groups having ND with a simple component  of arbitrary large reduced degree, e.g. the groups $C_{p^{m}} \rtimes C_{p^n}$ in \cite[Theorem A]{JespersSun}.

Next, analyzing all groups for which the Multiplicative Jordan Decomposition is known to hold and those for which it remains open, using  \cite[Section 7.4]{PolMilSehgalBook} and \cite{JespersSun}, one finds first that all groups which are known to have the Multiplicative Jordan Decomposition have at most one matrix component. The only groups among those for which it remains open with more than one matrix component are the groups of type $C_p \rtimes C_{2^k}$ with $k \geq 3$ and $p \equiv 1 \bmod 8$ and where the action of the cyclic $2$-group is by inversion. Note that these groups are SSN groups of unfaithful type - so exactly from the series for which the equivalence between property ND and having at most one matrix component remains open. Hence an answer to the following might solve the Multiplicative Jordan Decomposition for a new series and provide an answer to whether the Multiplicative Jordan Decomposition for a group implies that it has at most one matrix component.

\begin{question} \label{when MJD satisfy conjecture}
Let $p$ and $q$ be primes and $G= C_p \rtimes C_{q^k}$ for some natural number $k$ such that the action of $C_{q^k}$ is not faithful. Is it true that $G$ has ND if and only if it has one matrix component?
\end{question}

The smallest group with more than one matrix component for which the Multiplicative Jordan Decomposition remains unknown is
\[ \langle x, a \ | \ x^{17} = a^8 = 1, x^a = x^{-1} \rangle  \cong C_{17} \rtimes C_8.\]
In \cite[Section 4.1]{HalesPassi17} it is called ``a challenging open case''. We can confirm it is challenging. Answering our question would also eliminate the last question mark in \cite[Figure 1]{JespersSun}.

\bibliographystyle{plain}
\bibliography{ND}

\end{document}